\theoremstyle{plain}
\newtheorem{thm}{Theorem}[section]
\newtheorem{lem}[thm]{Lemma}
\newtheorem{cor}[thm]{Corollary}
\numberwithin{equation}{section}
\theoremstyle{definition}
\newtheorem{defn}[thm]{Definition}
\newtheorem{rem}[thm]{Remark}
\newcommand*{\rom}[1]{\expandafter\@slowromancap\romannumeral #1@}
\newcommand{\e}{\varepsilon}
\newcommand{\B}{\mathbb{B}}
\newcommand{\C}{\mathbb{C}}
\newcommand{\N}{\mathbb{N}}
\newcommand{\R}{\mathbb{R}}
\newcommand{\Z}{\mathbb{Z}}
\def\dist{\mathop\mathrm{dist}} 
\def\diam{\mathop\mathrm{diam}}
\DeclarePairedDelimiter\floor{\lfloor}{\rfloor}
\newcommand{\romup}[1]
    {\MakeUppercase{\romannumeral #1}}
\title[$d$-TST for general sets]{A $d$-dimensional Analyst's Travelling Salesman Theorem for general sets in $\R^n$}
\author{Matthew Hyde }
\address{Matthew Hyde\\
School of Mathematics \\ University of Edinburgh \\ JCMB, Kings Buildings \\
Mayfield Road, Edinburgh,
EH9 3JZ, Scotland.}
\email{m.hyde "at" ed.ac.uk}
\subjclass[2010]{28A75,28A78,28A12}
\keywords{Rectifiability, Travelling salesman theorem, beta numbers, Hausdorff content}
\thanks{M. Hyde was supported by The Maxwell Institute Graduate School in Analysis and its
Applications, a Centre for Doctoral Training funded by the UK Engineering and Physical
Sciences Research Council (grant EP/L016508/01), the Scottish Funding Council, Heriot-Watt
University and the University of Edinburgh.}
\begin{document}

\begin{abstract}
In his 1990 paper, Jones proved the following: given $E \subseteq \R^2$, there exists a curve $\Gamma$ such that $E \subseteq \Gamma$ and
\[ \mathscr{H}^1(\Gamma) \sim \text{diam}\, E + \sum_{Q} \beta_{E}(3Q)^2\ell(Q).\]
Here, $\beta_E(Q)$ measures how far $E$ deviates from a straight line inside $Q$. This was extended by Okikiolu to subsets of $\R^n$ and by Schul to subsets of a Hilbert space. 

In 2018, Azzam and Schul introduced a variant of the Jones $\beta$-number. With this, they, and separately Villa, proved similar results for lower regular subsets of $\R^n.$ In particular, Villa proved that, given $E \subseteq \R^n$ which is lower content regular, there exists a `nice' $d$-dimensional surface $F$ such that $E \subseteq F$ and 
\begin{align}
\mathscr{H}^d(F) \sim \text{diam}( E)^d + \sum_{Q} \beta_{E}(3Q)^2\ell(Q)^d.
\end{align}
In this context, a set $F$ is `nice' if it satisfies a certain topological non degeneracy condition, first introduced in a 2004 paper of David. 

In this paper we drop the lower regularity condition and prove an analogous result for general $d$-dimensional subsets of $\R^n.$ To do this, we introduce a new $d$-dimensional variant of the Jones $\beta$-number that is defined for any set in $\R^n.$ 
\end{abstract}

\maketitle

\tableofcontents

\section{Introduction}
\subsection{Analyst's Travelling Salesman Theorem}
The 1-dimensional Analyst's Travelling Salesman Theorem was first proven by Peter Jones \cite{jones1990rectifiable} for subsets of $\C$, with the motivation of studying the boundedness of a certain class of singular integral operators. Jones' theorem gives a multi-scale geometric criterion for when a set $E \subseteq \C$ can be contained in a curve of finite length. Roughly speaking, he proved that if $E$ is flat enough at most scales and locations, then there is a curve $\Gamma$ of finite length (with quantitative control on its length) such that $E \subseteq \Gamma.$ Conversely, he proved that if $E$ is a curve of finite length, then there is quantitative control over how often $E$ can be non-flat.

There are many notions of flatness one could consider, some of which will appear later in this introduction. The following is the one introduced by Jones. Define for $E,B \subseteq \R^n$,
\begin{align*}
\beta_{E,\infty}^d(B) = \frac{1}{r_B}\inf_L \sup\{\text{dist}(y,L) : y \in E \cap B\}
\end{align*}
where $L$ ranges over $d$-planes in $\R^n.$ This quantity became know as the Jones $\beta$-number. If we unravel the definition then $\beta_{E,\infty}^d(B) r_B$ describes the width of the smallest tube containing $E \cap B,$ see Figure \ref{f:beta}. Jones' theorem goes as follows:
\begin{figure}[ht]
  \centering
  \includegraphics[scale = 0.8]{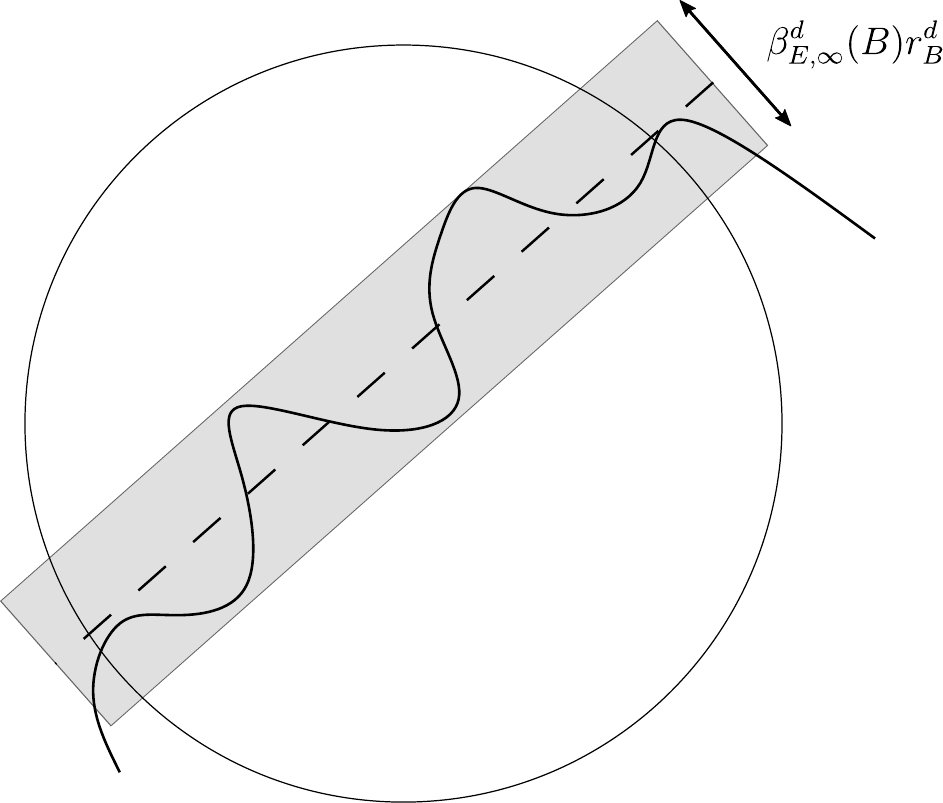}
\caption{A visual representation of $\beta_{E,\infty}^d(B)$. Notice that $\beta_{E,\infty}^d(B)$ only cares about the geometry of $E$ inside $B$, there may be points in $E\setminus B$ which is not contained in the tube.}\label{f:beta}
\end{figure}

\begin{thm}[Jones \cite{jones1990rectifiable}]\label{1DTSP}
There is a constant $C < \infty$ such that the following holds. Let $E \subseteq \C$. Then there is a connected set $\Gamma \supseteq E$ such that 
\begin{align}\label{1DTSP1}
\mathscr{H}^1(\Gamma) \lesssim_n \emph{diam}\,E + \sum_{\substack{Q \in \Delta \\ Q \cap E \not= \emptyset}} \beta_{E,\infty}^1(3Q)^2\ell(Q),
\end{align}
where $\Delta$ denotes the collection of all dyadic cubes in $\C$. Conversely, if $\Gamma$ is connected and $\mathscr{H}^1(\Gamma) < \infty$, then
\begin{align}\label{1DTSP2}
\emph{diam}\,\Gamma + \sum_{\substack{Q \in \Delta \\ Q \cap \Gamma \not= \emptyset}} \beta_{\Gamma,\infty}^1(3Q)^2\ell(Q) \lesssim_n \mathscr{H}^1(\Gamma).
\end{align}
\end{thm}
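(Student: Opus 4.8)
The plan is to prove the two inequalities \eqref{1DTSP1} and \eqref{1DTSP2} by separate arguments, after the usual reductions: rescale so that $\diam E$ (resp.\ $\diam\Gamma$) is $1$, replace $\Gamma$ by its closure so it is a compact connected set, and observe that in each sum only the dyadic cubes $Q$ with $\ell(Q)\lesssim 1$ meeting the set contribute anything.

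For the converse inequality \eqref{1DTSP2} I would start from a single-scale geometric estimate. Fix a dyadic cube $Q$ with $Q\cap\Gamma\neq\emptyset$ and such that $\Gamma\cap 3Q$ is not contained in a ball of radius $\tfrac{1}{10}\ell(Q)$; this holds for every $Q$ below a fixed top generation because $\Gamma$ is connected with $\diam\Gamma\sim 1$. Letting $L_Q$ be a line attaining $\beta:=\beta_{\Gamma,\infty}^1(3Q)$, there is a point $p\in\Gamma\cap 3Q$ with $\dist(p,L_Q)\geq\tfrac12\beta\ell(Q)$; choosing points $a,b\in\Gamma\cap 3Q$ roughly $\ell(Q)$ apart and on opposite sides of $p$, and using that $\Gamma$ contains subarcs from $a$ to $p$ and from $p$ to $b$ (connectivity), the elementary Pythagorean-type inequality
\[ |a-p| + |p-b| \;\geq\; |a-b| + c\,\frac{\dist(p,[a,b])^2}{\ell(Q)} \]
yields $\mathscr{H}^1(\Gamma\cap 3Q)\geq \ell(Q)\bigl(1 + c\,\beta^2\bigr)$. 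The remaining — and genuinely delicate — step is to sum these local estimates without overcounting length. At a fixed scale the dilated cubes $3Q$ have bounded overlap, so the real difficulty is the sum over scales: I would handle it by a stopping-time decomposition of the contributing cubes into trees along which $\beta_{\Gamma,\infty}^1(3Q)$ stays below a threshold $\e$, charging each term $\beta^2\ell(Q)$ to the portion of $\Gamma\cap 3Q$ lying at distance $\gtrsim\beta\ell(Q)$ from $L_Q$, and checking this portion is charged $O_n(1)$ times. This bookkeeping is the main obstacle in this direction.

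For \eqref{1DTSP1} I would build $\Gamma$ by a multiscale net construction: for each $k$ pick a maximal $2^{-k}$-separated set $X_k\subseteq E$, join each point of $X_{k+1}$ to a nearest point of $X_k$ by a segment, add segments between nearby points of $X_k$, and let $\Gamma$ be the Hausdorff limit of the resulting connected graphs, so that $\Gamma$ is connected and $E\subseteq\Gamma$. The length is estimated scale by scale. In a cube $Q$ of side $\sim 2^{-k}$ where $\beta_E(3Q)$ is small, every relevant net point of generations $k$ and $k+1$ lies in a thin tube around a line $L_Q$; projecting onto $L_Q$ and invoking the same Pythagorean inequality shows the edges associated to $Q$ add only $\ell(Q) + O(\beta_E(3Q)^2\ell(Q))$ beyond what a straight segment along $L_Q$ would, and the ``straight'' parts telescope along a tree of such cubes into a single term comparable to the diameter of that tree; a cube $Q$ with $\beta_E(3Q)\gtrsim\e$ instead contributes $\lesssim\ell(Q)\lesssim\beta_E(3Q)^2\ell(Q)$ directly, and there are boundedly many such tree tops per unit of the sum. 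Thus the same corona/stopping-time organization as in the lower bound controls everything. The crux here is arranging the construction to be simultaneously connected, to contain $E$, and to have controlled length: the quadratic gain in the Pythagorean estimate is exactly what prevents the interscale edges from accumulating uncontrolled length in the flat regions, and making that precise — rather than the soft construction itself — is the heart of the proof.
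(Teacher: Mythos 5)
This theorem is cited by the paper (attributed to Jones \cite{jones1990rectifiable}, with purely geometric proofs later given by Okikiolu and Schul); the paper itself does not prove it, so there is no in-paper argument to compare your proposal against.

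Taken on its own, your outline correctly identifies the three ingredients that power every known proof of the $1$-dimensional TST: the elementary ``Pythagorean'' excess-length estimate (a connected set passing far from the chord $[a,b]$ has length exceeding $|a-b|$ by an amount quadratic in the deviation), a multiscale farthest-insertion net construction for the sufficiency direction, and a stopping-time/corona organization of the cubes so the local estimates can be summed without overcounting. However, in both directions you defer exactly the steps where the theorem could actually fail. In the converse inequality, the single-scale step as written has a gap: the point $p$ of maximal deviation from the approximating line $L_Q$ need not be far from the chord $[a,b]$ when $a$ and $b$ are themselves far from $L_Q$; one must choose $a,b$ near the two ``ends'' of $\Gamma\cap 3Q$ and use connectivity together with the definition of $\beta_{\Gamma,\infty}^1(3Q)$ to place them within $\beta\ell(Q)$ of $L_Q$ before the Pythagorean inequality bites, and the subsequent bounded-overlap bookkeeping across scales (which you correctly flag as ``genuinely delicate'') is where the bulk of the work lies. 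In the direct inequality, the assertion that the interscale edges ``telescope along a tree of such cubes into a single term comparable to the diameter of that tree'' is exactly the content of Jones's estimate and is not established here; one also needs to verify that the graph limit is connected and contains $E$, which your sketch asserts but does not check. So this is a sound roadmap consistent with the standard approaches, but it is not yet a proof: the parts you label ``the main obstacle'' and ``the heart of the proof'' are precisely the parts that remain to be done.
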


The proof of Jones' result relies heavily on the tools of complex analysis and so is unique to curves in $\C.$ A purely geometric proof was given by Okikiolu \cite{okikiolu1992characterization} for subsets of $\R^n$ and by Schul \cite{schul2007subsets} for subsets of a Hilbert space. 

We highlight the following corollary of Theorem \ref{1DTSP} to compare with Theorem  \ref{c:TSP}; if $E \subseteq \R^n$ and the right hand side of \eqref{1DTSP1} is finite, then there exists a curve $\Gamma \supseteq E$ such that 
\begin{align}
\mathscr{H}^1(\Gamma) \sim \text{diam}\,E + \sum_{\substack{Q \in \Delta \\ Q \cap E \not= \emptyset}} \beta_{E,\infty}^1(3Q)^2\ell(Q).
\end{align}

\subsection{For set of dimensions larger than 1} 

Theorem \ref{1DTSP} (along with the work of Okikiolu) gives a complete theory for 1-dimensional subsets of Euclidean space, but what about higher dimensional subsets? It is natural to ask whether a $d$-dimensional analogue of the above Jones' theorem is true, that is,  
\begin{enumerate}
\item Given a set $E,$ can we find a `nice' $d$-dimensional set $F$ containing $E$ such that 
\[ \mathscr{H}^d(F) \lesssim \text{diam}(E)^d + \sum_{ \substack{Q \in \Delta \\ Q \cap \Gamma \not= \emptyset}}\beta^d_{E,\infty}(3Q)^2\ell(Q)^d? \]
\item Given that $E$ is a `nice' $d$-dimensional set, can we say that 
\[\text{diam}(E)^d + \sum_{ \substack{Q \in \Delta \\ Q \cap \Gamma \not= \emptyset}}\beta^d_{E,\infty}(3Q)^2\ell(Q)^d \lesssim \mathscr{H}^d(E)?  \]
\end{enumerate}

Pajot \cite{pajot1996theoreme} proved an analogous result to the first half of Theorem \ref{1DTSP} for 2-dimensional sets in $\R^n.$ In this, he gave a sufficient condition in terms of the Jones $\beta$-number for when a set $E \subseteq \R^n$ can be contained in a surface $f(\R^2)$ for some smooth $f: \R^2 \rightarrow \R^n.$

For dimensions larger than two we encounter a problem, at least when we work with Jones' $\beta$-number. The most natural candidate for a `nice' $d$-dimensional set is a Lipschitz graph of a $d$-dimensional plane. However, in his PhD thesis, Fang \cite{fang1990cauchy} constructed a 3-dimensional Lipschitz graph whose $\beta^3_{\Gamma,\infty}$ sum was infinite. Thus, with the $\beta$-numbers as defined by Jones, a $d$-dimensional analogue of the second half of Theorem \ref{1DTSP} was proven to be false, since for any Travelling Salesman Theorem, we'd like Lipschitz graphs to be considered `nice'. 
\bigbreak
\noindent
\textbf{Ahlfors $d$-regular set.} The issue discovered by Fang was resolved by David and Semmes \cite{david1991singular} who introduced a new $\beta$-number and proved a Travelling Salesman Theorem for Ahlfors $d$-regular sets in $\R^n$. A set $E \subseteq \R^n$ is said to be \textit{Ahlfors} $d$-\textit{regular} if there is $A > 0$ such that
\[r^d/A \leq \mathscr{H}^d(E \cap B(x,r)) \leq Ar^d \ \text{for all} \ x \in E, \ r \in (0,\text{diam}E). \]
They defined their $\beta$-number as follows. For $E \subseteq \R^n$ and $B$ a ball, set
\begin{align*}
\hat\beta_{E}^{d,p}(B) = \inf_L \left( \frac{1}{r_B^d}\int_B \left( \frac{\text{dist}(y,L)}{r_B}\right)^p \, d\mathscr{H}^d|_E(y) \right)^\frac{1}{p},
\end{align*}
where $L$ ranges over all $d$-planes in $\R^n$. In some sense, Jones' $\beta$-number measures the $L^\infty$ deviation of $E$ from a plane whereas David and Semmes $\beta$-number, $\hat\beta_E^{d,p}(B),$ measures the $L^p$-average deviation of $E$ from a plane. In general, these quantities are not comparable. 

Let 
\[ p(d) \coloneqq 
\begin{cases}
	\frac{2d}{d-2}, & d < 2 \\
	\infty, & d \leq 2	
\end{cases}.
\]
Their result goes as follows:
\begin{thm}[David, Semmes \cite{david1991singular}]\label{t:DS}
Let $E \subseteq \R^n$ be Ahlfors $d$-regular. The following are equivalent:
\begin{enumerate}
\item The set $E$ has big pieces of Lipschitz images, meaning, there are constants $L,c>0$ such that for all $x \in E$ and $r \in (0,\diam E),$ there is an $L$-Lipschitz map $f:\R^d \rightarrow \R^n$ satisfying $\mathscr{H}^d(E \cap B(x,r) \cap f(\R^d)) \geq cr^d. $
\item For $1\leq p < p(d), \ \hat\beta_E^{d,p}(x,r)^2\tfrac{dxdr}{r}$ is a Carleson measure on $E \times (0,\infty).$ Recall that $\sigma$ is a Carleson measure on $E \cap (0,\infty)$ if $\sigma(B(x,r) \times (0,r)) \lesssim r^d$ for each $(x,r) \in E \cap (0,\infty).$    
\end{enumerate}
\end{thm}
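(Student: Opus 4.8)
This is a substantial result, so I can only sketch the architecture. It is an equivalence whose two halves have quite different character: $(1)\Rightarrow(2)$ transfers a known square-function bound for Lipschitz graphs onto $E$, whereas $(2)\Rightarrow(1)$ constructs Lipschitz parametrisations out of the Carleson hypothesis. Throughout I would use Ahlfors $d$-regularity freely --- to pass between $\mathscr{H}^d(E\cap B(x,r))$ and $r^d$, to build a system of Christ--David ``dyadic cubes'' on $E$, and to recast the continuous Carleson condition as its dyadic analogue: for every cube $Q$, $\sum_{R\subseteq Q}\hat\beta_E^{d,p}(CR)^2\,\ell(R)^d\lesssim \ell(Q)^d$.

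For $(1)\Rightarrow(2)$ I would first treat the model case of a graph $\Gamma=\{(x,A(x)):x\in\R^d\}$ with $A:\R^d\to\R^{n-d}$ $L$-Lipschitz. A Dorronsoro-type estimate bounds the $L^p$-oscillation of $A$ about its best affine approximants by $\|\nabla A\|_\infty^2$ in the Carleson square-function sense, and this is exactly where the exponent $p(d)$ enters: the underlying Littlewood--Paley/Sobolev argument permits $p$ up to the Sobolev conjugate $2d/(d-2)$ (all $p<\infty$ when $d\le2$) but no further --- for $d\ge3$ one cannot reach $p=\infty$, by Fang's example. Since $\hat\beta_\Gamma^{d,p}(B)$ is dominated by such an oscillation, $\hat\beta_\Gamma^{d,p}$ is Carleson. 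Reducing a general Lipschitz image to a graph on the part of $\R^d$ that maps nearly isometrically --- which carries the bulk of the mass, since $\mathscr{H}^d(E\cap B\cap f(\R^d))\gtrsim r^d$ --- one gets the estimate for Lipschitz images. Finally I would transfer from big pieces to $E$ by a stopping-time iteration: the hypothesis forces a fixed proportion of any cube $Q$ to lie on a single Lipschitz image whose $\hat\beta$-contributions over its good sub-cubes are $\lesssim\ell(Q)^d$ by the model case; reprocessing the complementary cubes and summing a geometric series gives $\sum_{R\subseteq Q}\hat\beta_E^{d,p}(3R)^2\ell(R)^d\lesssim\ell(Q)^d$.

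For $(2)\Rightarrow(1)$ the admissible value $p=2$ already suffices. The engine is a \emph{corona decomposition} of $E$ distilled from the Carleson hypothesis. I would first upgrade the one-sided $\hat\beta_E^{d,2}$ to a bilateral control --- also measuring how well the approximating plane is filled by $E$ --- in the Carleson sense, which for Ahlfors regular sets follows from regularity together with the $\hat\beta$ bound. Then run a stopping-time argument from a top cube $Q_0$ with best plane $\pi_{Q_0}$: descend to children while (a) $E$ stays within $\e\ell(R)$ of a plane $\pi_R$ inside $CR$, and (b) $\pi_R$ stays within angle $\e$ of $\pi_{Q_0}$, stopping when either fails. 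Type-(a) stopping cubes are controlled by the $\hat\beta$ Carleson condition, type-(b) cubes by a geometric ``tilting'' square function that is itself dominated by $\hat\beta$; so the stopping cubes pack with a Carleson bound. Inside each resulting coherent region $E$ lies in a thin slab about a fixed plane, hence in the graph of a $C\e$-Lipschitz function over a regularised shadow. To produce, for a fixed ball $B(x,r)$, one Lipschitz image capturing $\gtrsim r^d$ of $E\cap B(x,r)$, I would stitch the graphs of the coherent regions met by $B(x,r)$ via a Whitney-extension / partition-of-unity gluing, truncating the stopping-time tree at a finite generation (legitimate by the Carleson packing) so that only a negligible, Carleson-summable fraction of $\mathscr{H}^d$ is discarded, and thereby extract one $L$-Lipschitz $f:\R^d\to\R^n$ with $\mathscr{H}^d(E\cap B(x,r)\cap f(\R^d))\gtrsim r^d$ --- which is condition (1).

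The main obstacle is the construction direction $(2)\Rightarrow(1)$, and within it the Lipschitz gluing: keeping the Lipschitz constant of the assembled map bounded while guaranteeing that the $\mathscr{H}^d$-mass discarded at each generation of the stopping-time tree is summable against the Carleson packing of the coronas. Arranging that the stopping-time family is genuinely Carleson (and the preliminary bilateralisation of $\hat\beta$) is the other delicate ingredient. By contrast $(1)\Rightarrow(2)$ is comparatively routine once the Dorronsoro estimate on graphs --- with the sharp exponent range $p<p(d)$ --- and the big-pieces transference scheme are in hand.
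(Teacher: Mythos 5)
This theorem is one the paper \emph{cites} from David and Semmes \cite{david1991singular} rather than proves, so there is no in-paper argument to compare your sketch against. That said, your outline does correctly reproduce the architecture of the original proof: the $(1)\Rightarrow(2)$ direction via Dorronsoro's square-function estimate for Lipschitz graphs (with the exponent range $p<p(d)$ coming from the Sobolev embedding, and $p=\infty$ ruled out for $d\geq 3$ by Fang's example) followed by a big-pieces transference, and the $(2)\Rightarrow(1)$ direction via a corona decomposition with stopping times and a Whitney-type gluing to assemble the Lipschitz parametrisation. You are also right that $p=2$ suffices for the converse since $2<p(d)$ for every $d$. One small remark worth flagging: the paper's displayed definition of $p(d)$ has a typo --- the first case should read $d>2$, not $d<2$ --- and your sketch implicitly reads it the intended way (finite Sobolev conjugate for $d>2$, $\infty$ for $d\leq 2$). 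Since this is quoted background, no further reconciliation with the paper is needed; for the full details one should consult David and Semmes' monograph and the Dorronsoro/corona literature they rely on.
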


\begin{rem}
An Ahlfors $d$-regular sets satisfying condition (1) of the above theorem is said to be \textit{uniformly rectifiable} (UR). This is one of many characterizations of UR sets.
\end{rem}

One can draw parallels between the above result and Jones' original theorem. On the one hand there is a condition on the multi-scale geometry of $E$, this time stated in terms of a Carleson condition involving the newly defined $\beta$-number of David and Semmes. On the other hand, there is a condition that $E$ is uniformly rectifiable, which means that large parts of the set can be covered by Lipschitz images. 
\bigbreak
\noindent
\textbf{$(c,d)$-lower content regular sets.} What can be said for sets which are not Ahlfors $d$-regular? In the Euclidean setting, not much progress was made on this until much more recently when Azzam and Schul \cite{azzam2018analyst} proved a Travelling Salesman Theorem for $d$-dimensional sets in $\R^n,$ under the weakened assumption of $(c,d)$-lower content regularity. A set $E \subseteq \R^n$ is said to be $(c,d)$-\textit{lower content regular} in a ball $B$ if for all $x \in E \cap B$ and $r \in (0,r_B),$ 
\begin{align*}
\mathscr{H}^d_\infty(E \cap B(x,r)) \geq cr^d.
\end{align*}
Here, $\mathscr{H}^d_\infty$ denotes the Hausdorff content, see \eqref{e:HausCont}. A general feature of lower regular sets is that they sufficiently spread out in at least $d$-directions at all locations and scales. In particular, they cannot concentrate around some lower dimensional set. Any curve is $(c,1)$-lower content regular for some $c >0$. For a higher dimensional example, any Reifenberg flat set is $(c,d)$-lower content regular. Recall, a set $E \subseteq \R^n$ is said to be $(\e,d)$-\textit{Reifenberg flat} if for each $x \in E$ and $0 < r<\diam(E)$, there exists a $d$-plane $P_{x,r}$ such that 
\[ \max \left\{ \sup_{y \in E \cap B(x,r)} \dist(y,P_{x,r}) , \sup_{y \in P_{x,r} \cap B(x,r)} \dist(y,E) \right\} \leq \e r. \]
To see an example of set which is not lower content regular, see Figure \ref{f:NotLR}.

\begin{figure}
  \centering
  \includegraphics{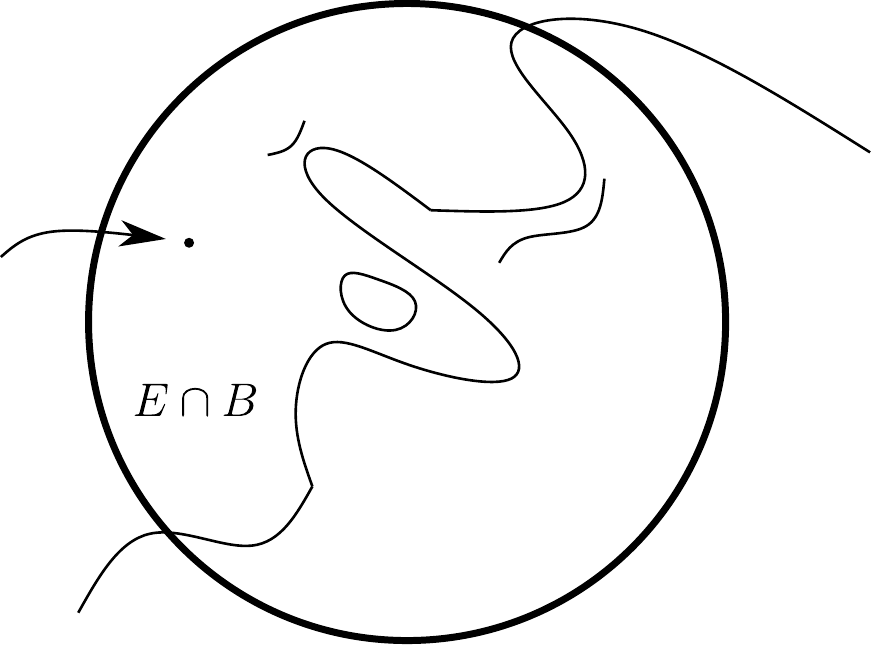}
\caption{An example of a set which is not 1-lower content regular. A ball centred at the highlighted singleton point, with radius small enough, will not satisfy the lower regularity condition for any parameter $c$.}\label{f:NotLR}
\end{figure}

Notice, under this relaxed condition, the measure $\mathscr{H}^d|_E$ may not be locally finite. As a result, Azzam and Schul were required to introduce a new $\beta$-number. The $\beta$-number they defined is analogous to that of David and Semmes but they instead `integrate' with respect to Hausdorff content. For $E \subseteq \R^n$ and a ball $B$, they defined
\begin{align*}
\check\beta_E^{d,p}(B) =  \inf_L \left( \frac{1}{r_B^d}\int_0^1 \mathscr{H}_\infty^d(\{x \in E \cap B : \text{dist}(x,L) > tr_B \})t^{p-1} \, dt \right)^\frac{1}{p},
\end{align*}
where $L$ ranges over all $d$-planes in $\R^n$. If $E$ is Ahlfors $d$-regular, then $\check\beta^{d,p}_E$ agrees with the $\beta$-number of David and Semmes up to a constant. 

To state their results, we need some additional notation. For closed sets $E,F \subseteq \R^n$ and a set $B$, define
\begin{align*}
d_B(E,F) = \frac{2}{\text{diam}(B)}\max \left\{\sup_{y \in E \cap B}\text{dist}(y,F), \sup_{y \in F \cap B}\text{dist}(y,E) \right\}.
\end{align*}
In the case where $B = B(x,r),$ we may write $d_{x,r}(E,F)$ to denote the above quantity. Thus, $d_B(E,F)$ is essentially the renormalised Hausdorff distance between the sets $E$ and $F$, inside $B$.

Some of the following results are stated in terms of Christ-David cubes. Given $E \subseteq \R^n$, the Christ-David cubes of $E$, usually denoted by $\mathscr{D}$, are subsets of $E$ which behave like Euclidean dyadic cubes. Importantly, they have a tree like structure. More shall be said about Christ-David cubes later, see Theorem \ref{cubes}

For $C_0 >0$ and $\e >0,$ let 
\[\text{BWGL}(C_0,\e) = \{Q \in \mathscr{D} : d_{C_0B_Q}(E,P) \geq \e \ \text{for all $d$-planes} \ P\}. \]
Thus, the cubes in BWGL$(C_0,\e)$ are those for which $E$ is not well approximated by a plane, in Hausdorff distance. The acronym BWGL stands for \emph{bi-lateral weak geometric lemma}. David and Semmes \cite{david1993analysis} gave another characterization of uniformly rectifiable sets in terms of BWGL. They showed that an Ahlfors $d$-regular set is uniformly rectifiable if and only if for every $C_0 \geq 1,$ there exists $\e >0$ such that $\text{BWGL}(C_0,\e)$ satisfies a Carleson condition with constant depending on $\e.$ By this, we mean there exists a constant $C = C(\e)$ such that for any $R \in \mathscr{D}$, we have
\[ \sum_{\substack{Q \subseteq R \\ Q \in \text{BWGL}(C_0,\e)}} \ell(Q)^d \leq C \ell(R)^d. \]

We now state the result from \cite{azzam2018analyst}. In fact, we state the reformulation presented in \cite{azzam2019quantitative}.

\begin{thm}[Azzam, Schul \cite{azzam2018analyst}]\label{dTSP}
Let $1 \leq d < n$ and $E \subseteq \R^n$ be a closed set. Suppose that $E$ is $(c,d)$-lower content regular and let $\mathscr{D}$ denote the Christ-David cubes for $E$. Let $C_0 >1.$ Then there is $\e >0$ small enough so that the following holds. Let $1 \leq p < p(d).$ For $R \in \mathscr{D},$ let
\begin{align}
\emph{BWGL}(R) = \emph{BWGL}(R,\e,C_0) = \sum_{\substack{Q \in \emph{BWGL}(\e,C_0) \\ Q \subseteq R}} \ell(Q)^d
\end{align}
and 
\begin{align}
\check\beta_{E,A,p}(R) \coloneqq \ell(R)^d + \sum_{Q \subseteq R} \check\beta^{d,p}_E(AB_Q)^2\ell(Q)^d.
\end{align}
Then, for $R \in \mathscr{D}$,
\begin{align}\label{e:dTSP}
\mathscr{H}^d(R) + \emph{BWGL}(R,\e,C_0) \sim_{A,n,c,p,C_0,\e} \check\beta_{E,A,p}(R). 
\end{align}
\end{thm}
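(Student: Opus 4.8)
I would establish \eqref{e:dTSP} as two one-sided estimates, both driven by a stopping-time decomposition of $\{Q\in\mathscr{D}:Q\subseteq R\}$ into \emph{coherent regions} (``trees''). For each $Q$ fix a $d$-plane $P_Q$ nearly attaining the infimum defining $\check\beta_E^{d,p}(AB_Q)$. The guiding principle is that inside a coherent region the planes $P_Q$ stay nearly parallel and the $\check\beta$'s are small, so the portion of $E$ visible there is trapped in a Lipschitz graph over the top plane, where Travelling-Salesman-type estimates are classical (Dorronsoro). I expect the direction $\mathscr{H}^d(R)+\mathrm{BWGL}(R)\lesssim\check\beta_{E,A,p}(R)$ to be the substantial one. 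Throughout we may assume $\ell(R)^d\le\check\beta_{E,A,p}(R)$ is harmless (it is a summand), and $\e$ is taken small depending on $C_0$.

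\textbf{Step 1: the bound $\check\beta_{E,A,p}(R)\lesssim\mathscr{H}^d(R)+\mathrm{BWGL}(R)$.} Fix thresholds $\e\ll\e_1\ll1$. Starting from $R$, grow a maximal tree $\mathbf{T}$ with top $Q(\mathbf{T})$, declaring $Q$ a stopping cube when, descending from $Q(\mathbf{T})$, one first meets: (a) $Q\in\mathrm{BWGL}(\e,C_0)$; (b) $\angle(P_Q,P_{Q(\mathbf{T})})>\e_1$; or (c) $\check\beta_E^{d,p}(AB_Q)>\e_1$. Iterating partitions $\{Q\subseteq R\}$ into trees. On a single tree, patching the $P_Q$'s produces an $O(\e_1)$-Lipschitz graph $\Gamma_{\mathbf{T}}$ over $P_{Q(\mathbf{T})}$ containing the piece of $E$ the tree sees; a Dorronsoro/square-function estimate — in the Hausdorff-content form appropriate to $\check\beta$, which is where the range $1\le p<p(d)$ is forced — then yields
\[ \sum_{Q\in\mathbf{T}}\check\beta_E^{d,p}(AB_Q)^2\ell(Q)^d\lesssim\mathscr{H}^d\big(\Gamma_{\mathbf{T}}\cap B_{Q(\mathbf{T})}\big)\lesssim\ell(Q(\mathbf{T}))^d. \]
Summing over trees reduces everything to a Carleson packing of tree-tops. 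A tree is born at a stopping cube of type (a), (b), or (c): type (a) contributes exactly $\mathrm{BWGL}(R)$; type (c) is handled by Chebyshev against $\sum_{Q\subseteq R}\check\beta_E^{d,p}(AB_Q)^2\ell(Q)^d$; type (b) is reduced to type (c) via a telescoping estimate $\angle(P_Q,P_{Q'})\lesssim\sum_{Q\subseteq Q''\subseteq Q'}\check\beta_E^{d,p}(AB_{Q''})$, valid while the intermediate $\check\beta$'s stay small — and this is exactly where lower content regularity enters, to pin the approximating planes down. A standard absorption argument then closes the estimate.

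\textbf{Step 2: the bound $\mathscr{H}^d(R)+\mathrm{BWGL}(R)\lesssim\check\beta_{E,A,p}(R)$.} First, $\mathrm{BWGL}(R)\lesssim\check\beta_{E,A,p}(R)$: I would show $Q\in\mathrm{BWGL}(\e,C_0)$ forces $\check\beta_E^{d,p}(AB_Q)\gtrsim_\e1$ (allowing a bounded number of nearby cubes to be charged too). Indeed, if $d_{C_0B_Q}(E,P_Q)\ge\e$ then either some $x\in E\cap C_0B_Q$ has $\dist(x,P_Q)\gtrsim\e\,r_{B_Q}$ — and $(c,d)$-lower content regularity then supplies a ball $B(x,\rho)$, $\rho\sim\dist(x,P_Q)$, carrying $E$-content $\gtrsim\rho^d$ and lying entirely at distance $\gtrsim\rho$ from $P_Q$, forcing $\check\beta_E^{d,p}(AB_Q)\gtrsim_\e1$ — or $P_Q$ has a point far from $E$, which when $\check\beta$ is small is excluded by David's topological non-degeneracy argument (lower content regularity makes $E$ ``fill'' the shadow it casts on the plane). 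For the measure term, run the tree decomposition of Step 1. On each tree $\mathbf{T}$ build, by a partition of unity subordinate to the cubes of $\mathbf{T}$ interpolating the $P_Q$'s, a single $O(\e_1)$-Lipschitz graph $\Gamma_{\mathbf{T}}$ over $P_{Q(\mathbf{T})}$, with domain of diameter $\sim\ell(Q(\mathbf{T}))$, containing the portion of $E$ the tree controls; hence $\mathscr{H}^d(E\cap(\text{region of }\mathbf{T}))\le\mathscr{H}^d(\Gamma_{\mathbf{T}})\lesssim\ell(Q(\mathbf{T}))^d$. Summing over trees and using the Carleson packing of tree-tops — type-(b),(c) parts bounded by the $\check\beta$-sum, type-(a) part equal to $\mathrm{BWGL}(R)$ just controlled — gives $\mathscr{H}^d(R)\le\mathscr{H}^d(E\cap B_R)\lesssim\check\beta_{E,A,p}(R)$, completing the ``$\gtrsim$'' half of \eqref{e:dTSP}.

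\textbf{Main obstacle.} The hard part is the construction side: quantitatively patching the local planes $P_Q$ over a coherent region into a single Lipschitz graph that stays close to $E$ while keeping the Lipschitz constant $O(\e_1)$ — all done with Hausdorff content rather than a measure, since $E$ need not be Ahlfors regular — and the Carleson packing of stopping cubes, especially converting a large angle-change of approximating planes into a genuine $\check\beta$ contribution. Lower content regularity is indispensable here: it makes the approximating planes essentially unique and stable, and it powers David's topological lemma for the BWGL bound. I would follow the framework of Azzam--Schul, adapting their content-based $\beta$-number machinery and its Dorronsoro-type square-function estimate.
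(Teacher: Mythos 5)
This statement is not proven in the paper: it is Theorem~\ref{dTSP}, cited verbatim from Azzam and Schul \cite{azzam2018analyst} (in the reformulation of \cite{azzam2019quantitative}) and used as a black box in the proofs of the paper's own Theorems~\ref{Thm1} and~\ref{Thm4}. There is therefore no ``paper's own proof'' to compare your attempt against. Taken as a sketch of the Azzam--Schul argument, your proposal captures the correct architecture (corona/tree decomposition, approximating planes, Dorronsoro-type square function estimate on each tree, Carleson packing of tree tops split according to the stopping reason), but a few technical points are glossed over in a way that would be genuine gaps if written out. First, patching the $P_Q$'s over a coherent tree does not literally produce a Lipschitz graph over $P_{Q(\mathbf{T})}$; what one gets from the David--Toro Reifenberg machinery (cf.\ Theorem~\ref{DT} and Theorem~\ref{LipDT} in the paper) is a Reifenberg-flat, bi-Lipschitz image of $\R^d$, which only locally looks like a small-constant Lipschitz graph, and the measure bound $\mathscr{H}^d(\Sigma_{\mathbf{T}}\cap B_{Q(\mathbf{T})})\lesssim\ell(Q(\mathbf{T}))^d$ comes from the bi-Lipschitz parametrization, not from a graph representation. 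Second, in your Step~2 the inequality $\mathscr{H}^d(E\cap(\text{region of }\mathbf{T}))\le\mathscr{H}^d(\Gamma_{\mathbf{T}})$ presumes $E$ is contained in $\Gamma_{\mathbf{T}}$, which is false; the surface only \emph{approximates} $E$ to within $O(\e)\ell(Q)$ at each stopped scale. The actual argument instead controls $\mathscr{H}^d(R)$ by packing the stopping cubes against the surface (each $c_0B_Q$ for a stopping cube $Q$ carves out $\gtrsim\ell(Q)^d$ of measure on $\Sigma_{\mathbf{T}}$, and these balls are disjoint), exactly as the present paper does in Lemmas~\ref{l:minQ}--\ref{MinCubes} for its own stopping times. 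Third, the content-based Dorronsoro estimate that powers Step~1 is itself a substantial lemma in Azzam--Schul (their square-function bound for $\check\beta^{d,p}_E$ on a coherent region), and its proof is where the restriction $p<p(d)$ and the lower content regularity really bite; you are right to flag it as the main obstacle, but it is not something that can be treated as known in a self-contained proof.
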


We should mention the work of Edelen, Naber and Valtorta \cite{edelen2016quantitative}. In this paper, they consider generalizations of Reifenberg's topological disk theorem \cite{reifenberg1960solution}. Instead of imposing the Reifenberg flat condition (as Reifenberg did himself) they impose a one sided flatness condition, to allow for example, sets with holes. This condition is stated in terms of $\beta$-numbers. In the paper they describe how well the size of a Radon measure $\mu$ can be bounded from above by the corresponding $\hat\beta_{\mu}^{d,p}$-number (these are defined analogously to $\hat\beta^{d,p}_E$ with the integral taking over $\mu$ instead of $\mathscr{H}^d|_E$). We state a corollary of their results for Hausdorff measure and compare this to Theorem \ref{dTSP}.

\begin{thm}\label{ENV1}
Let $E \subseteq \R^n$. Set $\mu = \mathscr{H}^d|_E$ and assume
\[ \int_0^2 \beta_{\mu,2}^2(x,r) \frac{dr}{r} \leq M \quad \text{for} \  \mu\text{-a.e}\ x \in \B.\]
Then $E$ is rectifiable and for every $x \in \B$ and $0 < r \leq 1,$ we have
\[ \mathscr{H}^d(E \cap B(x,r)) \lesssim_n (1+M)r^d.\]
\end{thm}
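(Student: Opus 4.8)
The plan is to obtain Theorem~\ref{ENV1} from the quantitative (rectifiable) Reifenberg theorem for measures of Edelen, Naber and Valtorta \cite{edelen2016quantitative}, where $\beta_{\mu,2}(x,r) = \hat\beta_\mu^{d,2}(B(x,r))$ in the notation of the introduction. Their result says, roughly, that there is a dimensional constant so that if $\mu$ is a (not necessarily locally finite) Borel measure on $\R^n$ with
\[ \int_0^2 \beta_{\mu,2}^2(x,r)\,\frac{dr}{r} \le M \qquad \text{for }\mu\text{-a.e.\ } x \in \B, \]
then for every $x\in\B$, every $0<r\le1$ and every $\delta>0$ the set $\supp\mu\cap B(x,r)$ can be covered by countably many balls $B(x_i,r_i)$ with $x_i\in\supp\mu$, $r_i\le\delta$ and $\sum_i r_i^d\lesssim_n(1+M)\,r^d$, and moreover $\supp\mu$ is $d$-rectifiable. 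So the task is to feed in $\mu=\mathscr{H}^d|_E$ and unwind the output.

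First I would observe that the hypothesis of Theorem~\ref{ENV1} is, after matching the normalisation of the $\beta$-number (and absorbing a harmless absolute constant into $M$), precisely the hypothesis above. Applying the theorem with $\mu=\mathscr{H}^d|_E$ and fixing $x\in\B$, $0<r\le1$, $\delta>0$, we obtain balls $\{B(x_i,r_i)\}$ of radii $r_i\le\delta$ covering $\supp\mu\cap B(x,r)$ with $\sum_i r_i^d\lesssim_n(1+M)r^d$. Since $\mathscr{H}^d(E\setminus\supp\mu)=0$ by definition of the support, this family also covers $E\cap B(x,r)$ up to an $\mathscr{H}^d$-null set, so $\mathscr{H}^d_{C\delta}(E\cap B(x,r))\lesssim_n(1+M)r^d$ for a dimensional constant $C$; letting $\delta\to0$ gives the claimed density bound $\mathscr{H}^d(E\cap B(x,r))\lesssim_n(1+M)r^d$. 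For rectifiability, $\supp\mu$ is $d$-rectifiable by \cite{edelen2016quantitative}, while $E\setminus\supp\mu$ is $\mathscr{H}^d$-null and hence trivially $d$-rectifiable, so $E$ itself is $d$-rectifiable.

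The point requiring care --- and the reason for working with the discrete, packing form of the Reifenberg theorem rather than quoting a statement about Radon measures --- is that $\mathscr{H}^d|_E$ is not assumed to be locally finite: a priori $\mathscr{H}^d(E\cap B(x,r))$ could be infinite even though every $\beta_{\mu,2}(x,\cdot)$ and its Dini integral is finite. The packing estimate $\sum_i r_i^d\lesssim_n(1+M)r^d$ of \cite{edelen2016quantitative} is established directly on the set of centres through the discrete Reifenberg construction and does not presuppose that $\mu$ is finite on balls; local finiteness, with the explicit bound, then emerges as a consequence. This is the only genuine obstacle; the remaining bookkeeping --- that the dependence on the Dini bound is linear, so that the final constant is $\lesssim_n(1+M)$ and not some worse function of $M$ --- is already contained in the quantitative statement of \cite{edelen2016quantitative}.
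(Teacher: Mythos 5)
Your proposal is correct and takes essentially the same route as the paper: Theorem~\ref{ENV1} is stated in the introduction as a direct corollary of \cite[Theorem~1.3]{edelen2016quantitative}, with no separate proof given, so the content is precisely what you supply, namely applying that theorem to $\mu = \mathscr{H}^d|_E$ and unwinding the packing estimate into a Hausdorff measure bound. Your remark about not presupposing local finiteness of $\mathscr{H}^d|_E$ and instead relying on the discrete packing form of the Edelen--Naber--Valtorta theorem is a correct and useful clarification of a point the paper elides, but it does not change the approach.
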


As mentioned, this is just one corollary of much more general theorem for Radon measures (see \cite[Theorem 1.3]{edelen2016quantitative}). Both Theorem \ref{dTSP} and Theorem \ref{ENV1} do not require $E$ to be Ahlfors $d$-regular. Instead, Theorem \ref{dTSP} requires that $E$ must be lower content regular, whereas Theorem \ref{ENV1} requires the existence of a locally finite measure. Furthermore, Theorem \ref{dTSP} also provides lower bounds for Hausdorff measure. 

Azzam and Villa further generalise Theorem \ref{dTSP} in \cite{azzam2019quantitative}. Here, they introduce the notion of a \textit{quantitative property} which is a way of splitting the Christ-David cubes of a set $E$ into ``good" and ``bad" parts. They prove estimates of the form of Theorem \ref{dTSP}, where $\text{BWGL}(R)$ is instead replaced with other quantitative properties which `guarantee uniform rectifiability'. Here, a quantitative property is said to guarantee uniform rectifiability if whenever the bad set of cubes is small (quantified by a Carleson packing condition) then $E$ is uniform rectifiable. The BWGL condition is an example of a quantitative property which guarantees uniform rectifiability. We direct the reader to \cite{azzam2019quantitative} for a more precise description and more example of these quantitative properties. We state one of their results for the \emph{bi-lateral approximation uniformly by planes} (BAUP) condition, which we explain below (this will be used later to prove the second main result of this paper).

\begin{thm}[Azzam, Villa \cite{azzam2019quantitative}]\label{AV}
Let $E \subseteq \R^n$ be a $(c,d)$-lower content regular set with Christ-David cubes $\mathscr{D}.$
Let
\begin{align}
\emph{BAUP}(C_0,\e) = \{Q \in \mathscr{D} : d_{C_0B_Q}(E,U) \geq \e, \ U \ \text{is a union of $d$-planes}\}.
\end{align}
For $R \in \mathscr{D},$ define
\begin{align}
\emph{BAUP}(R,C_0,\e) = \sum_{\substack{Q \subseteq R \\ Q \in \emph{BAUP}(C_0,\e)}} \ell(Q)^d.
\end{align}
and
\begin{align}
\check\beta_E(R) = \ell(R)^d + \sum_{Q \subseteq R} \check\beta_E^{d,p}(3B_Q)^2\ell(Q)^d.
\end{align}
Then, for all $R \in \mathscr{D}, \ C_0 >1,$ and $\e >0$ small enough depending on $C_0$ and $c$,
\begin{align}\label{e:AV}
\mathscr{H}^d(R) + \emph{BAUP}(R,C_0,\e) \sim \check\beta_E(R). 
\end{align}
\end{thm}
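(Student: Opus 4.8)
The plan is to reduce Theorem \ref{AV} to the Azzam--Schul estimate \eqref{e:dTSP}, so that the only genuinely new ingredient is a geometric comparison between the $\mathrm{BAUP}$ and $\mathrm{BWGL}$ packing sums. Two preliminary normalisations make this clean. First, the $\beta$-sums in \eqref{e:dTSP} and \eqref{e:AV} are comparable, $\check\beta_{E,A,p}(R)\sim\check\beta_E(R)$: passing between the dilations $AB_Q$ and $3B_Q$ costs only a bounded factor, by the approximate subadditivity of $Q\mapsto\check\beta^{d,p}_E(\cdot)^2$ together with the fact that each $AB_Q$ is covered by boundedly many balls $3B_{Q'}$ with $Q'$ near $Q$ of comparable size (a routine estimate, cf. \cite{azzam2018analyst}). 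Second, I fix $A\ge C_0$ in \eqref{e:dTSP}, using that $A$ is a free parameter there, and recall that the $\mathrm{BWGL}$ packing sum is comparable for different choices of the dilation parameter up to changing the threshold. Thus, fixing $C_0>1$ and letting $\e_0$ be the Azzam--Schul threshold for the corresponding data, \eqref{e:dTSP} becomes $\mathscr{H}^d(R)+\mathrm{BWGL}(R,\e_0,C_0)\sim\check\beta_E(R)$, and it remains to compare $\mathrm{BAUP}$ with $\mathrm{BWGL}$.

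The inequality $\mathscr{H}^d(R)+\mathrm{BAUP}(R,C_0,\e)\lesssim\check\beta_E(R)$ is the easy one. Since a single $d$-plane is a (degenerate) union of $d$-planes, the infimum defining the distance to unions never exceeds the infimum over single planes, whence $\mathrm{BAUP}(C_0,\e)\subseteq\mathrm{BWGL}(C_0,\e)$ and $\mathrm{BAUP}(R,C_0,\e)\le\mathrm{BWGL}(R,\e,C_0)$ for every $\e$. Moreover any $Q\in\mathrm{BWGL}(\e,C_0)$ forces $\check\beta^{d,p}_E(AB_Q)\gtrsim_\e 1$: the two-sided failure of $E$ to be $\e$-close to the best plane for $\check\beta^{d,p}_E(AB_Q)$ inside $C_0B_Q$, combined with $(c,d)$-lower content regularity, exhibits a definite amount of Hausdorff content of $E$ at distance $\gtrsim\e r_Q$ from that plane. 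Summing the resulting inequality $\ell(Q)^d\lesssim_\e\check\beta^{d,p}_E(AB_Q)^2\ell(Q)^d$ over $Q\subseteq R$ in $\mathrm{BWGL}$ gives $\mathrm{BWGL}(R,\e,C_0)\lesssim_\e\check\beta_{E,A,p}(R)\sim\check\beta_E(R)$, hence also $\mathscr{H}^d(R)\lesssim\check\beta_E(R)$.

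For the reverse inequality, \eqref{e:dTSP} already gives $\check\beta_E(R)\lesssim\mathscr{H}^d(R)+\mathrm{BWGL}(R,\e_0,C_0)$, so everything reduces to the packing estimate $\mathrm{BWGL}(R,\e_0,C_0)\lesssim\mathscr{H}^d(R)+\mathrm{BAUP}(R,C_0,\e)$, for $\e$ small relative to $\e_0$ (this is the sense of ``$\e$ small enough'' in the statement). The cubes of $\mathrm{BWGL}(\e_0,C_0)\cap\mathrm{BAUP}(C_0,\e)$ are already counted by $\mathrm{BAUP}(R,C_0,\e)$, so it suffices to bound $\sum_{Q\in\mathscr{B}}\ell(Q)^d$ by $\mathscr{H}^d(R)$, where $\mathscr{B}=\{Q\subseteq R:Q\in\mathrm{BWGL}(\e_0,C_0)\setminus\mathrm{BAUP}(C_0,\e)\}$. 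Each $Q\in\mathscr{B}$ carries a union $U_Q=\bigcup_j P_{Q,j}$ of $d$-planes with $d_{C_0B_Q}(E,U_Q)<\e$ while $d_{C_0B_Q}(E,P)\ge\e_0$ for every single plane $P$. The geometric lemma I would prove is that, when $\e\ll\e_0$, $U_Q$ must contain two planes $P_{Q,j},P_{Q,k}$ that are separated by $\gtrsim\e_0 r_Q$ somewhere in $C_0B_Q$ or make an angle $\gtrsim\e_0$, so that off the boundedly many, roughly $(d-1)$-dimensional ``seam'' sets $\{x\in C_0B_Q:\min\{\dist(x,P_{Q,j}),\dist(x,P_{Q,k})\}\le\delta r_Q\}$ there is a ball of radius $\gtrsim_{\e_0,\e,C_0}r_Q$ inside $C_0B_Q$ where $E$ is $\delta$-close in Hausdorff distance to a single plane. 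Hence every cube of $\mathscr{B}$ resolves into a flat piece at a controlled smaller scale and location, and the cubes of $\mathscr{B}$ must cluster near the lower-dimensional seams of their approximating unions. Feeding this into the standard corona/stopping-time bookkeeping over the seam regions --- essentially the mechanism by which David--Semmes \cite{david1993analysis} show that $\mathrm{BAUP}$ implies uniform rectifiability, run here with $\mathscr{H}^d_\infty$ in place of Ahlfors regularity --- delivers $\sum_{Q\in\mathscr{B}}\ell(Q)^d\lesssim\mathscr{H}^d(R)$.

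I expect this last step to be the main obstacle: turning the local picture (``$E$ close to a union of $d$-planes but to no single $d$-plane $\Rightarrow$ $E$ is flat off a lower-dimensional seam'') into a genuine Carleson packing estimate, uniformly over all $R\in\mathscr{D}$ and without the convenience of Ahlfors regularity. This is precisely where the $(c,d)$-lower content regularity hypothesis and careful bookkeeping with Hausdorff content enter, and it is why $\e$ must be taken very small relative to $\e_0$ (hence to $C_0$ and $c$). Everything else --- the reduction to $\mathrm{BWGL}$, the dilation comparisons, and the inclusion $\mathrm{BAUP}\subseteq\mathrm{BWGL}$ --- is routine once Theorem \ref{dTSP} is available.
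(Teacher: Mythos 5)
This theorem is cited from Azzam and Villa \cite{azzam2019quantitative}; the present paper does not prove it, so there is no internal proof to compare your attempt against. Your outline --- reduce to the Azzam--Schul equivalence \eqref{e:dTSP} and compare the $\text{BAUP}$ and $\text{BWGL}$ packing sums --- is a sensible plan, and the inclusion $\text{BAUP}(C_0,\e)\subseteq\text{BWGL}(C_0,\e)$ together with \eqref{e:dTSP} and the routine dilation comparisons indeed yields $\mathscr{H}^d(R)+\text{BAUP}(R,C_0,\e)\lesssim\check\beta_E(R)$. But your ``moreover'' sentence is wrong: $Q\in\text{BWGL}(\e,C_0)$ does \emph{not} force $\check\beta^{d,p}_E(AB_Q)\gtrsim_\e 1$. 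Take $E$ to be a $d$-plane $P$ with a disc of radius $\sim\e\ell(Q)$ removed; this set is $(c,d)$-lower content regular and lies in $\text{BWGL}$ at the scale of the hole, because $P$ has points far from $E$, yet $\check\beta^{d,p}_E(AB_Q,P)=0$. Your argument only covers the case where $E$ has points far from the optimal plane; the bilateral distance in the $\text{BWGL}$ definition can be large for the opposite reason, and that case yields no lower bound on $\check\beta$. The bound $\text{BWGL}(R,\e,C_0)\lesssim\mathscr{H}^d(R)+\check\beta_{E,A,p}(R)$ is a genuine Carleson packing estimate in which the $\mathscr{H}^d(R)$ term is essential, so you should simply quote \eqref{e:dTSP} rather than re-derive it cube by cube.

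The reverse inequality $\check\beta_E(R)\lesssim\mathscr{H}^d(R)+\text{BAUP}(R,C_0,\e)$ is where the theorem lives, and your proposal openly stops short there. The reduction to $\text{BWGL}(R,\e_0,C_0)\lesssim\mathscr{H}^d(R)+\text{BAUP}(R,C_0,\e)$ is correct, and the ``seam'' heuristic --- a cube close to a union of planes but to no single plane must be flat off a lower-dimensional intersection locus --- is the right picture, but you do not supply the packing lemma that converts it into a Carleson estimate. That lemma essentially \emph{is} the theorem: one must run a corona-type stopping time anchored at the seams, control the multiplicity and mutual angles of the approximating planes at every scale, and do the bookkeeping with $\mathscr{H}^d_\infty$ in the absence of a measure. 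For what it is worth, Azzam and Villa do not argue by direct comparison to $\text{BWGL}$; they prove an abstract analogue of \eqref{e:dTSP} valid for any ``quantitative property that guarantees uniform rectifiability'' and then verify the axioms for $\text{BAUP}$, so your missing lemma corresponds precisely to that verification step.
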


Notice, Theorem \ref{dTSP} and Theorem \ref{AV} are more concerned with establishing quantitative estimates of the form seen in Theorem \ref{1DTSP}, rather than studying what types of surfaces could mimic the role of finite length curves, as in the 1-dimensional case.  Very recently, Villa \cite{villa2019sets} proved a Travelling Salesman Theorem for lower content regular sets more closely resembling that of Jones' original theorem. The `nice' sets he used were a certain class of topological non-degenerate surfaces, first introduced by David \cite{david2004hausdorff}.

\begin{defn}
Let $0 < \alpha_0 <1.$ Consider a one parameter family of Lipschitz maps $\{\varphi_t\}_{0\leq t \leq1}$, defined on $\R^n.$ We say $\{\varphi\}_{0 \leq t \leq 1}$ is an \textit{allowed Lipschitz deformation} with parameters $\alpha_0$, or an $\alpha_0$-ALD, if it satisfies the following condition:
\begin{enumerate}
\item $\varphi_t(B(x,r)) \subseteq \overline{B}(x,r)$ for each $t \in [0,1];$
\item for each $y \in \R^n, \  t \mapsto \varphi_t(y)$ is a continuous function on $[0,1]$;
\item $\varphi_0(y) = y$ and $\varphi_t(y) = y$ for $t \in [0,1]$ whenever $y \in \R^n \setminus B(x,r);$
\item $\text{dist}(\varphi_t(y),E) \leq \alpha_0r$ for $t \in [0,1]$ and $y \in E \cap B(x,r),$ where $0 < \alpha_0 < 1.$ 
\end{enumerate}
\end{defn}

\begin{defn}
Fix parameters $r_0,\alpha_0,\delta_0$ and $\eta_0.$ We say $E \subseteq \R^n$ is a \textit{topologically stable $d$-surface} with parameters $r_0,\alpha_0,\delta_0$ and $\eta_0,$ if for all $\alpha_0$-ALD $\{\varphi_t\},$ and for all $x_0 \in E$ and $0<r<r_0$, we have
\begin{align}
\mathscr{H}^d(B(x,(1-\eta_0)r) \cap \varphi_1(E)) \geq \delta_0 r^d. 
\end{align}
\end{defn}

Amongst other things, Villa proved the following.

\begin{thm}\label{t:Villa}
Let $E \subseteq \R^n$ be a $(c,d)$-lower content regular set and let $Q_0 \in \mathscr{D}.$  Given two parameters $0 < \e, \kappa <1,$ there exists a set $\Sigma = \Sigma(\e,\kappa,Q_0)$ such that
\begin{enumerate}
\item $Q_0 \subseteq \Sigma.$
\item $\Sigma$ is topologically stable $d$-surface with parameters $r_0 = \emph{diam}(Q_0)/2$, $0 < \eta > 1/100$, and $\alpha_0$ and $\delta_0$ sufficiently small with respect to $\e$ and $\kappa.$
\item We have the estimate
\begin{align}
\mathscr{H}^d(\Sigma) \sim _{c_0,n,d,\e} \emph{diam}(Q_0)^d + \sum_{\substack{Q \in \mathscr{D} \\ Q \subseteq Q_0 }} \check\beta_E^{d,p}(C_0B_Q)^2\ell(Q)^d 
\end{align}
\end{enumerate}
\end{thm}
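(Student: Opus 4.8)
The plan is to run the $d$-dimensional Analyst's Travelling Salesman construction of Azzam and Schul \cite{azzam2018analyst} (the machinery behind the ``$\lesssim$'' direction of Theorem~\ref{dTSP}) to build $\Sigma$, then read topological stability off the construction and obtain the lower bound on $\mathscr{H}^d(\Sigma)$ by feeding $\Sigma$ back into Theorem~\ref{dTSP}. First, fixing threshold parameters determined by $\e$ and $\kappa$, perform a corona (stopping-time) decomposition of the Christ--David cubes $\mathscr{D}$ (Theorem~\ref{cubes}) below $Q_0$: call $Q\subseteq Q_0$ \emph{bad} if $\check\beta_E^{d,p}(C_0B_Q)>\e$, or if a carefully chosen best-approximating $d$-plane for $Q$ tilts by more than $\e$ relative to that of its parent, or if the Hausdorff content of $E$ near $Q$ drops; group the remaining cubes into maximal coherent trees $\mathscr{S}$ with top cube $Q_\mathscr{S}$, so that on each $\mathscr{S}$ the set $E$ stays within $\e\,\ell(Q)$ of a single $d$-plane $P_\mathscr{S}$ and the local approximating planes vary slowly. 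The crucial combinatorial input is the packing estimate
\[ \sum_{\mathscr{S}}\ell(Q_\mathscr{S})^d + \sum_{\substack{Q\subseteq Q_0\\ Q\text{ bad}}}\ell(Q)^d \lesssim \diam(Q_0)^d+\sum_{Q\subseteq Q_0}\check\beta_E^{d,p}(C_0B_Q)^2\ell(Q)^d, \]
proved as in \cite{azzam2018analyst}: a jump of size $\theta$ in the approximating plane across one generation forces $\check\beta_E^{d,p}\gtrsim\theta$ at a nearby cube, via the triangle inequality together with the $(c,d)$-lower content regularity of $E$ (so that $E$ genuinely fills out the approximating planes at that scale).

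Next, build the surface. On each coherent tree $\mathscr{S}$ one produces $\Sigma_\mathscr{S}$ as the image of $P_\mathscr{S}$ under a limit map $\sigma=\lim_k\sigma_k$, where $\sigma_0$ is the identity on $P_\mathscr{S}$ and $\sigma_k=\sigma_{k-1}+(\text{generation-}k\text{ correction})$, the correction assembled from a partition of unity subordinate to a net of $E$-points of generation $k$ and the corresponding local planes; the size of the generation-$k$ correction is controlled by the plane tilting, hence by the $\check\beta_E^{d,p}$ of the generation-$k$ cubes of $\mathscr{S}$, so the series converges and $\Sigma_\mathscr{S}$ is a graph over $P_\mathscr{S}$ with Lipschitz constant $\lesssim\e$. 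Gluing the $\Sigma_\mathscr{S}$ across the (packing-controlled) stopping cubes and adding a scale-$\diam(Q_0)$ flat piece through $Q_0$ yields a closed set $\Sigma\supseteq Q_0$ (conclusion (1)). The bound $\mathscr{H}^d(\Sigma)\lesssim\diam(Q_0)^d+\sum_{Q\subseteq Q_0}\check\beta_E^{d,p}(C_0B_Q)^2\ell(Q)^d$ (the ``$\lesssim$'' half of (3)) then follows by summing: each tree contributes $\lesssim\ell(Q_\mathscr{S})^d$, the extra area from the generation-$k$ corrections inside a tree is $\lesssim\sum_{Q\in\mathscr{S},\,\ell(Q)=2^{-k}}\check\beta_E^{d,p}(C_0B_Q)^2\ell(Q)^d$, the gluing regions contribute $\lesssim$ the measure of the bad and stopping cubes, and everything is summed using the packing estimate above.

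For conclusion (2), verify the topological-stability inequality. The structural fact extracted from the construction is that, at scale $\ell(Q_\mathscr{S})$, the part of $\Sigma$ over each coherent tree is a small-constant Lipschitz graph over a $d$-plane, while the part of $\Sigma$ not covered by such graphs has small $\mathscr{H}^d$-content at every scale (bounded by the packing constant, which is small with $\e$). Given any $\alpha_0$-ALD $\{\varphi_t\}$ acting in $B(x,r)$ with $r<\diam(Q_0)/2$ and $\alpha_0$ small, $\varphi_1$ moves points of $\Sigma$ by at most $\alpha_0 r$ and fixes everything outside $B(x,r)$, so the composition of the nearly-orthogonal projection onto the relevant $d$-plane with $\varphi_1$ has nonzero topological degree onto a definite-size $d$-disk in $B(x,(1-\eta_0)r)$, forcing $\mathscr{H}^d(B(x,(1-\eta_0)r)\cap\varphi_1(\Sigma))\geq\delta_0 r^d$. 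This is the quantitative mechanism behind David's criterion \cite{david2004hausdorff}; taking $\alpha_0,\delta_0$ small and $\eta_0$ as in the statement, relative to the flatness constant $\e$, makes the degree argument robust, and in particular shows $\Sigma$ is itself $(c',d)$-lower content regular with $c'=c'(c,n,d)$ (project as above with $\varphi_t=\id$).

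Finally, the ``$\gtrsim$'' half of (3). Since $Q_0=E\cap Q_0\subseteq\Sigma$, the ball-containment property of Christ--David cubes and lower content regularity of $E$ give $\diam(Q_0)^d\lesssim\mathscr{H}^d_\infty(Q_0)\leq\mathscr{H}^d(\Sigma)$, while monotonicity of Hausdorff content gives $\check\beta_E^{d,p}(C_0B_Q)\lesssim\check\beta_\Sigma^{d,p}(C_0B_Q)$ for every $Q\subseteq Q_0$. Now $\Sigma$ is closed, $(c',d)$-lower content regular, has $\mathscr{H}^d(\Sigma)<\infty$ by the upper bound, and is a well-controlled union of small-constant Lipschitz graphs, so its bad-cube (BWGL, equivalently BAUP) packing is controlled by $\mathscr{H}^d(\Sigma)$; hence Theorem~\ref{dTSP} (or Theorem~\ref{AV}) applied to $\Sigma$ and its cubes $\mathscr{D}(\Sigma)$ gives $\sum_{R'\subseteq R}\check\beta_\Sigma^{d,p}(CB_{R'})^2\ell(R')^d\lesssim\mathscr{H}^d(\Sigma)$ for a cube $R\in\mathscr{D}(\Sigma)$ of size $\sim\diam(Q_0)$ covering the relevant region. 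Transferring the sum over $\mathscr{D}(E)$-cubes to this sum over $\mathscr{D}(\Sigma)$-cubes — legitimate because for each generation the balls $\{C_0B_Q\}$ have bounded overlap and are centred on $E\subseteq\Sigma$ — yields $\sum_{Q\subseteq Q_0}\check\beta_E^{d,p}(C_0B_Q)^2\ell(Q)^d\lesssim\mathscr{H}^d(\Sigma)$. The main obstacle is the interaction between the parametrization step and the topological-stability step: one must keep the flatness and Lipschitz constants of the glued pieces uniformly small, depending only on $d,n,c$, so that no $\alpha_0$-ALD can pinch $\Sigma$, while simultaneously showing that the ``integrate against $\mathscr{H}^d_\infty$'' $\beta$-number of $E$ still controls the plane tilting, and hence the added area, even though $E$ carries no upper regularity and $\mathscr{H}^d|_E$ need not be locally finite; both are handled by choosing $\e$, and therefore the corona thresholds, small enough in terms of $d$, $n$ and $c$.
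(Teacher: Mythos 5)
Theorem \ref{t:Villa} is stated in this paper purely as background: it is attributed to Villa \cite{villa2019sets} and is not proved anywhere in the present text, so there is no in-paper argument against which to compare your attempt. The paper uses the result as a black box---after Theorem \ref{Thm4} produces a lower content regular set $F\supseteq E$, Theorem \ref{t:Villa} is applied to $F$ to produce the topologically stable surface in Theorem \ref{c:TSP}.

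Judged on its own, your sketch follows the methodology one would expect (corona decomposition, David--Toro parametrization, a degree argument, and feeding $\Sigma$ back into Theorem \ref{dTSP} for the lower bound). The packing estimate, the monotonicity $\check\beta_E^{d,p}(B)\leq\check\beta_\Sigma^{d,p}(B)$ coming from $E\subseteq\Sigma$, and the transfer of sums between Christ--David cubes of $E$ and of $\Sigma$ are all sound. But two steps are compressed to the point of hiding the actual difficulties. First, ``gluing the $\Sigma_\mathscr{S}$ across the stopping cubes yields a closed set $\Sigma$'' is where the construction lives or dies: a bare union of per-tree Lipschitz graphs is not automatically a topologically stable surface, since an ALD can collapse one piece onto another across their overlaps, and turning the amalgamation into a single multi-scale Reifenberg-flat set---so that at every scale and location there is a well-defined approximating plane for the degree argument---is precisely the content of David's construction in \cite{david2004hausdorff} and of Villa's adaptation of it; it is never merely a packing bookkeeping step. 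Second, your degree argument projects onto ``the relevant $d$-plane'' without specifying which plane to take at a given $(x,r)$, and justifying this for the glued object at scales larger than an individual tree needs the uniform Reifenberg flatness and the nested-plane compatibility estimates that the construction has to provide, not just the per-tree Lipschitz-graph structure. Neither of these is a cosmetic omission; in Villa's proof they carry most of the weight.
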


Before stating our main results, we mention that Travelling Salesman type problems have been considered in a variety of other setting outside of $\R^n.$ For such results in the Heisenberg group see \cite{ferrari2007geometric}, \cite{li2016upper}, \cite{li2016traveling} and for general metric spaces see \cite{hahlomaa2005menger}, \cite{hahlomaa2008menger}, \cite{schul2007ahlfors}, \cite{david2019sharp}.

\subsection{Main Results}\label{MR}

We prove a $d$-dimensional analogue of Theorem \ref{1DTSP} for general sets in $\R^n.$ In particular, we do not assume $E$ to be Ahlfors regular or lower content regular and we do not assume the existence of a locally finite measure on $E$ (as was the case in Theorem \ref{t:DS}, Theorem \ref{dTSP} and Theorem \ref{ENV1}, respectively). We should emphasize that while Theorem \ref{dTSP} and Theorem \ref{ENV1} concentrate on proving bounds for measures, our result differs in the sense that we construct a nice surface which contains our set, and the measure of this surface is controlled by our $\beta$-numbers.

Observe that if $E$ does not satisfy any lower regularity condition, it may be that $\mathscr{H}^d_\infty(E) = 0$. Thus, $\check\beta^{d,p}_E$ may trivially return a zero value even if there is some inherent non-flatness, for example if $E$ is a dense collection of points in some purely unrectifiable set. We shall introduce a new $\beta$-numbers, $\beta_E^{d,p}$, to deal with this. We first define a variant of the Hausdorff content, where we `force' sets to have some lower regularity with respect to this content, and define $\beta^{d,p}_E$ (analogously to Azzam and Schul) by integrating with respect to this new content. 

\begin{defn}\label{IntroDef}
Let $E \subseteq \R^n$, $B$ a ball and $0< c_1 \leq c_2 < \infty$ be constants to be fixed later. We say a collection of balls $\mathscr{B}$ which covers $E \cap B$ is \textit{good} if 
\begin{align}\label{Size}
\sup_{B' \in \mathscr{B}} r_{B'} < \infty
\end{align}
and for all $x \in E \cap B$ and $0 <r <r_B,$ we have
\begin{align}\label{LRi}
\sum_{\substack{B' \in \mathscr{B} \\ B' \cap B(x,r) \cap E \cap B \not= \emptyset}} r_{B'}^d \geq c_1 r^d
\end{align}
and
\begin{align}\label{URi}
\sum_{\substack{B' \in \mathscr{B} \\ B' \cap B(x,r) \cap E \cap B \not=\emptyset \\ r_{B'} \leq r}} r_{B'}^d \leq c_2 r^d.
\end{align}
Then, for $A \subseteq E \cap B,$ define 
\begin{align*}
\mathscr{H}^{d,E}_{B,\infty}(A) = \inf\left\{ \sum_{\substack{B^\prime \in \mathscr{B}\\ B^\prime \cap A \not= \emptyset}} r_{B^\prime}^d : \mathscr{B} \ \text{is good for} \ E \cap B \right\}
\end{align*}
\end{defn}

\begin{defn}
Let $1 \leq p <\infty$, $E \subseteq \R^n,$ $B$ a ball centred on $E$ and $L$ a $d$-plane. Define
\begin{align*}
\beta_E^{d,p}(B,L)^p &= \frac{1}{r_B^d} \int \left( \frac{\text{dist}(x,L)}{r_B}\right)^p \, d\mathscr{H}^{d,E}_{B,\infty} \\
&= \frac{1}{r_B^d} \int_0^1 \mathscr{H}^{d,E}_{B,\infty}(\{x \in E \cap B : \text{dist}(x,L) >tr_B \}) t^{p-1} \, dt,
\end{align*}
and 
\begin{align}
\beta_E^{d,p}(B) = \inf \{ \beta_E^{d,p}(B,L) : L \ \text{is a $d$-plane} \}. 
\end{align}
\end{defn}

In Section \ref{Prelims} we study the above definitions. We are more explicit about the constant $c_1,c_2$ appearing in Definition \ref{IntroDef} and we shall prove some basic properties of $\mathscr{H}^{d,E}_{B,\infty}$ and $\beta_E^{d,p}.$

Our main results read as follows: 

\begin{thm}\label{Thm1}
Let $1\leq d < n$, $C_0 >1$ and $1 \leq p < p(d).$ There exists a constant $c_1 > 0$ such that the following holds. Suppose $E \subseteq F \subseteq \R^n$, where $F$ is $(c,d)$-lower content regular for some $c \geq c_1.$ Let $\mathscr{D}^E$ and $\mathscr{D}^F$ be the Christ-David cubes for $E$ and $F$ respectively. Let $Q_0^E \in \mathscr{D}^E$ and let $Q_0^F$ be the cube in $\mathscr{D}^F$ with the same centre and side length as $Q_0^E.$ Then
\begin{equation}\label{e:Thm1}
\begin{aligned}
&\emph{diam}(Q_0^E)^d +  \sum_{\substack{Q \in \mathscr{D}^E\\ Q \subseteq Q_0^E}} \beta_E^{d,p}(C_0Q)^2 \ell(Q)^d \\
&\hspace{4em} \lesssim_{C_0,c,n,p} \emph{diam}(Q_0^F)^d+ \sum_{\substack{Q \in \mathscr{D}^F \\ Q \subseteq Q_0^F}} \check\beta_F^{d,p}(C_0Q)^2 \ell(Q)^d.
\end{aligned}
\end{equation}
\end{thm}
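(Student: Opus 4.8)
The plan is to compare the two sums cube-by-cube, using the inclusion $E \subseteq F$ together with the lower content regularity of $F$. The key observation is that $\beta_E^{d,p}$ is built by integrating against a content $\mathscr{H}^{d,E}_{B,\infty}$ which artificially forces $E$ to behave like a lower-regular set inside $B$, whereas $\check\beta_F^{d,p}$ integrates against the genuine Hausdorff content of $F$, which is already lower-regular. So the heart of the matter is a pointwise (or scale-by-scale) estimate of the form $\beta_E^{d,p}(C_0 Q) \lesssim \check\beta_F^{d,p}(C_0 Q')$ for cubes $Q \in \mathscr{D}^E$, $Q' \in \mathscr{D}^F$ with comparable centre and side length, possibly after enlarging the ball by a fixed factor and after summing over a bounded number of nearby $F$-cubes.

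First I would set up the dictionary between $\mathscr{D}^E$ and $\mathscr{D}^F$: since $E \subseteq F$, every ball centred on $E$ is centred on $F$, and by the standard properties of Christ–David cubes (Theorem \ref{cubes}) each $Q \in \mathscr{D}^E$ with $Q \subseteq Q_0^E$ is associated to a bounded number of cubes $Q' \in \mathscr{D}^F$ with $Q' \subseteq Q_0^F$, of comparable size and location, so that $C_0 B_Q \subseteq C_0' B_{Q'}$ for some fixed $C_0'$; conversely each $Q'$ is hit by boundedly many $Q$. The diameter terms match up to a constant because $Q_0^E$ and $Q_0^F$ share centre and side length. Thus it suffices to show: for each $Q \in \mathscr{D}^E$, choosing a suitable $Q' \in \mathscr{D}^F$,
\[
\beta_E^{d,p}(C_0 B_Q)^2 \ell(Q)^d \lesssim \check\beta_F^{d,p}(C_0' B_{Q'})^2 \ell(Q')^d,
\]
and then sum, absorbing the bounded overlap into the implied constant.

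The main step — and the main obstacle — is the single-scale inequality $\beta_E^{d,p}(B,L) \lesssim \check\beta_F^{d,p}(B',L)$ for a common plane $L$ and balls $B \subseteq B'$ of comparable radius. This reduces, via the layer-cake formulations, to showing
\[
\mathscr{H}^{d,E}_{B,\infty}\big(\{x \in E \cap B : \dist(x,L) > t r_B\}\big) \;\lesssim\; \mathscr{H}^d_\infty\big(\{x \in F \cap B' : \dist(x,L) > t' r_{B'}\}\big)
\]
for comparable thresholds $t \sim t'$. For this I would take any efficient cover of the relevant subset of $F$ by balls realising (up to a factor) the Hausdorff content on the right; the difficulty is that an arbitrary such cover need not be \emph{good} in the sense of Definition \ref{IntroDef}, i.e. it need not satisfy the lower bound \eqref{LRi} and upper bound \eqref{URi}. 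The remedy is to exploit that $F$ is $(c,d)$-lower content regular with $c \geq c_1$: starting from an efficient cover of $F \cap B'$, one refines and augments it — using a Vitali/stopping-time argument and the lower regularity of $F$ at every scale and location — to produce a cover $\mathscr{B}$ of $E \cap B \subseteq F \cap B'$ which is good, has total $\sum_{B'' \in \mathscr{B}} r_{B''}^d$ still controlled by $\mathscr{H}^d_\infty(F \cap B')$ up to a constant depending on $c$, and still covers the far-from-$L$ part of $E$ efficiently. Choosing the constant $c_1$ in the statement equal to the $c_1$ fixed in Section \ref{Prelims} (the lower bound in the definition of good covers) is exactly what makes this refinement possible: the lower regularity constant of $F$ must beat the regularity demanded of good covers. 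Feeding this good cover into the definition of $\mathscr{H}^{d,E}_{B,\infty}$ gives the desired content comparison, hence the $\beta$-number comparison; integrating in $t$ and taking infima over $L$ finishes the single-scale estimate, and summing over cubes finishes the theorem. I expect the bulk of the technical work to be in this construction of a good cover from an efficient one, and in bookkeeping the constants $C_0, C_0'$ under the cube-to-cube correspondence.
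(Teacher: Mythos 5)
Your plan rests on a single-scale inequality of the form $\beta_E^{d,p}(C_0B_Q) \lesssim \check\beta_F^{d,p}(C_0'B_{Q'})$ for balls of comparable radius and location, followed by a cube-by-cube summation. That inequality is false, and the paper in fact exhibits a counterexample in the discussion preceding Lemma \ref{SubsetError} (see Figure \ref{example}): take $F$ to be a line with a tiny tent of height $\e$ at the origin and $E$ the singleton apex of the tent. Then $\check\beta_F^{1,1}(\B,L)\sim\e^2$ while $\beta_E^{1,1}(\B,L)\gtrsim\e$, because the content $\mathscr{H}^{1,E}_{\B,\infty}$ assigns the singleton $E$ a mass $\gtrsim 1$ at every scale. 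This is exactly the failure of monotonicity of $\beta_\cdot^{d,p}$ under set inclusion, and no amount of refining or augmenting an efficient cover of $F\cap B'$ to a good cover of $E\cap B$ can make the one-term estimate
\[
\mathscr{H}^{d,E}_{B,\infty}\bigl(\{x\in E\cap B:\dist(x,L)>tr_B\}\bigr)\;\lesssim\;\mathscr{H}^d_\infty\bigl(\{x\in F\cap B':\dist(x,L)>t'r_{B'}\}\bigr)
\]
hold: any good cover of $E\cap B$ must, near points of $E$ that are isolated in $E$ but close to $F$, use balls of radius comparable to $\dist(\cdot,E)$, and this inevitably injects a second term measuring the distance from $F$ to $E$.

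The correct local statement is Lemma \ref{SubsetError}: $\beta_E^{d,p}(B,L)\lesssim\check\beta_F^{d,p}(2B,L)+\bigl(\tfrac{1}{r_B^d}\int_{F\cap 2B}(\dist(x,E)/r_B)^p\,d\mathscr{H}^d_\infty\bigr)^{1/p}$, and the hard part of the theorem is controlling the sum of the squared error terms over all cubes. This is genuinely multi-scale: the paper runs a corona-type stopping-time decomposition $\mathscr{S}$ on $\mathscr{D}^F(Q_0)$, stopping when $\check\beta_F$ accumulates or when $F$ drifts too far from $E$, builds a bi-Lipschitz surface $\Sigma_S$ in each region via the David--Toro theorem (Theorem \ref{DT}, \ref{LipDT}), and uses the smoothed stopping cubes and $\mathscr{H}^d(\Sigma_S)$ to dominate the error sum by $\sum_S\ell(Q(S))^d$ (Lemma \ref{Similar}); the sum of top cubes is then controlled by $\mathscr{H}^d(Q_0)$ plus the $\check\beta_F$ sum (Lemmas \ref{TopControl}, \ref{MinControl}), and finally Theorem \ref{dTSP} converts $\mathscr{H}^d(Q_0)$ into the desired right-hand side (Lemma \ref{l:Suffices1}). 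Your proposal contains none of the corona decomposition, the Reifenberg surface construction, or the mechanism for absorbing the error term, so as written it has a genuine gap that a cover-refinement argument alone cannot close.
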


\begin{rem}
We construct the cubes for $\mathscr{D}^E$ using a sequence of maximally $\rho^n$-separated nets $\{X_n^E\}_{n=0}^\infty$ in $E$. For each $n$, we complete $X_n^E$ to a maximally $\rho^n$-separated net for $F$, which we call $X_n^F$, and construct $\mathscr{D}^F$ using the sequence $\{X_n^F\}_{n=0}^\infty.$ In this way, we have $Q_0^E \subseteq Q_0^F.$
\end{rem}

\begin{thm}\label{Thm4}
Let $1\leq d < n$, $C_0 >1$ and $1 \leq p < p(d).$ Let $E \subseteq \R^n,$ $\mathscr{D}^E$ denote the Christ-David cubes for $E$ and let $Q_0^E \in \mathscr{D}^E$ be such that $\diam(Q_0^E) \geq \lambda \ell(Q_0^E)$ for some $0 < \lambda \leq 1.$ Then there exists a $(c_1,d)$-lower content regular set $F$ (with $c_1$ as in the previous theorem) such that the following holds. Let $\mathscr{D}^F$ denote the Christ-David cubes for $F$ and let $Q_0^F$ denote the cube in $\mathscr{D}^F$ with the same centre and side length as $Q_0^E$. Then 
\begin{equation}\label{e:Thm4}
\begin{aligned}
&\emph{diam}(Q_0^F)^d+ \sum_{\substack{Q \in \mathscr{D}^F \\ Q \subseteq Q_0^F}} \check\beta_F^{d,p}(C_0Q)^2 \ell(Q)^d \\
&\hspace{4em} \lesssim_{C_0,c,n,p,\lambda} \emph{diam}(Q_0^E)^d +  \sum_{\substack{Q \in \mathscr{D}^E\\ Q \subseteq Q_0^E}} \beta_E^{d,p}(C_0Q)^2 \ell(Q)^d.
\end{aligned}
\end{equation}
\end{thm}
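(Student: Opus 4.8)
The plan is to run the Travelling Salesman construction of Azzam--Schul (Theorem \ref{dTSP}) ``in reverse''. Given $E$ and a fixed $Q_0^E$, I would build a set $F \supseteq E$ that is $(c_1,d)$-lower content regular and whose David-Semmes $\beta$-sum over $Q_0^F$ is controlled by the right-hand side of \eqref{e:Thm4}. The natural candidate is a Reifenberg-type surface obtained by gluing together pieces of $d$-planes: for each Christ-David cube $Q \subseteq Q_0^E$ we have an (approximately) optimal $d$-plane $L_Q$ witnessing $\beta_E^{d,p}(C_0 Q)$, and I would stitch the $L_Q$'s together using a partition-of-unity / corona-decomposition scheme, exactly as in the ``sufficiency'' direction of the $d$-dimensional TST. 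The stopping-time (corona) decomposition would be driven by the flatness of the $\beta_E^{d,p}(C_0Q)$'s: inside a coronal ``tree'' where all $\beta_E^{d,p}(C_0Q)$ are small, the surface is a small Lipschitz perturbation of a single plane (hence lower content regular with a uniform constant), and each time the coronas branch we pay a $\beta$-squared term. The $\diam(Q_0^E)^d$ term accounts for the ``top'' plane and the hypothesis $\diam(Q_0^E)\ge\lambda\ell(Q_0^E)$ ensures $F$ is genuinely $d$-dimensional at the top scale, so the implicit constant is allowed to depend on $\lambda$.

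After $F$ is constructed I would verify the two required properties. First, lower content regularity of $F$ with a \emph{prescribed} constant $c_1$: this is where the construction must be done carefully, because $c_1$ has to be the same absolute constant appearing in Theorem \ref{Thm1} / Definition \ref{IntroDef}; I would get this from the fact that a sufficiently flat Reifenberg-type surface built from $d$-planes carries a bi-Lipschitz parametrization of a $d$-plane on each coronal piece, and these pieces overlap in a controlled way, giving $\mathscr{H}^d_\infty(F\cap B(x,r))\gtrsim r^d$ with a dimensional constant. Second, the measure estimate: $\mathscr{H}^d(F)\lesssim \diam(Q_0^E)^d + \sum_Q \beta_E^{d,p}(C_0Q)^2\ell(Q)^d$ is the standard output of the construction (each coronal piece contributes its planar area, branching contributes the $\beta$-squared sum), and then $\check\beta_F^{d,p}(C_0 Q)\lesssim \mathscr{H}^d(F)$-type packing — more precisely the full left-hand side of \eqref{e:Thm4} — follows either by invoking Theorem \ref{dTSP} applied to $F$ (which is now lower content regular), or directly from the construction, since near each cube $F$ is close to the plane $L_Q$ and $\check\beta_F^{d,p}$ is comparable to $\beta_E^{d,p}$ up to the scales where the coronas branch.

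The one genuinely new point, compared with the classical sufficiency proof, is that the $\beta$-numbers on the $E$-side are defined through the modified content $\mathscr{H}^{d,E}_{B,\infty}$ rather than through $\mathscr{H}^d_\infty$, so I must check that the good covers in Definition \ref{IntroDef} still give enough information to select the approximating planes $L_Q$ and to compare $\beta_E^{d,p}(C_0Q)$ with the ``true'' flatness of $F$ near $Q$. Concretely: a good cover of $E\cap C_0Q$ is forced to be lower content regular, so $\mathscr{H}^{d,E}_{C_0 B_Q,\infty}$ behaves like the $\mathscr{H}^d_\infty$-content of a lower-regular hull of $E$ in that ball, and this hull is exactly the kind of set the Azzam--Schul machine is designed to run on. I would make this precise using the preliminary properties of $\mathscr{H}^{d,E}_{B,\infty}$ and $\beta_E^{d,p}$ proved in Section \ref{Prelims}.

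The main obstacle I expect is matching the lower-regularity constant of $F$ to the fixed constant $c_1$ while \emph{simultaneously} keeping $\mathscr{H}^d(F)$ bounded by the $\beta_E^{d,p}$-sum: making the surface very flat (to force a large lower-regularity constant) fights against keeping its area small where $E$ is sparse, and the whole point of the modified content is to let $F$ ``fill in'' the missing lower regularity of $E$ at a controlled cost. Managing this trade-off — choosing, in each coronal region, a piece of $d$-plane large enough to guarantee $c_1$-lower regularity but not so large that it inflates the Carleson sum beyond $\diam(Q_0^E)^d + \sum_Q\beta_E^{d,p}(C_0Q)^2\ell(Q)^d$ — is the crux, and it is why the statement only asserts the inequality with constants depending on $C_0,c,n,p,\lambda$ rather than an equivalence.
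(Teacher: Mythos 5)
Your overall strategy is the right one and matches the paper's: run a stopping-time (corona) decomposition on $\mathscr{D}^E$ driven by the smallness of $\beta_E^{d,p}$, build a bi-Lipschitz Reifenberg surface $\Sigma_S$ on each non-singleton corona via David--Toro, take $F$ to be the union of these surfaces together with the ``bad'' residual set, verify $(c_1,d)$-lower content regularity of $F$, bound $\mathscr{H}^d(F)$ by $\diam(Q_0^E)^d + \sum \beta_E^{d,p}(C_0 Q)^2\ell(Q)^d$, and then pass from $\mathscr{H}^d(F)$ to the $\check\beta_F$-sum. Up to here the proposal tracks the paper's proof (Lemmas \ref{Sigma1}, \ref{MinCubes}, \ref{TildeF}). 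However, the final step has a genuine gap.

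Your last step proposes to bound $\sum_{Q\subseteq Q_0^F}\check\beta_F^{d,p}(C_0B_Q)^2\ell(Q)^d$ by invoking Theorem \ref{dTSP} applied to the (now lower-regular) set $F$. But Theorem \ref{dTSP} gives $\check\beta_{F,A,p}(R) \sim \mathscr{H}^d(R) + \mathrm{BWGL}(R,\e,C_0)$, and the $\mathrm{BWGL}$ term cannot be dropped. It genuinely fails to be small for the $F$ you have constructed: where distinct coronal surfaces $\Sigma_S$ and $\Sigma_{S'}$ come near each other — which is unavoidable, since you glue them along the stopping-time boundaries, and it can happen at arbitrarily many scales on the part of $F$ that lies off of $E$ — the set $F$ locally resembles a union of two transverse $d$-planes, so $d_{C_0 B_Q}(F,P)\gtrsim 1$ for \emph{every} single plane $P$. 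Thus $\mathrm{BWGL}(Q_0^F)$ may be as large as you please, and the bound $\check\beta_F$-sum $\lesssim\mathscr{H}^d(F)$ does not follow. Your fallback (``directly from the construction, since near each cube $F$ is close to the plane $L_Q$'') has the same defect: for cubes $Q\in\mathscr{D}^F$ that are far from $E$ there is no associated $L_Q$ and $F$ is close to a union of planes, not one plane.

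The paper's resolution is the piece you are missing: it splits $\mathscr{D}^F(Q_0^F)$ into cubes $\mathrm{Up}_F$ near $E$ and cubes $\mathrm{Top}_F$ far from $E$. For $\mathrm{Up}_F$ cubes it uses a volume argument to reduce the sum to a sum over $\mathscr{D}^E$-cubes, and then applies the $\beta$-error estimate (Lemma \ref{betaest}) to compare $\check\beta_F^{d,p}(C_0B_Q)$ with $\beta_E^{d,p}$ plus an error measuring $\int_{F\cap 2C_0B_Q}\dist(x,E)^p$, which is in turn controlled by the smoothed stopping cubes and the packing estimates for the $\Sigma_S$'s. For $\mathrm{Top}_F$ cubes it observes that $F$ \emph{is} well-approximated by a finite union of $d$-planes (BAUP), extends the surfaces slightly to make this uniform, and invokes the Azzam--Villa Theorem \ref{AV} (the BAUP version) rather than Theorem \ref{dTSP} (the BWGL version). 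Replacing BWGL by BAUP is precisely what makes the ``joints'' between coronas harmless; without this substitution the argument does not close.

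A secondary issue worth noting: because $E$ need not be lower regular, the corona stopping conditions in the paper also stop when $MB_{R'}$ fails to have $(d+1,\alpha)$-separated points; otherwise the optimizing planes $L_Q$ are not well-determined and the angle estimates (Lemma \ref{AngControl}) needed for David--Toro are unavailable. You would need this extra stopping condition, and correspondingly the $\mathrm{Type}_{\mathrm{II}}$ analysis in Lemma \ref{MinCubes}, to make the measure bound $\mathscr{H}^d(F)\lesssim\diam(Q_0^E)^d + \sum\beta_E^{d,p}(C_0 Q)^2\ell(Q)^d$ go through.
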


\begin{rem}
The condition that $\diam(Q_0^E) \geq \lambda \ell(Q_0)$ is not too strong an assumption. It simply ensures that the top cube is suitably sized for the set $E$. Consider for example the extreme case when $E$ is just a singleton point. The right hand side of \eqref{e:Thm4} will be identically equal to zero whilst the left hand side must be non-zero, by virtue of the fact that any lower regular set must have non-zero diameter. We would be in the same situation if for example $E$ was a very small (in comparison to $\ell(Q_0^E)$) straight line segment centred on $Q_0^E.$
\end{rem}

\begin{rem}
Both Theorem \ref{Thm1} and Theorem \ref{Thm4} are true with $\check\beta_F^{d,p}$ replaced by $\beta_{F}^{d,p}.$ This is because for lower regular sets, the two quantities are comparable, see Corollary \ref{CheckComp}.
\end{rem}

As an immediate corollary of Theorem \ref{Thm1} and Theorem \ref{Thm4}, along with Theorem \ref{t:Villa}, we obtain a Travelling Salesman Theorem for general sets in $\R^n$ resembling that of Jones original theorem.

\begin{thm}\label{c:TSP}
Let $1 \leq d \leq n$, $1 \leq p \leq p(d)$, and $C_0 > 1.$ Suppose $E \subseteq \R^n$, $Q_0 \in \mathscr{D}$ and 
\[ \sum_{\substack{Q \in \mathscr{D} \\ Q \subseteq Q_0}} \beta_E^{d,p}(C_0B_Q)^2\ell(Q)^d < \infty.\]
Then there exists $r_0,\alpha_0,\delta_0$ and $\eta_0$ and a topologically stable $d$-surface $\Sigma,$ with parameters $r_0,\alpha_0,\delta_0$ and $\eta_0$ such that $E \subseteq \Sigma$ and 
\begin{align}
\mathscr{H}^d(\Sigma) \sim \emph{diam}(Q_0)^d + \sum_{\substack{Q \in \mathscr{D} \\ Q \subseteq Q_0}} \beta_E^{d,p}(C_0B_Q)^2\ell(Q)^d. 
\end{align}
\end{thm}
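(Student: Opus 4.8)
The plan is to derive Theorem \ref{c:TSP} by chaining together the three ingredients already available: Theorem \ref{Thm1}, Theorem \ref{Thm4}, and Villa's Theorem \ref{t:Villa}. First I would reduce to the case where the top cube $Q_0$ is suitably sized for $E$: if $\diam(Q_0)$ is tiny compared to $\ell(Q_0)$, I would pass to a sub-cube $Q_0'$ (or enlarge the ambient scale) so that $\diam(Q_0') \geq \lambda \ell(Q_0')$ for some absolute $\lambda$, noting that this changes both sides of the desired equivalence only by constants and that the $\beta$-sum over a cube dominates the $\beta$-sum over any sub-cube. This lets me invoke Theorem \ref{Thm4}.

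Next, apply Theorem \ref{Thm4} to $Q_0^E = Q_0$ to produce a $(c_1,d)$-lower content regular set $F \supseteq$ (the relevant part of) $E$ — more precisely, $F$ is built so that $E \subseteq F$, with Christ-David cubes $\mathscr{D}^F$ and a top cube $Q_0^F$ of the same centre and side length, satisfying
\begin{align}
\emph{diam}(Q_0^F)^d + \sum_{\substack{Q \in \mathscr{D}^F \\ Q \subseteq Q_0^F}} \check\beta_F^{d,p}(C_0Q)^2\ell(Q)^d \lesssim \emph{diam}(Q_0)^d + \sum_{\substack{Q \in \mathscr{D}^E \\ Q \subseteq Q_0}} \beta_E^{d,p}(C_0B_Q)^2\ell(Q)^d < \infty.
\end{align}
Since $F$ is lower content regular with constant $c_1 \geq $ the threshold in Theorem \ref{Thm1}, I can run Theorem \ref{Thm1} in the reverse direction (with $F$ playing the role of both the ``$E$'' and the ambient ``$F$'') — or simply quote the remark that for lower regular sets $\check\beta_F^{d,p} \sim \beta_F^{d,p}$ — to conclude the two displayed quantities for $F$ are comparable, and in particular the $\check\beta_F$-sum is finite.

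Now apply Theorem \ref{t:Villa} to the lower content regular set $F$ with top cube $Q_0^F$, choosing the free parameters $\e,\kappa$ (hence $r_0,\alpha_0,\delta_0,\eta_0$) to be some fixed admissible values. This yields a topologically stable $d$-surface $\Sigma \supseteq Q_0^F \supseteq Q_0^E \supseteq E$ (after intersecting $E$ appropriately; alternatively one covers $E$ by the top cube since $E \cap B(x,r)$ for the relevant ball is all of $E$) with
\begin{align}
\mathscr{H}^d(\Sigma) \sim \emph{diam}(Q_0^F)^d + \sum_{\substack{Q \in \mathscr{D}^F \\ Q \subseteq Q_0^F}} \check\beta_F^{d,p}(C_0 B_Q)^2 \ell(Q)^d.
\end{align}
Combining the last three displays gives $\mathscr{H}^d(\Sigma) \lesssim \emph{diam}(Q_0)^d + \sum_{Q \subseteq Q_0} \beta_E^{d,p}(C_0 B_Q)^2 \ell(Q)^d$. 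For the reverse inequality $\gtrsim$, I would use Theorem \ref{Thm1}: since $\Sigma$ is lower content regular (topologically stable $d$-surfaces are, with an absolute constant — this needs checking, or one feeds the lower-regular $F$ instead), apply Theorem \ref{Thm1} with $E$ as the set and $\Sigma$ (or $F$) as the lower-regular container to get $\sum_{Q \subseteq Q_0} \beta_E^{d,p}(C_0 Q)^2 \ell(Q)^d \lesssim \mathscr{H}^d(\Sigma)$ via the $\check\beta_F$-sum and the bound $\check\beta_F^{d,p}(C_0 Q)^2 \ell(Q)^d \lesssim \mathscr{H}^d(F \cap C_0 B_Q)$ that underlies any such Carleson estimate, summed over $Q \subseteq Q_0^F$ with bounded overlap.

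The main obstacle I anticipate is bookkeeping the passage between the balls $C_0 B_Q$ for the cubes of $\mathscr{D}^E$ versus $\mathscr{D}^F$ versus $\mathscr{D}^\Sigma$, and verifying that the constants $c_1$ from Theorem \ref{Thm1} and the lower-regularity constant produced by Theorem \ref{Thm4} (and enjoyed by a topologically stable $d$-surface) are mutually compatible so the theorems can be composed without circularity. A secondary point is the reduction ensuring $\diam(Q_0) \gtrsim \ell(Q_0)$, handled as above; and one must make sure the exponent range $1 \le p \le p(d)$ in the statement is consistent with the strict inequality $p < p(d)$ in Theorems \ref{Thm1} and \ref{Thm4} (treating $p = p(d)$, if it occurs, by monotonicity of $\beta^{d,p}$ in $p$ or by restricting to $p < p(d)$ and noting the $p = p(d) = \infty$ cases for $d \le 2$ separately). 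None of these should be serious; the real content is entirely in the already-established Theorems \ref{Thm1}, \ref{Thm4}, and \ref{t:Villa}.
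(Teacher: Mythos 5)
Your proposal matches the paper's intended argument: Theorem \ref{c:TSP} is presented as an immediate corollary of Theorems \ref{Thm1}, \ref{Thm4}, and \ref{t:Villa}, chained exactly as you describe (Theorem \ref{Thm4} builds the lower content regular $F \supseteq E$, Theorem \ref{t:Villa} applied to $F$ produces $\Sigma$, and Theorem \ref{Thm1} with the same $F$ supplies the reverse inequality). Two small cautions: passing to a subcube does not reliably achieve $\diam(Q_0')\gtrsim\ell(Q_0')$ — if $E\cap Q_0$ consists of two tight clusters at mutual distance $\diam(Q_0)$, a subcube at the matching scale typically meets only one cluster and still has diameter far smaller than its side length — so the honest reading is that the implicit constant in the conclusion inherits the $\lambda$-dependence from Theorem \ref{Thm4}, as the paper does tacitly; and your closing appeal to a Carleson bound for $\check\beta_F$ is unnecessary, since Theorem \ref{t:Villa} already yields $\mathscr{H}^d(\Sigma)\sim\diam(Q_0^F)^d+\sum_{Q\subseteq Q_0^F}\check\beta_F^{d,p}(C_0B_Q)^2\ell(Q)^d$ and Theorem \ref{Thm1} bounds the $\beta_E$-sum by exactly that right-hand side.
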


\subsection{Acknowledgment} I would like to thank Jonas Azzam, my supervisor, for his invaluable support, guidance and patience throughout this project. 

\section{Preliminaries}\label{Prelims}
\subsection{Notation}

If there exists $C>0$ such that $a \leq Cb,$ then we shall write $a \lesssim b.$ If the constant $C$ depends of a parameter $t$, we shall write $a \lesssim_t b.$ We shall write $a \sim b$ if $a \lesssim b$ and $b\lesssim a,$ similarly we define $a \sim_t b.$ 

For $A,B \subseteq \R^n$, let
\begin{align*}
\text{dist}(A,B) = \inf\{|x-y| : x \in A, \ y \in B \}
\end{align*}
and 
\[\text{diam}(A) = \sup\{|x-y| : x,y \in A \}. \]

We recall the $d$-dimensional Hausdorff measure and content. For $A \subseteq \R^n, \ d \geq 0$ and $0 <\delta \leq \infty$ define
\begin{align}\label{e:HausCont}
\mathscr{H}^d_\delta(A) = \inf \left\{ \sum_i \diam(A_i)^d : A \subseteq \bigcup_i A_i \ \text{and} \ \diam A_i \leq \delta \right\}.
\end{align}
The $d$-dimensional Hausdorff \textit{content} of $A$ is defined to be $\mathscr{H}^d_\infty(A)$ and the $d$-dimensional Hausdorff \textit{measure} of $A$ is defined to be
\[ \mathscr{H}^d(A) = \lim_{\delta \rightarrow 0} \mathscr{H}^d_\delta(A).\]

\subsection{Chirst-David cubes} For a set $E \subseteq \R^n$ we shall need a version of ``dyadic cubes''. These were first introduced by David \cite{david1988morceaux} and generalised in \cite{christ1990b} and \cite{hytonen2012non}. 

\begin{lem}\label{cubes}
Let $X$ be a doubling metric space and $X_k$ be a sequence of maximal $\rho^k$-separated nets, where $\rho = 1/1000$ and let $c_0 = 1/500.$ Then, for each $k \in \Z$, there is a collection $\mathscr{D}_k$ of cubes such that the following hold.
\begin{enumerate}
\item For each $k \in \Z, \ X = \bigcup_{Q \in \mathscr{D}_k}Q.$
\item If $Q_1,Q_2 \in \mathscr{D} = \bigcup_{k}\mathscr{D}_k$ and $Q_1 \cap Q_2 \not= \emptyset,$ then $Q_1 \subseteq Q_2$ or $Q_2 \subseteq Q_1.$ 
\item For $Q \in \mathscr{D},$ let $k(Q)$ be the unique integer so that $Q \in \mathscr{D}_k$ and set $\ell(Q) = 5\rho^k.$ Then there is $x_Q \in X_k$ such that
\begin{align*}
B(x_Q,c_0\ell(Q)) \subseteq Q \subseteq B(x_Q , \ell(Q)). 
\end{align*}
\end{enumerate}
\end{lem}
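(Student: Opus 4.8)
The plan is to run the standard dyadic-cube construction of Christ \cite{christ1990b}, in the refined form of Hytönen and Martikainen \cite{hytonen2012non} (and going back to David \cite{david1988morceaux}), adapted to the normalisation $\rho = 1/1000$, $c_0 = 1/500$; the doubling hypothesis enters only to guarantee that each net is locally finite with a bounded number of children. The construction rests on a \emph{parent map}. Fix a well-ordering of $\bigcup_k X_k$. For $k \in \Z$ and $x \in X_k$, maximality of the $\rho^{k-1}$-separated net $X_{k-1}$ provides a point of $X_{k-1}$ within distance $\rho^{k-1}$ of $x$; let $\pi(x)$ be the least such point in the fixed well-order. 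Writing $\pi^{(m)}$ for the $m$-fold iterate, one has $\pi^{(m)}(x) \in X_{k-m}$ and, by summing a geometric series,
\[ d\bigl(x,\pi^{(m)}(x)\bigr) \le \sum_{j=1}^{m}\rho^{k-j} \le \frac{\rho^{k-m}}{1-\rho} \le \tfrac{1000}{999}\,\rho^{k-m}. \]
Thus $\bigcup_k X_k$ becomes a forest, the children of $z \in X_{k-1}$ being the $x \in X_k$ with $\pi(x) = z$.

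Next I would define the cubes from this forest. For $z \in X_k$ let $\widehat{Q}_z = \{\, x \in X_m : m \ge k,\ \pi^{(m-k)}(x) = z \,\}$ be its set of net-point descendants, and let $Q_z$ be obtained by closing up $\widehat{Q}_z$: concretely, assign each $y \in X$ to the generation-$k$ cube of the (essentially unique) descendant chain it accumulates on, breaking ties by the well-order, so that the resulting $\mathscr{D}_k = \{Q_z : z \in X_k\}$ genuinely partition $X$. Property (1) is then immediate, since every $y \in X$ is $\rho^m$-close to a point of $X_m$ for every $m$ and hence lies in some generation-$k$ cube; property (2) is immediate from the forest structure, because $\widehat{Q}_{z_1} \cap \widehat{Q}_{z_2} \ne \emptyset$ forces one of $z_1, z_2$ to be a $\pi$-ancestor of the other, and then $\widehat{Q}_{z_1} \subseteq \widehat{Q}_{z_2}$ or $\widehat{Q}_{z_2} \subseteq \widehat{Q}_{z_1}$, a relation preserved under the closing-up.

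What remains — and what I expect to be the only real obstacle — is the two-sided ball estimate (3) for $Q = Q_{x_Q}$ with $x_Q \in X_k$ and $\ell(Q) = 5\rho^k$. The outer inclusion $Q \subseteq B(x_Q, \ell(Q))$ is easy: every net-point descendant $x$ of $x_Q$ satisfies $d(x, x_Q) \le \tfrac{1000}{999}\rho^k < 5\rho^k$ by the displayed bound, and closing up costs only a further $\rho^m \to 0$. The inner inclusion $B(x_Q, c_0\ell(Q)) = B(x_Q, \rho^k/100) \subseteq Q$ is the delicate one, since the crude geometric-series estimate does not by itself pin down the generation-$k$ ancestor of a point near $x_Q$. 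Here one argues by downward induction on generations: if $x \in X_m$ with $m > k$ lies suitably deep inside $B(x_Q, \rho^k/100)$, then — using that $\pi(x)$ is a nearest (modulo the well-order) point of the $\rho^{m-1}$-separated net $X_{m-1}$ — so does $\pi(x)$, and iterating forces $\pi^{(m-k)}(x) = x_Q$, i.e. $x \in \widehat{Q}_{x_Q}$. The choice $\rho = 1/1000$, $c_0 = 1/500$ is exactly what makes this induction close, and the remaining bookkeeping is as in \cite{christ1990b} and \cite{hytonen2012non}.
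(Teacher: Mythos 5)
The paper does not prove Lemma~\ref{cubes}; it is stated and cited (David \cite{david1988morceaux}, Christ \cite{christ1990b}, Hyt\"onen--Martikainen \cite{hytonen2012non}), so there is no internal argument to compare against and I will judge your sketch on its own merits. Your overall plan --- a forest on $\bigcup_k X_k$ via a parent map, cubes as (closed-up) descendant sets, then the two ball inclusions --- is indeed the standard one, and your handling of (1), (2) and the outer inclusion in (3) is sound.

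The genuine gap is in the parent map, and it breaks the inner inclusion. You define $\pi(x)$, for $x\in X_m$, to be the \emph{least}, in a fixed well-order, of the points of $X_{m-1}$ within distance $\rho^{m-1}$ of $x$; but in the inner-ball step you then use $\pi(x)$ as though it were a \emph{nearest} point of $X_{m-1}$. These are different maps, and with the definition as written the construction can fail property (3). Take $X=\R$, the nested nets $X_m=\rho^m\Z$, and a well-order on $\bigcup_k X_k$ that lists all strictly positive elements before $0$ and the negatives. For $x_Q=0\in X_k$, every $w=j\rho^{k+1}$ with $0<j<1000$ has both $0$ and $\rho^k$ within distance $\rho^k$, so your $\pi$ sends it to $\rho^k$, while the $w$ with $-1000<j\le 0$ are sent to $0$. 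Iterating, the level-$m$ descendants of $0$ all lie in $\bigl(-\rho^k/(1-\rho),\,0\bigr]$, so the closed-up cube $Q$ through $0$ misses the entire right half of $B\bigl(0,\rho^k/100\bigr)$; and one checks that no other $x_Q\in X_k$ rescues the inclusion either. The fix is to take $\pi(x)$ to be a \emph{nearest} point of $X_{m-1}$, with the well-order used only to break exact ties, as in \cite{christ1990b,hytonen2012non}. Separately, even after this repair the ``downward induction'' you gesture at does not close as stated: since $d(x,\pi(x))<\rho^{m-1}$, the parent is in general \emph{farther} from $x_Q$, not ``still suitably deep inside'', and the cumulative displacement $\sum_{j\ge k}\rho^{j}=\rho^{k}/(1-\rho)$ along an ancestry chain exceeds both the inner radius $\rho^k/100$ and the separation $\rho^k$ of $X_k$, so this estimate alone never pins down the generation-$k$ ancestor. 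The argument in the cited papers is organized differently (tracking a descendant of $x_Q$ near a given $y$ scale by scale, using the nearest-parent property in an essential way); it is fine to defer that bookkeeping to \cite{christ1990b,hytonen2012non}, but the induction as you have described it is not the one that works, and the parent map must be corrected first.
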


Given a collection of cubes $\mathscr{D}$ and $Q  \in \mathscr{D},$ define 
\[\mathscr{D}(Q) = \{R \in \mathscr{D} : R \subseteq Q\}.\]
Let $\text{Child}_k(Q)$ denote the $k^{th}$ generational descendants of $Q$ (where we often write $\text{Child}(Q)$ to mean $\text{Child}_1(Q)$) and $Q^{(k)}$ denote the $k^{th}$ generational ancestor. We shall denote the descendants up to the $k^{th}$ by $\text{Des}_k(Q),$ that is, 
\begin{align}\label{d:Des}
\text{Des}_k(Q) = \bigcup_{i=0}^k \text{Child}_i(Q). 
\end{align}
Finally define a distance function, $d_\mathscr{C}$, to a collection of cubes $\mathscr{C} \subseteq \mathscr{D}$ by setting
\[ d_\mathscr{C}(x) = \inf\{\ell(R) + \text{dist}(x,R) : R \in \mathscr{C} \}, \] 
and for $Q \in \mathscr{D},$ set
\[ d_\mathscr{C}(Q) = \inf\{ d_\mathscr{C}(x) : x \in Q \}. \]
The following lemma is standard and can be found in, for example, \cite{azzam2018analyst}.
\begin{lem}
Let $\mathscr{C} \subseteq \mathscr{D}$ and $Q,Q^\prime \in \mathscr{D}.$ Then
\begin{align}\label{Tr}
d_\mathscr{C}(Q) \leq 2\ell(Q) + \emph{dist}(Q,Q^\prime) + 2\ell(Q^\prime) + d_\mathscr{C}(Q^\prime).
\end{align}
\end{lem}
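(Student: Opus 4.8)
The plan is to prove \eqref{Tr} by a direct application of the triangle inequality, unwinding the two nested infima in the definition of $d_\mathscr{C}$. If $\mathscr{C} = \emptyset$ then $d_\mathscr{C} \equiv +\infty$ and there is nothing to prove, so I assume $\mathscr{C} \neq \emptyset$.

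First I would fix $\e > 0$ and pick near-optimal competitors for the right-hand side: a point $x' \in Q'$ with $d_\mathscr{C}(x') \leq d_\mathscr{C}(Q') + \e$, then a cube $R \in \mathscr{C}$ with $\ell(R) + \dist(x',R) \leq d_\mathscr{C}(x') + \e$, and finally a point $z \in R$ with $|x'-z| \leq \dist(x',R) + \e$. The one piece of geometric input is part (3) of Lemma \ref{cubes}: since $Q \subseteq B(x_Q,\ell(Q))$ and $Q' \subseteq B(x_{Q'},\ell(Q'))$ we have $\diam(Q) \leq 2\ell(Q)$ and $\diam(Q') \leq 2\ell(Q')$, so for any $x \in Q$,
\[ |x - x'| \leq \diam(Q) + \dist(Q,Q') + \diam(Q') \leq 2\ell(Q) + \dist(Q,Q') + 2\ell(Q'). \]
Then, using $\dist(x,R) \leq |x-z| \leq |x-x'| + |x'-z|$, I would estimate
\begin{align*}
d_\mathscr{C}(x) &\leq \ell(R) + \dist(x,R) \leq \big(\ell(R) + \dist(x',R)\big) + |x-x'| + \e \\
&\leq d_\mathscr{C}(Q') + 2\ell(Q) + \dist(Q,Q') + 2\ell(Q') + 3\e.
\end{align*}
Since $x \in Q$ was arbitrary, taking the infimum over $x \in Q$ and then letting $\e \to 0$ gives \eqref{Tr}.

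There is no serious obstacle here; the only thing requiring care is the bookkeeping of the three successive infima (over $x \in Q$, over $R \in \mathscr{C}$, and in replacing $\dist(\,\cdot\,,R)$ by an honest point of $R$), which is exactly why the auxiliary parameter $\e$ is convenient — alternatively one could phrase the whole argument with infima throughout. I note in passing that the estimate actually yields slightly more: the term $2\ell(Q)$ could be dropped by instead choosing $x \in Q$ with $|x-x'|$ close to $\dist(Q,Q') + \diam(Q')$, but the symmetric form with $2\ell(Q)$ present is the one used in the sequel.
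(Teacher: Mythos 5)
Your proof is correct. The paper does not supply its own argument for this lemma — it simply cites \cite{azzam2018analyst} and calls it standard — and your argument (unwinding the nested infima via near-optimal competitors and the triangle inequality, using $\diam(Q)\le 2\ell(Q)$ from Lemma \ref{cubes}(3)) is exactly the standard proof one would find there. Your parenthetical observation that the $2\ell(Q)$ term could be dropped is also correct, though immaterial for the applications in the paper.
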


\subsection{Theorem of David and Toro} The surface $F$ from Theorem \ref{Thm4} will be a union of surfaces constructed using the following Reifenberg parametrization theorem of David and Toro \cite{david2012reifenberg}.
\begin{thm}[{\cite[Sections 1 - 9]{david2012reifenberg}}]\label{DT}
Let $P_0$ a plane. Let $k \in \N$ and set $r_k = 10^{-k}$. Let $\{x_{j,k}\}_{j \in J_k}$ be an $r_k$-separated net. To each $x_{j,k}$, associate a ball $B_{j,k} = B(x_{j,k},r_k)$ and a plane $P_{j,k}$ containing $x_{j,k}.$ Assume 
\[\{x_{j,0}\}_{j \in J_0} \subset P_0,\]
and
\[x_{i,k} \in V^2_{k-1},\]
where $V_k^\lambda \coloneqq \bigcup_{j \in J_k} \lambda B_{j,k}.$ Define
\[\varepsilon_k(x) = \sup\{d_{x_{i,l},10^4r_l}(P_{j,k},P_{i,l}) : j \in J_k, \ \abs{l-k} \leq 2, \ i \in J_k, x \in 100B_{j,k} \cap 100B_{i,l} \}.\]
Then, there is $\e_0 >0$ such that if $\e \in (0,\e_0)$ and 
\begin{align}
\varepsilon_k(x_{j,k}) < \e, \ \text{for all} \ k \geq 0 \ \text{and} \ j \in J_k,
\end{align} 
then there is a bijection $f: \R^n \rightarrow \R^n$ such that: 
\begin{enumerate}
\item We have 
\begin{equation}
E_\infty \coloneqq \bigcap_{K=1}^\infty \overline{\bigcup_{k=K}^\infty \{x_{j,k}\}_{j \in J_k}} \subseteq \Sigma \coloneqq f(\R^n).
\end{equation}
\item $f(x) =x$ when $\emph{dist}(z,P_0) >2.$
\item For $x,y \in \R^n$ with $|x-y| \leq 1,$
\begin{equation}
\frac{1}{4}|x-y|^{1+\tau} \leq |f(x)-f(y)| \leq 10 |x-y|^{1-\tau}. 
\end{equation}
\item $|f(x)-x| \lesssim \e$ for $x \in \R^n.$
\item For $x \in P_0,$ $f(x) = \lim_k \sigma_k \circ \dots \circ \sigma_0,$ where
\begin{equation}
\sigma_k(y) = \psi_k(y) + \sum_{j \in J_k} \theta_{j,k}(y)[\pi_{j,k}(y)-y].
\end{equation}
Here, $\{x_{j,k}\}_{j \in L_k}$ is a maximal $\tfrac{r_k}{2}$-separated set in $\R^n \setminus V_k^9,$ 
\[B_{j,k} = B(x_{j,k},r_k/10) \quad \text{for} \ j \in L_k, \]
$\{ \theta_{j,k}\}_{j \in J_k \cup L_k}$ is a partition of unity such that $\mathds{1}_{9B_{j,k}} \leq \theta_{j,k} \leq \mathds{1}_{10B_{j,k}}$ for all $k$ and $j \in L_k \cup J_k,$ and $\psi_k = \sum_{j \in L_k} \theta_{j,k}.$
\item For $k \geq 0,$
\begin{equation}\label{e:10}
\sigma_k(y) = y \ \text{and} \ D\sigma_k(y)=I \ \text{for} \ y \in \R^n \setminus V^{10}_k.
\end{equation}
\item Let $\Sigma_0 = P_0$ and 
\begin{equation}
\Sigma_k = \sigma_k(\Sigma_{k-1}).
\end{equation}
There is a function $A_{j,k}:P_{j,k} \cap 49B_{j,k} \rightarrow P_{j,k}^\perp$ of class $C^2$ such that $\abs{A_{j,k}(x_{j,k})} \lesssim \e r_k, \ \abs{DA_{j,k}} \lesssim \e$ on $P_{j,k} \cap 49B_{j,k},$ and if $\Gamma_{j,k}$ is its graph over $P_{j,k}$, then 
\begin{equation}
\Sigma_k \cap D(x_{j,k},P_{j,k},49r_k) = \Gamma_k \cap D(x_{j,k},P_{j,k},49r_k)
\end{equation}
where
\begin{equation}
D(x_{j,k},P_{j,k},49r_k) = \{z+w : z \in P \cap B(x,r), \ w \in P^\perp \cap B(0,r)\}.
\end{equation}
In particular, 
\begin{equation}\label{Sigma_kPlane}
d_{x_{j,k},49r_{k}}(\Sigma_k , P_{j,k}) \lesssim \e. 
\end{equation}
\item For $k \geq 0$ and $y \in \Sigma_k$, there is an affine $d$-plane $P$ through $y$ and a $C\e$-Lipschitz and $C^2$-function $A:P \rightarrow P^\perp$ so that if $\Gamma$ is the graph of $A$ over $P$, then 
\begin{equation}
\Sigma_k \cap B(y,19r_k) = \Gamma \cap B(y,19r_k). 
\end{equation}
\item Have $\Sigma = f(P_0)$ is $C\e$-Reifenberg flat in the sense that for all $z \in \Sigma,$ and $t \in (0,1),$ there is $P=P(z,t)$ so that $d_{z,t}(\Sigma,P) \lesssim \e.$ 
\item For all $y \in \Sigma_k$,
\begin{align}\label{Sigma_ksigma_k}
\abs{\sigma_k(y) - y} \lesssim \e_k(y) r_k
\end{align}
and moreover,
\begin{align}\label{Sigma_kSigma}
\emph{dist}(y,\Sigma) \lesssim \e r_k, \quad \text{for} \ y \in \Sigma_k
\end{align}
\item For $k \geq 0$, $y \in \Sigma_j \cap V_k^8,$ choose $i \in J_k$ such that $y \in 10B_{i,k}.$ Then
\begin{align}
\abs{\sigma_k(y) - \pi_{i,k}(y)} \lesssim \e_k(y)r_k. 
\end{align}
\item For $x \in \Sigma$ and $r>0,$
\begin{align}\label{e:lowerreg}
\mathscr{H}^d_\infty(\Sigma \cap B(x,r)) \geq (1-C\e)\omega_d r^d
\end{align}
where $\omega_d$ is the volume of the unit ball in $\R^d.$ 
\end{enumerate}
\end{thm}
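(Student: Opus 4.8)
\medskip
\noindent\textbf{Proof strategy for Theorem~\ref{DT}.}
The plan is to realise $\Sigma$ as the limit of an increasing sequence of $C^2$ surfaces $\Sigma_0 = P_0$, $\Sigma_k = \sigma_k(\Sigma_{k-1})$, where each $\sigma_k\colon\R^n\to\R^n$ is a small $C^2$ diffeomorphism that ``bends'' $\Sigma_{k-1}$ toward the local model planes $P_{j,k}$. Concretely I would fix the partition of unity $\{\theta_{j,k}\}_{j\in J_k\cup L_k}$ of item~(5), with $\mathds{1}_{9B_{j,k}}\le\theta_{j,k}\le\mathds{1}_{10B_{j,k}}$ and $\abs{D\theta_{j,k}}\lesssim r_k^{-1}$, and set $\sigma_k(y)=\psi_k(y)+\sum_{j\in J_k}\theta_{j,k}(y)[\pi_{j,k}(y)-y]$, with $\pi_{j,k}$ the affine orthogonal projection onto $P_{j,k}$ and $\psi_k=\sum_{j\in L_k}\theta_{j,k}$. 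Outside $V_k^{10}$ every $J_k$-bump vanishes, so $\sigma_k=\id$ there, which is item~(6); and for $y\in V_k^{10}$ only boundedly many $\theta_{j,k}$ are nonzero, with the associated planes pairwise $\e$-close (and $\e$-close to the neighbouring-level planes) by the hypothesis $\e_k(x_{j,k})<\e$. A direct computation then gives $\abs{\sigma_k(y)-y}\lesssim\e_k(y)r_k$ and $\abs{D\sigma_k(y)-I}\lesssim\e$, so for $\e$ small $\sigma_k$ is a $C^2$ diffeomorphism that is $\e$-close to the identity in $C^1$; this already yields \eqref{e:10}, \eqref{Sigma_ksigma_k} and item~(12).

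The heart of the argument is an induction on $k$ producing the local graph description: for every $y\in\Sigma_k$ there is an affine $d$-plane $P$ through $y$ and a $C\e$-Lipschitz $C^2$ function $A\colon P\to P^\perp$ with $\Sigma_k\cap B(y,19r_k)=\Gamma\cap B(y,19r_k)$, $\Gamma$ the graph of $A$, \emph{and with $C$ independent of $k$}. The base case $\Sigma_0=P_0$ is trivial; in the inductive step one uses that near $x_{j,k}$ the plane of the level-$(k-1)$ graph is $C\e$-close to $P_{j,k-1}$, hence to $P_{j,k}$, so $\sigma_k$ displaces the graph by $O(\e r_k)$ and rotates its tangent by $O(\e)$, keeping it a $C\e$-Lipschitz graph; the crucial point is that the bending error committed at scale $k$ is $O(\e)$ and localised at scale $r_k$, so the errors do not accumulate into a $k$-dependent constant. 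This gives items~(7), (8) (in particular \eqref{Sigma_kPlane}) and~(9), and, letting $k$ be large at a fixed location, the $C\e$-Reifenberg flatness of $\Sigma$ in item~(10).

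For convergence, the estimate $\abs{\sigma_k(y)-y}\lesssim\e r_k=\e\cdot 10^{-k}$ makes the compositions $f_k\coloneqq\sigma_k\circ\cdots\circ\sigma_0$ uniformly Cauchy on $\R^n$; hence $f_k\to f$ uniformly, $f$ is a homeomorphism of $\R^n$ equal to the identity where $\dist(\cdot,P_0)>2$ and with $\abs{f-\id}\lesssim\e$ everywhere, which gives items~(2), (4) and, with item~(5), the stated formula for $f$. For the bi-Hölder bound~(3) I would chain the one-step distortions: if $2^{-m-1}<\abs{x-y}\le 2^{-m}$ then at each scale $k\le m$ the map $\sigma_k$ changes the distance between the running images of $x$ and $y$ by a factor $1+O(\e)$, while the combined effect of the scales $k>m$ is $O(\abs{x-y})$; multiplying $\sim m\sim\log\frac1{\abs{x-y}}$ factors of $1+O(\e)$ produces $\tfrac14\abs{x-y}^{1+\tau}\le\abs{f(x)-f(y)}\le 10\abs{x-y}^{1-\tau}$ with $\tau=C\e$. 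In particular $\Sigma_k=f_k(P_0)$ converges locally in Hausdorff distance to $f(P_0)$, which we call $\Sigma$; this also yields \eqref{Sigma_kSigma}.

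It remains to place $E_\infty$ inside $\Sigma$ and to prove lower regularity. Each net point $x_{j,k}$ lies within $O(r_{k-1})=O(r_k)$ of a level-$(k-1)$ net point (the hypothesis $x_{i,k}\in V_{k-1}^2$), hence within $O(\e r_k)$ of $\Sigma_k$ by \eqref{Sigma_kPlane}, hence within $O(\e r_k)\to 0$ of the closed set $\Sigma$ by \eqref{Sigma_kSigma}; since $E_\infty$ consists exactly of accumulation points of $\{x_{j,k}\}$ as $k\to\infty$, we get $E_\infty\subseteq\Sigma$, i.e. item~(1). For item~(13), fix $x\in\Sigma$ and $r\in(0,1)$, pick $k$ with $r_k\sim r$, and apply item~(9): $\Sigma\cap B(x,r)$ is a $C\e$-Lipschitz graph over a $d$-plane $P$, so the orthogonal projection $\pi_P$ is $1$-Lipschitz and $\pi_P(\Sigma\cap B(x,r))$ contains the $d$-disc $P\cap B(\pi_P x,(1-C\e)r)$; since $\mathscr{H}^d_\infty$ is monotone and non-increasing under $1$-Lipschitz maps, this forces $\mathscr{H}^d_\infty(\Sigma\cap B(x,r))\gtrsim r^d$, and tracking the constant through the projection gives the sharp form $\mathscr{H}^d_\infty(\Sigma\cap B(x,r))\ge(1-C\e)\omega_d r^d$.

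\medskip
\noindent\textbf{Main obstacle.} The delicate point is the inductive step in the second paragraph: one must show that the bending errors at the infinitely many scales do not accumulate, so that $\Sigma_k$ stays a $C\e$-Lipschitz graph with $C$ \emph{uniform} in $k$. This requires careful bookkeeping of which planes $P_{j,k}$ are active near a given point at a given scale and repeated use of the coherence inequality $d_{x_{i,l},10^4 r_l}(P_{j,k},P_{i,l})<\e$ for $\abs{l-k}\le 2$; it is precisely the upgrade from ``$f$ is continuous'' to ``$f$ is bi-Hölder'' that makes $\Sigma$ a genuinely nondegenerate $d$-surface rather than merely a closed set containing $E_\infty$.
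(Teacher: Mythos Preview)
Your outline is a faithful high-level summary of the David--Toro construction, but there is nothing to compare it against: the paper does not prove Theorem~\ref{DT}. The theorem is quoted verbatim from \cite[Sections 1--9]{david2012reifenberg} and used as a black box; the only work the present paper does with it is to verify, in Lemmas~\ref{Sigma} and~\ref{Sigma1}, that the hypotheses $\varepsilon_k(x_{j,k})<\e$ and the square-summability condition of Theorem~\ref{LipDT} are met for the particular nets $\mathscr{C}_k'$ coming from the stopping-time regions, and then to read off the conclusions (items (7), (9), (10), (12), and \eqref{Sigma_ksigma_k}, \eqref{Sigma_kSigma}) as needed.

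For what it is worth, your sketch tracks the actual David--Toro argument closely: the maps $\sigma_k$ are exactly as in item~(5), the one-step $C^1$ estimate $\abs{D\sigma_k-I}\lesssim\e$ is the engine, the uniform-in-$k$ local graph lemma is indeed the inductive core (Propositions~5.1 and 6.1 in \cite{david2012reifenberg}), and the bi-H\"older bound comes from chaining the distortion factors scale by scale. The one place where your sketch is slightly off is in the derivation of item~(1): you need that $x_{j,k}$ is within $O(\e r_k)$ of $\Sigma_k$, not just $O(r_k)$, and this uses the graph description of $\Sigma_k$ near $x_{j,k}$ together with $x_{j,k}\in P_{j,k}$, rather than merely $x_{j,k}\in V_{k-1}^2$. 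But since the paper supplies no proof here, this is a remark about David--Toro, not about the present paper.
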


\begin{rem}
We conjecture the main results hold for subsets of an infinite dimensional Hilbert space. With this in mind, we have tried to make much of our work here dimension free. We shall indicate the places where the estimates depend on the ambient dimension. As such, many of the volume arguments rely on the following result. Put simply, it states that a collection of disjoint balls lying close enough to a $d$-dimensional plane will satisfy a $d$-dimensional packing condition. 
\end{rem}

\begin{lem}[{\cite[Lemma 3.1]{edelen2018effective}}]\label{ENV}
Let $V$ be an affine $d$-dimensional plane in a Banach space $X$, and $\{B(x_i,r_i)\}_{i \in I}$ be a family of pairwise disjoint balls with $r_i \leq R, \ B(x_i,r_i) \subseteq B(x,R),$ for some $x \in \R^n$ and $\emph{dist}(x_i,V) < r_i/2.$ Then, there is a constant $\kappa = \kappa(d)$ such that
\begin{equation}\label{e:ENV}\sum_{i \in I} r_i^d \leq \kappa R^d.
\end{equation}
\end{lem}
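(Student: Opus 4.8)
The plan is to run the familiar ``push everything onto $V$ and count volumes'' argument, but with one twist forced by the fact that $X$ is only a Banach space: there need not be a bounded linear (let alone orthogonal) projection onto the $d$-plane $V$, so I will never project. Instead I will replace each center $x_i$ by a near-foot-point on $V$, observe that the hypothesis $\dist(x_i,V)<r_i/2$ is exactly strong enough to keep the resulting balls disjoint \emph{inside} $V$ after shrinking by a fixed factor, and then invoke the fact that on a finite-dimensional normed space the Haar measure of a ball is a fixed constant times the $d$-th power of the radius, independent of the norm.

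First, for each $i\in I$ I would pick $v_i\in V$ with $|x_i-v_i|<r_i/2$, which is possible since $\dist(x_i,V)<r_i/2$. Because the balls $B(x_i,r_i)$ are pairwise disjoint we have $|x_i-x_j|\ge r_i+r_j$ for $i\ne j$, so by the triangle inequality
\[
|v_i-v_j|\ge |x_i-x_j|-|x_i-v_i|-|x_j-v_j|>(r_i+r_j)-\tfrac{r_i}{2}-\tfrac{r_j}{2}=\tfrac{r_i+r_j}{2}.
\]
Hence the closed balls $\bar B_V(v_i,r_i/4)=\{v\in V:|v-v_i|\le r_i/4\}$ taken inside $V$ are pairwise disjoint. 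Moreover, since $x_i\in B(x,R)$ and $r_i\le R$, we get $v_i\in B(x,2R)$ for every $i$; fixing one index $i_0\in I$ this yields $|v_i-v_{i_0}|\le 4R$, and using $r_i/4\le R$ we conclude $\bar B_V(v_i,r_i/4)\subseteq \bar B_V(v_{i_0},5R)$ for all $i$.

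Finally I would fix a linear isomorphism of the $d$-dimensional linear space $V-v_{i_0}$ with $\R^d$ and let $m$ be the corresponding pulled-back Lebesgue measure on $V$; then $m$ is translation invariant and, by homogeneity of the norm, $m(\bar B_V(v,s))=s^d\,m(\bar B_V(0,1))$ for all $v\in V$, $s>0$, where $0<m(\bar B_V(0,1))<\infty$ (this is where finite-dimensionality of $V$ is used, but the particular norm is irrelevant). Summing the disjointness and containment just established,
\[
m(\bar B_V(0,1))\sum_{i\in I}\left(\tfrac{r_i}{4}\right)^d=\sum_{i\in I}m\!\left(\bar B_V(v_i,r_i/4)\right)\le m\!\left(\bar B_V(v_{i_0},5R)\right)=(5R)^d\,m(\bar B_V(0,1)),
\]
so $\sum_{i\in I}r_i^d\le 20^d R^d$, which gives the claim with $\kappa(d)=20^d$ (the constant being irrelevant). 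The only genuinely substantive point is the first step: recognizing that $\dist(x_i,V)<r_i/2$ is precisely what converts ``disjoint in $X$'' into ``disjoint inside the $d$-plane $V$ after a fixed shrinking'', with no projection needed; after that the estimate is a one-line volume count, and this is essentially the argument behind \cite[Lemma 3.1]{edelen2018effective}.
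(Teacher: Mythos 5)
The paper cites this as \cite[Lemma 3.1]{edelen2018effective} without reproducing a proof, so there is no in-paper argument to compare against; your proposal is a correct, self-contained proof and is essentially the argument given in that reference. The key steps all check out: disjointness of $B(x_i,r_i)$ in a normed space gives $|x_i-x_j|\ge r_i+r_j$ by convexity of the segment joining the centres; replacing each $x_i$ by a near-foot-point $v_i\in V$ with $|x_i-v_i|<r_i/2$ then yields $|v_i-v_j|>(r_i+r_j)/2$, so the slabs $\bar B_V(v_i,r_i/4)$ are pairwise disjoint in $V$ and all sit inside a ball of radius $O(R)$ in $V$; pulling back Lebesgue measure along any linear identification of the $d$-dimensional normed space $V-v_{i_0}$ with $\R^d$ gives a translation-invariant, $d$-homogeneous measure with $0<m(\bar B_V(0,1))<\infty$, and the packing bound $\sum r_i^d\lesssim_d R^d$ drops out. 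Your remark about avoiding any projection onto $V$ is a nice way to keep the argument genuinely norm-independent (and dimension-free in $n$), which matches the paper's stated intent of keeping the volume estimates independent of the ambient dimension.
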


\begin{lem}\label{l:children}
Let $E \subseteq \R^n,$ $Q \in \mathscr{D}$ and $M \geq 1.$ If $0 < \e  \leq \tfrac{c_0\rho}{2M}$ and
\[ \beta_{E,\infty}^d(MB_Q) \leq \e, \]
then $Q$ has at most $K = K(M,d)$ children, i.e. independent of $n.$
\end{lem}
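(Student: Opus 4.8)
The plan is to exploit the flatness hypothesis $\beta_{E,\infty}^d(MB_Q) \le \e$ to confine all of $Q$ to a thin slab around some $d$-plane $L$, and then count children via a volume/packing argument in that slab. First I would fix the $d$-plane $L$ achieving (up to a harmless factor) the infimum in $\beta_{E,\infty}^d(MB_Q)$, so that every $y \in E \cap MB_Q$ satisfies $\dist(y,L) \le \e\, r_{MB_Q} = \e M \ell(Q)$ (using $r_{B_Q} \sim \ell(Q)$ from Lemma \ref{cubes}). Since each child $Q' \in \mathrm{Child}(Q)$ has a center $x_{Q'} \in E$ with $B(x_{Q'}, c_0 \ell(Q')) \subseteq Q'$, and $\ell(Q') = \rho\,\ell(Q)$, the balls $B(x_{Q'}, c_0\rho\ell(Q)/2)$ are pairwise disjoint (distinct net points at scale $k(Q)+1$ are $\rho^{k(Q)+1}$-separated, and $c_0\rho/2 < 1$ with room to spare), all lie inside, say, $2B_Q \subseteq MB_Q$, and their centers satisfy $\dist(x_{Q'},L) \le \e M \ell(Q)$.

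Next I would choose the quantitative threshold so that $\dist(x_{Q'},L)$ is at most half the radius of the disjoint ball. With radius $r' = c_0\rho\ell(Q)/2$ we need $\e M \ell(Q) \le r'/2 = c_0\rho\ell(Q)/4$, i.e. $\e \le c_0\rho/(4M)$; the hypothesis gives $\e \le c_0\rho/(2M)$, which is within a factor of $2$, so after adjusting constants (e.g. shrinking the disjoint radius slightly, or using that one may take $L$ nearly optimal) the hypothesis $\dist(x_{Q'},L) < r'/2$ holds. Then Lemma \ref{ENV} applies to the family $\{B(x_{Q'}, r')\}_{Q' \in \mathrm{Child}(Q)}$: it is a pairwise disjoint family of balls of equal radius $r'$, contained in a ball of radius $R \sim M\ell(Q)$, with centers within $r'/2$ of the $d$-plane $L$. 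Hence
\begin{align}
\#\mathrm{Child}(Q)\cdot (r')^d = \sum_{Q' \in \mathrm{Child}(Q)} (r')^d \le \kappa(d)\, R^d,
\end{align}
so $\#\mathrm{Child}(Q) \le \kappa(d) (R/r')^d \lesssim_d (M/(c_0\rho))^d =: K(M,d)$, which is independent of the ambient dimension $n$ — exactly as the remark preceding Lemma \ref{ENV} advertises, since the only place dimension could enter, the packing bound, has been routed through Lemma \ref{ENV}.

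The main obstacle, such as it is, is bookkeeping with the constants: making sure the chosen disjoint radius $r'$ is simultaneously small enough that the balls $B(x_{Q'},r')$ are genuinely pairwise disjoint (which forces $r' \le$ half the net separation $\rho^{k(Q)+1} = \rho\ell(Q)/5$) and that $B(x_{Q'},r') \subseteq MB_Q$ (immediate since $M > 1$ and each child sits in $B(x_Q,\ell(Q))$), while the flatness bound $\e M\ell(Q) \le r'/2$ still follows from $\e \le c_0\rho/(2M)$. Choosing $r' = c_0\rho\ell(Q)/2$ meets the disjointness requirement (as $c_0\rho/2 = \rho/1000 < \rho/10$) and the containment, and the flatness requirement becomes $\e \le c_0\rho/(4M)$; if one wants to match the stated threshold $c_0\rho/(2M)$ exactly one instead takes $r' = c_0\rho\ell(Q)$, noting $c_0\rho = \rho/500 < \rho/10$ still gives disjointness, and then $\e M \ell(Q) \le r'/2$ reads $\e \le c_0\rho/(2M)$, precisely the hypothesis. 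With that choice the argument closes and $K = K(M,d) = \kappa(d)\,(CM/(c_0\rho))^d$ works.
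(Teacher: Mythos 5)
Your argument is correct and is essentially the paper's proof: both pick the approximating $d$-plane $L$ from the $\beta_{E,\infty}^d$ bound, observe that the balls $c_0B_R = B(x_R, c_0\rho\ell(Q))$ for $R \in \mathrm{Child}(Q)$ are pairwise disjoint, lie in $MB_Q$, and have centers within $\e M\ell(Q) \le r_{c_0B_R}/2$ of $L$, and then apply Lemma \ref{ENV} to get $\#\mathrm{Child}(Q) \le \kappa(d)(M/(c_0\rho))^d$. The only cosmetic difference is that you justify disjointness via the net separation at scale $k(Q)+1$, whereas the paper invokes $B(x_R,c_0\ell(R))\subseteq R$ together with the disjointness of siblings; both are valid.
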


\begin{proof}
The balls $\{c_0B_R\}_{R \in \text{Child}(Q)}$ are pairwise disjoint (recall $c_0$ from Lemma \ref{cubes}), contained in $MB_Q$, and have radius less than or equal to $r_{MB_Q}.$ Since 
\[\beta_{E,\infty}^d(MB_Q) \leq \e,\]
there exists a $d$-plane $P_Q$ such that 
\begin{align}
\text{dist}(y,P_Q) \leq \e M\ell(Q)
\end{align}
for all $y \in MB_Q.$ In particular, for any $R \in \text{Child}(Q),$ we have 
\[ \dist(x_R,P_Q) \leq \e M \ell(Q) \leq r_{c_0B_R}/2. \]
By Lemma \ref{ENV}, 
\[ \# \{R: R \in \text{Child}(Q) \}c_0^d\rho^d \ell(Q)^d = \sum_{R \in \text{Child}(Q)} (c_0\ell(R))^d \overset{\eqref{e:ENV}}{\leq} \kappa (M\ell(Q))^d,\]
from which the lemma follows by dividing through by $c_0^d\rho^d\ell(Q)^d.$
\end{proof}

As a simple corollary of the above lemma, we also get a bound on the number of descendants up to a specified generation. The constant here ends up also depending on the generation. 

\begin{lem}\label{l:des}
Let $E \subseteq \R^n$, $Q \in \mathscr{D}$, $M \geq 1$ and $k \geq 0.$ If $0 < \e < \tfrac{c_0 \rho}{2M}$ and 
\[ \beta^{d}_{E,\infty}(MB_Q) \leq \e \]
for all $R \in \emph{Des}_k(Q)$, then 
\begin{align}\label{e:des}
\sum_{R \in \emph{Des}_k(Q)} \ell(R)^d \lesssim_{d,M,k} \ell(Q)^d.
\end{align}
\end{lem}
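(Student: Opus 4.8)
The plan is to induct on the generation $k$, using Lemma \ref{l:children} to bound the number of children at each step. For $k=0$ the statement is trivial since $\mathrm{Des}_0(Q)=\{Q\}$ and the left-hand side is $\ell(Q)^d$. So assume the bound holds for generation $k-1$ with some constant $C(d,M,k-1)$. We decompose
\[
\sum_{R \in \mathrm{Des}_k(Q)} \ell(R)^d = \ell(Q)^d + \sum_{R' \in \mathrm{Child}(Q)} \sum_{R \in \mathrm{Des}_{k-1}(R')} \ell(R)^d.
\]
By Lemma \ref{l:children}, applied with the hypothesis $\beta^d_{E,\infty}(MB_Q)\le\e$, the cube $Q$ has at most $K=K(M,d)$ children, and each child $R'$ satisfies $\ell(R')=\rho\,\ell(Q)$. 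To invoke the inductive hypothesis on each $R'$, I need $\beta^d_{E,\infty}(MB_R)\le\e$ for all $R\in\mathrm{Des}_{k-1}(R')$; but $\mathrm{Des}_{k-1}(R')\subseteq\mathrm{Des}_k(Q)$, so this is guaranteed by the hypothesis of the lemma. Thus
\[
\sum_{R \in \mathrm{Des}_k(Q)} \ell(R)^d \le \ell(Q)^d + K\cdot C(d,M,k-1)\,(\rho\,\ell(Q))^d \le \big(1 + K\,C(d,M,k-1)\big)\,\ell(Q)^d,
\]
which closes the induction with $C(d,M,k) = 1 + K(M,d)\,C(d,M,k-1)$; unwinding gives $C(d,M,k)\lesssim K^{k}$, consistent with the claimed dependence on $k$.

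There is essentially no hard step here — the only points requiring a moment's care are that the hypothesis $\beta^d_{E,\infty}(MB_R)\le\e$ is stated for \emph{all} $R\in\mathrm{Des}_k(Q)$, so it is automatically inherited by every sub-collection $\mathrm{Des}_{k-1}(R')$ for $R'$ a child of $Q$, and that $\e<\tfrac{c_0\rho}{2M}$ is exactly the threshold needed to apply Lemma \ref{l:children} at each node. The constant in \eqref{e:des} does not depend on $n$ because Lemma \ref{l:children} (hence the per-node child bound $K$) is dimension-free, relying only on Lemma \ref{ENV}. If one wanted to avoid the explicit induction, one could instead note directly that the cubes in $\mathrm{Des}_k(Q)$ form a tree of depth $k$ with branching at most $K$ at every vertex, so there are at most $\sum_{i=0}^k K^i$ of them, each of side length $\ge\rho^k\ell(Q)$ and $\le\ell(Q)$; summing $\ell(R)^d\le\ell(Q)^d$ over at most $K^{k+1}$ cubes gives the bound immediately. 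Either way the argument is short, so I would present the inductive version for cleanliness.
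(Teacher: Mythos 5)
Your proof is correct and is exactly the argument the paper has in mind: the paper states Lemma \ref{l:des} without proof, explicitly flagging it as "a simple corollary" of Lemma \ref{l:children}, and both your inductive version and your tree-counting alternative are precisely that corollary. You also correctly read the hypothesis as $\beta^d_{E,\infty}(MB_R)\le\e$ for all $R\in\mathrm{Des}_k(Q)$ (the paper's statement has an evident typo with $B_Q$ in place of $B_R$), and you rightly observe that since $\mathrm{Des}_{k-1}(Q)\subseteq\mathrm{Des}_k(Q)$ the child bound from Lemma \ref{l:children} is available at every non-leaf node.
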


\subsection{Hausdorff-type content}
In this section we study the Hausdorff content $\mathscr{H}^{d,E}_{B,\infty}$ that we defined in the introduction. For the convenience of the reader, we state the definition again. 

\begin{rem}\label{r:consts}
Let us be explicit about the constant $c_1$ appearing in Theorem \ref{Thm1} and Theorem \ref{Thm4} and the constant $c_2$ appearing in Definition \ref{IntroDef}. We fix now
\[c_1 \coloneqq \frac{\omega_d \rho^d}{2^{d+1}3^d}\]
where $\rho$ is the constant appearing in Theorem \ref{cubes} and $\omega_d$ is the volume of the unit ball in $\R^d.$ We also fix 
\[c_2 \coloneqq 18^d\kappa,\]
with $\kappa$ as in Lemma \ref{ENV}. We shall comment on this choice of constants in Remark \ref{r:LR}. Basically, we have chosen $c_1$ sufficiently small and $c_2$ sufficiently large. 
\end{rem}

\begin{defn}\label{Cover}
Let $E \subseteq \R^n$, $B$ a ball. We say a collection of balls $\mathscr{B}$ which covers $E \cap B$ is \textit{good} if
\begin{align}\label{Size}
\sup_{B' \in \mathscr{B}} r_{B'} < \infty
\end{align}
and for all $x \in E \cap B$ and $0 <r <r_B,$ we have
\begin{align}\label{LR}
\sum_{\substack{B' \in \mathscr{B} \\ B' \cap B(x,r) \cap E \cap B \not= \emptyset}} r_{B'}^d \geq c_1 r^d
\end{align}
and
\begin{align}\label{UR}
\sum_{\substack{B' \in \mathscr{B} \\ B' \cap B(x,r) \cap E \cap B \not=\emptyset \\ r_{B'} \leq r}} r_{B'}^d \leq c_2 r^d.
\end{align}
Then, for $A \subseteq E \cap B,$ define 
\begin{align*}
\mathscr{H}^{d,E}_{B,\infty}(A) = \inf\left\{ \sum_{\substack{B^\prime \in \mathscr{B}\\ B^\prime \cap A \not= \emptyset}} r_{B^\prime}^d : \mathscr{B} \ \text{is good for} \ E \cap B \right\}
\end{align*}
\end{defn}

See Figure \ref{f:example} for an example of a good cover. Let us make some remarks concerning the above definition. 

\begin{rem}
For the usual Hausdorff content, $\mathscr{H}^d_\infty,$ all coverings of a set are permissible (see \eqref{e:HausCont}). In defining our new content, we restrict the permissible coverings to ensure all sets will have a lower regularity property with respect to this content (this is the role of \eqref{LR}). In addition, we require an upper regularity condition, \eqref{UR}. This is to ensure any cover we choose is sensible. In particular, it stops us constructing lower regular covers by just repeatedly adding the same ball over and over again. 
\end{rem}

\begin{rem}
We include \eqref{Size} because we want all balls in $\mathscr{B}$ to be contained in a bounded region.  
\end{rem}

\begin{rem}
If $E \subseteq \R^n$ and $B$ is a ball, then $\mathscr{B} = \{B\}$ is a good cover for $E \cap B.$ In particular, every set has a good cover. 
\end{rem}

\begin{figure}[ht]
  \centering
  \includegraphics[scale = 0.8]{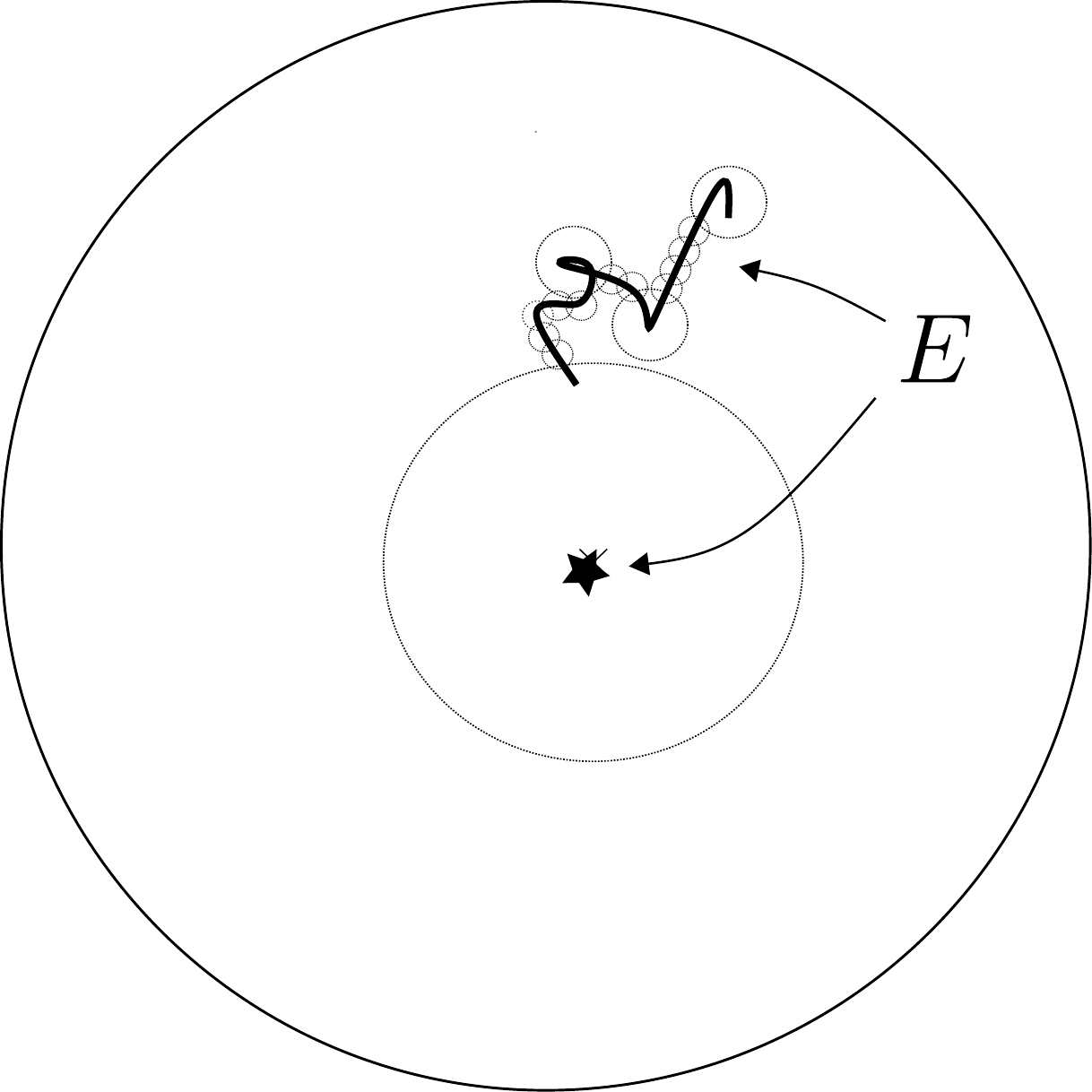}
\caption{Example of a good cover for $E \cap B$. Notice we need to cover the star segment with a large ball to ensure the cover is lower regular. We could equally have added many smaller balls as long as the upper regularity condition is not violated.}\label{f:example}
\end{figure}

\begin{rem}
It is easy to see that for any $A \subseteq E \cap B,$ we have $\mathscr{H}^d_\infty(A) \leq \mathscr{H}^{d,E}_{B,\infty}(A).$
\end{rem}

We shall now prove some basic properties of $\mathscr{H}^{d,E}_{B,\infty}.$ Before doing so we need some preliminary lemmas. The first is \cite[Lemma 2.5]{mattila1999geometry} and the second is modification of \cite[Lemma 2.6]{mattila1999geometry}, whose proof is essentially the same.

\begin{lem}\label{l:Mat}
Suppose $a,b \in \R^2, \ 0 < |a| \leq |a-b|$ and $0 < |b| \leq |a-b|$. Then the angle between the vectors $a$ and $b$ is at least $60^\circ$, that is,
\begin{align}
|a/|a| - b/|b|| \geq 1. 
\end{align}
\end{lem}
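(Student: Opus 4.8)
The final statement to prove is Lemma~\ref{l:Mat}, an elementary planar geometry fact: if $0<|a|\le|a-b|$ and $0<|b|\le|a-b|$ for vectors $a,b\in\R^2$, then $|a/|a|-b/|b||\ge 1$, equivalently the angle between $a$ and $b$ is at least $60^\circ$.

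\medskip

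\textbf{Plan.} The cleanest route is to reduce everything to the law of cosines. Write $\theta$ for the angle between $a$ and $b$, so that $a\cdot b = |a||b|\cos\theta$ and hence
\[
|a-b|^2 = |a|^2 + |b|^2 - 2|a||b|\cos\theta.
\]
The two hypotheses say $|a|^2\le|a-b|^2$ and $|b|^2\le|a-b|^2$. Adding these and substituting the identity above gives
\[
|a|^2 + |b|^2 \le 2|a-b|^2 = 2|a|^2 + 2|b|^2 - 4|a||b|\cos\theta,
\]
which rearranges to $4|a||b|\cos\theta \le |a|^2 + |b|^2$. That is not yet quite enough, so instead I would use the two hypotheses one at a time rather than summing: from $|a|^2 \le |a-b|^2 = |a|^2+|b|^2-2|a||b|\cos\theta$ we get $2|a||b|\cos\theta \le |b|^2$, i.e. $\cos\theta \le |b|/(2|a|)$; symmetrically $\cos\theta \le |a|/(2|b|)$. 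Multiplying these two inequalities (valid in the regime where both right-hand sides and $\cos\theta$ could be compared — one should first dispose of the case $\cos\theta\le 0$, where the conclusion $\theta\ge 90^\circ > 60^\circ$ is immediate) yields $\cos^2\theta \le \tfrac14$, so $\cos\theta \le \tfrac12$ and $\theta \ge 60^\circ$. Finally, $|a/|a| - b/|b||^2 = 2 - 2\cos\theta \ge 2 - 1 = 1$, giving the claim.

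\medskip

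\textbf{Main obstacle.} There is essentially no hard step here; the only thing requiring care is the case analysis on the sign of $\cos\theta$ before multiplying the two intermediate inequalities $\cos\theta\le|b|/(2|a|)$ and $\cos\theta\le|a|/(2|b|)$, since multiplying inequalities is only safe when all quantities involved are nonnegative. If $\cos\theta\le 0$ the conclusion is trivial; if $\cos\theta>0$ then both $|b|/(2|a|)$ and $|a|/(2|b|)$ are positive and the product step is legitimate. One could alternatively avoid the multiplication entirely by noting that $|b|/(2|a|) \ge \cos\theta$ and $|a|/(2|b|)\ge\cos\theta$ force $\cos\theta \le \min\{|b|/(2|a|), |a|/(2|b|)\} \le \tfrac12$, the last step because $\min\{s, 1/s\}\le 1$ for any $s>0$ applied with $s = |b|/|a|$. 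Either way the proof is a few lines.
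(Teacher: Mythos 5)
Your proof is correct. Note that the paper does not actually supply a proof of this lemma — it simply cites \cite[Lemma~2.5]{mattila1999geometry} — so there is no in-paper argument to compare against. Your law-of-cosines derivation is clean: from $|a|^2\le|a-b|^2$ and $|b|^2\le|a-b|^2$ you get $\cos\theta\le|b|/(2|a|)$ and $\cos\theta\le|a|/(2|b|)$, and the observation that $\min\{s,1/s\}\le 1$ for $s>0$ gives $\cos\theta\le\tfrac12$ without any case split on the sign of $\cos\theta$, after which $|a/|a|-b/|b||^2=2-2\cos\theta\ge 1$; this is a perfectly good self-contained replacement for the citation.
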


\begin{lem}\label{l:overlap}
There is $N(n) \in \N$ with the following property. Let $B$ be a ball and suppose there are $k$ disjoint balls $B_1,\dots,B_k$ such that $r_{B_i} \geq r_B$ and $B \cap B_i \not=\emptyset$ for all $i=1,\dots,k.$ Then $k \leq N(n).$
\end{lem}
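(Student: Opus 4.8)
\textbf{Proof proposal for Lemma \ref{l:overlap}.}

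The plan is to use a standard volume-packing argument combined with Lemma \ref{l:Mat} to control the overlap, exactly as in the proof of \cite[Lemma 2.6]{mattila1999geometry}. First I would fix the ball $B = B(x,r)$ and, after rescaling, assume without loss of generality that $x = 0$ and $r = 1$. For each $i$, since $B_i$ is disjoint from the other $B_j$ but meets $B$, its centre $y_i$ satisfies $|y_i| \le r + r_{B_i} \le 2 r_{B_i}$ (using $r_{B_i} \ge r = 1$), so in particular $|y_i| \ge$ something is not guaranteed — one must separate the centres that are close to $0$ from those that are far. The key point is that at most a bounded number (depending only on $n$) of the $y_i$ can lie within distance, say, $2$ of the origin while being centres of pairwise disjoint balls of radius $\ge 1$, by the usual volume count in $B(0,4)$.

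The remaining centres satisfy $|y_i| > 2 \ge 2 r \ge$ (a fixed multiple of $r$), and for these I would invoke Lemma \ref{l:Mat}: if $y_i, y_j$ are two such centres with, after relabelling, $|y_i| \le |y_j|$, then because $B_i \cap B_j = \emptyset$ we have $|y_i - y_j| \ge r_{B_i} + r_{B_j} \ge \max\{|y_i|, |y_j|\}$ up to the normalisation — more precisely one shows $|y_i| \le |y_i - y_j|$ and $|y_j| \le |y_i - y_j|$, so the angle between $y_i$ and $y_j$ is at least $60^\circ$, i.e. the unit vectors $y_i/|y_i|$ are $1$-separated on the unit sphere of $\R^n$. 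Since the unit sphere admits only $N'(n)$ points that are pairwise $1$-separated, there are at most $N'(n)$ such far centres. Combining the two bounds gives $k \le N(n)$ for a constant $N(n)$ depending only on $n$.

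The one technical wrinkle — and the main thing to get right — is the inequality $|y_i - y_j| \ge \max\{|y_i|,|y_j|\}$ needed to apply Lemma \ref{l:Mat}: disjointness gives $|y_i - y_j| \ge r_{B_i} + r_{B_j}$, and since both $B_i, B_j$ meet $B = B(0,1)$ we have $|y_i| \le 1 + r_{B_i} \le 2 r_{B_i}$ and similarly for $j$, hence $|y_i - y_j| \ge r_{B_i} + r_{B_j} \ge \tfrac12(|y_i| + |y_j|) \ge \tfrac12\max\{|y_i|,|y_j|\}$. This is off by a factor of $2$ from what Lemma \ref{l:Mat} literally requires, so in practice one first removes all centres with $|y_i| \le C$ for a suitable absolute constant $C$ (handled by the volume count in a fixed ball), after which $|y_i| \gg r_{B_i}$ fails — so instead the clean route is: those $B_i$ with $r_{B_i} \le |y_i|$ are handled by Lemma \ref{l:Mat} directly (disjointness then forces $|y_i - y_j| \ge r_{B_i} + r_{B_j} \ge |y_i|$ when $|y_i| \le |y_j|$... wait, this needs $r_{B_j} \ge |y_i| - r_{B_i}$), and those with $r_{B_i} > |y_i|$ have $0 \in B_i$ and there are at most $N''(n)$ of those by a volume count. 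I expect sorting this case split cleanly to be the only real obstacle; the geometry is otherwise routine.
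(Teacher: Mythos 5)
You had the right ingredients---Lemma~\ref{l:Mat} plus the disjointness and intersection conditions---but the argument as written does not close, and the paper's proof shows you didn't need any of the case splits you introduced. The confusion comes from weakening the bound $|y_i| \le r + r_{B_i}$ to $|y_i| \le 2r_{B_i}$ too early. Keep the tight form and the chain closes directly: since $B_j$ meets $B = B(0,1)$ and $r_{B_j} \ge 1$, disjointness gives
\[
|y_i - y_j| \;>\; r_{B_i} + r_{B_j} \;\ge\; r_{B_i} + 1 \;\ge\; |y_i|,
\]
and by symmetry $|y_i - y_j| \ge |y_j|$. So the hypotheses of Lemma~\ref{l:Mat} hold for every pair, the unit vectors $y_i/|y_i|$ are pairwise $1$-separated on $S^{n-1}$, and compactness gives $k \le N(n)$ in one step. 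There is no need to separate ``near'' centres from ``far'' centres, and no volume-packing count is required. The quantity ``off by a factor of $2$'' you worried about only appears because you replaced the $1$ (i.e.\ $r_B$) by $r_{B_i}$ on the right; in the disjointness bound the $r_{B_j}$ term supplies exactly the $1$ you need, not an $r_{B_i}$.

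One point you omitted entirely (and the paper treats explicitly): Lemma~\ref{l:Mat} requires $0 < |a|$, so you must first dispose of the possibility that some $y_i = 0$. But if $B_i$ is centred at the origin with $r_{B_i} \ge r_B$, then $B \subseteq B_i$, and any other $B_j$ meeting $B$ would meet $B_i$, contradicting disjointness; hence $k = 1$ in that case. With that degenerate case set aside, the clean inequality above finishes the proof.
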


\begin{proof}
We may assume $B$ is centred at the origin. If one of the $B_i$ is centred at the origin then $k =1$, so assume this is not the case. Let $B_i = B(x_i,r_i).$ For $i \not=j$, since $B_i \cap B_j =\emptyset$, we have
$$|x_i - x_j| > r_i + r_j > r_i + r_B,$$
and so
\begin{align}
0< |x_i| \leq r_B + r_i \leq |x_i - x_j| \quad \text{for} \ i\not=j. 
\end{align}
Applying Lemma \ref{l:Mat} with $a =x_i$ and $b = x_j$ for $i \not=j$ in the two dimensional plane containing $0,x_i,x_j$, we obtain 
\begin{align}
| x_i/|x_i| - x_j/|x_j|| \geq 1 \quad \text{for} \ i \not=j.
\end{align}
Since the unit sphere $S^{n-1}$ is compact there are at most $N(n)$ such points.
\end{proof}

\begin{lem}\label{HausEasy}
Let $E \subseteq \R^n,$ and $B$ be a ball. Then,
\begin{enumerate}
\item $\mathscr{H}^{d,E}_{B ,\infty}(E \cap B(x,r)) \geq c_1 r^d$ for all $x \in E \cap B, \  0< r \leq r_B.$ 
\item If $A_1 \subseteq A_2 \subseteq E \cap B$, then $\mathscr{H}^{d,E}_{B ,\infty}(A_1) \leq \mathscr{H}^{d,E}_{B ,\infty}(A_2).$
\item If $B_1 \subseteq B_2$  and $A \subseteq E \cap B_1,$ then  $\mathscr{H}^{d,E}_{B_1,\infty}(A) \leq \mathscr{H}^{d,E}_{B_2,\infty}(A)$. 
\item Suppose $E \cap B = E_1 \cup E_2.$ Then $\mathscr{H}^{d,E}_{B,\infty}(E \cap B) \lesssim \mathscr{H}^{d,E}_{B,\infty}(E_1) +  \mathscr{H}^{d,E}_{B,\infty}(E_2).$ 
\end{enumerate}
\end{lem}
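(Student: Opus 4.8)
The plan is to prove the four items roughly in the order stated, since each is a short consequence of the definition of $\mathscr{H}^{d,E}_{B,\infty}$ together with the defining properties \eqref{LR}, \eqref{UR} of a good cover. For item (1), I would take any good cover $\mathscr{B}$ of $E\cap B$, and observe that the balls $B'\in\mathscr{B}$ meeting $E\cap B(x,r)$ must in particular meet $E\cap B(x,r)\cap E\cap B$ (using $B(x,r)\subseteq B$), so the lower-regularity bound \eqref{LR} applied at the pair $(x,r)$ gives
\begin{align*}
\sum_{\substack{B'\in\mathscr{B}\\ B'\cap E\cap B(x,r)\neq\emptyset}} r_{B'}^d \;\geq\; \sum_{\substack{B'\in\mathscr{B}\\ B'\cap B(x,r)\cap E\cap B\neq\emptyset}} r_{B'}^d \;\geq\; c_1 r^d.
\end{align*}
Taking the infimum over good covers yields $\mathscr{H}^{d,E}_{B,\infty}(E\cap B(x,r))\geq c_1 r^d$. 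One small point to be careful about: \eqref{LR} is stated for $0<r<r_B$, so the endpoint case $r=r_B$ needs either a direct argument (the single ball $B$ still works, and any good cover contains balls covering $E\cap B$) or a limiting argument letting $r\uparrow r_B$; I would just remark that \eqref{LR} extends to $r=r_B$ by taking the union of the conditions for $r<r_B$, or equivalently monotonicity.

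Item (2) is immediate monotonicity: if $A_1\subseteq A_2$, then $\{B'\in\mathscr{B}:B'\cap A_1\neq\emptyset\}\subseteq\{B'\in\mathscr{B}:B'\cap A_2\neq\emptyset\}$ for every good cover $\mathscr{B}$, so the sum defining the content of $A_1$ is at most that for $A_2$, and we take infima. Item (3) requires checking that a good cover for $E\cap B_2$ restricts to something usable for $E\cap B_1$: given $\mathscr{B}$ good for $E\cap B_2$, I claim $\mathscr{B}$ is also good for $E\cap B_1$. Indeed \eqref{Size} is unchanged; for \eqref{LR} and \eqref{UR} at a pair $(x,r)$ with $x\in E\cap B_1$, $0<r<r_{B_1}$, we have $x\in E\cap B_2$ and $r<r_{B_1}\leq r_{B_2}$ (using $B_1\subseteq B_2$), and the events ``$B'\cap B(x,r)\cap E\cap B_1\neq\emptyset$'' imply ``$B'\cap B(x,r)\cap E\cap B_2\neq\emptyset$'', so the lower bound \eqref{LR} for $E\cap B_2$ only gets better when we enlarge the index set, giving \eqref{LR} for $E\cap B_1$; and for \eqref{UR}, shrinking the index set only decreases the sum, so the upper bound persists. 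Hence every good cover for $E\cap B_2$ is good for $E\cap B_1$, the infimum defining $\mathscr{H}^{d,E}_{B_1,\infty}(A)$ is over a larger family, and the inequality follows. (There is a subtlety: the cover must \emph{cover} $E\cap B_1$, which it does since it covers $E\cap B_2\supseteq E\cap B_1$.)

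Item (4) is the main obstacle, because subadditivity is not built into the definition: gluing a good cover $\mathscr{B}_1$ of $E_1$ to a good cover $\mathscr{B}_2$ of $E_2$ to form $\mathscr{B}_1\cup\mathscr{B}_2$ preserves the upper count \eqref{UR} only up to a factor of $2$ (hence the $\lesssim$ rather than $\leq$ in the statement), and one must check the union still satisfies \eqref{LR}, \eqref{Size} and covers $E\cap B$. Covering and \eqref{Size} are clear. For \eqref{LR}: given $x\in E\cap B$, say $x\in E_1$ (wlog), then the $\mathscr{B}_1$-balls alone already give $\sum r_{B'}^d\geq c_1 r^d$, and adding the $\mathscr{B}_2$-balls only increases the sum. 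For \eqref{UR}: at any pair $(x,r)$, the contribution from $\mathscr{B}_1$-balls of radius $\leq r$ meeting $B(x,r)\cap E\cap B$ is $\leq c_2 r^d$ if $x\in E_1$ — but if $x\notin E_1$ this is not directly given by \eqref{UR} for $\mathscr{B}_1$. To handle this cleanly, I would instead argue that a good cover for $E\cap B$ need only satisfy \eqref{UR}/\eqref{LR} ``as seen from points of $E\cap B$'', and use that $E\cap B=E_1\cup E_2$ so every relevant base point lies in one of the two pieces; alternatively, and more robustly, I would replace each $\mathscr{B}_i$ by its union with the single ball $\{B\}$ (which contributes $r_B^d$ to the total and makes the cover trivially lower-regular and globally controlled), noting $r_B^d\lesssim \mathscr{H}^{d,E}_{B,\infty}(E_i)$ is \emph{not} available — so the honest route is: take near-optimal good covers $\mathscr{B}_i$ for $E_i$ with $\sum_{B'\cap E_i\neq\emptyset}r_{B'}^d\leq \mathscr{H}^{d,E}_{B,\infty}(E_i)+\e$, form $\mathscr{B}=\mathscr{B}_1\cup\mathscr{B}_2$, verify it is good with $c_2$ replaced by $2c_2$ (which is harmless — either one re-reads the definition with a larger admissible constant, or one notes $c_2$ was chosen ``sufficiently large'' in Remark \ref{r:consts} so $2c_2$ is still fine, or one thins $\mathscr{B}$), and conclude
\begin{align*}
\mathscr{H}^{d,E}_{B,\infty}(E\cap B)\;\leq\;\sum_{\substack{B'\in\mathscr{B}\\ B'\cap(E\cap B)\neq\emptyset}}r_{B'}^d\;\leq\;\sum_{\substack{B'\in\mathscr{B}_1\\ B'\cap E_1\neq\emptyset}}r_{B'}^d+\sum_{\substack{B'\in\mathscr{B}_2\\ B'\cap E_2\neq\emptyset}}r_{B'}^d\;\leq\;\mathscr{H}^{d,E}_{B,\infty}(E_1)+\mathscr{H}^{d,E}_{B,\infty}(E_2)+2\e,
\end{align*}
and let $\e\to 0$. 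The one genuine thing to nail down is that $\mathscr{B}_1\cup\mathscr{B}_2$ satisfies the lower-regularity bound \eqref{LR} at \emph{every} $x\in E\cap B$ — which is exactly where $E\cap B=E_1\cup E_2$ is used — and that the upper bound \eqref{UR} only degrades by the constant factor $2$, which is absorbed into the $\lesssim$.
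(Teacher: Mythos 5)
Your treatments of (1)--(3) are fine and essentially match the paper's; the endpoint remark for (1) (using monotonicity to get $r=r_B$) is a reasonable footnote, and your observation in (3) that any good cover of $E\cap B_2$ is also good for $E\cap B_1$ is exactly the paper's argument.

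Your argument for (4), however, has a genuine gap, and it sits in the displayed inequality you call the conclusion. Recall that, by Definition \ref{Cover}, $\mathscr{H}^{d,E}_{B,\infty}(E_i)$ is an infimum over good covers $\mathscr{B}$ of the \emph{entire} set $E\cap B$, with only those balls meeting $E_i$ counted. So your near-optimal $\mathscr{B}_1$ and $\mathscr{B}_2$ are both covers of all of $E\cap B$. Now take $B'\in\mathscr{B}_1$ with $B'\cap(E\cap B)\neq\emptyset$ but $B'\cap E_1=\emptyset$ (so $B'$ meets only the $E_2$ part). Such a ball is counted on the left of
\[
\sum_{\substack{B'\in\mathscr{B}_1\cup\mathscr{B}_2\\ B'\cap(E\cap B)\neq\emptyset}} r_{B'}^d \leq \sum_{\substack{B'\in\mathscr{B}_1\\ B'\cap E_1\neq\emptyset}} r_{B'}^d + \sum_{\substack{B'\in\mathscr{B}_2\\ B'\cap E_2\neq\emptyset}} r_{B'}^d,
\]
but appears in neither sum on the right: it is excluded from the first because it misses $E_1$, and from the second because it belongs to $\mathscr{B}_1$, not $\mathscr{B}_2$. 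The inequality is therefore false in general, and this is precisely the obstruction that makes (4) nontrivial. (There is also a secondary problem: if $\mathscr{B}_1\cup\mathscr{B}_2$ is only good with the weaker constant $2c_2$, it is not admissible in the infimum defining $\mathscr{H}^{d,E}_{B,\infty}$, so the first inequality in your display would also need justification; ``re-reading the definition with a larger constant'' changes the quantity you are bounding.)

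The paper avoids the union entirely. It observes that $\mathscr{B}_1$ alone is already a good cover of $E\cap B$ and writes
\[
\mathscr{H}^{d,E}_{B,\infty}(E\cap B) \leq \sum_{\substack{B'\in\mathscr{B}_1\\ B'\cap E\cap B\neq\emptyset}} r_{B'}^d
= \sum_{\substack{B'\in\mathscr{B}_1\\ B'\cap E_1\neq\emptyset}} r_{B'}^d + \sum_{B'\in\mathscr{B}_1'} r_{B'}^d,
\]
where $\mathscr{B}_1'$ consists of the $\mathscr{B}_1$-balls meeting $E\cap B$ but missing $E_1$. The first sum is controlled by $\mathscr{H}^{d,E}_{B,\infty}(E_1)+\e$, and the real work is showing $\sum_{B'\in\mathscr{B}_1'} r_{B'}^d \lesssim \sum_{B''\in\mathscr{B}_2,\ B''\cap E_2\neq\emptyset} r_{B''}^d$. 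That comparison between the two different covers is done by splitting $\mathscr{B}_1'$ according to whether each ball has a nearby $\mathscr{B}_2$-ball of comparable or larger radius (handled via \eqref{UR} for $\mathscr{B}_1$) or not (handled via a Vitali-type subfamily, Lemma \ref{l:overlap}, \eqref{UR} again, and \eqref{LR} for $\mathscr{B}_2$). None of that appears in your sketch, and it cannot be replaced by a simple gluing of covers. You would need to supply this cross-cover comparison to make (4) go through.
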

\begin{proof}
Property (1) is an immediate consequence of Definition \ref{Cover} since any good cover $\mathscr{B}$ of $E \cap B$ satisfies \eqref{LR}. Property (2) is also clear from Definition \ref{Cover}. If $B_1 \subseteq B_2$ then any good cover for $B_2$ is also a good cover for $B_1,$ and (3) follows. 

Let us prove (4). By scaling and translating, we may prove (4) for $\B,$ the unit ball in $\R^n$ centred at the origin. Let $\e > 0$ and suppose $\mathscr{B}_i, \  i=1,2$ are good covers of $E \cap \B$ such that
\begin{align}\label{e:12.1}
\sum_{\substack{B \in \mathscr{B}_i \\ B \cap E_i \not=\emptyset}} r_B^d \leq \mathscr{H}^{d,E}_{\B,\infty}(E_i) + \e/2.
\end{align}
Let $\mathscr{B}_1^\prime$ be the collection of balls $B \in \mathscr{B}_1$ such that $B \cap E_1 = \emptyset.$ It suffices to show
\begin{align}\label{l:qwe}
\sum_{B \in \mathscr{B}_1'} r_{B}^d \lesssim \sum_{\substack{B' \in \mathscr{B}_2 \\ B' \cap E_2 \not=\emptyset}} r_{B'}^d
\end{align}
since if \eqref{l:qwe} were true then
\begin{align}
\mathscr{H}^{d,E}_{\B,\infty}(E \cap \B) &\leq \sum_{\substack{B \in \mathscr{B}_1 \\ B \cap E \cap \B \not=\emptyset}} r_{B}^d = \sum_{\substack{B \in \mathscr{B}_1 \\ B \cap E_1 \not=\emptyset}} r_{B}^d + \sum_{B \in \mathscr{B}_1^\prime} r_{B}^d \\
& \lesssim  \sum_{\substack{B \in \mathscr{B}_1 \\ B \cap E_1 \not=\emptyset}} r_{B}^d + \sum_{\substack{B' \in \mathscr{B}_2 \\ B' \cap E_2 \not=\emptyset}} r_{B^\prime}^d \\
&\stackrel{\eqref{e:12.1}}{\leq} \mathscr{H}^{d,E}_{\B,\infty}(E_1) + \mathscr{H}^{d,E}_{\B,\infty}(E_2) + \e.
\end{align}
Since $\e$ was arbitrary we get (4). 

For the remainder of the proof, we focus on \eqref{l:qwe}. To begin with, we partition $\mathscr{B}_1^\prime$ into two further collection. Define
\begin{align}
\mathscr{B}^\prime_{1,1} = \{B \in \mathscr{B}^\prime_1 : \text{there is $B^{\prime} \in \mathscr{B}_2$ with} \ B \cap B^{\prime} \cap E \cap \B\not=\emptyset \ \text{and} \ r_{B^{\prime}} \geq r_{B} \}
\end{align}
and
\begin{align}
\mathscr{B}^\prime_{1,2} = \{B \in \mathscr{B}^\prime_1 : r_{B^{\prime}} < r_{B} \ \text{for all $B^{\prime} \in \mathscr{B}_2$ such that} \ B \cap B^{\prime} \cap E \cap \B\not=\emptyset \}.
\end{align}
\begin{figure}
  \centering
  \includegraphics[scale=0.5]{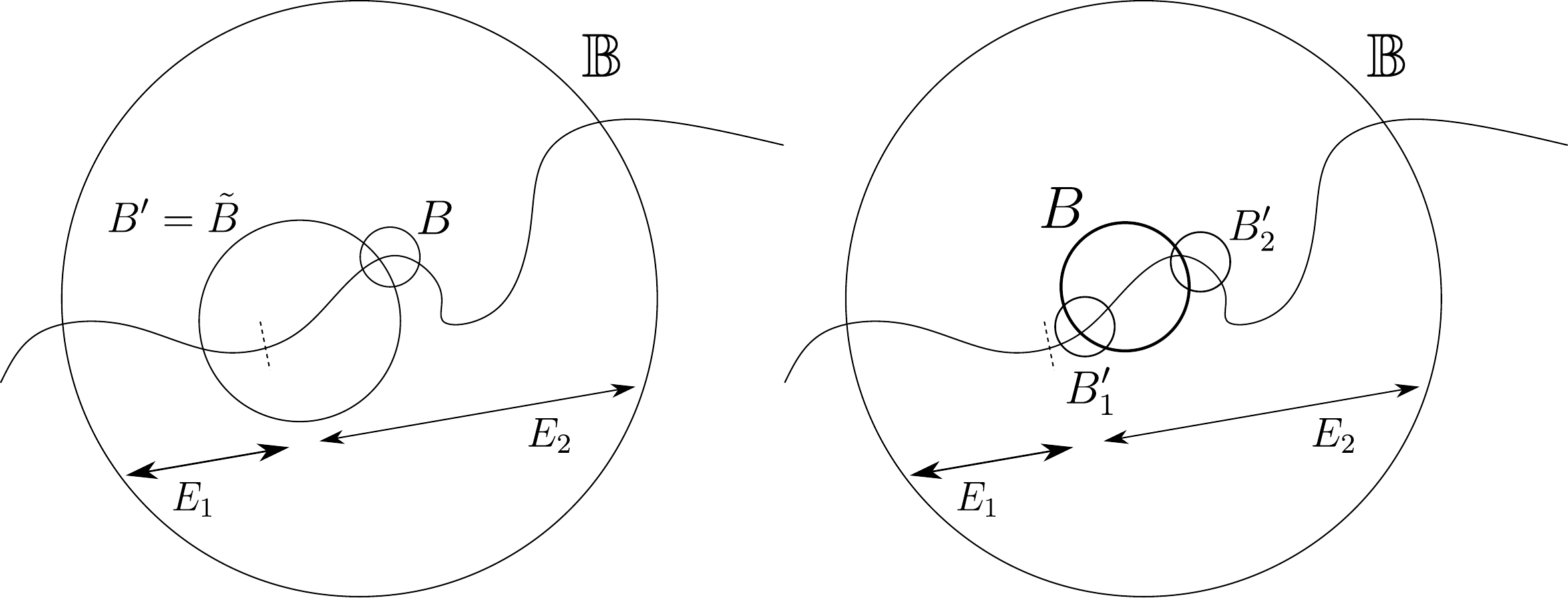}
\caption{Illustration of the balls in $\mathscr{B}_{1,1}'$ (left) and $\mathscr{B}_{1,2}'$ (right).}\label{f:BasicProp}
\end{figure}
See Figure \ref{f:BasicProp}. For $B \in \mathscr{B}_{1,1}^\prime$ let $\tilde{B}$ be the ball in $\mathscr{B}_2$ such that $\tilde{B} \cap B \cap E \cap \B\not=\emptyset$ and $r_{\tilde{B}} \geq r_{B}.$ If there is more than one such ball we can choose $\tilde{B}$ arbitrarily. Since $B \cap E_1 = \emptyset$ and $E \cap \B = E_1 \cup E_2,$ it must be that $B \cap E \cap \B \subseteq E_2,$ hence $\tilde{B} \cap E_2 \not=\emptyset.$  Then 
\begin{align}\label{e:flight}
\sum_{B \in \mathscr{B}^\prime_{1,1}} r_{B}^d 
&= \sum_{\substack{B^{\prime} \in \mathscr{B}_2 \\ B^{\prime} \cap E_2 \not=\emptyset}} \sum_{\substack{B \in \mathscr{B}_{1,1}^\prime \\ \tilde{B} = B^{\prime}}} r_{B}^d 
\leq \sum_{\substack{B^{\prime} \in \mathscr{B}_2 \\ B^{\prime} \cap E_2 \not=\emptyset}} \sum_{\substack{B \in \mathscr{B}_1 \\ B \cap B^{\prime} \cap E \cap \B\not=\emptyset \\ r_{B} \leq r_{B^{\prime}}}} r_{B}^d \\
&\overset{\eqref{UR}}{\leq} c_2 \sum_{\substack{B^{\prime} \in \mathscr{B}_2 \\ B^{\prime} \cap E_2 \not=\emptyset}} r_{B^{\prime}}^d.
\end{align}
We turn our attention to $\mathscr{B}_{1,2}'$. Let $B_1$ be the largest ball in $\mathscr{B}_{1,2}^\prime$. Then, given $B_1,\dots,B_k,$ define $B_{k+1}$ to be the largest ball $B \in \mathscr{B}_{1,2}^\prime$ such that
\begin{align}
E \cap \B \cap B \cap \bigcup_{i=1}^k B_i = \emptyset.
\end{align}
Let $\{B_i\}$ be the resulting (possibly infinite) disjoint collection of balls. By \eqref{Size} each ball in $\mathscr{B}_{1,2}'$ is contained in some bounded region, so a standard volume argument shows that for any $R > 0$ there at most a finite number of disjoint balls $B \in \mathscr{B}_{1,2}'$ such that $r_{B} \geq R$ and $E \cap \B \cap B \not=\emptyset$. Hence, for any $B \in \mathscr{B}_{1,2}^\prime$ there exists $B_i$ such that $B \cap B_i \cap E \cap \B \not=\emptyset.$ Moreover, for any $B_i$ such that $B \cap B_i \cap E \cap \B \not=\emptyset$, we have $r_{B} \leq r_{B_i}.$ Thus,
\begin{align}
\sum_{B \in \mathscr{B}_{1,2}^\prime} r_{B}^d \leq \sum_{i = 1}^\infty \sum_{\substack{B \in \mathscr{B}_{1,2}^\prime \\ B \cap B_i \cap E \cap \B \not= \emptyset}} r_{B}^d \overset{\eqref{UR}}{\leq} c_2 \sum_{i=1}^\infty r_{B_i}^d. 
\end{align}
By Lemma \ref{l:overlap}, for any $B^\prime \in \mathscr{B}_2,$ we have
\begin{align}\label{e:12.2}
\#\{B_i : B^\prime \cap B_i \cap E \cap \B \not=\emptyset\} \lesssim_n 1.
\end{align}
As before, since $B_i \cap E_1 = \emptyset,$ if $B' \in \mathscr{B}_2$ satisfies $B' \cap B_i \cap E \cap \B \not=\emptyset$ for some $i,$ then $B' \cap E_2 \not=\emptyset.$ Hence
\begin{align}
\sum_{B \in \mathscr{B}_{1,2}^\prime} r_B^d 
&\leq c_2 \sum_{i=1}^\infty r_{B_i}^d 
\overset{\eqref{LR}}{\leq} \frac{c_2}{c_1} \sum_{i=1}^\infty \sum_{\substack{B^\prime \in \mathscr{B}_2 \\ B^\prime \cap E_2 \not=\emptyset \\ B^\prime \cap B_i \cap E \cap \B\not= \emptyset }} r_{B^\prime}^d \\
&\leq \frac{c_2}{c_1}\sum_{\substack{B^\prime \in \mathscr{B}_2 \\ B^\prime \cap E_2 \not=\emptyset}} \sum_{\substack{i \\ B^\prime \cap B_i \cap E \cap \B \not=\emptyset}} r_{B^\prime}^d \stackrel{\eqref{e:12.2}}{\lesssim} \frac{c_2}{c_1} \sum_{\substack{B^\prime \in \mathscr{B}_2 \\ B^\prime \cap E_2 \not=\emptyset}} r_{B^\prime}^d.
\end{align}
Combining this with \eqref{e:flight} finishes the proof of \eqref{l:qwe}.
\end{proof}

For a function $f:\R^n \rightarrow [0,\infty),$ define integration with respect to $\mathscr{H}^{d,E}_{B,\infty}$ via the Choquet integral:
\begin{align}
\int f \, d\mathscr{H}^{d,E}_{B,\infty} \coloneqq \int_0^\infty  \mathscr{H}^{d,E}_{B,\infty}(\{x \in E \cap B : f(x) > t \}) \, dt.
\end{align}

We state some basic properties of the above Choquet integral, see \cite{wang2011some} for more details. Note, in \cite{wang2011some} there are additional upper and lower continuity assumption, but these are not required for the following lemma.

\begin{lem}\label{Choquet}
Let $f,g: \R^n \rightarrow [0,\infty)$ such that $f \leq g$ and $\alpha >0$. Then 
\begin{enumerate}
\item $\int f \, d\mathscr{H}^{d,E}_{B,\infty} \leq  \int g \, d\mathscr{H}^{d,E}_{B,\infty};$
\item $ \int (f + \alpha) \, d\mathscr{H}^{d,E}_{B,\infty} = \int f  \, d\mathscr{H}^{d,E}_{B,\infty} + \alpha \mathscr{H}^{d,E}_{B,\infty}(E\cap B);$
\item $\int \alpha f  \, d\mathscr{H}^{d,E}_{B,\infty} =\alpha \int f \, d\mathscr{H}^{d,E}_{B,\infty}.$
\end{enumerate}
\end{lem}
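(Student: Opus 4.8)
The statement to prove is Lemma~\ref{Choquet}, listing three standard properties of the Choquet integral against the content $\mathscr{H}^{d,E}_{B,\infty}$. Here is how I would proceed.

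\textbf{Overall approach.} All three properties reduce to manipulations of the layer-cake (distribution function) representation $\int f \, d\mathscr{H}^{d,E}_{B,\infty} = \int_0^\infty \mathscr{H}^{d,E}_{B,\infty}(\{x \in E\cap B : f(x) > t\})\, dt$, together with the monotonicity of $\mathscr{H}^{d,E}_{B,\infty}$ established in Lemma~\ref{HausEasy}(2). The only structural facts I need are: $\mathscr{H}^{d,E}_{B,\infty}$ is monotone under inclusion of subsets of $E\cap B$; $\mathscr{H}^{d,E}_{B,\infty}(E\cap B)$ is finite (since $\{B\}$ is a good cover, giving $\mathscr{H}^{d,E}_{B,\infty}(E\cap B) \le r_B^d < \infty$); and the one-variable change of variables in a Lebesgue integral on $[0,\infty)$.

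\textbf{Step 1 (monotonicity, property (1)).} If $f \le g$ pointwise, then for every $t > 0$ we have the inclusion $\{x\in E\cap B : f(x) > t\} \subseteq \{x \in E\cap B : g(x) > t\}$. By Lemma~\ref{HausEasy}(2), $\mathscr{H}^{d,E}_{B,\infty}(\{f > t\}) \le \mathscr{H}^{d,E}_{B,\infty}(\{g > t\})$ for each $t$. Integrating this inequality in $t$ over $(0,\infty)$ gives (1).

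\textbf{Step 2 (vertical translation, property (2)).} For $\alpha > 0$ write the distribution function of $f + \alpha$: for $0 < t \le \alpha$ we have $\{x : f(x) + \alpha > t\} = E\cap B$ (since $f \ge 0$), and for $t > \alpha$ we have $\{x : f(x) + \alpha > t\} = \{x : f(x) > t - \alpha\}$. Hence
\begin{align*}
\int (f+\alpha)\, d\mathscr{H}^{d,E}_{B,\infty}
&= \int_0^\alpha \mathscr{H}^{d,E}_{B,\infty}(E\cap B)\, dt + \int_\alpha^\infty \mathscr{H}^{d,E}_{B,\infty}(\{f > t-\alpha\})\, dt \\
&= \alpha\, \mathscr{H}^{d,E}_{B,\infty}(E\cap B) + \int_0^\infty \mathscr{H}^{d,E}_{B,\infty}(\{f > s\})\, ds,
\end{align*}
where in the last integral I substituted $s = t - \alpha$. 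This is exactly (2); finiteness of $\mathscr{H}^{d,E}_{B,\infty}(E\cap B)$ ensures the first term is a genuine (finite) real number rather than a formal $\infty - \infty$.

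\textbf{Step 3 (positive homogeneity, property (3)).} For $\alpha > 0$, $\{x : \alpha f(x) > t\} = \{x : f(x) > t/\alpha\}$, so substituting $s = t/\alpha$ (so $dt = \alpha\, ds$) yields
\begin{align*}
\int \alpha f\, d\mathscr{H}^{d,E}_{B,\infty} = \int_0^\infty \mathscr{H}^{d,E}_{B,\infty}(\{f > t/\alpha\})\, dt = \alpha \int_0^\infty \mathscr{H}^{d,E}_{B,\infty}(\{f > s\})\, ds = \alpha \int f\, d\mathscr{H}^{d,E}_{B,\infty}.
\end{align*}

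\textbf{Main obstacle.} There is essentially no obstacle: the lemma is soft and follows purely from the layer-cake formula plus monotonicity of the content. The only point requiring a moment's care is the bookkeeping in (2) — splitting the $t$-integral at $t = \alpha$ and noting that $\mathscr{H}^{d,E}_{B,\infty}(E\cap B) < \infty$ so that the identity is not vacuous — and remembering that all the sets involved are automatically subsets of $E\cap B$, so that the monotonicity statement of Lemma~\ref{HausEasy}(2) applies directly. (Measurability of the distribution functions in $t$ is not an issue: $t \mapsto \mathscr{H}^{d,E}_{B,\infty}(\{f > t\})$ is non-increasing, hence Borel measurable, so all the integrals above are well-defined Lebesgue integrals.)
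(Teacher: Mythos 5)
Your proof is correct and complete. The paper itself does not give a proof of Lemma~\ref{Choquet}; it simply cites \cite{wang2011some} and remarks that the additional continuity hypotheses there are not needed. Your layer-cake argument is the natural self-contained proof, and it verifies exactly the facts that the paper relies on: monotonicity of $\mathscr{H}^{d,E}_{B,\infty}$ under inclusion (Lemma~\ref{HausEasy}(2)), finiteness of $\mathscr{H}^{d,E}_{B,\infty}(E\cap B)$ (which follows since $\{B\}$ is a good cover, so $\mathscr{H}^{d,E}_{B,\infty}(E\cap B) \le r_B^d < \infty$), and the monotonicity of the distribution function in $t$ to guarantee measurability. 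One tiny point of precision: in Step~2, at the single value $t=\alpha$ the superlevel set $\{f+\alpha>\alpha\}$ is $\{f>0\}$ rather than all of $E\cap B$, but this is a Lebesgue-null set of $t$-values and does not affect the integral — your computation stands.
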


The Choquet integral also satisfies a Jensen-type inequality. The proof is based on the proof of the usual Jensen's inequality for general measures, see \cite{rudin2006real}. 

\begin{lem}\label{JensenPhi}
Suppose $E,B \subseteq \R^n$, $\phi : \R \rightarrow \R$ is convex and $f : \R^n \rightarrow \R$ is bounded. Then,
\begin{align}\label{e:Jensen}
\phi \left( \frac{1}{\mathscr{H}^{d,E}_{B,\infty}(E\cap B) }\int f \, d\mathscr{H}^{d,E}_{B,\infty} \right) \leq \frac{1}{\mathscr{H}^{d,E}_{B,\infty}(E\cap B)}\int \phi \circ f \, d\mathscr{H}^{d,E}_{B,\infty}.
\end{align}
\end{lem}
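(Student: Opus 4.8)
\textbf{Proof proposal for Lemma \ref{JensenPhi}.}

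The plan is to mimic the standard proof of Jensen's inequality for probability measures, using the three properties of the Choquet integral recorded in Lemma \ref{Choquet} in place of linearity of the integral. Write $\mu := \mathscr{H}^{d,E}_{B,\infty}(E\cap B)$ for brevity; note $0 < \mu < \infty$ since $\{B\}$ is a good cover (so $\mu \leq r_B^d$) and by property (1) of Lemma \ref{HausEasy} (so $\mu \geq c_1 r_B^d > 0$, taking $r = r_B$). Set $t_0 := \frac{1}{\mu}\int f \, d\mathscr{H}^{d,E}_{B,\infty}$. Because $f$ is bounded, $t_0$ is a finite real number, and in fact $t_0$ lies in the closed interval spanned by the essential range of $f$. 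The key classical fact is that a convex $\phi$ has a supporting line at $t_0$: there exists $\beta \in \R$ with
\begin{align}
\phi(s) \geq \phi(t_0) + \beta(s - t_0) \qquad \text{for all } s \in \R.
\end{align}

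Next I would substitute $s = f(x)$ into the supporting-line inequality, obtaining the pointwise bound $\phi\circ f \geq \phi(t_0) + \beta(f - t_0)$ on $E \cap B$. The subtlety is that $\beta$ may be negative, so I cannot directly apply monotonicity (property (1) of Lemma \ref{Choquet}) to an expression containing $-\beta t_0$ or $+\beta f$ with the wrong sign; the Choquet integral is only sub-additive in general, not additive, and properties (2), (3) of Lemma \ref{Choquet} require the shift/scalar to be nonnegative. To get around this, I would add a large constant $N$ to both sides so that $g_1 := \phi(t_0) + \beta(f - t_0) + N$ and $g_2 := (\phi\circ f) + N$ are both nonnegative, which is possible since $f$ is bounded and hence so is the affine function of $f$; moreover I would choose $N$ so that the constant $\phi(t_0) - \beta t_0 + N \geq 0$ as well. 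Then monotonicity gives $\int g_1 \, d\mathscr{H}^{d,E}_{B,\infty} \leq \int g_2 \, d\mathscr{H}^{d,E}_{B,\infty}$. On the right, property (2) of Lemma \ref{Choquet} yields $\int g_2 = \int \phi\circ f \, d\mathscr{H}^{d,E}_{B,\infty} + N\mu$. On the left, I need to evaluate $\int (\beta f + (\phi(t_0) - \beta t_0 + N)) \, d\mathscr{H}^{d,E}_{B,\infty}$ exactly, and this is where I must be careful: if $\beta \geq 0$, properties (2) and (3) give $\beta \int f + (\phi(t_0)-\beta t_0 + N)\mu = \beta \mu t_0 + (\phi(t_0) - \beta t_0 + N)\mu = (\phi(t_0) + N)\mu$; if $\beta < 0$, write $\beta f = -|\beta| f$ and handle the negative scalar by the identity $\int(-|\beta| f + M)\,d\mathscr{H}^{d,E}_{B,\infty}$ via the layer-cake formula together with $\mathscr{H}^{d,E}_{B,\infty}(\{f > t\}) = \mu - \mathscr{H}^{d,E}_{B,\infty}(\{f \le t\})$ — but this \emph{reflection identity for super-level sets fails for a mere content}, so instead I would simply restrict to proving the supporting-line estimate only in the case this causes no trouble. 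Cleanly: it suffices to observe that the function $x \mapsto \phi(t_0) + \beta(f(x) - t_0) + N$ can be rewritten, if $\beta<0$, as $N' - |\beta| f$ with $N' := \phi(t_0) - \beta t_0 + N$ chosen large enough that $N' - |\beta|\sup f \geq 0$; then $\int (N' - |\beta| f)\,d\mathscr{H}^{d,E}_{B,\infty} = \int_0^{N'}\mathscr{H}^{d,E}_{B,\infty}(\{N' - |\beta| f > t\})\,dt = \int_0^{N'}\mathscr{H}^{d,E}_{B,\infty}(\{ f < (N'-t)/|\beta|\})\,dt$, and here I am stuck with sub-level sets of a content rather than super-level sets.

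Because of this asymmetry, the honest approach is: \emph{do not} try to integrate the affine minorant and compare; instead use the supporting line the other way. Concretely, I would argue by contradiction or rather directly: the inequality $\phi(t_0)\leq \frac{1}{\mu}\int\phi\circ f\,d\mathscr{H}^{d,E}_{B,\infty}$ is exactly $\phi(t_0)\mu \leq \int \phi\circ f\,d\mathscr{H}^{d,E}_{B,\infty}$, and from the pointwise bound $\phi\circ f(x) \geq \phi(t_0) + \beta(f(x)-t_0)$ I add $N$ to reach nonnegative functions, apply monotonicity, use property (2) on the right, and on the left I only ever need the \emph{lower} bound $\int (\phi(t_0)+\beta(f-t_0)+N)\,d\mathscr{H}^{d,E}_{B,\infty} \geq (\phi(t_0)+N)\mu$, which for $\beta\geq 0$ is an equality via (2),(3) and for $\beta<0$ follows from sub-additivity: $(\phi(t_0)+N)\mu = \int\big((N'-|\beta|f) + |\beta|f\big)\,d\mathscr{H}^{d,E}_{B,\infty} \leq \int(N'-|\beta|f)\,d\mathscr{H}^{d,E}_{B,\infty} + |\beta|\int f\,d\mathscr{H}^{d,E}_{B,\infty}$, hence $\int(N'-|\beta|f)\,d\mathscr{H}^{d,E}_{B,\infty} \geq (\phi(t_0)+N)\mu - |\beta|\mu t_0$... which is the wrong direction. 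So the real resolution is that $f$ is assumed \emph{bounded} precisely so that one may reduce to the case $f \geq 0$ by translating, and then further note that for the layer-cake representation only super-level sets appear, and sub-additivity of $A\mapsto\mathscr{H}^{d,E}_{B,\infty}(A)$ on super-level sets is all that is needed: I would follow the argument in \cite{rudin2006real} verbatim, replacing each use of additivity of the integral with the observation that, for the specific affine functions appearing, properties (2) and (3) of Lemma \ref{Choquet} \emph{do} apply after adding a suitable nonnegative constant and arranging all scalars to be nonnegative by absorbing signs into the constant $t_0$ vs. $f$ bookkeeping. The main obstacle, then, is precisely the lack of full additivity/reflection symmetry of the Choquet integral; the boundedness hypothesis on $f$ is what lets us push everything into the regime where Lemma \ref{Choquet}(2)--(3) suffice, and I expect the clean write-up to be a careful but short translation of the classical supporting-line proof with this bookkeeping made explicit.
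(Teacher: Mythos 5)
Your approach is the same as the paper's: construct the supporting line $\phi(s) \ge \phi(t_0) + \gamma(s - t_0)$ at the Choquet average $t_0$, substitute $s = f(x)$, and integrate using the properties in Lemma \ref{Choquet}. You then get stuck, and justifiably so, on the sign of the slope $\gamma$: Lemma \ref{Choquet}(2)--(3) only allow nonnegative scalars, and the Choquet integral has no reflection identity $\int(-f)\,d\mathscr{H}^{d,E}_{B,\infty} = -\int f\,d\mathscr{H}^{d,E}_{B,\infty}$ because $\mathscr{H}^{d,E}_{B,\infty}$ is not additive on complements. The concern is real; in fact the paper's own proof applies Lemma \ref{Choquet} to $\gamma f + \phi(t)$ with no sign restriction on $\gamma$, so it too is tacitly assuming $\gamma\ge0$.

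The trouble is that your proposal never resolves this and ends as a sketch rather than a proof. Writing $\mu := \mathscr{H}^{d,E}_{B,\infty}(E\cap B)$ as you do: in the case $\beta<0$ your subadditivity chain contains an arithmetic slip, since $N' := \phi(t_0) - \beta t_0 + N = \phi(t_0) + |\beta| t_0 + N$, so its left-hand side should read $N'\mu$ rather than $(\phi(t_0)+N)\mu$; once corrected, the subadditivity estimate actually does go in the direction you want and gives $\int(N'-|\beta|f)\,d\mathscr{H}^{d,E}_{B,\infty} \ge (\phi(t_0)+N)\mu$. But that corrected argument still needs genuine subadditivity of the Choquet integral, which is equivalent to $\mathscr{H}^{d,E}_{B,\infty}$ being $2$-alternating; the paper only proves approximate subadditivity of the content (Lemma \ref{HausEasy}(4), with an unspecified multiplicative constant), so this is not available. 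Your closing remedy, translating $f$ to be nonnegative and ``absorbing signs into the bookkeeping,'' does not help either: a positive shift of $f$ changes neither $\phi$ nor the sign of the chosen subgradient $\gamma$. What actually rescues the lemma in practice is that every application in the paper has $\phi$ nondecreasing on $[0,\infty)$ and $f\ge0$ (e.g.\ $\phi(x)=x^p$ in Lemma \ref{l:p}), so $\gamma\ge0$ automatically and Lemma \ref{Choquet}(2)--(3) apply directly after shifting by a large constant. A careful write-up should either impose that monotonicity explicitly, or carry out only the $\gamma\ge0$ case and observe it is the only one used.
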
 

\begin{proof}
Since $f$ is bounded and $\mathscr{H}^{d,E}_{B,\infty}(E \cap B) \gtrsim r_B^d$, we can set 
\[t = \frac{1}{\mathscr{H}^{d,E}_{B,\infty}(E\cap B) }\int f \, d\mathscr{H}^{d,E}_{B,\infty} < \infty.\] 
Since $\phi$ is convex, if $-\infty < s <t <u <\infty$, then 
\begin{align}\label{Jensen1}
\frac{\phi(t)-\phi(s)}{t-s} \leq \frac{\phi(u)-\phi(t)}{u-t}.
\end{align}
Let $\gamma$ be the supremum of the left hand side of \eqref{Jensen1} taken over all $s \in (-\infty,t).$ It is clear then that 
\begin{align}
\phi(t) \leq \phi(s) + \gamma \cdot (t -s)
\end{align}
for all $s \in \R.$ By rearranging the above inequality we have that for any $x \in \R^n,$   
\begin{align}
\gamma f(x) + \phi(t) \leq \phi(f(x)) + \gamma t. 
\end{align}
Integrating both sides with respect to $x$ and using Lemma \ref{Choquet}, we have 
\begin{align}
\gamma \int f \, d\mathscr{H}^{d,E}_{B,\infty} + \mathscr{H}^{d,E}_{B,\infty}(E \cap B) \phi(t) \leq  \int \phi \circ f \, d\mathscr{H}^{d,E}_{B,\infty}  + \gamma \mathscr{H}^{d,E}_{B,\infty}(E\cap B) t,
\end{align}
thus,
\begin{align}
\mathscr{H}^{d,E}_{B,\infty}(E \cap B) \phi(t) \leq \int \phi \circ f \, d\mathscr{H}^{d,E}_{B,\infty} + \gamma \left(\mathscr{H}^{d,E}_{B,\infty}(E\cap B) t - \int f \, d\mathscr{H}^{d,E}_{B,\infty} \right).
\end{align}
By definition of $t$, the second term on the right hand side of the above inequality is zero, from which the lemma follows. 

\end{proof}

Integration with respect to $\mathscr{H}^d_\infty$ can be defined similarly and satisfies identical properties. In the special case of integration with respect to $\mathscr{H}^d_\infty$ we also have the following:
\begin{lem}[{\cite[Lemma 2.1]{azzam2018analyst}}] \label{l:subsum}
Let $0 < p< \infty.$ Let $f_i$ be a countable collection of Borel functions in $\R^n.$ If the sets $\emph{supp} \, f_i = \{f_i > 0\}$ have bounded overlap, meaning there exists a $C < \infty$ such that
\[ \sum \mathds{1}_{\emph{supp} \, f_i} \leq C,\]
then
\begin{align}
\int \left( \sum f_i \right)^p \, d\mathscr{H}^d_\infty \leq C^p \sum \int f_i^p \, d\mathscr{H}^d_\infty.
\end{align}
\end{lem}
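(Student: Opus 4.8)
The plan is to prove Lemma~\ref{l:subsum} by the standard ``layer-cake'' argument, using the bounded overlap hypothesis twice: once to compare the superlevel set of $\sum f_i$ with a union of rescaled superlevel sets of the individual $f_i$, and once when summing up.

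\textbf{Step 1: reduce to a pointwise bound on superlevel sets.}
First I would write, for fixed $t>0$,
\[
\Big\{ \sum_i f_i > t \Big\} \subseteq \bigcup_i \Big\{ f_i > \tfrac{t}{C} \Big\}.
\]
Indeed, if $x$ lies in the left-hand set, then at most $C$ of the $f_i(x)$ are nonzero (this is where $\sum \mathds{1}_{\supp f_i} \le C$ enters), so at least one of them must exceed $t/C$. Since Hausdorff content is countably subadditive,
\[
\mathscr{H}^d_\infty\Big( \Big\{ \sum_i f_i > t \Big\} \Big) \le \sum_i \mathscr{H}^d_\infty\Big( \Big\{ f_i > \tfrac{t}{C} \Big\} \Big).
\]

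\textbf{Step 2: apply the Choquet formula and change variables.}
By the definition of the Choquet integral with respect to $\mathscr{H}^d_\infty$ together with the substitution $s = t^p$ (equivalently, the standard identity $\int g^p \, d\mathscr{H}^d_\infty = \int_0^\infty \mathscr{H}^d_\infty(\{g^p > t\})\, dt = p\int_0^\infty \mathscr{H}^d_\infty(\{g > s\}) s^{p-1}\, ds$), one gets
\[
\int \Big( \sum_i f_i \Big)^p d\mathscr{H}^d_\infty
= p \int_0^\infty \mathscr{H}^d_\infty\Big( \Big\{ \sum_i f_i > s \Big\} \Big) s^{p-1}\, ds
\le p \sum_i \int_0^\infty \mathscr{H}^d_\infty\Big( \Big\{ f_i > \tfrac{s}{C} \Big\} \Big) s^{p-1}\, ds.
\]
Then substitute $s = C u$ in each integral, which produces a factor $C^p$:
\[
p \int_0^\infty \mathscr{H}^d_\infty\Big( \Big\{ f_i > \tfrac{s}{C} \Big\} \Big) s^{p-1}\, ds
= C^p\, p \int_0^\infty \mathscr{H}^d_\infty(\{ f_i > u \}) u^{p-1}\, du
= C^p \int f_i^p \, d\mathscr{H}^d_\infty.
\]
Summing over $i$ gives the claimed inequality $\int (\sum f_i)^p\, d\mathscr{H}^d_\infty \le C^p \sum_i \int f_i^p\, d\mathscr{H}^d_\infty$.

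\textbf{Main obstacle.}
There is no serious analytic difficulty here; the only point requiring mild care is justifying the interchange of the countable sum over $i$ with the integral over $s$, which is immediate from Tonelli's theorem since all integrands are nonnegative and measurable in $s$. (Measurability of $s \mapsto \mathscr{H}^d_\infty(\{f_i > s\})$ is standard, as it is monotone.) One should also note that the inclusion in Step~1 is where we genuinely use that the number of nonzero terms at each point — not merely some average — is bounded by $C$; if the overlap were only bounded on average the argument would fail, so it is worth stating explicitly. Everything else is bookkeeping with the Choquet integral, for which the scaling property (Lemma~\ref{Choquet}(3)) and the change of variables are the only tools needed. Since this lemma is quoted verbatim from \cite[Lemma 2.1]{azzam2018analyst}, one may alternatively simply cite it, but the proof above is short enough to include for completeness.
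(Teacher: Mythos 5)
Your proof is correct, and it is essentially the same layer-cake argument used in the cited source \cite[Lemma 2.1]{azzam2018analyst} (the paper here simply cites that lemma and gives no proof of its own). The key inclusion $\{\sum_i f_i > t\} \subseteq \bigcup_i \{f_i > t/C\}$, the countable subadditivity of $\mathscr{H}^d_\infty$, the identity $\int g^p\,d\mathscr{H}^d_\infty = p\int_0^\infty \mathscr{H}^d_\infty(\{g>s\})s^{p-1}\,ds$, the Tonelli interchange, and the scaling $s = Cu$ are all applied correctly, and you correctly isolate the pigeonhole step as the place where the pointwise (not merely average) bound on overlap is used.
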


\subsection{Preliminaries with \texorpdfstring{$\beta$}{b}-numbers} In this section we prove some basic properties of $\beta^{d,p}_E.$ Again, we restate the definition for the readers convenience. 
\begin{defn}\label{DefBeta}
Let $1 \leq p <\infty$, $E \subseteq \R^n,$ $B$ a ball centred on $E$ and $L$ a $d$-plane. Define
\begin{align*}
\beta_E^{d,p}(B,L)^p &= \frac{1}{r_B^d} \int \left( \frac{\text{dist}(x,L)}{r_B}\right)^p \, d\mathscr{H}^{d,E}_{B,\infty} \\
&= \frac{1}{r_B^d} \int_0^1 \mathscr{H}^{d,E}_{B,\infty}(\{x \in E \cap B : \text{dist}(x,L) >tr_B \}) t^{p-1} \, dt,
\end{align*}
and 
\begin{align}
\beta_E^{d,p}(B) = \inf \{ \beta_E^{d,p}(B,L) : L \ \text{is a $d$-plane} \}. 
\end{align}
\end{defn}

\begin{rem}\label{r:ineq}
It is easy to show that $\mathscr{H}_\infty^d \leq \mathscr{H}^{d,E}_{B,\infty}$ which implies $\check\beta_E^{d,p} \leq \beta^{d,p}_E.$ If $c \geq c_1$ and $E$ is a $(c,d)$-lower regular set, we get the reverse inequality up to a constant, see Corollary \ref{CheckComp}.
\end{rem}

\begin{rem}\label{r:LR}
It is possible to define a $\beta$-number $\beta^{d,p,c}_E$ where the additional parameter $c$ replaces the fixed lower regularity constant $c_1$ from \eqref{LR}. We could prove a version of Theorem \ref{Thm4} for $\beta^{d,p,c},$ that is, if $E \subseteq F$ and the sum of these $\beta^{d,p,c}$ coefficients for $E$ is finite, then we can find a $(c_1,d)$-lower content regular set $F$ such that $E \subseteq F$ (notice that the regularity constant for $F$ is independent of $c$). Thus to prove Theorem \ref{c:TSP} we could just as well have used $\beta^{d,p,c}$ instead of $\beta^{d,p}$ for any $c \leq c_1.$ We have fixed our lower regularity parameter for clarity.  
\end{rem}

\begin{lem}\label{l:p}
Let $1 \leq p < \infty, \ E \subseteq \R^n$ and $B$ a ball centred on $E$. Then
\begin{align}\label{e:p}
\beta_E^{d,1}(B)_{d,p} \lesssim \beta_E^{d,p}(B)
\end{align}
\end{lem}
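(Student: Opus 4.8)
This is a Jensen-type inequality, so the plan is to run the argument of the classical ``$L^1 \lesssim L^p$ on a finite measure space'' through the Choquet framework, using Lemma \ref{JensenPhi} with the convex function $\phi(t)=|t|^p$. First I would record the two-sided bound $c_1 r_B^d \le \mathscr{H}^{d,E}_{B,\infty}(E\cap B) \le r_B^d$: the upper bound comes from the fact that $\{B\}$ is a good cover of $E\cap B$ (it satisfies \eqref{Size}, \eqref{LR} with constant $r_B^d\ge c_1 r^d$ for $r<r_B$, and \eqref{UR} vacuously), and the lower bound is Lemma \ref{HausEasy}(1) applied with $x$ the centre of $B$ and $r\to r_B$. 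In particular, taking $L'$ a $d$-plane through the centre of $B$ gives $\dist(x,L')\le r_B$ on $B$, hence $\beta_E^{d,1}(B)\le \beta_E^{d,1}(B,L')\le \mathscr{H}^{d,E}_{B,\infty}(E\cap B)/r_B^d\le 1$; this crude bound will handle the degenerate case below.

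Next, fix $\e>0$ and choose a $d$-plane $L$ with $\beta_E^{d,p}(B,L)\le \beta_E^{d,p}(B)+\e$. I split into two cases according to how close $L$ is to $E\cap B$. \emph{Case A:} $\dist(x,L)\le 2r_B$ for all $x\in E\cap B$. Then $f(x):=\dist(x,L)/r_B$ is bounded on $E\cap B$, so Lemma \ref{JensenPhi} with $\phi(t)=|t|^p$ applies and yields
\[
\Big(\tfrac{1}{H}\int f\,d\mathscr{H}^{d,E}_{B,\infty}\Big)^p \le \tfrac{1}{H}\int f^p\,d\mathscr{H}^{d,E}_{B,\infty},
\qquad H:=\mathscr{H}^{d,E}_{B,\infty}(E\cap B).
\]
Rewriting $r_B^d\beta_E^{d,1}(B,L)=\int f\,d\mathscr{H}^{d,E}_{B,\infty}$ and $r_B^d\beta_E^{d,p}(B,L)^p=\int f^p\,d\mathscr{H}^{d,E}_{B,\infty}$, this rearranges to $\beta_E^{d,1}(B,L)\le (H/r_B^d)^{1-1/p}\beta_E^{d,p}(B,L)$, and since $H/r_B^d\le 1$ and $1-1/p\ge 0$ we conclude $\beta_E^{d,1}(B)\le \beta_E^{d,1}(B,L)\le \beta_E^{d,p}(B,L)\le \beta_E^{d,p}(B)+\e$.

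\emph{Case B:} there is $x_0\in E\cap B$ with $\dist(x_0,L)>2r_B$. For $x\in E\cap B\cap B(x_0,r_B)$ we then have $\dist(x,L)\ge \dist(x_0,L)-|x-x_0|>r_B>tr_B$ for every $t\in(0,1)$, so the superlevel set in the definition of $\beta_E^{d,p}(B,L)$ contains $E\cap B\cap B(x_0,r_B)$; by monotonicity Lemma \ref{HausEasy}(2) and Lemma \ref{HausEasy}(1) its $\mathscr{H}^{d,E}_{B,\infty}$-content is at least $c_1 r_B^d$. Hence $\beta_E^{d,p}(B,L)^p\ge c_1\int_0^1 t^{p-1}\,dt=c_1/p$, which combined with the crude bound $\beta_E^{d,1}(B)\le 1$ from Step 1 gives $\beta_E^{d,1}(B)\le 1\le (p/c_1)^{1/p}\,(\beta_E^{d,p}(B)+\e)$. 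Letting $\e\to 0$ in both cases yields $\beta_E^{d,1}(B)\le (p/c_1)^{1/p}\,\beta_E^{d,p}(B)$, a constant depending only on $p$ and on $d$ (through $c_1$), which proves \eqref{e:p}.

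The only real subtlety, and thus the part I would be most careful about, is Case B: Jensen's inequality as stated requires a bounded integrand, so when the near-optimal plane for $\beta^{d,p}$ escapes to infinity one cannot apply it directly, and one must instead observe that such a plane forces $\beta^{d,p}$ to be bounded below by an absolute constant (using lower content regularity, Lemma \ref{HausEasy}(1)), against which the universally bounded quantity $\beta_E^{d,1}(B)$ is controlled for free. Everything else is routine bookkeeping with the Choquet integral and the normalization $\mathscr{H}^{d,E}_{B,\infty}(E\cap B)\le r_B^d$.
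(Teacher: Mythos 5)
Your proof is correct and follows essentially the same route as the paper's, namely applying Lemma~\ref{JensenPhi} with $\phi(t)=|t|^p$ to $f(x)=\dist(x,L)/r_B$, together with the normalization $c_1 r_B^d\le\mathscr{H}^{d,E}_{B,\infty}(E\cap B)\le r_B^d$. Your Case~B is actually a small upgrade in rigor: the paper picks an optimal plane $P$ and simply asserts $\dist(x,P)\le 2r_B$ on $E\cap B$ (so that the boundedness hypothesis of Lemma~\ref{JensenPhi} holds) without comment, whereas you correctly address the possibility of a far-away near-optimizer by showing that it forces $\beta_E^{d,p}(B)\gtrsim_{p,d}1$, against which the trivial bound $\beta_E^{d,1}(B)\le 1$ finishes.
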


\begin{proof}
This is a direct consequence of Lemma \ref{JensenPhi} and Definition \ref{DefBeta}. The inequality obviously holds for $p=1,$ so assume $p >1.$ Then the map $x \mapsto x^p$ is convex. Let $P$ be the $d$-plane such that $\beta_E^{d,p}(B) = \beta_E^{d,p}(B,P).$ Since $\text{dist}(x,P) \leq 2r_B$ for all $x \in E \cap B$ and $\mathscr{H}^{d,E}_{B,\infty}(E \cap B) \sim r_B^d,$ we have, by Lemma \ref{JensenPhi},
\begin{align}
\beta^{d,1}_E(B) &\leq \frac{1}{r_B^d} \int  \left(\frac{\text{dist}(x,P)}{r_B} \right)\, d\mathscr{H}^{d,E}_{B,\infty} \overset{\eqref{e:Jensen}}{\lesssim} \left(\frac{1}{r_B^d} \int  \left( \frac{\text{dist}(x,P)}{r_B}\right)^p \, d\mathscr{H}^{d,E}_{B,\infty}\right)^\frac{1}{p} \\
&= \beta^{d,p}_E(B). 
\end{align}
\end{proof}

\begin{lem}\label{l:cont}
Let $1 \leq p < \infty$ and $E \subseteq \R^n.$ Then, for all balls $B^\prime \subseteq B$ centred on $E,$
\begin{align}
\beta^{d,p}_E(B^\prime) \leq \left(\frac{r_B}{r_{B^\prime}} \right)^{1+\frac{d}{p}} \beta_E^{d,p}(B). 
\end{align}
\end{lem}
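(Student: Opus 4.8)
\textbf{Proof proposal for Lemma \ref{l:cont}.}

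The plan is to reduce the inequality to two observations: first, that shrinking the ball $B$ to $B'$ can only decrease the relevant Hausdorff-type content on subsets of $E \cap B'$ (monotonicity in the ball), and second, that the normalisation factors $1/r_B^d$ and $1/r_B$ inside the definition of $\beta_E^{d,p}$ pick up the power $(r_B/r_{B'})^{1+d/p}$ when we pass from radius $r_B$ to radius $r_{B'}$. Concretely, fix a $d$-plane $L$ achieving (or nearly achieving) $\beta_E^{d,p}(B) = \beta_E^{d,p}(B,L)$, and use the same $L$ as a competitor for $\beta_E^{d,p}(B')$.

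First I would write, using Definition \ref{DefBeta} for $B'$ and the competitor plane $L$,
\begin{align}
\beta_E^{d,p}(B',L)^p = \frac{1}{r_{B'}^d} \int \left( \frac{\dist(x,L)}{r_{B'}} \right)^p d\mathscr{H}^{d,E}_{B',\infty}.
\end{align}
Since $B' \subseteq B$, part (3) of Lemma \ref{HausEasy} gives $\mathscr{H}^{d,E}_{B',\infty}(A) \leq \mathscr{H}^{d,E}_{B,\infty}(A)$ for every $A \subseteq E \cap B'$; applying this to the super-level sets $\{x \in E \cap B' : \dist(x,L) > t r_{B'}\} \subseteq \{x \in E \cap B : \dist(x,L) > t r_{B'}\}$ (the second inclusion also uses monotonicity of the content in the set, part (2) of Lemma \ref{HausEasy}, together with $E\cap B' \subseteq E \cap B$), and using the Choquet-integral representation, one bounds the integral above by the corresponding integral over $\mathscr{H}^{d,E}_{B,\infty}$. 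Then I would pull out the normalisation: writing $\dist(x,L)/r_{B'} = (r_B/r_{B'})\cdot \dist(x,L)/r_B$ and factoring the constant $(r_B/r_{B'})^p$ out of the integral via Lemma \ref{Choquet}(3), and likewise replacing the prefactor $1/r_{B'}^d$ by $(r_B/r_{B'})^d \cdot 1/r_B^d$, gives
\begin{align}
\beta_E^{d,p}(B',L)^p \leq \left( \frac{r_B}{r_{B'}} \right)^{p+d} \cdot \frac{1}{r_B^d} \int \left( \frac{\dist(x,L)}{r_B} \right)^p d\mathscr{H}^{d,E}_{B,\infty} = \left( \frac{r_B}{r_{B'}} \right)^{p+d} \beta_E^{d,p}(B,L)^p.
\end{align}
Taking $p$-th roots and then the infimum over $L$ (on the right, using that $L$ was chosen for $B$; on the left, $\beta_E^{d,p}(B') \leq \beta_E^{d,p}(B',L)$) yields the claimed bound with exponent $1 + d/p$.

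The only mild technical point — and the step I would be most careful about — is the interaction between the two super-level sets when $B' \subsetneq B$: the integrand $\dist(\cdot,L)$ is the same function, but the domain of integration changes from $E \cap B'$ to $E \cap B$, so I must check the content inequality $\mathscr{H}^{d,E}_{B',\infty}(\{x \in E \cap B' : \dist(x,L) > s\}) \leq \mathscr{H}^{d,E}_{B,\infty}(\{x \in E \cap B : \dist(x,L) > s\})$ for each $s > 0$, which follows by composing parts (2) and (3) of Lemma \ref{HausEasy}. Everything else is bookkeeping of the scale factors, so there is no real obstacle; the lemma is essentially a scaling/monotonicity statement.
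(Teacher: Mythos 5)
Your proof is correct and follows the same strategy as the paper's: fix the optimal plane for $B$, use it as a competitor for $B'$, apply parts (2) and (3) of Lemma \ref{HausEasy} to pass from $\mathscr{H}^{d,E}_{B',\infty}$ on $E\cap B'$ to $\mathscr{H}^{d,E}_{B,\infty}$ on $E\cap B$, and then absorb the scaling factors. The paper realises the rescaling as a change of variables in the $t$-integral rather than via Lemma \ref{Choquet}(3), but this is a cosmetic difference and the argument is identical in substance.
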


\begin{proof}
Let $P$ be the $d$-plane such that $\beta_E^{d,p}(B) = \beta_E^{d,p}(B,P).$ By Lemma \ref{HausEasy} (2),(3) and a change of variables, we have
\begin{align}
\beta^{d,p}_E(B^\prime)^p &\leq \frac{1}{r_{B^\prime}^d} \int_0^1 \mathscr{H}^{d,E}_{B^\prime,\infty}(\{x \in E \cap B^\prime : \text{dist}(x,P) >tr_{B^\prime} \}) t^{p-1} \, dt \\
&\stackrel{(2),(3)}{\leq} \frac{r_B^d}{r_{B^\prime}^d} \frac{1}{r_B^d}\int_0^1 \mathscr{H}^{d,E}_{B,\infty}(\{x \in E \cap B : \text{dist}(x,P) >tr_{B^\prime} \}) t^{p-1} \, dt \\
&\leq \left(\frac{r_B}{r_{B^\prime}} \right)^{d+p}  \frac{1}{r_B^d}\int_0^1 \mathscr{H}^{d,E}_{B,\infty}(\{x \in E \cap B : \text{dist}(x,P) >tr_{B} \}) t^{p-1} \, dt \\
&= \beta_E^{d,p}(B)^p. 
\end{align}
\end{proof}

By restricting the covers of our sets as we have done, we lose monotonicity of $\beta^{d,p}_\cdot(B,L)$ with respect to set inclusion. This is because we are imposing the condition that all sets have large measure, even singleton points.  We illustrate this point with an example. Let $\e>0$, $\B$ be the unit ball centred at the origin and $L$ a $d$-plane through the origin. Consider a set $F$ consisting of the $d$-plane $L$ with the segment through the origin replaced by two sides of an equilateral triangle with height $\e$ and the set $E \subseteq F$ which is just the singleton point at the tip of the equilateral triangle (see Figure \ref{example}). Since $F$ is lower $1$-regular and we have comparability for $\check\beta^{d,p}$ and $\beta^{d,p}$ on lower regular sets (see Corollary \ref{CheckComp}), it is easy to show
\[\beta_{F}^{1,1}(\B,L) \lesssim \check\beta_F^{1,1}(2\B,L) \sim \epsilon^2.\] 
On the other hand, by Lemma \ref{HausEasy} (1), we must have 
\begin{align}
\beta_E^{1,1}(\B,L) &= \int_0^1 \mathscr{H}^{1,E}_{\B,\infty}( \{x \in E \cap \B : \dist(x,L) \geq t \}) \, dt \\
&= \int_0^\e \mathscr{H}^{1,E}_{\B,\infty}(E \cap \B) \, dt \stackrel{(1)}{\gtrsim} \epsilon,
\end{align}
which for $\epsilon$ small enough implies 
\[ \beta_F^{1,1}(\B,L) \leq \beta_E^{1,1}(\B,L). \] 

\begin{figure}
  \centering
  \includegraphics[scale = 0.7]{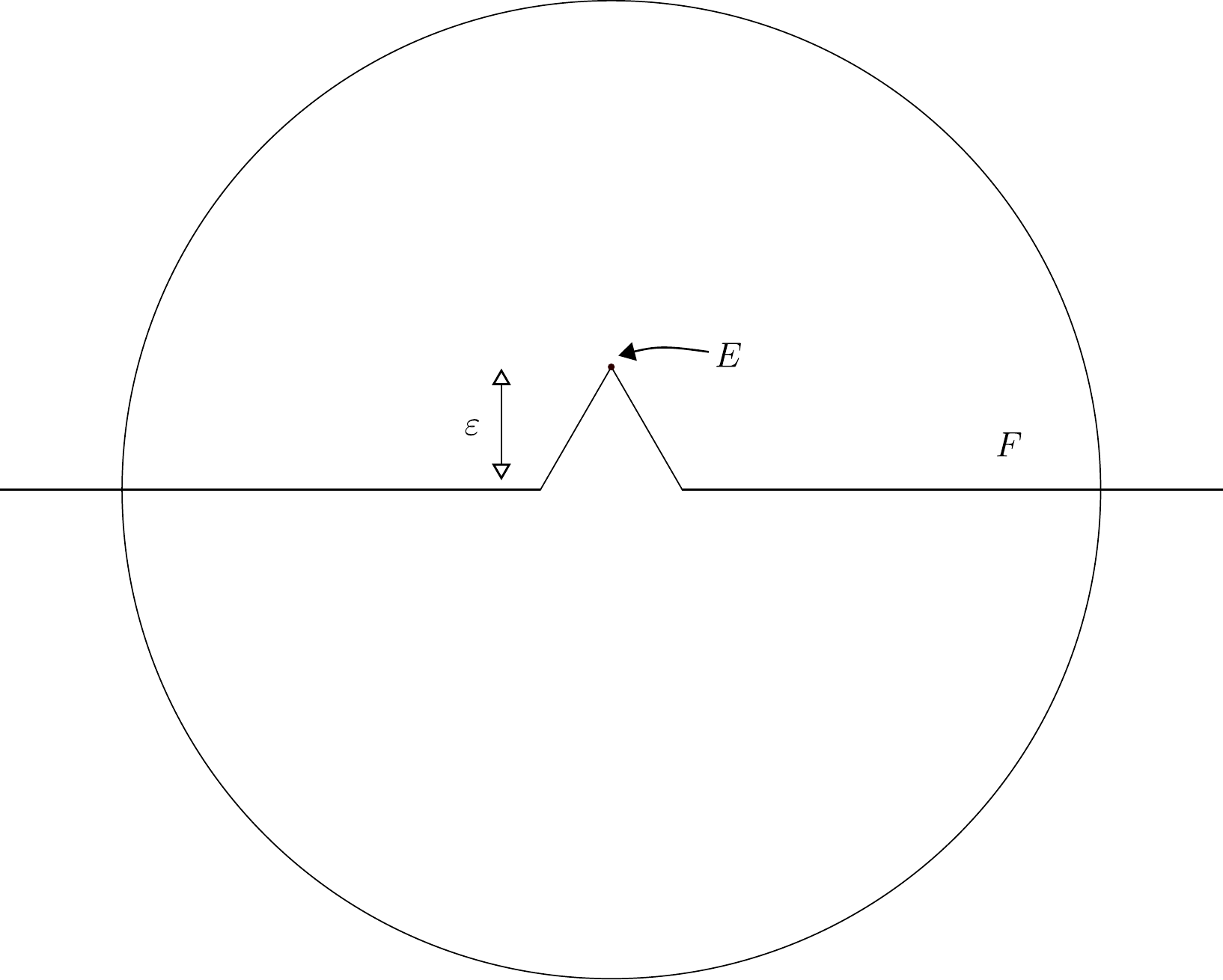}
\caption{$\beta^{d,p}_\cdot$ is not monotone.}\label{example}
\end{figure}

We do however have the following result at least in the case where the larger set is lower regular. Roughly speaking it states that if $E \subseteq F$ and $F$ is lower regular, then we can control the $\beta$-number of $E$ by the $\beta$-number of $F$ with some error term dependent on the average distance of $E$ from $F.$ 
\begin{lem}\label{SubsetError}
Let $F \subseteq \R^n$ be $(c,d)$-lower content regular for some $c \geq c_1$ and let $E \subseteq F.$ Let $B$ be a ball and $L$ a $d$-dimensional plane. Then 
\begin{align}
\beta_E^{d,p}(B,L) \lesssim_{c,d,p} \beta_F^{d,p}(2B,L) + \left(\frac{1}{r_B^d} \int_{F \cap 2B} \left(\frac{\emph{dist}(x,E)}{r_B}\right)^p \, d\mathscr{H}_\infty^d(x) \right)^\frac{1}{p}.
\end{align}
\end{lem}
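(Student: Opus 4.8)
\textbf{Proof plan for Lemma \ref{SubsetError}.}

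The plan is to transfer a good cover for $F \cap 2B$ (with respect to the content $\mathscr{H}^{d,F}_{2B,\infty}$) into a good cover for $E \cap B$ (with respect to $\mathscr{H}^{d,E}_{B,\infty}$), and then estimate the $\beta$-integral for $E$ by splitting $E \cap B$ according to whether a point lies close to the minimizing plane $L$ or not. First I would fix $\delta > 0$ and choose a good cover $\mathscr{B}$ of $F \cap 2B$ nearly realizing $\mathscr{H}^{d,F}_{2B,\infty}$, i.e. such that the relevant $\beta$-quantities for $F$ over $2B$ are almost attained. The key observation is that since $E \subseteq F$, every ball $B' \in \mathscr{B}$ that meets $E$ also meets $F$, and the lower/upper regularity inequalities \eqref{LR}, \eqref{UR} for $\mathscr{B}$ as a cover of $F \cap 2B$ can be used — via $F$ being $(c,d)$-lower content regular and $c \ge c_1$ — to check that (a suitable sub-collection or mild enlargement of) $\mathscr{B}$, restricted to balls meeting $E \cap B$, is a \emph{good} cover for $E \cap B$. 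Here the passage from radius scale $2B$ to $B$ and from $F$ to the possibly much sparser $E$ is where one must be careful: lower regularity of $F$ guarantees \eqref{LR} survives, while \eqref{UR} is monotone under passing to a subset so it survives automatically. Thus $\mathscr{H}^{d,E}_{B,\infty}(A) \lesssim \sum_{B' \in \mathscr{B}, B' \cap A \ne \emptyset} r_{B'}^d$ for any $A \subseteq E \cap B$.

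Next I would estimate $\beta_E^{d,p}(B,L)^p = \frac{1}{r_B^d}\int_0^1 \mathscr{H}^{d,E}_{B,\infty}(\{x \in E \cap B : \dist(x,L) > t r_B\}) t^{p-1}\,dt$ using the cover $\mathscr{B}$. For a fixed $t$, a ball $B' \in \mathscr{B}$ meets $\{x \in E \cap B : \dist(x,L) > t r_B\}$ only if there is a point of $E$ in $B'$ at distance $> t r_B$ from $L$; I want to charge $r_{B'}^d$ either to the $\beta$-number of $F$ (when $B'$ contains points of $F$ genuinely far from $L$) or to the error integral $\int_{F \cap 2B}(\dist(x,E)/r_B)^p\,d\mathscr{H}^d_\infty$ (when the far-from-$L$ point of $E$ in $B'$ is itself far from the nearby bulk of $F$, forcing $\dist(\,\cdot\,,E)$ to be large on a chunk of $F$). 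Concretely, for such a $B'$ either $F \cap B'$ contains a point at distance $\gtrsim t r_B$ from $L$ — in which case a fixed fraction of the content $\mathscr{H}^d_\infty(F \cap B')\gtrsim r_{B'}^d$ (lower regularity of $F$) sits at distance $\gtrsim t r_B$ from $L$ and contributes to $\check\beta_F^{d,p}(2B,L)^p$ — or every point of $F$ in, say, $2B'$ is within $\tfrac14 t r_B$ of $L$, in which case the point $y \in E \cap B'$ with $\dist(y,L) > t r_B$ satisfies $\dist(z, E) \ge \dist(z,y) - (\text{stuff}) \gtrsim t r_B$ for all $z$ in a definite portion of $F \cap B'$, so again a fixed fraction of $r_{B'}^d \lesssim \mathscr{H}^d_\infty(\{z \in F \cap 2B : \dist(z,E) > c t r_B\})$. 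Summing over the (boundedly overlapping, by \eqref{UR}-type or Besicovitch arguments) balls $B'$ and integrating in $t$ against $t^{p-1}\,dt$ yields $\beta_E^{d,p}(B,L)^p \lesssim \beta_F^{d,p}(2B,L)^p + \frac{1}{r_B^d}\int_{F \cap 2B}(\dist(x,E)/r_B)^p\,d\mathscr{H}^d_\infty$, after replacing the Choquet integral over $\mathscr{H}^{d,F}_{2B,\infty}$ by the one over $\mathscr{H}^d_\infty$ (legitimate since $\mathscr{H}^d_\infty \le \mathscr{H}^{d,F}_{2B,\infty}$, and here the inequality goes the convenient way because lower regularity of $F$ makes them comparable, cf. Corollary \ref{CheckComp}; alternatively keep $\check\beta_F$ throughout and bound $\check\beta_F \le \beta_F$). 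Taking $\delta \to 0$ and then a $p$-th root gives the claim.

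\textbf{Main obstacle.} The delicate point is the dichotomy in the previous paragraph and bounding the overlap of the balls $B'$ charged to each piece: one must show that the collection of balls of $\mathscr{B}$ meeting a fixed scale slice $\{x \in E\cap B : \dist(x,L)> tr_B\}$ can be handled so that $\sum r_{B'}^d$ is genuinely controlled by a single content evaluation of $F$ (not overcounted), and that the threshold constants (the $\tfrac14$, the $c$) can be chosen uniformly in $t$. This is essentially a localized Vitali/Besicovitch selection combined with the upper regularity \eqref{UR} of $\mathscr{B}$, and it is where the hypothesis $c \ge c_1$ — ensuring the transplanted cover is admissible for $E$ — is used in an essential way. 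Everything else (the change of variables in $t$, the passage between $\mathscr{H}^d_\infty$ and $\mathscr{H}^{d,\cdot}_{\cdot,\infty}$, Jensen via Lemma \ref{JensenPhi} if one wants to pass between $p=1$ and general $p$) is routine given the lemmas already established.
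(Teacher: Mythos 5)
Your high-level dichotomy — each ball of the cover that meets a far-from-$L$ point of $E$ is charged either to the $\beta$-number of $F$ (if $F$ itself ventures far from $L$ there) or to the $\dist(\cdot,E)$-integral (if $F$ hugs $L$ there, so $E$ must be far from $F$) — is the same as the paper's split of the cover into $\mathscr{B}_L$ and $\mathscr{B}_E$, and the integration in $t$ against $t^{p-1}\,dt$ at the end is also the same. But there is a real gap in the first step, and it is where the actual work of the lemma is.

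You propose to start from a good cover $\mathscr{B}$ of $F \cap 2B$ (nearly realizing the relevant $F$-quantities) and then pass to ``a suitable sub-collection or mild enlargement'' of $\mathscr{B}$ to get a good cover of $E \cap B$, asserting that lower content regularity of $F$ makes \eqref{LR} ``survive.'' This does not work as stated. Restricting $\mathscr{B}$ to the balls that meet $E$, and enlarging them by a fixed factor, can fail \eqref{LR} badly: take $F$ a $d$-plane and $E$ a finite set of points on $F$ spread far apart. An efficient good cover of $F \cap 2B$ consists of tiny balls; the ones meeting $E$ are tiny balls near each point of $E$, and no bounded dilation produces a collection satisfying $\sum_{B'\cap B(x,r)\cap E\ne\emptyset} r_{B'}^d \ge c_1 r^d$ for $r$ of order $r_B$. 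The enlargement needed is not ``mild''; it must scale like $\dist(\text{center},E)$. This is precisely the construction the paper uses: for each threshold $t$, it builds a cover of $E\cap\B$ from a maximal net $\{x_i\}\subseteq F\cap\B$ separated at scale $4\max\{\delta(x_i),\delta(x_j)\}$ where $\delta(x)=\max\{\dist(x,L),\dist(x,E)\}+t/120$, takes $B_i'=B(x_i,4\delta(x_i))$, and sets $\mathscr{B}=\{3B_i'\}$. The $\dist(\cdot,E)$ term in $\delta$ guarantees each $3B_i'$ actually hits $E$, so the LR sum for $E$ is really a sum over balls hitting $F$ and can be bounded below using the lower regularity of $F$; the $\dist(\cdot,L)$ term makes the balls essentially lie flat along $L$, which gives \eqref{UR} via Lemma \ref{ENV}; and the $+t/120$ term forces the radii to be $\gtrsim t$, which drives the two-case charging. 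In short, the cover must be built from scratch with $\delta$-dependent radii and is genuinely $t$-dependent; it cannot be inherited from an optimizing cover for $F$.

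A second, smaller issue: you want the same cover both to nearly realize the content quantities defining $\beta_F^{d,p}(2B,L)$ and to serve as a good cover for $E\cap B$, but an optimizer for one purpose need not be admissible for the other. The paper sidesteps this entirely: it proves the stronger estimate with $\check\beta_F$ on the right (so only $\mathscr{H}^d_\infty$, which admits arbitrary covers, is needed on the $F$ side), and the cover it constructs is used only on the $E$ side. That resolves the tension you flag as the ``main obstacle'' without any Besicovitch/Vitali step; the overlap bounds come from the separation of the net and Lemma \ref{ENV}.

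So: right dichotomy, right $t$-integration, but the central construction — a good cover for $E\cap B$ built with the weight $\max\{\dist(\cdot,L),\dist(\cdot,E)\}+t$ — is missing, and the proposed substitute (transplanting a cover of $F$) fails.
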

\begin{proof}

We actually prove a stronger version of the above statement, with $\beta^{d,p}_F$ replaced by $\check\beta^{d,p}_F.$ Note, by re-scaling and translating, we may assume $B = \B.$ For $t > 0$ let
\[E_t = \{x \in E \cap \B : \dist(x,L)>t \}. \]
To prove the lemma, it suffices to show 
\begin{align}\label{SumEst}
\begin{split}
\mathscr{H}_{\B,\infty}^{d,E}(E_t) &\lesssim \mathscr{H}^d_\infty \left( \{x \in F \cap 2\B : \text{dist}(x,L) > t/150\}\right) \\
&\hspace{4em} + \mathscr{H}^d_\infty \left(\{x \in F \cap 2\B : \text{dist}(x,E) > t/150\}\right)
\end{split}
\end{align}
since if the above were true,
\begin{align*}
\beta_E^{d,p}(\B,L)^p &= \int_0^1 \mathscr{H}_{\B,\infty}^{d,E}(E_t) t^{p-1} \, dt \\
&\overset{\eqref{SumEst}}{\lesssim} \int_0^1 \mathscr{H}_\infty^d (\{x \in F \cap 2\B : \text{dist}(x,L) > t/150 \}) t^{p-1} \, dt \\
&\hspace{4em} +  \int_0^1 \mathscr{H}_\infty^d (\{x \in F \cap 2\B : \text{dist}(x,L) > t/150 \}) t^{p-1} \, dt \\
&\lesssim \int_0^1 \mathscr{H}_\infty^d (\{x \in F \cap 2\B : \text{dist}(x,L) > 2t \}) t^{p-1} \, dt \\
&\hspace{4em} + \int_0^1 \mathscr{H}_\infty^d (\{x \in F \cap 2\B : \text{dist}(x,L) > 2t\}) t^{p-1} \, dt \\
&\sim\check\beta_F^{d,p}(2\B,L)^p + \int_{F \cap 2\B} \dist(x,E)^p \, d\mathscr{H}_\infty^d(x).
\end{align*}

For the rest of the proof we focus on \eqref{SumEst}. Fix $t>0.$ We must first construct a suitable good cover for $E \cap \B.$ For $x \in F \cap \B,$ let 
\begin{align}\label{e:delta}
\delta(x) = \max\{\text{dist}(x,L),\text{dist}(x,E) \} + t/120
\end{align}
and let $X = \{x_i\}_{i \in I}$ be a maximal net in $F \cap \B$ such that 
\begin{align}\label{e:sep}
|x_i-x_j| \geq 4\max\{\delta(x_i),\delta(x_j)\}
\end{align}
for all $i \not=j.$ For each $i \in I$ let $B^\prime_i = B(x_i,4\delta(x_i))$ and $\mathscr{B}^\prime = \{B_i^\prime\}_{i \in I}.$ By \eqref{e:delta}, $\delta(x)>0$ for all $x \in F \cap \B,$ so the balls $B_i'$ are non-degenerate. Furthermore, define 
$$\mathscr{B} = \{3B_i'\}_{i \in I} = \{B_i\}_{i \in I}.$$ 
\textbf{Claim:} $\mathscr{B}$ is a good cover for $E \cap \B.$ 
\bigbreak
\noindent
Since $E \subseteq F,$ it follows that $\mathscr{B}$ covers $E$ by maximality. It is also clear that \eqref{Size} holds. We are left to verify \eqref{LR} and \eqref{UR}. Let $x \in E \cap \B$ and $0 < r< 1.$ We look first at \eqref{LR}. 

Assume $B^\prime_i \cap B(x,r/3) \cap F \not=\emptyset$ for some $i \in I.$ If $4\delta(x_i) \geq r/3$ then $B(x,r/3) \subseteq 3B_i^\prime,$ hence $3B^\prime_i \cap B(x,r/3) \cap E \not=\emptyset.$ If $4\delta(x_i) <r/3$, then $B^\prime_i \subseteq B(x,r)$ and since $\text{dist}(x_i,E) \leq \delta(x_i)$ there exists some $y \in E \cap B(x,r) \cap B^\prime_i $, in particular, $B^\prime_i \cap B(x,r) \cap E \not= \emptyset.$ In either case, we conclude $E \cap B(x,r) \cap 3B^\prime_i \not=\emptyset.$ By definition of $\mathscr{B}$, this means there exists $B_i \in \mathscr{B}$ such that $E \cap B(x,r) \cap B_i \not=\emptyset.$ Then, since $F$ is $(c,d)$-lower content regular in $\B$, we have
\begin{align*}
\sum_{\substack{B \in \mathscr{B} \\ B \cap E \cap B(x,r) \not=\emptyset}} \left(\frac{r_B}{3}\right)^d \geq \sum_{\substack{B^\prime \in \mathscr{B}^\prime \\ B^\prime \cap F \cap B(x,r/3) \not=\emptyset}} r_{B^\prime}^d \geq c\left(\frac{r}{3}\right)^d, 
\end{align*}
hence $\mathscr{B}$ satisfies the lower bound \eqref{LR}. 

Now for the upper bound \eqref{UR}. If $B \in \mathscr{B}$ satisfies $B \cap B(x,r) \cap E \not=\emptyset$ and $r_B \leq r$, then $B \subseteq B(x,3r)$. Furthermore, since the balls $\{\tfrac{1}{6}B\}_{B\in \mathscr{B}}$ are disjoint and satisfy $\text{dist}(x_B,L) \leq r_{\frac{1}{6}B}/2,$ we have by Lemma \ref{ENV},
\[
\sum_{\substack{B \in \mathscr{B} \\ B \cap E \cap B(x,r) \not=\emptyset \\ r_B \leq r}} r_B^d \leq 6^d\sum_{\substack{B \in \mathscr{B} \\ B \subseteq B(x,3r)}} r_{\frac{1}{6}B}^d \overset{\eqref{e:ENV}}{\leq} 18^d\kappa r^d
\]
Since $c \geq c_1$ and, recalling that $c_2 =18^d \kappa,$ it follows that $\mathscr{B}$ satisfies \eqref{UR}, thus, $\mathscr{B}$ is a good cover for $E \cap \B$ which proves the claim.
\bigbreak
We partition the balls in $\mathscr{B}$ as follows: let
\begin{align*}
\mathscr{B}_E &= \{B_i \in \mathscr{B} : r_i = 12\text{dist}(x_i,E) + t/10\}; \\
\mathscr{B}_L &= \{B_i \in \mathscr{B} : r_i = 12\text{dist}(x_i,L) + t/10\}.
\end{align*}
If $\dist(x_i,E) = \dist(x_i,L)$ then we put $B_i$ in $\mathscr{B}_E$ or $\mathscr{B}_L$ arbitrarily. Since $\mathscr{B}$ is good for $E \cap \B,$ we have
\begin{align}\label{HausComp1}
\mathscr{H}_{\B,\infty}^{d,E}(E_t) \leq \sum_{\substack{B \in \mathscr{B} \\ B \cap E_t \not= \emptyset}}r_B^d =  \sum_{\substack{B \in \mathscr{B}_E \\ B \cap E_t \not= \emptyset}}r_B^d +  \sum_{\substack{B \in \mathscr{B}_L \\ B \cap E_t \not= \emptyset}}r_B^d. 
\end{align}
If we can show that 
\begin{align}\label{Balls_E}
\sum_{\substack{B \in \mathscr{B}_E \\ B \cap E_t \not= \emptyset}} r_B^d \lesssim  \mathscr{H}^d_\infty \left(\{x \in F \cap 2\B : \text{dist}(x,E) > t/150\}\right)
\end{align}
and
\begin{align}\label{Balls_L}
\sum_{\substack{B \in \mathscr{B}_L \\ B \cap E_t \not= \emptyset}} r_B^d \lesssim  \mathscr{H}^d_\infty \left(\{x \in F \cap 2\B : \text{dist}(x,L) > t/150\}\right),
\end{align}
then \eqref{SumEst} follows from the above two inequalities and \eqref{HausComp1}. We first prove \eqref{Balls_E}. The proof of \eqref{Balls_L} is similar and we shall comment on the necessary changes after we are done with \eqref{Balls_E}. 

Let 
\[A \coloneqq \{x \in F \cap 2\B : \text{dist}(x,E) > t/150\}\]
and let $\mathscr{B}_A$ be a cover of $A$ such that each ball $B \in \mathscr{B}_A$ is centred on $A$, has $r_B \leq r_\B$ and 
\begin{align}\label{e:HausA}
\mathscr{H}^d_\infty(A) \sim \sum_{B \in \mathscr{B}_A}r_B^d.
\end{align}
Let $B_i \in \mathscr{B}_E$ satisfy $E_t \cap B_i \not= \emptyset$ and $y_i \in E_t \cap B_i.$ Recall that since $B_i \in \mathscr{B}_E$ we have $\dist(x_i,L) \leq \dist(x_i,E)$ and $B_i = B(x_i,12\dist(x_i,E) +t/10)$. It follows that 
\begin{align}\label{e: delta > t}
t &< \dist(y_i,L) \leq |y_i - x_i| + \dist(x_i,E) \\
& \leq 12\dist(x_i,E) + t/10 + \dist(x_i,E) \\
&\leq 13\dist(x_i,E) + t/2. 
\end{align}
Rearranging, we have
\begin{align}\label{e:x_it}
\dist(x_i,E) >t/26.
\end{align}
This implies that
\begin{align}\label{F in A}
F \cap \tfrac{1}{24}B_i \subseteq A, 
\end{align}
since for any $y \in F \cap \tfrac{1}{24}B_i$ we have
\begin{align}
\dist(y,E) \geq \dist(x_i,E) - \frac{1}{24}r_{B_i} = \frac{1}{2}\dist(x_i,E)  - \frac{1}{240}t \stackrel{\eqref{e:x_it}}{\geq} \frac{t}{150}.
\end{align} 
By \eqref{F in A}, since $\mathscr{B}_A$ covers $A$, there is $B \in \mathscr{B}_A$ such that $\tfrac{1}{24}B_i \cap B \not=\emptyset$. We partition $\mathscr{B}_E$ further by setting
\begin{align*}
\mathscr{C}_1 &= \{B_i  : \text{there exists} \ B \in \mathscr{B}_A \ \text{such that} \ \tfrac{1}{24}B_i \cap B \not=\emptyset \ \text{and} \ r_B \geq r_{B_i}/24 \},\\
\mathscr{C}_2 &= \mathscr{B}_E\setminus\mathscr{C}_1. 
\end{align*}
\begin{figure}[t]
  \centering
  \includegraphics[scale = 0.8]{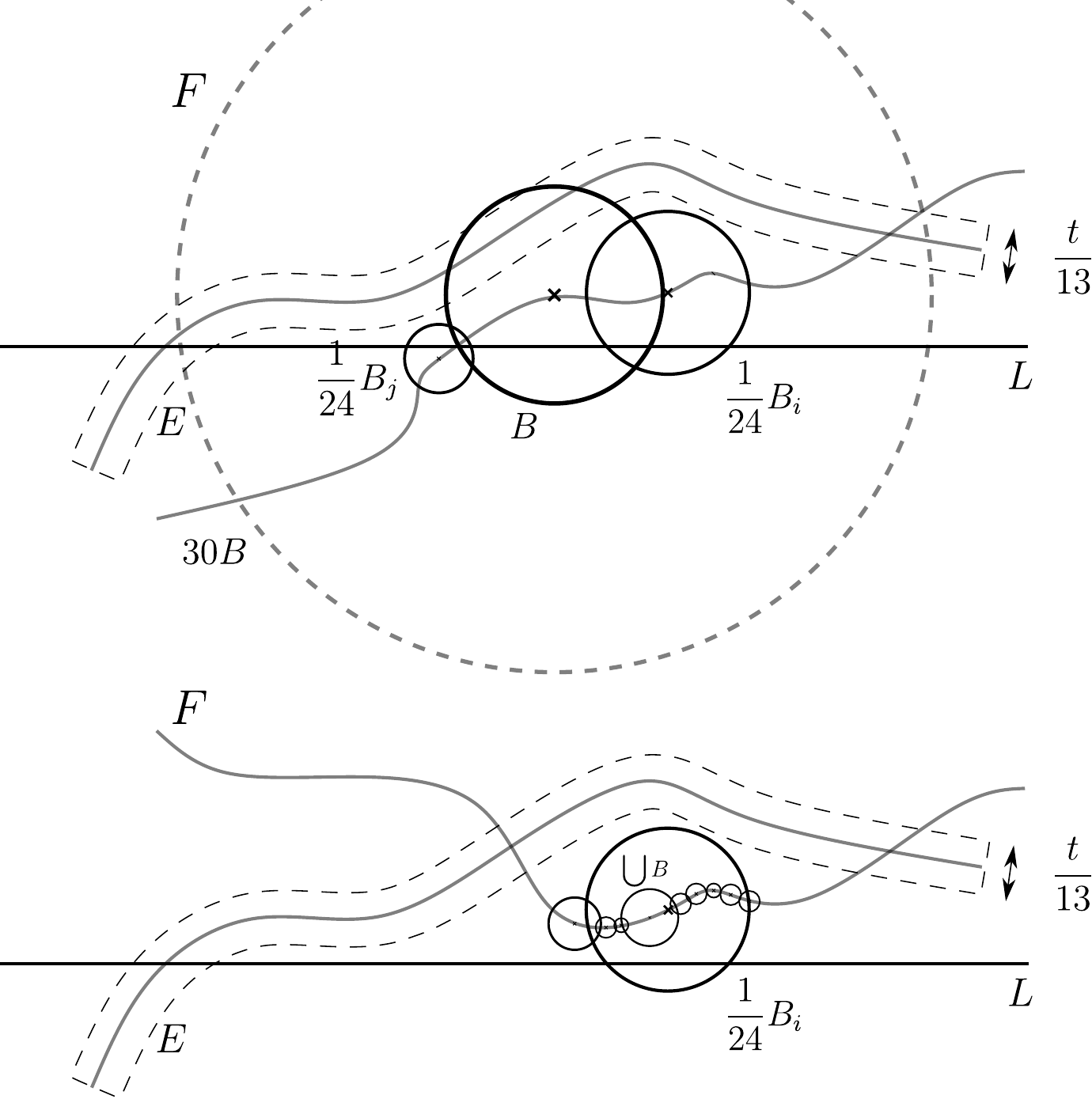}
\caption{Examples of balls in $\mathscr{C}_1$ (left) and $\mathscr{C}_2$ (right).}\label{f:Error}
\end{figure}
See Figure \ref{f:Error}. We first control the sum over balls in $\mathscr{C}_1.$ Let $B \in \mathscr{B}_A$ and assume there exists $B_i \in \mathscr{C}_1$ such that $\tfrac{1}{24}B_i \cap B\not=\emptyset$ and $r_B \geq r_{B_i}/24.$ For $y \in B_i,$ we have
\begin{align*}
|y - x_B| \leq |y-x_i| + |x_i - x_B| \leq r_{B_i} + r_B + r_{B_i/24} \leq 26r_B,
\end{align*}
from which we conclude $B_i \subseteq 26B.$ So, if 
\[
\mathscr{C}^B_1 = \{B_i \in \mathscr{C}_1 : \tfrac{1}{24}B_i \cap B \not=\emptyset \ \text{and} \ r_B \geq r_{B_i}/24\},
\] 
the balls $\{\tfrac{1}{6}B_i\}_{B_i \in \mathscr{C}_1^B}$ are pairwise disjoint, contained in $26B$ and satisfy 
\[\text{dist}(x_i,L) \leq r_{\frac{1}{6}B_i}/2\] 
(recall that $B_i = B(x_i,12\delta(x_i))$). By Lemma \ref{ENV} and because $\mathscr{B}$ is a good cover, this gives
\begin{align*}
\sum_{B_i \in \mathscr{C}_1^B}r_{B_i}^d \lesssim \sum_{B_i \in \mathscr{C}_1^B}r_{\frac{1}{6}B_i}^d \overset{\eqref{e:ENV}}{\lesssim} r_B^d.
\end{align*}
Thus, 
\begin{align}\label{HausComp2}
\sum_{B_i \in \mathscr{C}_1}r_{B_i}^d \leq \sum_{B \in \mathscr{B}_A} \sum_{B_i \in \mathscr{C}_1^B}r_{B_i}^d \lesssim \sum_{B \in \mathscr{B}_A} r_B^d \overset{\eqref{e:HausA}}{\lesssim} \mathscr{H}_\infty^d(A).
\end{align}
Now, the sum over balls in $\mathscr{C}_2.$ If $B_i \in \mathscr{C}_2$ and $B \in \mathscr{B}_A$ is such that $\tfrac{1}{24}B_i \cap B \not=\emptyset,$ then $r_B < r_{B_i}/24.$ Furthermore $B \cap \tfrac{1}{24}B_j = \emptyset$ for all $B_j \in \mathscr{C}_2, \ j \not= i,$ since otherwise 
\begin{align*}
|x_i - x_j| &\leq |x_i - x_B| + |x_B - x_j| \leq r_{B_i}/24 + 2r_B + r_{B_j}/24 \\
& \leq\left(\frac{1}{24} + \frac{1}{12} + \frac{1}{24} \right) \max\{r_{B_i},r_{B_j}\} \leq \frac{1}{6}\max\{r_{B_i},r_{B_j} \} \\
&\leq 4\max\{\delta(x_i),\delta(x_j)\},
\end{align*}
contradicting \eqref{e:sep}. Thus, 
\begin{equation}\label{e:C_2} 
\#\{B_i \in \mathscr{C}_2: B \cap \frac{1}{24}B_i \not= \emptyset\} \leq 1.
\end{equation}
Since $\mathscr{B}_A$ forms a cover for $A$ and $F \cap \tfrac{1}{24}B_i \subseteq A$ by \eqref{F in A}, it follows that $\{B \in \mathscr{B}_A : B \cap \tfrac{1}{24}B_i \not=\emptyset\}$ forms a cover for $F \cap \tfrac{1}{24}B_i.$ Using then that $F$ is $(c,d)$-lower content regular, we have
\begin{align}\label{HausComp3}
\begin{split}
\sum_{B_i \in \mathscr{C}_2}r_{B_i}^d &\lesssim \sum_{B_i \in \mathscr{C}_2} \mathscr{H}_\infty^d(F \cap \tfrac{1}{24}B_i) \leq \sum_{B_i \in \mathscr{C}_2} \sum_{\substack{B \in \mathscr{B}_A\\ B \cap \frac{1}{24}B_i \not=\emptyset}} r_B^d \\
& = \sum_{B \in \mathscr{B}_A} \sum_{\substack{B_i \in \mathscr{C}_2 \\ B \cap \frac{1}{24}B_i \not=\emptyset}} r_B^d \overset{\eqref{e:C_2}}{\leq} \sum_{B \in \mathscr{B}_A} r_B^d \overset{\eqref{e:HausA}}{\lesssim} \mathscr{H}_\infty^d(A).
\end{split}
\end{align}
Combining \eqref{HausComp2} and \eqref{HausComp3} completes the proof of \eqref{Balls_E}. The proof of \eqref{Balls_L} follows exactly the same reasoning: for each $B_i \in \mathscr{B}_L$ we have
\[ \dist(x_i,L) > t/26, \]
the proof of which is the same as \eqref{e:x_it}. Analogously to \eqref{F in A}, this implies
\[ F \cap \tfrac{1}{24}B_i  \subseteq A',\]
where 
\[ A' = \{x \in F \cap 2\B : \dist(x,L) > t/150\}. \]
The rest of the proof is identical. This completes the proof of \eqref{SumEst} which in turn completes the proof of the lemma. 

\end{proof}

As a corollary of the above proof of Lemma \ref{SubsetError} and Remark \ref{r:ineq}, we have the following:

\begin{cor}\label{CheckComp}
Suppose $c \geq c_1$ and $E \subseteq \R^n$ is $(c,d)$-lower content regular. For any ball $B$ and $d$-plane $L,$ we have
\begin{align}
\check\beta^{d,p}_E(B,L) \leq \beta_E^{d,p}(B,L) \lesssim \check\beta^{d,p}_E(2B,L).
\end{align}
\end{cor}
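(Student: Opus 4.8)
The plan is to deduce both inequalities from facts already in hand, with no new work. For the left-hand inequality, recall (Remark \ref{r:ineq}, see also the remark following Lemma \ref{HausEasy}) that $\mathscr{H}^d_\infty(A) \leq \mathscr{H}^{d,E}_{B,\infty}(A)$ for every $A \subseteq E \cap B$. Applying this with $A = \{x \in E \cap B : \dist(x,L) > tr_B\}$ for each $t \in (0,1)$ and integrating against $t^{p-1}\,dt$, using monotonicity of the Choquet integral (Lemma \ref{Choquet}~(1)), yields
\[ \check\beta^{d,p}_E(B,L)^p = \frac{1}{r_B^d}\int_0^1 \mathscr{H}^d_\infty(A_t)\, t^{p-1}\,dt \leq \frac{1}{r_B^d}\int_0^1 \mathscr{H}^{d,E}_{B,\infty}(A_t)\, t^{p-1}\,dt = \beta^{d,p}_E(B,L)^p, \]
where $A_t = \{x \in E \cap B : \dist(x,L) > tr_B\}$. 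Taking $p$-th roots gives the first inequality.

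For the right-hand inequality, the key observation is that Lemma \ref{SubsetError} applies with the choice $F = E$: by hypothesis $E$ is $(c,d)$-lower content regular with $c \geq c_1$, and trivially $E \subseteq E$. Moreover, the proof of Lemma \ref{SubsetError} in fact establishes the stronger estimate in which $\beta^{d,p}_F$ is replaced by $\check\beta^{d,p}_F$ on the right-hand side. With $F = E$ the error term
\[ \left( \frac{1}{r_B^d}\int_{F \cap 2B}\left(\frac{\dist(x,E)}{r_B}\right)^p\,d\mathscr{H}^d_\infty(x)\right)^{\frac{1}{p}} \]
vanishes identically, since $\dist(x,E) = 0$ for every $x \in E$. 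Hence $\beta^{d,p}_E(B,L) \lesssim_{c,d,p} \check\beta^{d,p}_E(2B,L)$, which is the second inequality.

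There is essentially no obstacle: the content of the statement is already contained in Lemma \ref{SubsetError} together with the elementary content comparison $\mathscr{H}^d_\infty \leq \mathscr{H}^{d,E}_{B,\infty}$, and the corollary is merely the specialisation $F = E$. The only point requiring a word of care is that one must invoke the stronger form of Lemma \ref{SubsetError} (with $\check\beta^{d,p}_F$ in place of $\beta^{d,p}_F$) that the proof actually produces, rather than the slightly weaker displayed statement; this is harmless, as that stronger form is exactly what the argument there yields.
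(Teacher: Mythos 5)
Your proposal is correct and follows the paper's own approach: the paper states the corollary as a direct consequence of Remark \ref{r:ineq} and the proof of Lemma \ref{SubsetError}, which is precisely what you do, including the key observation that the proof of Lemma \ref{SubsetError} actually delivers the stronger bound with $\check\beta^{d,p}_F$ on the right and that taking $F=E$ annihilates the error term. (One minor nit: the second step of your first display is just monotonicity of the Lebesgue integral applied pointwise in $t$ to the comparison of contents, not Lemma \ref{Choquet}~(1), which compares integrands for a single set function; this does not affect the argument.)
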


\begin{lem}
Assume $E \subseteq \R^n$ and there is $B$ centred on $E$ so that for all $B^\prime \subseteq B$ centred on $E$ we have $\mathscr{H}_\infty^d(E \cap B^\prime) \geq cr_{B^\prime}^d$. Then
\begin{align}\label{CompL_infty}
\beta_{E,\infty}^d\left( \frac{1}{2}B\right) \lesssim \beta_E^{d,1}(B)^\frac{1}{d+1}.
\end{align}
\end{lem}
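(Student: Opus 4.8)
The plan is to prove, directly, that $\beta_E^{d,1}(B)$ is bounded below by a power of the distance of $E\cap\tfrac12 B$ from the $L^1$-optimal plane: if some $y\in E\cap\tfrac12 B$ lies at distance $\delta r_B$ from that plane $L$, then a whole ball $B(y,\rho)$ of radius $\rho\sim\delta r_B$ also lies at distance $\gtrsim\delta r_B$ from $L$, and by the built-in lower regularity of $\mathscr{H}^{d,E}_{B,\infty}$ (Lemma~\ref{HausEasy}(1)) this ball contributes $\gtrsim\rho^d$ to every superlevel set $\{x\in E\cap B:\dist(x,L)>tr_B\}$ with $t\le\delta/2$, forcing $\beta_E^{d,1}(B)\gtrsim\delta^{d+1}$.

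I would begin by fixing a $d$-plane $L$ with $\beta_E^{d,1}(B,L)\le 2\beta_E^{d,1}(B)$. For $y\in E\cap\tfrac12 B$ put $\delta(y)=\dist(y,L)/r_B$ and $\rho=\rho(y)=\min\{\delta(y)r_B/2,\,r_B/2\}$. Because $y\in\tfrac12 B$ and $\rho\le r_B/2$, the ball $B(y,\rho)$ is contained in $B$ and centred on a point of $E$ with radius $\le r_B$, so Lemma~\ref{HausEasy}(1) gives $\mathscr{H}^{d,E}_{B,\infty}(E\cap B(y,\rho))\ge c_1\rho^d$. Since $\rho\le\delta(y)r_B/2$, the triangle inequality yields $\dist(x,L)\ge\dist(y,L)-\rho\ge\delta(y)r_B/2$ for every $x\in B(y,\rho)$, whence $E\cap B(y,\rho)\subseteq\{x\in E\cap B:\dist(x,L)>tr_B\}$ for all $t\le\delta(y)/2$. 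Plugging this into the Choquet-integral formula for $\beta_E^{d,1}(B,L)$ and using monotonicity of $\mathscr{H}^{d,E}_{B,\infty}$ (Lemma~\ref{HausEasy}(2)),
\[
2\beta_E^{d,1}(B)\ \ge\ \beta_E^{d,1}(B,L)\ \ge\ \frac{1}{r_B^d}\int_0^{\min\{\delta(y)/2,1\}}c_1\rho^d\,dt\ =\ \frac{c_1\rho^d}{r_B^d}\min\{\delta(y)/2,\,1\}.
\]
If $\delta(y)\le1$ then $\rho=\delta(y)r_B/2$ and the right-hand side equals $c_1 2^{-(d+1)}\delta(y)^{d+1}$, so $\delta(y)\lesssim\beta_E^{d,1}(B)^{1/(d+1)}$; if $\delta(y)>1$ then $\rho=r_B/2$ and the right-hand side is $\gtrsim c_1$, so $\beta_E^{d,1}(B)\gtrsim1$.

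To conclude, I would test the definition of $\beta_{E,\infty}^d(\tfrac12 B)$ against $L$ itself: since $r_{\frac12 B}=\tfrac12 r_B$,
\[
\beta_{E,\infty}^d(\tfrac12 B)\ \le\ \frac{1}{r_{\frac12 B}}\sup_{y\in E\cap\frac12 B}\dist(y,L)\ =\ 2\sup_{y\in E\cap\frac12 B}\delta(y),
\]
while testing against a $d$-plane through the centre of $B$ gives the a priori bound $\beta_{E,\infty}^d(\tfrac12 B)\le1$. Now, if every $y\in E\cap\tfrac12 B$ satisfies $\delta(y)\le1$, then the first case above bounds $\sup_y\delta(y)\lesssim\beta_E^{d,1}(B)^{1/(d+1)}$, and \eqref{CompL_infty} follows. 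If instead some $y_0$ has $\delta(y_0)>1$, the second case gives $\beta_E^{d,1}(B)\gtrsim1$, hence $\beta_E^{d,1}(B)^{1/(d+1)}\gtrsim1\ge\beta_{E,\infty}^d(\tfrac12 B)$, and \eqref{CompL_infty} again follows.

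The only genuine subtlety is that $L$ is optimal only \emph{in the $L^1$ sense} and may a priori sit far from $\tfrac12 B$; this is precisely what the case $\delta(y)>1$ accounts for, and it is absorbed harmlessly by the trivial bound $\beta_{E,\infty}^d\le1$. The truncation $\rho=\min\{\delta r_B/2,\,r_B/2\}$ does double duty: it keeps $B(y,\rho)$ inside $B$ so that Lemma~\ref{HausEasy}(1) applies, and it keeps the $t$-integration inside $[0,1]$. One may observe that this direction does not in fact invoke the lower content regularity of $E$, since the bound $\mathscr{H}^{d,E}_{B,\infty}(E\cap B(y,\rho))\ge c_1\rho^d$ is immediate from the definition of a good cover.
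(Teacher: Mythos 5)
Your proof is correct, and it takes a genuinely different route from the paper's. The paper's proof is a two-line reduction: it cites Azzam--Schul's Lemma~2.12 (the analogous estimate for $\check\beta^{d,1}_E$, whose proof uses the lower content regularity hypothesis on $E$ to lower-bound $\mathscr{H}^d_\infty$ of small balls) and then applies the trivial pointwise inequality $\check\beta^{d,1}_E \le \beta^{d,1}_E$ from Remark~\ref{r:ineq}. You instead give a self-contained direct argument working with $\mathscr{H}^{d,E}_{B,\infty}$ from the start, obtaining the lower bound $\mathscr{H}^{d,E}_{B,\infty}(E\cap B(y,\rho))\ge c_1\rho^d$ for free from Lemma~\ref{HausEasy}(1) (which is built into the definition of a good cover), rather than from the hypothesis of the lemma. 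The case split on $\delta(y)\le 1$ versus $\delta(y)>1$, the choice $\rho=\min\{\delta(y)r_B/2,\,r_B/2\}$ keeping $B(y,\rho)\subseteq B$ and the $t$-integration in $[0,1]$, and the fallback to the trivial bound $\beta^d_{E,\infty}\le 1$, are all handled correctly. Two consequences of your approach are worth noting: (i) the lower content regularity hypothesis on $E$ is never used, so the lemma holds for every $E$ and every ball $B$ centred on $E$; (ii) the implicit constant in \eqref{CompL_infty} depends only on $c_1$ and $d$, not on the regularity constant $c$, whereas the cited Azzam--Schul route produces a constant depending on $c$. What the paper's route buys is brevity, by reusing an existing result; what yours buys is a sharper statement and a proof that exposes exactly where the built-in regularity of $\mathscr{H}^{d,E}_{B,\infty}$ earns its keep. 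One cosmetic point: when $\beta_E^{d,1}(B)=0$ the optimal plane $L$ need not be attained exactly; one should either argue that the infimum is attained, or run the argument with a near-minimizer and let the error tend to zero, which is harmless.
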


\begin{proof}
Azzam and Schul prove the same inequality for $\check\beta_E^{d,p}$ (see \cite[Lemma 2.12]{azzam2018analyst}). Then, by Corollary \ref{CheckComp},
\[
\beta_{E,\infty}^d\left( \frac{1}{2}B\right) \lesssim \check\beta_E^{d,1}(B)^\frac{1}{d+1} \lesssim \beta_E^{d,1}(B)^\frac{1}{d+1}.
\]
\end{proof}

\begin{rem}\label{r:children}
By \eqref{CompL_infty} and Lemma \ref{l:children}, if $\e >0$ is small enough (depending on $M$) and $\beta_E^{d,p}(MB_Q) \leq \e$ for some $Q \in \mathscr{D},$ then $Q$ has at most $K$ children where $K$ depends only on $M$ and $d$ and not the ambient dimension $n.$
\end{rem}

The following is analogous to Lemma 2.21 in \cite{azzam2018analyst}. It says the $\beta$-number of a lower regular set can be controlled by the $\beta$-number of a nearby set, with an error depending on the average distance between the two. The proof is very similar to the proof of Lemma \ref{SubsetError}.  

\begin{lem}\label{betaest}
Let $1 \leq p < \infty.$ Suppose $E,F \subseteq \R^n$, $B^1$ is a ball centred on $E$ and $B^2$ is a ball of same radius but centred on $F$ such that $B^1 \subseteq 2B^2.$ Suppose for all balls $B \subseteq 2B^1$ centred on $E$ we have $\mathscr{H}_\infty^d(B \cap E) \geq c{r_B}^d$  for some $c > 0.$ Then
\begin{align}\label{betaeq}
\check\beta_{E}^{d,p}(B^1,P) &\lesssim_{c,p,d} \beta_{F}^{d,p}(2B^2,P) \\
&\hspace{4em}+ \left(\frac{1}{r^d_{B^1}}\int_{E \cap 2B^1}\left( \frac{\emph{dist}(y,F)}{r_{B^1}}\right)^p \, d\mathscr{H}_\infty^d(y) \right)^\frac{1}{p}.
\end{align} 
\end{lem}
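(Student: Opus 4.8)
The plan is to mimic the proof of Lemma \ref{SubsetError}, with the roles of the two sets interchanged. After rescaling and translating we may assume $r_{B^1}=r_{B^2}=1$, and we fix a $d$-plane $P$. Expanding the definitions, $\check\beta_E^{d,p}(B^1,P)^p=\int_0^1\mathscr{H}^d_\infty(E_t)\,t^{p-1}\,dt$ with $E_t=\{x\in E\cap B^1:\dist(x,P)>t\}$, and $\beta_F^{d,p}(2B^2,P)^p$ is (up to the factor $r_{2B^2}^d$) the analogous integral with $\mathscr{H}^{d,F}_{2B^2,\infty}$ in place of $\mathscr{H}^d_\infty$ and with the level sets $\{z\in F\cap 2B^2:\dist(z,P)>s\}$. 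Hence, exactly as \eqref{SumEst} was used in Lemma \ref{SubsetError}, it suffices to establish the pointwise-in-$t$ bound
\begin{align*}
\mathscr{H}^d_\infty(E_t)\lesssim\mathscr{H}^{d,F}_{2B^2,\infty}\big(\{z\in F\cap 2B^2:\dist(z,P)>ct\}\big)+\mathscr{H}^d_\infty\big(\{x\in E\cap 2B^1:\dist(x,F)>ct\}\big)
\end{align*}
for a suitable small absolute constant $c$; integrating against $t^{p-1}\,dt$, changing variables and taking $p$-th roots then gives \eqref{betaeq}. (One cannot replace $\mathscr{H}^{d,F}_{2B^2,\infty}$ by $\mathscr{H}^d_\infty$ here: if $F$ is a dense countable subset of a $d$-plane then the plain content of the level set vanishes while its restricted content does not, so the stronger "$\check\beta_F$" form of the lemma is false.)

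To prove the pointwise estimate, fix $t$ and, following the construction of Lemma \ref{SubsetError}, build a cover $\mathscr{B}=\{B_i\}$ of $E\cap B^1$ from a maximal net $\{x_i\}$ in $E\cap B^1$ that is $4\max\{\delta(x_i),\delta(x_j)\}$-separated, where now $\delta(x)=\max\{\dist(x,P),\dist(x,F)\}+t/120$ and $B_i$ is a fixed dilate of $B(x_i,\delta(x_i))$. Since $\mathscr{B}$ covers $E\cap B^1\supseteq E_t$ we have $\mathscr{H}^d_\infty(E_t)\le\sum_{B_i\cap E_t\neq\emptyset}r_{B_i}^d$, and we split this sum according to whether $r_{B_i}$ is governed by $\dist(x_i,F)$ or by $\dist(x_i,P)$, into an "$F$-part" $\mathscr{B}_F$ and a "$P$-part" $\mathscr{B}_P$. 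As in \eqref{e:x_it}, for any $B_i$ meeting $E_t$ one gets $\delta(x_i)\gtrsim t$, and because a small dilate $\tfrac1C B_i$ is centred on $E$ and contained in $2B^1$, the lower regularity hypothesis on $E$ yields $r_{B_i}^d\lesssim\mathscr{H}^d_\infty(E\cap\tfrac1C B_i)$. This is the point at which the lower regularity of $E$ enters, playing the role that the lower regularity of $F$ played in Lemma \ref{SubsetError}.

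For $\mathscr{B}_F$ one shows, exactly as for \eqref{F in A}, that $E\cap\tfrac1C B_i$ is contained in the error set $\{x\in E\cap 2B^1:\dist(x,F)>ct\}$; fixing a near-optimal cover $\mathscr{B}_A$ of that set and running the $\mathscr{C}_1/\mathscr{C}_2$ dichotomy of Lemma \ref{SubsetError} verbatim — Lemma \ref{ENV} applied to the disjoint balls $\{\tfrac16 B_i\}$, whose centres lie within $r_{\frac16 B_i}/2$ of $P$ by construction, for the $\mathscr{C}_1$ piece, and the separation \eqref{e:sep} giving multiplicity $\le1$ for the $\mathscr{C}_2$ piece — bounds $\sum_{\mathscr{B}_F,\,B_i\cap E_t\neq\emptyset}r_{B_i}^d$ by $\mathscr{H}^d_\infty$ of the error set. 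For $\mathscr{B}_P$ one proceeds analogously, but must work with the restricted content: one further subdivides $\mathscr{B}_P$ into the balls with $\dist(x_i,F)$ small compared with $r_{B_i}$ and the rest, estimates the latter by the error set as above, and for the former locates a point $y_i\in F\cap 2B^2$ with $\dist(y_i,P)>ct$ inside $\tfrac1C B_i$, then applies the defining lower regularity \eqref{LR} of a near-optimal good cover of $F\cap 2B^2$ at $y_i$ (again together with Lemma \ref{ENV} and \eqref{e:sep} to control multiplicities) to bound $\sum r_{B_i}^d$ by $\mathscr{H}^{d,F}_{2B^2,\infty}$ of a comparable level set of $F$. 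Summing the pieces and letting the error in the near-optimal covers tend to $0$ completes the pointwise estimate.

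I expect the $\mathscr{B}_P$ part to be the main obstacle. In Lemma \ref{SubsetError} the lower regularity of the ambient set supplied, for free, the local content bound $\mathscr{H}^d_\infty(F\cap\tfrac1{24}B_i)\gtrsim r_{B_i}^d$ driving the $\mathscr{C}_2$ estimate; here $F$ carries no regularity, so the content of the relevant level set along $F$ must be recovered indirectly — one uses that the error term already absorbs the part of $E$ lying far from $F$, so that on the complementary part $E$ is genuinely close to $F$ and a point of the $F$-level set can be found near each $B_i$, after which the lower regularity \eqref{LR} of good covers of $F\cap 2B^2$ converts this into the needed bound. The delicate part is choosing all the thresholds and dilation constants consistently (as with the constant $150$ in Lemma \ref{SubsetError}) and routing every packing estimate through Lemma \ref{ENV} and the net separation \eqref{e:sep} rather than generic volume counting, so that the implied constants depend only on $c,p,d$ and not on the ambient dimension.
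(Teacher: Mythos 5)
Your plan is essentially the paper's argument, transposed from Lemma \ref{SubsetError} exactly as you suggest: a pointwise-in-$t$ inequality bounding $\mathscr{H}^d_\infty(E_t)$ by the restricted content $\mathscr{H}^{d,F}_{2B^2,\infty}$ of the corresponding $F$-level set plus the plain content of the $E$-error set, proved by covering the level set with a separated net, splitting into a ``$P$-part'' and an ``$F$-part,'' and for each part running a two-case dichotomy in which one case is handled by Lemma \ref{ENV} applied to the disjoint dilated balls and the other by the lower-regularity property (of the good cover of $F\cap 2B^2$ via \eqref{LR}, resp.\ of $E$ via the hypothesis) together with a multiplicity-one count coming from the net separation. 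Your observation that $\check\beta_F$ cannot replace $\beta_F$ on the right-hand side (dense countable $F$ inside a plane kills the plain content of the level set but not the restricted content) is correct and worth having in mind; it is exactly why the restricted content appears there.

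One modest difference from the paper: you transplant $\delta(x)=\max\{\dist(x,P),\dist(x,F)\}+t/120$ verbatim from Lemma \ref{SubsetError} and take the net on all of $E\cap B^1$. The paper instead takes the net directly on the level set $E_t$ (so non-degeneracy is automatic, as $\dist(x_i,P)>t$, and no additive $t/120$ is needed) and, more importantly, uses $\delta(x)=\max\{\dist(x,P),16\dist(x,F)\}$. The factor $16$ is what makes the split into $I_1$ (the $P$-part) and $I_2$ (the $F$-part) clean: on $I_1$ one automatically has $\dist(x_i,F)\le \dist(x_i,P)/16$, so $F\cap\tfrac12 B_i$ is both nonempty and contained in the $F$-level set, with no further case analysis. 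With your $\delta$ you correctly notice you must ``further subdivide $\mathscr{B}_P$'' according to whether $\dist(x_i,F)$ is small; that works, but it is exactly the extra layer the multiplicative $16$ is there to avoid. Both versions go through and give the same constants up to reparametrisation, so this is a cosmetic rather than substantive divergence.
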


\begin{proof}
By scaling and translating, we can assume that $B^1 = \B^.$ For $t>0$, set
\begin{align*}
E_t = \{x \in E \cap \B : \text{dist}(x,L) > t \}.
\end{align*}
To prove \eqref{betaeq}, it suffices to show 
\begin{align}\label{e:leq}
\mathscr{H}_\infty^d(E_t) &\lesssim \mathscr{H}_{2\B,\infty}^{d,F}(\{x \in F \cap 2\B : \text{dist}(x,L) > tr_B/2 \}) \\ 
&\hspace{4em} + \mathscr{H}_\infty^d(\{x \in E \cap 2\B : \text{dist}(x,F) > tr_B/32 \}),
\end{align}
since
\begin{align*}
\beta_E^{d,p}(\B,L)^p &= \int_0^1 \mathscr{H}_\infty^d(E_t) t^{p-1} \, dt  \\
&\stackrel{\eqref{e:leq}}{\lesssim} \int_0^1 \mathscr{H}_{2\B,\infty}^{d,F} (\{x \in F \cap 2\B : \text{dist}(x,L) > t/2 \}) t^{p-1} \, dt \\
&\hspace{4em} +  \int_0^1 \mathscr{H}_\infty^d (\{x \in E \cap 2\B : \text{dist}(x,F) > t/32 \}) t^{p-1} \, dt \\
&\lesssim \int_0^1 \mathscr{H}_{2\B,\infty}^{d,F} (\{x \in F \cap 2\B : \text{dist}(x,L) > 2t \}) t^{p-1} \, dt \\
&\hspace{4em} +  \int_0^1 \mathscr{H}_\infty^d (\{x \in E \cap 2\B : \text{dist}(x,F) > 2t\}) t^{p-1} \, dt \\
&\sim\beta_F^{d,p}(2\B,L)^p +  \int_{E \cap 2\B} \dist(x,F)^p \, d\mathscr{H}_\infty^d(x),
\end{align*}
Let us prove \eqref{e:leq}. We first need to construct a suitable cover for $E_t.$ For $x \in E_t$, let 
$$\delta(x) = \max\{ \text{dist}(x,L), 16\text{dist}(x,F) \}$$
and set $X_t$ to be a maximally separated net in $E_t$ such that, for $x,y \in X_t,$ we have 
\begin{align}\label{Separation}
|x-y| \geq 4 \max\{\delta(x),\delta(y)\}.
\end{align}
Enumerate $X_t = \{x_i\}_{i \in I}.$ For $x_i \in X_t$ denote $B_i = B(x_i,\delta(x_i)).$ Notice, these balls are non-degenerate since $x_i \in X_t \subseteq E_t.$ By maximality we know $\{4B_i\}$ covers $E_t$ so,
\begin{align}\label{E_t}
\mathscr{H}_\infty^d(E_t) \lesssim \sum_{i \in I} (r_{4B_i})^d. 
\end{align} 
We partition $I = I_1 \cup I_2$, where 
\begin{align*}
I_1 = \{ i \in I : \delta(x_i) = \text{dist}(x_i,L) \} 
\end{align*}
and
\begin{align*}
I_2 = \{ i \in I : \delta(x_i) = 16\text{dist}(x_i,F) \}.
\end{align*}
If $\text{dist}(x_i, L) = 16\text{dist}(x_i,F)$, we put $i$ in $I_1$ or $I_2$ arbitrarily. By \eqref{E_t}, it follows that 
\[\mathscr{H}^d_\infty(E_t) \lesssim \sum_{i \in I_1} (r_{4B_i})^d + \sum_{i \in I_2} (r_{4B_i})^d.\]
We will show that
\begin{align}\label{I_1}
\sum_{i \in I_1} (r_{4B_i})^d \lesssim \mathscr{H}^{d,E}_{2\B,\infty}( \{x \in F \cap 2\B : \dist(x,L) > t/2\})
\end{align}
and
\begin{align}\label{I_2}
\sum_{i \in I_2} (r_{4B_i})^d \lesssim \mathscr{H}^d_\infty ( \{x \in E \cap 2\B : \dist(x,F) > t/32\})
\end{align}
from which \eqref{e:leq} follows. The rest of the proof is dedicated to proving \eqref{I_1} and \eqref{I_2}. Let us begin with \eqref{I_1}. Let 
\[F_t = \{x \in F \cap 2\B : \text{dist}(x,L) > t/2\}\]
and $\mathscr{B}$ be a good cover for $F \cap 2\B$ such that 
\begin{align}\label{e:good}
\mathscr{H}_{2\B,\infty}^{d,F}(F_t) \sim \sum_{\substack{B \in \mathscr{B}\\B \cap F_t \not=\emptyset}} r_B^d.
\end{align}
If $i \in I_1$ then 
\begin{align}\label{e:use}
F \cap \tfrac{1}{2}B_i \not= \emptyset \quad \text{and} \quad F \cap \tfrac{1}{2}B_i \subseteq F_t
\end{align}
since $\text{dist}(x_i,L) >t$ (by virtue of that fact that $x_i \in E_t$) and 
\[\tfrac{1}{2}B_i = B(x_i,\text{dist}(x_i,L)/2) \supseteq B(x_i , 8\text{dist}(x_i,F)).\]
Since $\mathscr{B}$ forms a cover of $F_t$ there exists at least one $B \in \mathscr{B}$ such that $B \cap \tfrac{1}{2}B_i \not=\emptyset.$ We further partition $I_1.$ Let
\begin{align*}
I_{1,1} &= \{i \in I_1 : \ \text{there exists} \ B \in \mathscr{B} \ \text{such that} \ \tfrac{1}{2}B_i \cap B \not= \emptyset \ \text{and} \ r_B \geq r_{B_i} \}, \\
I_{1,2} &= I_1 \setminus I_{1,1}. 
\end{align*}
We first control the sum over $I_{1,1}.$ Let $B \in \mathscr{B}$ and assume there is $B_i$ satisfying $\tfrac{1}{2}B_i \cap B \not=\emptyset$ and $r_B \geq r_{B_i}$ (which by definition implies $i \in I_{1,1}$), then $2B_i \subseteq 4B.$ By \eqref{Separation} we know the $\{2B_i\}$ are disjoint and satisfy $\text{dist}(x_i,L) \leq r_{2B_i}/2.$ By Lemma \ref{ENV}, we have
\begin{align*}
\sum_{\substack{i \in I_{1,1} \\ \frac{1}{2}B_i \cap B \not=\emptyset \\ r_{B_i} \leq r_B}} r_{4B_i}^d \lesssim \sum_{\substack{i \in I_{1,1} \\ 2B_i \subseteq 4B}} r_{2B_i}^d \overset{\eqref{e:ENV}}{\lesssim} r_B^d.
\end{align*}
Thus,
\begin{align}\label{I_L^1}
\sum_{i \in I_{1,1}} r_{4B_i}^d \leq \sum_{\substack{B \in \mathscr{B}\\B \cap F_t \not=\emptyset}} \sum_{\substack{i \in I_{1,1} \\ \frac{1}{2}B_i \cap B \not=\emptyset \\ r_{B_i} \leq r_B}} r_{4B_i}^d \lesssim \sum_{\substack{B \in \mathscr{B}\\B \cap F_t \not=\emptyset}}r_B^d. 
\end{align}
We now turn our attention to $I_{1,2}.$ For $i \in I_{1,2}$, let $x_i^\prime$ be the point in $F$ closest to $x_i$ and set $B_i^\prime = B(x_i^\prime, \text{dist}(x_i,L)/4).$ Note that $B_i^\prime \subseteq \tfrac{1}{2}B_i,$ since for $y \in B_i^\prime$ we have
\begin{align*}
|y - x_i| \leq \frac{1}{4}\text{dist}(x_i,L) + |x-x_i| \overset{(i \in I_1)}{\leq} \left(\frac{1}{4} + \frac{1}{16} \right)\text{dist}(x_i,L) \leq \frac{1}{2}\text{dist}(x_i,L).
\end{align*}
Since $F \cap \tfrac{1}{2}B_i \subseteq F_t$ by \eqref{e:use}, and $\mathscr{B}$ forms a cover for $F_t,$ the balls 
\[\{B \in \mathscr{B} : B \cap F \cap \frac{1}{2}B_i\not=\emptyset\}\]
form a cover for $F \cap \tfrac{1}{2}B_i.$ Furthermore, if $B \cap \tfrac{1}{2}B_i \not=\emptyset$ then $B \cap \tfrac{1}{2}B_j = \emptyset$ for all $i\not=j$, that is 
\begin{align}\label{e:1,2}
\# \{i \in I_{1,2} : B_i \cap B \not=\emptyset \} \leq 1,
\end{align}
since otherwise $|x_i - x_j| < 4\max\{\delta(x_i),\delta(x_j)\},$ contradicting \eqref{Separation}. By Lemma \ref{HausEasy} (1), we know
\[\mathscr{H}_{2\B,\infty}^{d,F}(F \cap B_i^\prime) \gtrsim r_{B_i^\prime}^d \gtrsim r_{B_i}^d,\]
and since $\mathscr{B}$ is a good cover for $F \cap 2\B,$ we have
\begin{equation}
\begin{aligned}
\sum_{i \in I_{1,2}} (r_{4B_i})^d &\lesssim \sum_{i \in I_{1,2}} \mathscr{H}_{2\B,\infty}^{d,F}(F \cap B_i^\prime) \leq  \sum_{i \in I_{1,2}} \mathscr{H}_{2\B,\infty}^{d,F}(F \cap \tfrac{1}{2}B_i) \\
&\leq \sum_{i \in I_{1,2}} \sum_{\substack{B \in\mathscr{B} \\ B \cap \frac{1}{2}B_i \cap F \not=\emptyset}} r_B^d  = \sum_{\substack{B \in\mathscr{B}\\ B \cap F_t \not=\emptyset}}  \sum_{\substack{i \in I_{1,2} \\ B \cap \frac{1}{2}B_i \cap F \not=\emptyset}} r_B^d \overset{\eqref{e:1,2}}{\leq}  \sum_{\substack{B \in\mathscr{B}\\ B \cap F_t \not=\emptyset}} r_B^d. 
\end{aligned}\label{I_L^2}
\end{equation}
Combining \eqref{I_L^1} and \eqref{I_L^2}, we conclude
\begin{align}\label{F_t}
\sum_{i \in I_1} r_{4B_i}^d \lesssim \sum_{\substack{B \in \mathscr{B} \\ B \cap F_t \not=\emptyset}} r_B^d \overset{\eqref{e:good}}{\lesssim} \mathscr{H}_{2\B,\infty}^{d,F}(F_t)
\end{align}
which is \eqref{I_1}.

We turn our attention to proving \eqref{I_2}, the proof of which follows much the same as that for \eqref{I_1}. Let 
$$E_t^\prime =\{x \in E \cap 2\B : \text{dist}(x,F) > t/32 \}$$
and $\mathscr{B}^\prime$ be a collection of balls covering $E_t'$ such that each $B \in \mathscr{B}'$ is centred on $E_t'$, has $r_B \leq r_\B$ and 
\begin{align}\label{e:good2}
\mathscr{H}_\infty^d(E_t^\prime) \sim \sum_{B \in \mathscr{B}^\prime} r_B^d.
\end{align}
As before, we partition $I_2$. If $i \in I_2,$ since $x_i \in E_t,$ we have $\text{dist}(x_i,L) >t$ and
\begin{align}
\text{dist}(x_i,F) \geq \text{dist}(x_i,L)/16 \geq t/16,
\end{align}
hence 
\begin{align}\label{use1}
E \cap \tfrac{1}{32}B_i = E \cap B(x_i,\text{dist}(x_i,F)/2) \subseteq E_t^\prime.
\end{align}
Thus for each $B_i,$ since $\mathscr{B}'$ forms a cover for $E_t',$ there exists $B \in \mathscr{B}^\prime$ such that $B \cap \tfrac{1}{32}B_i \not=\emptyset.$ We partition $I_2$ by letting
\begin{align*}
I_{2,1} &= \{i \in I_2 : \ \text{there exists} \ B \in \mathscr{B}^\prime \ \text{such that} \ \tfrac{1}{32}B_i \cap B \not= \emptyset \ \text{and} \ r_B \geq r_{B_i} \}, \\
I_{2,2} &= I_2\setminus I_{2,1}. 
\end{align*}
If $B \in \mathscr{B}^\prime$ and $B \cap \tfrac{1}{32}B_i \not=\emptyset$ with $r_B \geq r_{B_i}$ then $2B_i \subseteq 4B.$ Furthermore, by \eqref{Separation}, we know the $\{2B_i\}$ are disjoint and satisfy $\text{dist}(x_i,L) < \text{dist}(x_i,F)/16 = r_{2B_i}/2,$ so by Lemma \ref{ENV}, we have
\begin{align*}
\sum_{\substack{i \in I_{2,1} \\ \frac{1}{32}B_i \cap B \not=\emptyset \\ r_{B_i} \leq r_B}} r_{4B_i}^d \lesssim \sum_{\substack{i \in I_{2,1} \\ 2B_i \subseteq 3B}} r_{2B_i}^d \overset{\eqref{e:ENV}}{\lesssim} r_B^d.
\end{align*}
Thus,
\begin{align}\label{I_F^1}
\sum_{i \in I_{2,1}} r_{4B_i}^d \leq \sum_{B \in \mathscr{B}^\prime} \sum_{\substack{i \in I_{2,1} \\ \frac{1}{32}B_i \cap B \not=\emptyset \\ r_{B_i} \leq r_B}} r_{4B_i}^d \leq \sum_{B \in \mathscr{B}^\prime}r_B^d. 
\end{align}
We now deal with $I_{2,2}.$ Since by \eqref{use1}, $E \cap \tfrac{1}{32}B_i \subseteq E_t^\prime$, the balls 
\[\{B \in \mathscr{B}^\prime : B \cap \frac{1}{32}B_i \cap E \not=\emptyset\}\]
form a cover for $E \cap \tfrac{1}{32}B_i.$ As before, if $B \cap \tfrac{1}{32}B_i \not=\emptyset$ then $B \cap \tfrac{1}{32}B_j = \emptyset$ for all $i\not=j$ by \eqref{Separation}. By lower regularity of $E$, we know $\mathscr{H}_\infty^d(E \cap \tfrac{1}{32}B_i) \gtrsim r_{B_i}^d,$ from which we conclude
\begin{align}\label{I_F^2}
\begin{split}
\sum_{i \in I_{2,2}} r_{4B_i}^d &\lesssim \sum_{i \in I_{2,2}} \mathscr{H}_\infty^d(E \cap \tfrac{1}{32} B_i) \leq   \sum_{i \in I_{2,2}} \sum_{\substack{B \in\mathscr{B}^\prime \\ B \cap \frac{1}{32}B_i \cap E \not=\emptyset}} r_B^d \\
&= \sum_{B \in\mathscr{B}^\prime}  \sum_{\substack{i \in I_{2,2} \\ B \cap \frac{1}{32}B_i \cap E \not=\emptyset}} r_B^d \lesssim \sum_{B \in \mathscr{B}^\prime} r_B^d. 
\end{split}
\end{align}
The proof of \eqref{I_2} (and hence the proof of the lemma) is completed since
\[ \sum_{i\in I_2} r_{4B_i}^d =  \sum_{i\in I_{2,1}} r_{4B_i}^d + \sum_{i\in I_{2,2}} r_{4B_i}^d \stackrel{ \substack{\eqref{I_F^1} \\ \eqref{I_F^2}}}{\lesssim} \sum_{B \in \mathscr{B}'} r_B^d \stackrel{\eqref{e:good2}}{\lesssim} \mathscr{H}^d_\infty(E_t'). \]

\end{proof}

In Section \ref{s:Thm1} and Section \ref{s:Thm4}, we want to apply the construction of David and Toro (Theorem \ref{DT}). To do this, we need to control the angles between pairs of planes, by their corresponding $\beta$-numbers. The following series of lemmas, culminating in Lemma \ref{AngControl}, will allow us to do so. We first introduce some more notation.

For two planes $P,P^\prime$ containing the origin, we define
\begin{align}
\angle(P,P^\prime) = d_{B(0,1)}(P,P^\prime).
\end{align}
If $P,P^\prime$ are general affine planes with $x \in P$ and $y \in P^\prime$, we define 
\begin{align}
\angle(P,P^\prime) = \angle(P-x,P^\prime - y). 
\end{align}
For planes $P_1,P_2$ and $P_3,$ it is not difficult to show that 
\begin{align}\label{Triangle}
\angle(P_1,P_3) \leq \angle(P_1,P_2) + \angle(P_2,P_3).
\end{align}

\begin{lem}[{\cite[Lemma 6.4]{azzam2015characterization}}]\label{AzzamTolsa}
Suppose $P_1$ and $P_2$ are $d$-planes in $\R^n$ and $X = \{x_0,\dots, x_d\}$ are points so that 
\begin{enumerate}
\item $\eta \in (0,1)$, where 
\[\eta = \eta(X) = \min \{\emph{dist}(x_i, \emph{span}(X\setminus \{x_i\}) \}/\emph{diam}(X) \]
\item $\emph{dist}(x_i,P_j) < \e \emph{diam}(X)$ for $i=0,\dots,d$ and $j=1,2,$ where $\e < \eta d^{-1}/2.$ 
\end{enumerate}
Then
\[\emph{dist}(y,P_1) \leq \e \left(\frac{2d}{\eta}\emph{dist}(y,X) + \emph{diam}(X) \right) \ \text{for all} \ y \in P_2. \]
\end{lem}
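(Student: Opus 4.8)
The statement to prove is Lemma \ref{AzzamTolsa} itself — but since this is quoted from \cite{azzam2015characterization}, what I am really being asked is to reconstruct its proof. The idea is a standard affine-algebra argument: the points $x_0,\dots,x_d$ are in ``general position'' (quantified by $\eta$), so they span the plane $P_2$ (after the error $\e\,\mathrm{diam}(X)$ is taken into account), and any $y \in P_2$ can be written in barycentric coordinates with respect to a nearby reference simplex whose vertices lie near $P_1$; then $\mathrm{dist}(y,P_1)$ is controlled by the barycentric coefficients times the individual distances $\mathrm{dist}(x_i,P_1)$, and the coefficients are bounded using $\eta$ and $\mathrm{dist}(y,X)$.

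\textbf{Step 1: set up barycentric coordinates.} Without loss of generality rescale so $\mathrm{diam}(X)=1$. For $y \in P_2$, first project $y$ onto the affine span $V$ of $X$; call this $y'$. The nondegeneracy hypothesis (1) says each $x_i$ is distance $\geq \eta$ from the affine hull of the remaining points, so the simplex $X$ is ``$\eta$-fat'' and $y'$ has a unique representation $y' = \sum_{i=0}^d \lambda_i x_i$ with $\sum \lambda_i = 1$. The key quantitative fact is that $|\lambda_i| \lesssim \eta^{-1}(\mathrm{dist}(y',X)+1) \lesssim \eta^{-1}(\mathrm{dist}(y,X)+1)$: this follows because $\lambda_i$ can be read off as the signed ratio of the distance from $y'$ to the hyperplane $\mathrm{span}(X\setminus\{x_i\})$ to the distance from $x_i$ to that same hyperplane, and the denominator is $\geq \eta$ while the numerator is at most $\mathrm{dist}(y',\mathrm{span}(X\setminus\{x_i\})) + (\text{something bounded})$, hence $\lesssim \mathrm{dist}(y,X)+1$.

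\textbf{Step 2: transfer the estimate to $P_1$.} Let $\pi_1$ denote nearest-point projection onto $P_1$. Since $\pi_1$ is affine and $1$-Lipschitz, $\pi_1(y') = \sum_i \lambda_i \pi_1(x_i)$, and
\begin{align*}
\mathrm{dist}(y,P_1) \leq |y - y'| + |y' - \pi_1(y')| \leq |y-y'| + \sum_{i=0}^d |\lambda_i|\, |x_i - \pi_1(x_i)| = |y-y'| + \sum_{i=0}^d |\lambda_i|\,\mathrm{dist}(x_i,P_1).
\end{align*}
By hypothesis (2) each $\mathrm{dist}(x_i,P_1) < \e$, and by Step 1 each $|\lambda_i| \lesssim \eta^{-1}(\mathrm{dist}(y,X)+1)$; summing the $d+1$ terms and absorbing constants gives $\sum_i |\lambda_i|\mathrm{dist}(x_i,P_1) \lesssim \e\,\eta^{-1} d\,(\mathrm{dist}(y,X)+1)$, which after tracking the constants precisely yields the $\e(\tfrac{2d}{\eta}\mathrm{dist}(y,X) + 1)$-shape of the bound. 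It remains to control $|y-y'| = \mathrm{dist}(y,V)$: since $y\in P_2$ and all $x_i$ lie within $\e$ of $P_1$ \emph{and} within $\e$ of $P_2$, the plane $V$ is within $O(\e/\eta)$ of $P_2$ in a neighbourhood of $X$, so $\mathrm{dist}(y,V) \lesssim (\e/\eta)(\mathrm{dist}(y,X)+1)$ as well, of the same form. Combining, $\mathrm{dist}(y,P_1) \leq \e(\tfrac{2d}{\eta}\mathrm{dist}(y,X)+\mathrm{diam}(X))$ after undoing the rescaling.

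\textbf{Main obstacle.} The delicate point is getting the \emph{sharp numerical constant} $\tfrac{2d}{\eta}$ rather than just $\lesssim_d \eta^{-1}$; this forces one to be careful in Step 1 about exactly how the barycentric coefficient $\lambda_i$ relates to the fatness parameter, and in particular to use that $\eta$ is defined with $\mathrm{diam}(X)$ in the denominator so the ratio bounding $|\lambda_i|$ is literally $\mathrm{dist}(y,X)/(\eta\,\mathrm{diam}(X)) + O(1)$. The condition $\e < \eta d^{-1}/2$ in hypothesis (2) is precisely what is needed to guarantee the perturbed reference points $\pi_1(x_i)$ (or the near-$X$ points inside $P_1$) still form a nondegenerate simplex, so that the affine-combination manipulation is legitimate; one should check this first before running Steps 1–2. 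Everything else is routine linear algebra and the triangle inequality.
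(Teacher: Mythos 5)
This lemma is quoted verbatim from \cite[Lemma 6.4]{azzam2015characterization}; the present paper gives no proof of it, so there is no in-paper argument to compare your proposal against.

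Your barycentric-coordinate strategy is the right idea, and Steps~1--2 are in order as far as they go. The soft spot is the term $|y-y'|=\mathrm{dist}(y,V)$. You assert that ``$V$ is within $O(\e/\eta)$ of $P_2$ in a neighbourhood of $X$'' and that this gives $\mathrm{dist}(y,V)\lesssim(\e/\eta)(\mathrm{dist}(y,X)+1)$, but this is itself an estimate of exactly the same type as the one being proved (take $P_1$ to be $V$): establishing it honestly requires a second run of the barycentric argument, this time for the simplex $\{\pi_{P_2}(x_i)\}\subset P_2$ and the target plane $V$, together with a check that $\pi_{P_2}(X)$ is still $\eta'$-nondegenerate for some $\eta'\gtrsim\eta$ when $\e<\eta/(2d)$. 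As written it reads as circular. The cleaner fix, which also gets you to the sharp-shaped constant in one pass, is to skip the projection onto $V$ entirely: write $y=\sum_i\mu_i\,\pi_{P_2}(x_i)$ with $\sum\mu_i=1$ in $P_2$ (the nondegeneracy of $\pi_{P_2}(X)$, guaranteed by $\e<\eta d^{-1}/2$, controls $\sum|\mu_i|$ exactly as in your Step~1), and then
\[
\mathrm{dist}(y,P_1)\le\sum_i|\mu_i|\,\mathrm{dist}(\pi_{P_2}(x_i),P_1)
\le\sum_i|\mu_i|\bigl(|\pi_{P_2}(x_i)-x_i|+\mathrm{dist}(x_i,P_1)\bigr)
\le 2\e\sum_i|\mu_i|,
\]
with no $|y-y'|$ term to contend with. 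This one-pass version is what actually delivers a bound of the stated shape $\e\bigl(\tfrac{2d}{\eta}\,\mathrm{dist}(y,X)+\mathrm{diam}(X)\bigr)$; your two-pass version can be made to work but doubles the bookkeeping and loosens the constants.
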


In order to control angles between $d$-planes, we need to know that $E$ is sufficiently spread out in at least $d$ directions. This is quantified below. 

\begin{defn}\label{Sep}
Let $ 0 <\alpha <1.$ We say a ball $B$ has $(d+1,\alpha)$-separated points if there exist points $X = \{x_0,\dots,x_d\}$ in $E \cap B$ such that, for each $i=1,\dots,d,$ we have
\begin{align}\label{e:sepdef}
\text{dist}(x_{i+1},\text{span}\{x_0,\dots,x_i\}) \geq \alpha r_{B}.
\end{align}
\end{defn}

\begin{lem}\label{AngPre}
Suppose $E \subseteq \R^n$ and there is $B^\prime$ and $B$ both centred on $E$ with $B^\prime \subseteq B.$ Suppose further that there exists $0 < \alpha <1$ such that $B^\prime$ has $(d+1,\alpha)$-separated points. Let $P$ and $P^\prime$ be two $d$-planes. Then
\begin{align*}
d_{B^\prime}(P,P^\prime) \lesssim  \frac{1}{\alpha^{2d+2}}\left[\left( \frac{r_B}{r_{B^\prime}}\right)^{d+1} \beta_E^{d,1}(2B,P) + \beta_E^{d,1}(2B^\prime,P^\prime) \right].
\end{align*}
\end{lem}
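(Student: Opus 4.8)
The strategy is to use the $(d+1,\alpha)$-separated points of $B'$ as the reference set $X$ in Lemma \ref{AzzamTolsa}, so that control on $\dist(x_i,P)$ and $\dist(x_i,P')$ translates into control on $d_{B'}(P,P')$. First I would fix the separated points $X=\{x_0,\dots,x_d\}\subseteq E\cap B'$ given by Definition \ref{Sep}, and record that $\diam(X)\lesssim r_{B'}$ while the non-degeneracy parameter $\eta(X)$ is bounded below by a constant times $\alpha$ (since each $x_{i+1}$ is at distance $\geq \alpha r_{B'}$ from the span of the previous points, and $\diam(X) \le 2 r_{B'}$). The key quantitative input is then an $L^\infty$-type estimate at the points $x_i$: I claim that for a $d$-plane $L$ realising (up to a factor) the infimum in $\beta_E^{d,1}(2B',L')$ or $\beta_E^{d,1}(2B,L)$, one has $\dist(x_i,L)\lesssim \alpha^{-?}(\cdots)\beta_E^{d,1}(\cdots)^{?}\cdot r_{B'}$. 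This is exactly the kind of ``$\beta_\infty$ controlled by an integrated $\beta$'' bound, and the mechanism is: if $\dist(x_i,L)$ were large, then since $x_i\in E$ and every set has lower regularity $c_1$ with respect to $\mathscr{H}^{d,E}_{B,\infty}$ (Lemma \ref{HausEasy}(1)), a definite-size ball around $x_i$ would contribute a definite amount to $\int (\dist(x,L)/r_B)\,d\mathscr{H}^{d,E}_{B,\infty}$, forcing $\beta_E^{d,1}(2B',L)$ to be large.

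Concretely, I would argue as follows. Let $P$ be arbitrary; by Lemma \ref{HausEasy}(1), $\mathscr{H}^{d,E}_{2B',\infty}(E\cap B(x_i,r))\geq c_1 r^d$ for $r$ up to $\sim r_{B'}$. If $s_i:=\dist(x_i,P)$, then for $x\in E\cap B(x_i,s_i/2)$ we have $\dist(x,P)\geq s_i/2$, so
\begin{align*}
\beta_E^{d,1}(2B',P) \gtrsim \frac{1}{r_{B'}^d}\int_{E\cap B(x_i,s_i/2)}\frac{\dist(x,P)}{r_{B'}}\,d\mathscr{H}^{d,E}_{2B',\infty} \gtrsim \frac{1}{r_{B'}^{d+1}}\cdot\frac{s_i}{2}\cdot c_1\Bigl(\frac{s_i}{2}\Bigr)^d \sim \frac{s_i^{d+1}}{r_{B'}^{d+1}},
\end{align*}
provided $s_i\lesssim r_{B'}$ (and if $s_i\gtrsim r_{B'}$ the estimate is trivial since $\beta_E^{d,1}(2B',P)$ is then bounded below by an absolute constant). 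Hence $\dist(x_i,P)\lesssim r_{B'}\,\beta_E^{d,1}(2B',P)^{1/(d+1)}$ — but this is not quite what we want, since the final estimate is linear in the $\beta$'s. So instead I would keep the bound in the implicit form $\dist(x_i,P)^{d+1}\lesssim r_{B'}^{d+1}\beta_E^{d,1}(2B',P)$ and feed it into Lemma \ref{AzzamTolsa} with $\e$ of size $\max_i \dist(x_i,P_j)/\diam(X)$; Lemma \ref{AzzamTolsa} then gives $\dist(y,P)\lesssim \frac{d}{\eta}\e\diam(X) + \e\diam(X)\lesssim \alpha^{-1}\e\, r_{B'}$ for $y\in P'\cap B'$, and symmetrically. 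Converting to $d_{B'}(P,P')$ and then replacing $\beta_E^{d,1}(2B',P)$ by $\beta_E^{d,1}(2B',P')$ plus a transfer term (using the scaling Lemma \ref{l:cont} to pass from $2B$ down to $2B'$ when estimating $\dist(x_i,P)$ in terms of $\beta_E^{d,1}(2B,P)$, which costs the factor $(r_B/r_{B'})^{d+1}$) produces the stated inequality, with the powers of $\alpha$ accumulating from the $\eta^{-1}$ in Lemma \ref{AzzamTolsa} and from verifying hypothesis (2) of that lemma (which needs $\e<\eta d^{-1}/2\sim\alpha/d$, i.e. the $\beta$'s must a priori be small — in the regime where they are not, the right-hand side dominates $d_{B'}(P,P')\leq 2$ trivially).

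The main obstacle is bookkeeping the exponents and the case analysis: one must handle separately the ``large $\beta$'' regime (where the conclusion is trivial because $d_{B'}(P,P')\le2$ and the right-hand side is $\gtrsim\alpha^{-2d-2}$ times something bounded below) and the ``small $\beta$'' regime (where Lemma \ref{AzzamTolsa} applies), and in the latter one must be careful that the quantity $\e$ appearing in Lemma \ref{AzzamTolsa} is itself only controlled by the $(d+1)$-st root of the $\beta$'s, yet the final bound is linear — the resolution is that in the small-$\beta$ regime the root is larger than the quantity itself only after multiplying by the right power of $r_B/r_{B'}$ and $\alpha$, or more precisely one tracks $\dist(x_i,P)/r_{B'}$ directly (not its $(d+1)$st root) through Lemma \ref{AzzamTolsa}, which is legitimate since that lemma's conclusion is linear in $\e = \max_i \dist(x_i,P_j)/\diam(X)$. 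I would also need the triangle-type inequality \eqref{Triangle} and the comparison $d_{B'}(P,P')\sim\angle(P,P')$ for planes meeting $B'$, both of which are elementary.
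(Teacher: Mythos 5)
Your plan correctly sets up the use of the $(d+1,\alpha)$-separated points and Lemma \ref{AzzamTolsa} with $\eta(X)\gtrsim\alpha$, and it correctly diagnoses the quantitative obstacle; but your proposed resolution of that obstacle does not work, and this is a genuine gap. Your estimate $\dist(x_i,P)\lesssim r_{B'}\,\beta_E^{d,1}(2B',P)^{1/(d+1)}$ is the best one can do for the \emph{fixed} points $x_i$: lower regularity $\mathscr{H}^{d,E}_{B',\infty}(E\cap B(x_i,s_i/2))\gtrsim s_i^d$ trades a factor of $s_i^d$ for the single copy of $s_i$ in the integrand, so the $\beta$-number only sees $(s_i/r_{B'})^{d+1}$. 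Claiming one can ``track $\dist(x_i,P)/r_{B'}$ directly (not its $(d+1)$st root) through Lemma \ref{AzzamTolsa}'' does not remove the root: that lemma's conclusion being linear in $\e=\max_i\dist(x_i,P_j)/\diam(X)$ is irrelevant if the only available bound on $\e$ is the root bound. Fed through, your argument gives $d_{B'}(P,P')\lesssim \alpha^{-2}\bigl(\beta_E^{d,1}(2B,P)^{1/(d+1)}+\beta_E^{d,1}(2B',P')^{1/(d+1)}\bigr)$, which is strictly weaker than the lemma's linear bound precisely in the small-$\beta$ regime that matters (and is what is needed later, e.g. when verifying the square-summability hypothesis \eqref{e:LipDT} in Lemma \ref{Sigma}, where only a linear control $\varepsilon_k'\lesssim\beta$ lets one sum $\varepsilon_k'^2$ against $\sum\beta^2\le\e^2$).

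The paper circumvents this by abandoning the fixed $x_i$ in favour of a pigeonhole over nearby points. Set $B_i=B(x_i,\alpha^2 r_{B'}/100)$ and, with $E_{t,i}=\{x\in E\cap B_i:\dist(x,P)>tr_{B'}\ \text{or}\ \dist(x,P')>tr_{B'}\}$, let $T$ be the largest threshold with $E_{t,i}=E\cap B_i$ for all $t\le T$. Since $\mathscr{H}^{d,E}_{B_i,\infty}(E\cap B_i)\gtrsim\alpha^{2d}r_{B'}^d$, while the Choquet integrals defining $\beta_E^{d,1}(2B,P)$ and $\beta_E^{d,1}(2B',P')$ dominate $\int_0^T\mathscr{H}^{d,E}_{B_i,\infty}(E_{t,i})\,dt$ after the monotonicity and rescaling of Lemma \ref{HausEasy}, one gets $T\lesssim\lambda\alpha^2$, where $\lambda$ is (up to constant) the right-hand side of the lemma. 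Hence each $B_i$ contains a replacement point $y_i\in E$ with both $\dist(y_i,P)$ and $\dist(y_i,P')$ at most $\lesssim\lambda\alpha^2 r_{B'}$ — a bound that is now \emph{linear} in $\lambda$. Because $r_{B_i}=\alpha^2 r_{B'}/100$ is small compared to the $\alpha r_{B'}$ separation of the $x_i$, the set $Y=\{y_i\}$ still has $\diam(Y)\gtrsim\alpha r_{B'}$ and $\eta(Y)\gtrsim\alpha$, so Lemma \ref{AzzamTolsa} applied to $Y$ (not $X$) with $\e\sim\lambda\alpha$ yields the stated bound. This averaging step — exploiting that $\beta^{d,1}$ is a Choquet integral of the distance function, so that most of $E\cap B_i$ must lie within a linearly controlled distance of both planes simultaneously — is the missing idea in your proposal.
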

\begin{proof}
Since $B^\prime$ has $(d+1,\alpha)$-separated points, we can find $X = \{x_0,\dots, x_d\}$ satisfying \eqref{e:sepdef}. This implies that $\alpha < \eta(X) \leq 1.$ Let $B_i = B(x_i, \alpha^2 r_{B^\prime}/100)$ and for $t>0$ let 
\[E_{t,i} = \{ x \in E \cap B_i : \text{dist}(x,P) > tr_{B^\prime} \ \text{or} \ \text{dist}(x,P^\prime) >tr_{B^\prime}\}. \]
Let $T>0$ and suppose $E_{t,i} = E \cap B_i$ for all $t \leq T.$ We shall bound $T$. By Lemma \ref{HausEasy} (1), 
\[\mathscr{H}^{d,E}_{B_i,\infty}(E \cap B_i) \geq c_1r_{B_i}^d = \frac{c_1 \alpha^{2d}}{100^d}r_{B^\prime}^d. \]
Using this, along with Lemma \ref{HausEasy} (2),(3),(4), we get
\begin{align*}
T &\leq \mathscr{H}^{d,E}_{B_i,\infty}(E \cap B_i)^{-1} \int_0^T \mathscr{H}^{d,E}_{B_i,\infty}(E_{t,i}) \, dt \lesssim  \frac{1}{\alpha^{2d} r_{B^\prime}^d}\int_0^T \mathscr{H}^{d,E}_{B_i,\infty}(E_{t,i}) \, dt \\
&\stackrel{(4)}{\lesssim} \frac{1}{\alpha^{2d} r_{B^\prime}^d}\int_0^T \mathscr{H}^{d,E}_{B_i,\infty}\{x \in E\cap B_i : \text{dist}(x,P) >tr_{B^\prime}\} \, dt \\
& \quad\quad +\frac{1}{\alpha^{2d} r_{B^\prime}^d}\int_0^T \mathscr{H}^{d,E}_{B_i,\infty}\{x \in E\cap B_i : \text{dist}(x,P^\prime) >tr_{B^\prime}\} \, dt \\
&\stackrel{(2),(3)}{\lesssim} \frac{1}{\alpha^{2d} r_{B^\prime}^d}\int_0^T \mathscr{H}^{d,E}_{2B^\prime,\infty}\{x \in E\cap 2B^\prime : \text{dist}(x,P) >tr_{2B^\prime}\} \, dt \\
& \quad\quad +\frac{r_B^{d+1}}{r_{B^\prime}^{d+1}}\frac{1}{ \alpha^{2d} r_{B^\prime}^d}\int_0^T \mathscr{H}^{d,E}_{2B,\infty}\{x \in E\cap 2B : \text{dist}(x,P^\prime) >tr_{2B}\} \, dt \\
& \lesssim \frac{1}{\alpha^{2d}}\left[\left( \frac{r_B}{r_{B^\prime}}\right)^{d+1} \beta_E^{d,1}(2B,P) + \beta_E^{d,1}(2B^\prime,P^\prime)\right] \eqqcolon \lambda \alpha^2 .
\end{align*}
Note, we define $\lambda$ like this for convenience in the forthcoming estimates. Thus, there is a constant $C$ such that $T \leq C \lambda \alpha^2 .$ This implies for each $i = 0,1,\dots,d$, there exists some $y_i \in (E \cap B_i)\setminus E_{2 \lambda \alpha^2, i}$. Let $Y = \{y_0,\dots,y_d\}.$ Since $|x_i - x_j| \geq \alpha r_{B'}$ for all $i \not =j,$ and $y_i \in B_i,$ it follows that 
\begin{align}\label{e:dY>a}
\text{diam}(Y) \geq \alpha r_{B'}/2.
\end{align}
Thus,
\begin{align}
\dist(y_i,P_j) \leq 2C\lambda  \alpha^2 r_{B^\prime} = \frac{2C\lambda  \alpha^2 r_{B^\prime}}{\diam (Y)} \diam (Y) \stackrel{\eqref{e:dY>a}}{\leq} 4C \lambda\alpha  \diam (Y).
\end{align}
Because $d_{B'}(P,P') \leq 1,$ if $\lambda \geq \tfrac{1}{16Cd}$ then the lemma follows. Assume instead that $\lambda < \tfrac{1}{16Cd}.$ By \eqref{e:dY>a} we can show that 
\begin{align}\label{e:eta}
\alpha/2 \leq \eta(Y) \leq 1,
\end{align}
which gives
\[ 4C \lambda \alpha \leq \frac{\alpha d^{-1}}{4} \stackrel{\eqref{e:eta}}{\leq} \eta(Y) d^{-1}/2,\]
so, taking $\e = 4C\lambda \alpha$ in Lemma \ref{AzzamTolsa}, we get 
\[ d_{B'}(P,P^\prime) \leq \epsilon \left(\frac{2d}{\eta(Y)} + 1 \right) \stackrel{\eqref{e:dY>a}}{\leq} 4C\lambda \alpha \left( \frac{4d}{\alpha} +1\right) \leq 20Cd\lambda, \]
which proves the lemma.
\end{proof}


\begin{rem}
The following lemma is essentially Lemma 2.18 from \cite{azzam2018analyst} and the proof is the same. The main difference is that since $E$ is not necessarily lower regular, we need to assume that $E$ has $(d+1,\alpha)$-separated points in each cube. The final constant then also ends up depending on $\alpha.$ When we refer to this lemma for a lower regular set, we will be using Lemma 2.18 from \cite{azzam2018analyst} i.e. we can forget about the separation condition and the $\alpha$ constant in the final inequality. 
\end{rem}

\begin{lem}\label{AngControl}
Let $M >1$, $\alpha >0$ and $E$ a Borel set. Let $\mathscr{D}$ be the cubes for $E$ from Lemma \ref{cubes} and $Q_0 \in \mathscr{D}.$ Let $P_Q$ satisfy $\beta_E^{d,1}(MB_Q) = \beta_E^{d,1}(MB_Q,P_Q)$. Let $Q,R \in \mathscr{D}$, $Q,R \subseteq Q_0$ and suppose for all cubes $T \subseteq Q_0$ such that $T$ contains either $Q$ or $R$ that $\beta_E^{d,1}(MB_T) < \e$ and $T$ has $(\alpha,d+1)$-separated points. Then for $\Lambda >0$, if $\emph{dist}(Q,R) \leq \Lambda \max\{\ell(Q),\ell(R) \} \leq \Lambda^2\min\{\ell(Q),\ell(R)\},$ then
\begin{align*}
\angle(P_Q,P_R) \lesssim _{M,\Lambda}\frac{\e}{\alpha^{2d+2}}. 
\end{align*}
\end{lem}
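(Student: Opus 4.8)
The statement is a standard "angle transitivity along a chain of cubes" argument, so the plan is to reduce to Lemma~\ref{AngPre} applied to a carefully chosen comparison cube, and then chain the resulting estimates. First I would reduce to the case $\ell(Q)\le\ell(R)$ without loss of generality. Set $n_0=n_0(\Lambda)$ to be the smallest integer with $\rho^{-n_0}\ge 2\Lambda^2$ (recall $\rho=1/1000$), and let $T$ be the ancestor of $Q$ with $\ell(T)=\rho^{-n_0}\ell(Q)\sim_\Lambda \ell(Q)$. The hypotheses $\operatorname{dist}(Q,R)\le\Lambda\ell(R)\le\Lambda^2\ell(Q)$ together with $\ell(Q)\le\ell(R)\le\Lambda\ell(Q)$ (this last follows from $\ell(R)\le\Lambda\max\{\ell(Q),\ell(R)\}$ when $\ell(R)\ge\ell(Q)$... more precisely from the chain of inequalities, one extracts $\ell(R)\le\Lambda^2\ell(Q)$ hence $\ell(R)\lesssim_\Lambda\ell(Q)$) show that $R\subseteq CB_T$ for a dimensional-free constant $C=C(\Lambda)$, and likewise $Q\subseteq CB_T$; enlarging $n_0$ if necessary we can arrange $Q,R\subseteq B_T$ and $MB_Q, MB_R\subseteq 2B_T$-type containments as needed.

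**Applying Lemma~\ref{AngPre}.**
Next I would apply Lemma~\ref{AngPre} twice: once with the pair of balls $MB_Q\subseteq 2B_T$ (so $B'=MB_Q$, $B=B_T$ up to the factor-of-$2$ bookkeeping, with planes $P=P_T$ and $P'=P_Q$), and once with $MB_R\subseteq 2B_T$ (planes $P_T$ and $P_R$). The separation hypothesis supplies $(d+1,\alpha)$-separated points in $B_Q$ and $B_R$ — note that a ball containing $(d+1,\alpha)$-separated points at scale $r_{MB_Q}$ also contains them at scale $r_{B_Q}$ up to adjusting $\alpha$ by the fixed factor $M$, which only costs a constant depending on $M$. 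Lemma~\ref{AngPre} then gives
\begin{align*}
d_{MB_Q}(P_T,P_Q)&\lesssim\frac{1}{\alpha^{2d+2}}\left[\left(\tfrac{r_{B_T}}{r_{MB_Q}}\right)^{d+1}\beta_E^{d,1}(CB_T,P_T)+\beta_E^{d,1}(2MB_Q,P_Q)\right],\\
d_{MB_R}(P_T,P_R)&\lesssim\frac{1}{\alpha^{2d+2}}\left[\left(\tfrac{r_{B_T}}{r_{MB_R}}\right)^{d+1}\beta_E^{d,1}(CB_T,P_T)+\beta_E^{d,1}(2MB_R,P_R)\right].
\end{align*}
Since $r_{B_T}/r_{MB_Q}\lesssim_{M,\Lambda}1$ and similarly for $R$, and since each $\beta_E^{d,1}$ term on the right is $\le\beta_E^{d,1}$ at a ball of the form $M B_S$ for some $S\in\{T\}\cup\{\text{small perturbations}\}$ — here I would invoke Lemma~\ref{l:cont} to pass from $CB_T$ or $2MB_Q$ to $MB_S$ at the cost of a constant depending on $M,\Lambda$, using that $T$ and the relevant cubes all lie between $Q$ (or $R$) and $Q_0$ so the hypothesis $\beta_E^{d,1}(MB_S)<\e$ applies, and Lemma~\ref{l:p} to go from $\beta^{d,1}$ to the plane realizing $\beta_E^{d,1}(MB_S,P_S)$ — each right-hand side is $\lesssim_{M,\Lambda}\e/\alpha^{2d+2}$.

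**Chaining.**
Finally, $d_{MB_Q}$ controls the angle: since $MB_Q$ is a ball and $P_T,P_Q$ are $d$-planes, $\angle(P_T,P_Q)\lesssim d_{MB_Q}(P_T,P_Q)$ (comparing renormalized Hausdorff distance inside a fixed ball to the normalized angle — this is the routine fact that for two $d$-planes through a common nearby point the two quantities are comparable). Likewise $\angle(P_T,P_R)\lesssim d_{MB_R}(P_T,P_R)$. Then by the triangle inequality \eqref{Triangle},
\begin{align*}
\angle(P_Q,P_R)\le\angle(P_Q,P_T)+\angle(P_T,P_R)\lesssim_{M,\Lambda}\frac{\e}{\alpha^{2d+2}},
\end{align*}
which is the claim. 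The main obstacle is the bookkeeping in the first step: one must verify that the chosen ancestor $T$ is genuinely contained in $Q_0$ (so that the hypotheses on $T$ are available) and that all the dilated balls appearing satisfy the containment relations needed to feed Lemma~\ref{AngPre} and Lemma~\ref{l:cont} with constants depending only on $M$ and $\Lambda$; none of this is deep, but it is where the argument could go wrong if $Q$ or $R$ is close to the top cube $Q_0$ — in that regime one takes $T=Q_0$ itself and the estimate only improves. Everything else is an application of results already established in the excerpt.
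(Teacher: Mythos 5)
The paper offers no self-contained proof here: immediately before the statement it remarks that ``the following lemma is essentially Lemma 2.18 from [Azzam--Schul] and the proof is the same,'' i.e.\ it is a citation. Your strategy --- pass to a common ancestor $T$ of comparable scale, compare $P_Q$ with $P_T$ and $P_R$ with $P_T$ via Lemma~\ref{AngPre}, then chain with the angle triangle inequality \eqref{Triangle} --- is indeed the argument that citation refers to, so the approach is right.

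However, your bookkeeping in the middle step has a genuine gap. Lemma~\ref{AngPre} outputs $\beta_E^{d,1}(2B,P)+\beta_E^{d,1}(2B',P')$. Taking $B'=MB_Q$, $P'=P_Q$, you are left with $\beta_E^{d,1}(2MB_Q,P_Q)$ and you propose to control it by ``invoking Lemma~\ref{l:cont} to pass from $2MB_Q$ to $MB_S$.'' That does not work: Lemma~\ref{l:cont} controls the \emph{infimal} $\beta$-number when one \emph{shrinks} the ball, whereas here the ball is being \emph{enlarged} and the plane is the fixed minimizer $P_Q$ for $MB_Q$, which need not be close to optimal for $2MB_Q$. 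There is no monotonicity in this direction, and one cannot bound $\beta_E^{d,1}(2MB_Q,P_Q)$ by $\beta_E^{d,1}(MB_S)$ for any $S$ without already knowing something like $\angle(P_Q,P_S)$ is small --- which is exactly what we are trying to prove. The correct instantiation is to take $B'=\tfrac{M}{2}B_Q$ and $B=\tfrac{M}{2}B_T$, so that $2B'=MB_Q$ and $2B=MB_T$: then the two $\beta$-terms in Lemma~\ref{AngPre} become $\beta_E^{d,1}(MB_Q,P_Q)=\beta_E^{d,1}(MB_Q)<\e$ and $\beta_E^{d,1}(MB_T,P_T)=\beta_E^{d,1}(MB_T)<\e$, which is what one actually needs, and the ratio $r_B/r_{B'}=\ell(T)/\ell(Q)\lesssim_\Lambda 1$. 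This halving does shift the separated-points hypothesis onto $\tfrac{M}{2}B_Q$ rather than $MB_Q$, and one must then check the hypothesis of Lemma~\ref{AngControl} is stated (and used in Section~4) at the compatible scale, but this is only a constant adjustment in $\alpha$.

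A second, smaller point: the parenthetical claim that ``a ball containing $(d+1,\alpha)$-separated points at scale $r_{MB_Q}$ also contains them at scale $r_{B_Q}$'' is backwards. If $MB_Q$ has $(d+1,\alpha)$-separated points, those points may all lie in $MB_Q\setminus B_Q$, so $B_Q$ need not have them. The correct (and trivial) direction is that separated points in a smaller concentric ball persist, with $\alpha$ rescaled, in a larger one. In your argument you in fact want the separation at the scale of $B'$ that you feed into Lemma~\ref{AngPre}, so once $B'$ is chosen as above the remark is both unnecessary and, as written, incorrect.
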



\section{Proof of Theorem \ref{Thm1}}\label{s:Thm1}

Let $X_k^E$ be a sequence of maximally $\rho^k$-separated nets in $E$. For each $k$, let $X_k^F$ be the completion of $X_k^E$ to a maximally $\rho^k$-separated net in $F.$ Let $\mathscr{D}^E$ and $\mathscr{D}^F$ be the cubes from Theorem \ref{cubes} with respect to $X_k^E$ and $X_k^F.$ In this way, for each $Q \in \mathscr{D}^E$, there exists $Q' \in \mathscr{D}^F$ such that 
\begin{align}\label{e:cubeEF}
x_Q = x_{Q'}, \ \ell(Q) = \ell(Q').
\end{align}

Let $Q_0^E \in \mathscr{D}^E$ and let $Q_0^F \in \mathscr{D}^F$ be the cube with the same centre and side length as $Q_0^E.$ To simplify notation we will write $ \mathscr{D}= \mathscr{D}^F$ and $Q_0 = Q_0^F.$ We first reduce to proof of \eqref{e:Thm1} to the proof of \eqref{Suffices1} below. Recall the definition 
\[\mathscr{D}(Q) = \{R \in \mathscr{D} : R \subseteq Q\}.\]
\begin{lem}\label{l:Suffices1}
If
\begin{align}\label{Suffices1}
\sum_{Q \in \mathscr{D}(Q_0)} \beta_E^{d,p}(C_0B_Q)^2 \ell(Q)^d \lesssim \mathscr{H}^d(Q_0) + \sum_{Q \in \mathscr{D}(Q_0)} \check\beta_F^{d,1}(AB_Q)^2 \ell(Q)^d
\end{align}
for some $A \geq C_0,$ then \eqref{e:Thm1} holds.
\end{lem}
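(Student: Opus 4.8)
The plan is to chain together three ingredients: the cube correspondence \eqref{e:cubeEF} between $\mathscr{D}^E$ and $\mathscr{D}^F$, the hypothesis \eqref{Suffices1}, and Theorem \ref{dTSP} applied to the auxiliary $(c,d)$-lower content regular set $F$. The point is that this lemma is only the (easy) reduction; all the real work of Theorem \ref{Thm1} will go into establishing \eqref{Suffices1}.

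First I would dispose of the diameter terms. Since $\ell(Q_0^E)=\ell(Q_0^F)$ and, for any Christ--David cube, $\diam(Q)\sim\ell(Q)$ (by Lemma \ref{cubes}(3)), we have $\diam(Q_0^E)^d\sim\ell(Q_0^F)^d\sim\diam(Q_0^F)^d$, so the diameter term is comparable on the two sides of \eqref{e:Thm1} and may be absorbed. Next, note that $\beta_E^{d,p}(C_0B_Q)$ depends only on the ball $C_0B_Q=B(x_Q,C_0\ell(Q))$. By \eqref{e:cubeEF}, to each $Q\in\mathscr{D}^E$ with $Q\subseteq Q_0^E$ there corresponds a unique $Q'\in\mathscr{D}^F$ with $x_{Q'}=x_Q$, $\ell(Q')=\ell(Q)$ and $Q'\subseteq Q_0^F$ (cf. the remark following Theorem \ref{Thm1}); since $Q\mapsto Q'$ is injective and $C_0B_{Q'}=C_0B_Q$,
\[
\sum_{\substack{Q\in\mathscr{D}^E\\ Q\subseteq Q_0^E}}\beta_E^{d,p}(C_0B_Q)^2\ell(Q)^d\ \le\ \sum_{Q\in\mathscr{D}(Q_0)}\beta_E^{d,p}(C_0B_Q)^2\ell(Q)^d,
\]
and the right-hand side is exactly the left-hand side of \eqref{Suffices1}. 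Thus it suffices to bound the right-hand side of \eqref{Suffices1} by the right-hand side of \eqref{e:Thm1}.

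Granting \eqref{Suffices1}, I am left to show that, for the relevant $A\ge C_0$,
\[
\mathscr{H}^d(Q_0)+\sum_{Q\in\mathscr{D}(Q_0)}\check\beta_F^{d,1}(AB_Q)^2\ell(Q)^d\ \lesssim\ \diam(Q_0^F)^d+\sum_{Q\in\mathscr{D}(Q_0)}\check\beta_F^{d,p}(C_0B_Q)^2\ell(Q)^d.
\]
Since $F$ is $(c,d)$-lower content regular, Theorem \ref{dTSP} applies to $F$. With dilation factor $C_0$ it gives
\[
\mathscr{H}^d(Q_0)\ \le\ \mathscr{H}^d(Q_0)+\emph{BWGL}(Q_0,\e,C_0)\ \sim\ \ell(Q_0)^d+\sum_{Q\in\mathscr{D}(Q_0)}\check\beta_F^{d,p}(C_0B_Q)^2\ell(Q)^d,
\]
and $\ell(Q_0)^d\sim\diam(Q_0^F)^d$. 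For the $\check\beta_F^{d,1}$-sum I would first pass from $p=1$ to $p$ and shrink the dilation back to $C_0B_Q$: by Lemma \ref{l:p} (applied to $F$) and Corollary \ref{CheckComp}, $\check\beta_F^{d,1}(AB_Q)\le\beta_F^{d,1}(AB_Q)\lesssim\beta_F^{d,p}(AB_Q)\lesssim\check\beta_F^{d,p}(2AB_Q)$; then Theorem \ref{dTSP} for $F$ with dilation factor $2A$ (and the same $C_0$ in the $\emph{BWGL}$ term) yields
\[
\sum_{Q\in\mathscr{D}(Q_0)}\check\beta_F^{d,p}(2AB_Q)^2\ell(Q)^d\ \lesssim\ \mathscr{H}^d(Q_0)+\emph{BWGL}(Q_0,\e,C_0)\ \sim\ \ell(Q_0)^d+\sum_{Q\in\mathscr{D}(Q_0)}\check\beta_F^{d,p}(C_0B_Q)^2\ell(Q)^d,
\]
because the quantity $\mathscr{H}^d(Q_0)+\emph{BWGL}(Q_0,\e,C_0)$ is comparable to $\ell(Q_0)^d+\sum_Q\check\beta_F^{d,p}(AB_Q)^2\ell(Q)^d$ for every fixed choice of dilation factor $A$ (with $\e$ depending only on $C_0$). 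Combining these bounds with \eqref{Suffices1} and the cube correspondence yields \eqref{e:Thm1}.

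The only genuinely delicate point is the cube correspondence at the start: one must know that the construction of $\mathscr{D}^F$ from the completed nets $\{X_k^F\}$ is compatible with $\mathscr{D}^E$, in the sense that $Q\subseteq Q_0^E$ forces $Q'\subseteq Q_0^F$, so that the $\beta_E$-sum over $\mathscr{D}^E(Q_0^E)$ really injects into the $\beta_E$-sum over $\mathscr{D}(Q_0)$; this is where \eqref{e:cubeEF} and the containment $Q_0^E\subseteq Q_0^F$ are used. Everything after that is bookkeeping --- feeding \eqref{Suffices1} into the Azzam--Schul theorem for the lower content regular set $F$ and shuffling dilation factors and the exponent $1$ versus $p$, all handled by Theorem \ref{dTSP}, Lemma \ref{l:p} and Corollary \ref{CheckComp}.
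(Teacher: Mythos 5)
Your proposal is correct and tracks the paper's argument closely, with one genuine divergence in the final bookkeeping step. Both you and the paper (i) use the cube correspondence \eqref{e:cubeEF} to pass from the $\beta_E$-sum over $\mathscr{D}^E(Q_0^E)$ to the one over $\mathscr{D}(Q_0)=\mathscr{D}^F(Q_0^F)$, (ii) feed in \eqref{Suffices1}, (iii) upgrade $\check\beta_F^{d,1}$ to $\check\beta_F^{d,p}$ via Jensen (Lemma \ref{l:p} / Corollary \ref{CheckComp}), and (iv) replace $\mathscr{H}^d(Q_0)$ by $\diam(Q_0^F)^d$ via Theorem \ref{dTSP}. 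The divergence is in step (v), shrinking the dilation factor from $A$ back to $C_0$: the paper does this by an elementary ancestor argument --- choose $K$ with $(1+A\rho^K)\le C_0$ so that $AB_Q\subseteq C_0B_{Q^{(K)}}$, control the first $K$ generations trivially by $\diam(Q_0)^d$, and reindex the deeper generations by their $K$-th ancestors using Lemma \ref{l:cont} and the bounded number of children. You instead invoke Theorem \ref{dTSP} a second time, with dilation factor $2A$, and use the fact that $\mathscr{H}^d(Q_0)+\text{BWGL}(Q_0,\e,C_0)$ is simultaneously comparable to $\check\beta_{F,2A,p}(Q_0)$ and to $\check\beta_{F,C_0,p}(Q_0)$ (with $\e$ fixed by $C_0$ alone, not by the dilation of the $\check\beta$-sum) to compare the two $\check\beta$-sums directly. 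Both routes work; yours is shorter and avoids the combinatorics but leans twice on the Azzam--Schul black box, whereas the paper's ancestor argument is self-contained. One small imprecision to flag: you write $\diam(Q_0^E)\sim\ell(Q_0^E)$ ``by Lemma \ref{cubes}(3)'', but for a general set $E$ the cube $Q_0^E$ could be nearly degenerate, so only $\diam(Q_0^E)\lesssim\ell(Q_0^E)$ is available from that lemma. This does not affect the proof, since only the inequality $\diam(Q_0^E)^d\lesssim\diam(Q_0^F)^d$ is needed, and that follows directly from the containment $Q_0^E\subseteq Q_0^F$ (or, as the paper does, from $\diam(Q_0^E)\le\ell(Q_0^E)=\ell(Q_0^F)\sim\diam(Q_0^F)$, the last comparison being legitimate because $F$ is lower content regular).
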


\begin{proof}
Assume \eqref{Suffices1} holds. By \eqref{e:cubeEF}, for each cube $Q \in \mathscr{D}^E$ there exists a cube $Q' \in \mathscr{D}$ such that $C_0B_Q \subseteq C_0B_{Q'}.$ Using this with Lemma \ref{l:cont} gives  
\begin{align}
&\diam(Q_0^E)^d + \sum_{Q \in \mathscr{D}^E(Q_0^E)}\beta_E^{d,p}(C_0B_Q)^2 \ell(Q)^d \\
&\hspace{4em} \lesssim\diam(Q_0)^d + \sum_{Q \in \mathscr{D}(Q_0)} \beta_E^{d,p}(C_0B_Q)^2 \ell(Q)^d. 
\end{align}
By \eqref{Suffices1}, Lemma \ref{l:p} and Theorem \ref{dTSP} (because $F$ is lower content regular), we have
\begin{align}
\sum_{Q \in \mathscr{D}(Q_0)} \beta_E^{d,p}(C_0B_Q)^2 \ell(Q)^d
&\stackrel{\eqref{Suffices1}}{\lesssim} \mathscr{H}^d(Q_0) + \sum_{Q \in \mathscr{D}(Q_0)} \check\beta_F^{d,1}(AB_Q)^2 \ell(Q)^d \\
&\stackrel{\eqref{e:p}}{\lesssim} \mathscr{H}^d(Q_0) + \sum_{Q \in \mathscr{D}(Q_0)} \check\beta_F^{d,p}(AB_Q)^2 \ell(Q)^d \\
&\stackrel{\eqref{e:dTSP}}{\lesssim} \diam(Q_0)^d + \sum_{Q \in \mathscr{D}(Q_0)} \check\beta_F^{d,p}(AB_Q)^2 \ell(Q)^d.
\end{align}
Now, let $K = K(C_0,A)$ be the smallest integer such that $(1+A\rho^K) \leq C_0.$ Let $Q \in \mathscr{D}_k$ for some $k \geq K$ and $y \in AB_Q.$ Since $x_Q \in Q^{(K)},$ 
\begin{align}
|y - x_{Q^{(K)}}| \leq A\ell(Q) + \ell(Q^{(K)}) = (A\rho^K + 1)\ell(Q^{(K)}) \leq C_0\ell(Q^{(K)}).
\end{align}
Hence $AB_Q \subseteq C_0B_{Q^{(K)}}.$ Furthermore each cube $Q \in \mathscr{D}$ has at most $C = C(n)$ children (notice the number of descendants is dependent on the the ambient dimension since we do not necessarily know $\beta_F^{d,1}$ is small for an arbitrary cube in $\mathscr{D}$). It follows that each $Q$ has at most $KC$ descendants up to the $K^{th}$ generation. In particular, this is also true for $Q_0.$ By this and Lemma \ref{l:cont}, we have
\begin{align}
\sum_{Q \in \mathscr{D}(Q_0)} \check\beta_F^{d,p}(AB_Q)^2\ell(Q)^d &= \sum_{k=0}^{K-1} \sum_{Q \in \mathscr{D}_k(Q_0)}\check\beta_F^{d,p}(AB_Q)^2\ell(Q)^d  \\
&\hspace{4em} + \sum_{k=K}^\infty \sum_{Q \in \mathscr{D}_k(Q_0)} \check\beta_F^{d,p}(AB_Q)^2\ell(Q)^d\\
&\lesssim_n \diam(Q_0)^d + \sum_{Q \in \mathscr{D}(Q_0)} \check\beta_F^{d,p}(C_0B_Q)^2\ell(Q)^d.
\end{align}
Combing each of the above sets of inequalities gives \eqref{e:Thm4}.

\end{proof}
The rest of this section is devoted to proving \eqref{Suffices1} holds for some $A \geq C_0.$
\begin{defn}\label{d:ST}
A collection of cubes $S \subseteq \mathscr{D}$ is called a \textit{stopping time region} if the following hold.
\begin{enumerate}
\item There is a cube $Q(S) \in S$ such that $Q(S)$ contains all cubes in $S$. 
\item If $Q \in S$ and $Q \subseteq R \subseteq Q(S),$ then $R \in S$.
\item If $Q \in S$, then all siblings of $Q$ are also in $S$. 
\end{enumerate}
We let:
\begin{itemize}
\item $Q(S)$ denote the maximal cube in $S$.
\item $\min(S)$ denote the cubes in $S$ which have a child not contained in $S$.
\item $S(Q)$ denote the unique stopping time regions $S$ such that $Q \in S.$ 
\end{itemize}
\end{defn}

We split $\mathscr{D}(Q_0)$ into a collection of stopping time regions $\mathscr{S}$ where in each stopping time region, $F$ is well-approximated by $E$ and there is good control on a certain Jones type function. Observe that if $Q \in \mathscr{D}$ and $C_0B_Q \cap E = \emptyset$ then $\beta_E^{d,p}(C_0B_Q) = 0,$ and so we will not restart our stopping times on these cubes.


Let $M > 1$ be a large constant (to be fixed later) and $\e >0$ be small (also fixed later). For each $Q \in \mathscr{D}(Q_0)$ such that $E \cap C_0B_Q \not= \emptyset$, we define a stopping time region $S_Q$ as follows. Begin by adding $Q$ to $S_Q$ and inductively, on scales, add cubes $R$ to $S_Q$ if each of the following holds,
\begin{enumerate}
\item $R^{(1)} \in S_Q$,
\item for every sibling $R^\prime$ of $R$, if $x \in R^\prime$ then $\text{dist}(x,E) \leq \e\ell(R^\prime).$ 
\item every sibling $R^\prime$ of $R$ satisfies
\[\sum_{R^\prime \subseteq T \subseteq Q} \check\beta_F^{d,1}(MB_T)^2 < \e^2.\]
\end{enumerate}

\begin{rem}
If $\check\beta_F^{d,1}(MB_Q) \geq \e$ or if there exists $x \in Q$ such that $\dist(x,E) > \e \ell(Q)$ then $S_Q = \{Q\}.$
\end{rem}

\begin{rem}\label{r:int}
For $\e$ small enough, if $R \in S_Q$ for some stopping time region $S_Q,$ then $R \cap E \not=\emptyset$. This follows because $c_0B_R \subseteq R$ and $\dist(x_R,E) \leq \e \ell(R) \leq c_0\ell(R)$ by (2). 
\end{rem}

\begin{rem}
We begin by choosing $\e$ small enough so that each cube $R,$ contained in some stopping time region, has at most $K$ children, where $K$ depends only on $M$ and $d.$ See Remark \ref{r:children} for why this is possible.
\end{rem}

We partition $\{Q \in \mathscr{D}(Q_0) : E \cap C_0B_Q \not=\emptyset\}$ as follows. First, add $S_{Q_0}$ to $\mathscr{S}.$ Then, if $S$ has been added to $\mathscr{S}$ and if $Q \in \text{Child}(R)$ for some $R \in \min(S)$ such that $E \cap C_0B_Q \not= \emptyset,$  also add $S_Q$ to $\mathscr{S}.$ Let $\mathscr{S}$ be the collection of stopping time regions obtained by repeating this process indefinitely. Note that 
\[\sum_{Q \in \mathscr{D}(Q_0)} \beta_E^{d,p}(C_0B_Q)^2 \ell(Q)^d = \sum_{S \in \mathscr{S}} \sum_{Q \in S} \beta_E^{d,p}(C_0B_Q)^2 \ell(Q)^d.\]

For each $S \in \mathscr{S}$ which is not a singleton (i.e. $S \not= \{Q\}$) we plan to find a bi-Lipschitz surface which well approximates $F$ inside $S$. With some additional constraints, the surfaces produced by Theorem \ref{DT} will be bi-Lipschitz. 

\begin{thm}[{\cite[Theorem 2.5]{david2012reifenberg}}]\label{LipDT}
With the same notation and assumptions as Theorem \ref{DT}, assume additionally that there exists $K < \infty$ such that
\begin{align}\label{e:LipDT}
\sum_{k \geq0} \varepsilon_k^\prime(f_k(z))^2 \leq K \ \text{for} \ z \in \Sigma_0
\end{align}
with
\begin{align*}
\varepsilon_k^\prime(x) = \sup\{d_{x_{i,l},10^4r_l}(P_{j,k},P_{i,l}) : j \in J_k, \ \abs{l-k} \leq 2, \ i \in J_k, x \in 10B_{j,k} \cap 10B_{i,l} \}.
\end{align*}
Then $f = \lim f_N = \lim_N \sigma_0 \circ \dots \circ \sigma_N : \Sigma_0 \rightarrow \Sigma$ is $C(K)$-bi-Lipschitz. 
\end{thm}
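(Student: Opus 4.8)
Since the statement is \cite[Theorem 2.5]{david2012reifenberg}, the plan is to indicate the strategy rather than reproduce the construction of \cite{david2012reifenberg}. Because Theorem~\ref{DT} already supplies the H\"older two‑sided bound $\tfrac14|x-y|^{1+\tau}\le|f(x)-f(y)|\le 10|x-y|^{1-\tau}$ for $|x-y|\le1$, and $f|_{P_0}=\lim_N f_N|_{P_0}$, I would first reduce the Lipschitz statement to a uniform (in $N$) two‑sided estimate on the tangential derivatives of the finite compositions $f_N=\sigma_N\circ\dots\circ\sigma_0$: namely $e^{-CK}|v|\le|Df_N(z)v|\le e^{CK}|v|$ for all $N$, all $z\in P_0$, and all $v$ tangent to $P_0$, with $C$ absolute. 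Granting this, for $z,w\in P_0$ one sets $r=|z-w|$, picks $N$ with $r_N\sim r$, notes that $\sigma_k$ moves points by at most $\lesssim\e r_k$ (see \eqref{Sigma_ksigma_k}) so that $|f(z)-f_N(z)|\le\sum_{k>N}|f_k(z)-f_{k-1}(z)|\lesssim\e r$, and uses the fact that $\Sigma_N$ is a $C\e$‑Lipschitz graph at scale $\sim r_N$ (Theorem~\ref{DT}) to rule out cancellation in $\int_0^1 Df_N(\gamma(t))\gamma'(t)\,dt$ along the segment $[z,w]$; this gives $|f_N(z)-f_N(w)|\sim_K r$, hence $|f(z)-f(w)|\sim_K r$ once $\e$ is small relative to $e^{-CK}$.

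For the derivative bound I would use the chain rule $Df_N(z)=D\sigma_N(f_{N-1}(z))\circ\dots\circ D\sigma_0(z)$, restricted at stage $k$ to the tangent plane $T_k$ of $\Sigma_k$ at $f_k(z)$, write $D\sigma_k(f_{k-1}(z))\big|_{T_{k-1}}=I+A_k$ with respect to an isometric identification $T_{k-1}\cong T_k$, and telescope logarithms:
\[
\log\frac{|Df_N(z)v|}{|v|}=\sum_{k=0}^N\log\frac{|(I+A_k)\,u_k|}{|u_k|},
\]
where $u_k=Df_{k-1}(z)v/|Df_{k-1}(z)v|$ (with $f_{-1}=\mathrm{id}$). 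The point is to show that each summand is $O\big((\e_k'(f_{k-1}(z)))^2\big)$ up to terms summing to $O(\e^2)$; the total is then $\lesssim K$ by hypothesis \eqref{e:LipDT} (transferring the base point from $f_{k-1}(z)$ to $f_k(z)$ is harmless since $|f_k(z)-f_{k-1}(z)|\lesssim\e r_k$ and $\e_k'$ is essentially constant at scale $r_k$).

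The crux — and the only place the \emph{square}‑summability, as opposed to summability, of the $\e_k'$ is genuinely needed — is that $A_k$ is antisymmetric up to a quadratic error: $A_k=S_k+O\big((\e_k'(f_{k-1}(z)))^2+\e^2\big)$ with $S_k$ antisymmetric and $\|A_k\|\lesssim\e_k'(f_{k-1}(z))+\e$. Given this, antisymmetry of $S_k$ forces $\langle u_k,S_ku_k\rangle=0$, so $|(I+A_k)u_k|^2=|u_k|^2\big(1+O(\|A_k\|^2)\big)$ and the $k$‑th summand is $O(\|A_k\|^2)$, as wanted. To establish the quadratic closeness to a rigid motion I would use the explicit formula $\sigma_k(y)=\psi_k(y)+\sum_j\theta_{j,k}(y)[\pi_{j,k}(y)-y]$: on a ball where the relevant $\theta_{j,k}\approx1$, $\sigma_k$ agrees in $C^1$, up to $O(\e r_k)$, with $\pi_{j,k}$ followed by the graph map of $A_{j,k}$ over $P_{j,k}$; since $T_{k-1}$ lies within angle $\lesssim\e$ of $P_{j,k-1}$, $T_k$ within angle $\lesssim\e$ of $P_{j,k}$ (by \eqref{Sigma_kPlane}), and $\angle(P_{j,k-1},P_{j,k})\lesssim\e_k'(f_{k-1}(z))$ by definition of $\e_k'$, the derivative of an orthogonal projection between two $d$‑planes at angle $\theta$, followed by a $C\e$‑graph map, has symmetric part $I+O(\theta^2+\e^2)$ and operator norm $1+O(\theta+\e)$; the cut‑off terms $D\theta_{j,k}$ (of size $1/r_k$) multiply increments $|\pi_{j,k}(y)-y|\lesssim\e r_k$ between projections onto planes mutually within angle $\lesssim\e_k'$, and one must check these assemble into $S_k$ plus $O((\e_k')^2+\e^2)$.

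I expect this last verification to be the main obstacle: one has to track the several model planes $P_{j,k}$ active at a given point (all mutually $O(\e_k')$‑close), together with the interplay between the tangent planes of $\Sigma_{k-1}$, $\Sigma_k$ and the $P_{j,k}$, and confirm that the partition‑of‑unity construction is genuinely \emph{second‑order} close to a rigid motion on the relevant ball, not merely first‑order close to the identity — otherwise the per‑step length defect is only $O(\e_k')$, square‑summable but not summable, which would not yield a bi‑Lipschitz bound. This is precisely the content packed into \cite[Sections~2--5]{david2012reifenberg}.
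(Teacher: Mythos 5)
The paper does not prove this statement: it is quoted verbatim as \cite[Theorem 2.5]{david2012reifenberg} and used as a black box, so there is no ``paper's own proof'' against which to compare. Your sketch is an honest and essentially correct outline of David and Toro's argument, and you correctly isolate the crux --- that the per-step maps $\sigma_k$ must be \emph{second-order} close to rigid motions (derivative $I+A_k$ with $A_k$ antisymmetric up to $O\big((\e_k')^2+\e^2\big)$), so that the telescoping logarithm picks up only $O(\|A_k\|^2)$ per step and hence only square-summability of the $\e_k'$ is needed. You also correctly flag the genuinely delicate point: verifying that the partition-of-unity glueing of the local projection maps, including the $D\theta_{j,k}$ terms of size $r_k^{-1}$ acting on increments of size $\e r_k$, really assembles into (antisymmetric) $+$ (quadratic error) rather than merely (first-order error); this is the technical content of \cite[Sections 2--5]{david2012reifenberg}, which the paper under review deliberately defers to. Given that the paper cites rather than reproves the result, deferring the detailed verification is the appropriate move, and nothing in your outline is wrong.
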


\begin{lem}\label{Sigma}
There exists $\e >0$ small enough so that for each $S \in \mathscr{S}$, which is not a singleton, there is a surface $\Sigma_{S}$ such that
\begin{align}\label{Closeness}
\emph{dist}(y,\Sigma_{S}) \lesssim \e ^\frac{1}{d+1}\ell(R)
\end{align}
for each $y \in F \cap \tfrac{M}{4}B_R$ where $R \in S$. Also, for each ball $B$, centred on $\Sigma_S$ and contained in $MB_{Q(S)}$, we have
\begin{align}\label{ee:lowerreg}
 \frac{\omega_d}{2}r_B^d \leq \mathscr{H}^d( \Sigma_{S} \cap B) \lesssim r_B^d.
\end{align}
\end{lem}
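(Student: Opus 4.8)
The plan is to apply the Reifenberg parametrization theorem of David--Toro (Theorem \ref{DT}) and its bi-Lipschitz refinement (Theorem \ref{LipDT}) to the net of points coming from the cubes of the stopping time region $S$. First I would fix a small scale: for $R \in S$ let $x_R$ be the center of $R$ and, since $R \cap E \neq \emptyset$ by Remark \ref{r:int} and condition (2) in the stopping time construction guarantees $\dist(x,E) \le \e\ell(R)$ on all of $R$, I may replace $x_R$ by a nearby point of $E$ at cost $\e\ell(R)$. For each generation $k$ (rescaled so that $\rho^k$ corresponds to $r_k = 10^{-k}$ after a harmless change of the separation ratio, or equivalently by passing to a subsequence of generations) collect the centers $\{x_R : R \in \mathscr{D}_k \cap S\}$ together with centers of the immediate descendants of $\min(S)$ to fill out the net down to all scales; to each such point associate the ball $B_{R}$ of radius comparable to $\ell(R)$ and the minimizing plane $P_R$ from $\beta_E^{d,1}(MB_R) = \beta_E^{d,1}(MB_R,P_R)$. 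Condition (3) of the stopping time construction says $\sum_{R \subseteq T \subseteq Q(S)} \check\beta_F^{d,1}(MB_T)^2 < \e^2$, and by Lemma \ref{l:cont}, Lemma \ref{betaest}/Lemma \ref{SubsetError} (controlling $\beta_E$ by $\check\beta_F$ plus the distance term, which is small by condition (2)), the relevant $\beta$-numbers along any chain in $S$ are square-summable with total bound $\lesssim \e^2$; in particular each individual $\beta_E^{d,1}(MB_T) \lesssim \e$, so Lemma \ref{AngControl} applies.

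The key step is verifying the hypothesis $\varepsilon_k(x_{j,k}) < \e'$ of Theorem \ref{DT}, i.e.\ that consecutive and nearby planes $P_R, P_{R'}$ differ by a small angle. For cubes $R, R'$ at comparable scales with $\dist(R,R') \lesssim \max\{\ell(R),\ell(R')\}$, Lemma \ref{AngControl} gives $\angle(P_R,P_{R'}) \lesssim_{M} \e/\alpha^{2d+2}$, provided both cubes (and the chain above them) have $(d+1,\alpha)$-separated points. The worry is that a cube in $S$ might \emph{not} have well-separated points — but then the minimizing plane is essentially non-unique in the relevant direction, and the standard fix (as in \cite{azzam2018analyst}) is: if $MB_R$ fails to have $(d+1,\alpha)$-separated points for a suitable fixed $\alpha$ comparable to $\e^{1/(d+1)}$, then $E \cap MB_R$ (hence $F \cap \tfrac{M}{4}B_R$, using condition (2)) lies within $C\alpha \ell(R) \lesssim \e^{1/(d+1)}\ell(R)$ of a single $(d-1)$-plane, so we can choose $P_R$ to contain that lower-dimensional set and freely rotate it to agree with neighbors; this is exactly where the loss from $\e$ to $\e^{1/(d+1)}$ in \eqref{Closeness} comes from. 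With $\alpha$ fixed this way, Lemma \ref{AngControl} yields $\varepsilon_k \lesssim \e^{1/(d+1)} \cdot \e^{-(2d+2)/(d+1)} = \e^{-1}\e^{1/(d+1)}$... so in fact one fixes $\alpha$ to balance these, obtaining $\varepsilon_k < \e_0$ once $\e$ is small; this requires care but is routine bookkeeping.

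Granting the David--Toro hypotheses, Theorem \ref{DT} produces the surface $\Sigma_S = f(P_0)$ with $\Sigma_S \supseteq E_\infty \supseteq \{x_R : R \in S\}$-closure. Estimate \eqref{Closeness} follows from item (4) and \eqref{Sigma_kSigma} of Theorem \ref{DT} (distance from the constructed planes to $\Sigma$ is $\lesssim \e r_k$) together with the fact that $F \cap \tfrac{M}{4}B_R$ is within $C\e^{1/(d+1)}\ell(R)$ of $P_R \cap C B_R$ (by the $\beta$-number bound, the distance term, and the separation discussion above), plus the triangle inequality. For the measure bounds \eqref{ee:lowerreg}: the lower bound $\tfrac{\omega_d}{2}r_B^d \le \mathscr{H}^d(\Sigma_S \cap B)$ is item \eqref{e:lowerreg} of Theorem \ref{DT} with $C\e \le 1/2$; the upper bound $\mathscr{H}^d(\Sigma_S \cap B) \lesssim r_B^d$ follows because items (8)--(9) express $\Sigma_S$ locally as a $C\e$-Lipschitz graph over a $d$-plane at every scale $r_k$, so a Vitali covering of $\Sigma_S \cap B$ by such graph-balls combined with Lemma \ref{ENV} (a disjointified family of balls near a $d$-plane packs with the $d$-dimensional bound) gives the Ahlfors upper-regularity of $\Sigma_S$. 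The invocation of Theorem \ref{LipDT} is only needed to know $\Sigma_S$ is bi-Lipschitz (used later, not here), and its extra hypothesis \eqref{e:LipDT} is exactly the square-summability $\sum_k \varepsilon_k'(f_k(z))^2 \lesssim \e^2$ which we already established from condition (3) of the stopping time and Lemma \ref{AngControl}. The main obstacle is the separated-points dichotomy and pinning down the exponent $\tfrac{1}{d+1}$ — everything else is a now-standard application of \cite{david2012reifenberg}.
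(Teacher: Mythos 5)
Your proposal reaches for the right ingredients (David--Toro, angle control, the $L^\infty$/$L^1$ comparison of $\beta$-numbers), but it conflates the Section 3 stopping time, where Lemma \ref{Sigma} lives, with the Section 4 stopping time that feeds Lemma \ref{Sigma1}, and this produces several genuine errors.

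First, there is no $(d+1,\alpha)$-separated-points condition in the Section 3 stopping time: here $F$ is assumed $(c,d)$-lower content regular, so the separated points come for free and Lemma \ref{AngControl} is invoked in the Azzam--Schul form (Lemma 2.18 of \cite{azzam2018analyst}), as the remark preceding Lemma \ref{AngControl} explicitly says. Your whole dichotomy about cubes that fail to have separated points, rotating a $(d-1)$-plane, balancing $\alpha$ against $\e$, and the computation $\varepsilon_k \lesssim \e^{-1}\e^{1/(d+1)}$ is a misplaced import from the Theorem \ref{Thm4} construction; it plays no role here. Relatedly, the planes fed into David--Toro should minimize $\check\beta_F^{d,1}(MB_Q)$, matching stopping condition (3), not $\beta_E^{d,1}(MB_Q)$; routing through Lemma \ref{SubsetError}/\ref{betaest} to convert to $E$-quantities is both unnecessary and not what gives the clean $\varepsilon_k(x) \lesssim \check\beta_F^{d,1}(MB_Q) < \e$ input that Theorem \ref{DT} needs.

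Second, your account of where the exponent $\tfrac{1}{d+1}$ comes from is wrong. It does not come from the $\alpha$-balancing; it comes from the comparison \eqref{CompL_infty}, $\beta_{F,\infty}^d(\tfrac{1}{2}B) \lesssim \beta_F^{d,1}(B)^{1/(d+1)}$, applied with $\check\beta_F^{d,1}(MB_{R'},L_{R'}) < 2\e$ to deduce that all of $F \cap \tfrac{M}{2}B_{R'}$ lies within $C\e^{1/(d+1)}\ell(R)$ of $L_{R'}$; \eqref{Closeness} then follows from \eqref{Sigma_kPlane} and \eqref{Sigma_kSigma}.

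Third, your claim that Theorem \ref{LipDT} is ``used later, not here'' is incorrect: the bi-Lipschitz property is used precisely to get the upper bound in \eqref{ee:lowerreg}. Without \eqref{e:LipDT} the David--Toro map is only bi-H\"older, and a bi-H\"older image of $P_0$ need not be upper Ahlfors $d$-regular. Your proposed substitute is also not sound: item (8) of Theorem \ref{DT} gives a $C\e$-Lipschitz graph description of $\Sigma_k$ at scale $r_k$, not of the limit surface $\Sigma$ at all scales, and item (9) (Reifenberg flatness) alone does not control $\mathscr{H}^d$ from above. So one really must verify the Carleson-type bound \eqref{e:LipDT} (which follows from stopping condition (3) and Lemma \ref{AngPre}), invoke Theorem \ref{LipDT}, and read off the upper bound from bi-Lipschitzness; the lower bound then comes from \eqref{e:lowerreg} after restricting $\Sigma_S = \Sigma_S' \cap MB_{Q(S)}$.
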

\begin{proof}
For $k \geq 0$ let $s(k)$ be such that $5\rho^{s(k)} \leq r_k \leq 5\rho^{s(k) -1}.$ For each  $Q \in \mathscr{S},$ let $L_Q$ be the $d$-plane through $x_Q$ such that 
\[\check\beta_F^{d,1}(MB_Q,L_Q) \leq 2\check\beta_F^{d,1}(MB_Q).\]
For each $k$, let $\mathscr{C}_k^\prime$ be a maximal $r_k$-separated net for 
\[\mathscr{C}_k =\{x_Q : Q \in \mathscr{D}_{s(k)} \cap S\}.\]
By Lemma \ref{AngControl}, $\mathscr{C}_k^\prime$ with planes $\{L_Q\}_{Q \in \mathscr{C}_k^\prime}$ satisfy the assumptions of Theorem \ref{DT} with 
\begin{align}\label{e:betae}
\varepsilon_k(x) \lesssim \check\beta^{d,1}_F(MB_Q) <\e
\end{align}
for any $x \in Q$ with $x_Q \in \mathscr{C}_k$. Let $\Sigma_S^\prime$ be the resulting surface from Theorem \ref{DT}. By \eqref{e:lowerreg}, we can choose $\e$ small enough so that for all balls $B$ centred on $\Sigma_S'$ we have
\begin{align}\label{eee:LR}
 \mathscr{H}^d_\infty (\Sigma_S' \cap B) \geq \frac{\omega_d}{2}r_B^d.
\end{align}

We verify that the additional assumption of Theorem \ref{LipDT} is satisfied, thus $\Sigma_S^\prime$ is in fact a bi-Lipschitz surface. Let $x = f(z) \in \Sigma_{S}^\prime$ and set $x_k = f_k(z)$. By the triangle inequality, we have
\begin{align*}
\abs{x-x_k} &\leq \sum_{j \geq k} |x_{j+1} - x_j| \leq \sum_{j \geq k} |\sigma_j(x_j) - x_j| \\
&\stackrel{\eqref{Sigma_ksigma_k}}{\lesssim} \sum_{j \geq k} \varepsilon_j(x)r_j  \stackrel{\eqref{e:betae}}{\lesssim} \sum_{j \geq k} \e r_j \lesssim \e r_k.
\end{align*}
Thus, for $\e$ small enough, $x_k \in B(x,2r_k).$ Let $Q \in \mathscr{D}_{s(k)} \cap S$ such that $x \in Q.$ Suppose $\ell \in \{k,k-1\}$ and $x_{Q'} \in \mathscr{C}'_\ell$ is such that $x_k \in B(x_{Q'},r_\ell)$ (the existence of $x_{Q'}$ is guaranteed by maximality). It follows that 
\begin{align}
|x_{Q'} - x_Q| \leq |x_{Q'} - x_k| + |x_k - x| + |x- x_Q| \leq r_\ell + 2r_k + r_k \leq 13r_k
\end{align}
For $M$ large enough, this gives
\[ 100B(x_{Q'},r_\ell) \subseteq B(x_Q, 13r_k + 100r_\ell)  \subseteq B(x_Q,1100r_k) \subseteq B(x_Q, M\ell(Q)).\]
Then by Lemma \ref{AngPre}, $\varepsilon_k^\prime(x_k) \lesssim \check\beta_F^{d,1}(MB_Q)$. Since, by our stopping time condition, we have control over the sum of coefficients $\check\beta_F^{d,1}(MB_Q)$, we have verified \eqref{e:LipDT}. We define 
\[\Sigma_S = \Sigma_S^\prime \cap MB_{Q(S)}.\]
Thus, for all balls $B$ centred on $\Sigma_S$ such that $B \subseteq MB_{Q(S)}$, left-most inequality in \eqref{ee:lowerreg} follows by \eqref{eee:LR} and the right-most inequality follows since $\Sigma_S'$ is the bi-Lipschitz image of $\R^d.$  

We check \eqref{Closeness}. Let $R \in \mathscr{D}_{s(k)} \cap S$ and $y \in \tfrac{M}{4}B_R.$ Let $R^\prime \in \mathscr{D}_{s(k)} \cap S$ be a cube such that $x_{R^\prime} \in \mathscr{C}_k^\prime$ and $|x_R - x_{R^\prime}| \leq r_k.$ By the triangle inequality
\begin{align}
|y - x_{R^\prime}| \leq |y - x_R| + |x_R - x_{R^\prime}| \leq \frac{M}{4}\ell(R) + r_k \leq \left(\frac{M}{4} + \rho^{-1}\right)\ell(R).
\end{align}
Choosing $M \geq 4\rho^{-1}$ gives $\tfrac{M}{4}B_{R} \subseteq \tfrac{M}{2}B_{R^\prime}.$ Since $\check\beta_F^{d,1}(MB_{R^\prime},L_{R^\prime}) < 2\e,$ by \eqref{CompL_infty}, $\beta^d_{E,\infty}(\tfrac{M}{2}B_{R^\prime}) \lesssim \e^\frac{1}{d+1},$ hence
\begin{align*}
\text{dist}(y, L_{R^\prime}) \lesssim \e^\frac{1}{d+1}r_k \lesssim \e^\frac{1}{d+1}\ell(R). 
\end{align*}
By \eqref{Sigma_kPlane}, there exists $z \in \Sigma_{S,k}'$ such that $\abs{\pi_{L_{R^\prime}}(y) - z} \lesssim \e r_k$. Furthermore, by \eqref{Sigma_kSigma}, $\text{dist}(z,\Sigma_{S}) \lesssim \e r_k$. Combing the previous estimates we see that \eqref{Closeness} holds. 
\end{proof}

\begin{lem}\label{Similar}
For $M \geq 2C_0$,
\begin{align*}
\sum_{S \in \mathscr{S}}\sum_{Q \in S} \beta_E^{d,p}(C_0B_Q)^2\ell(Q)^d \lesssim  \sum_{S \in \mathscr{S}}\sum_{Q \in S} \check\beta_F^{d,p}(MB_Q)^2\ell(Q)^d  +\sum_{S \in \mathscr{S}} \ell(Q(S))^d.
\end{align*}
\end{lem}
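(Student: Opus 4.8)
The plan is to work one stopping time region $S \in \mathscr{S}$ at a time and show
\[
\sum_{Q \in S} \beta_E^{d,p}(C_0B_Q)^2\ell(Q)^d \lesssim \sum_{Q \in S} \check\beta_F^{d,p}(MB_Q)^2\ell(Q)^d + \ell(Q(S))^d,
\]
then sum over $S$. Fix such an $S$. If $S$ is a singleton $\{Q\}$, then $\beta_E^{d,p}(C_0B_Q)^2\ell(Q)^d \lesssim \ell(Q)^d = \ell(Q(S))^d$ since $\mathrm{dist}(x,L) \leq 2r_{C_0B_Q}$ for all $x \in E \cap C_0B_Q$ and $\mathscr{H}^{d,E}_{C_0B_Q,\infty}(E \cap C_0B_Q) \sim r_{C_0B_Q}^d$ by Lemma~\ref{HausEasy}; so we may assume $S$ is not a singleton and invoke Lemma~\ref{Sigma} to obtain the bi-Lipschitz surface $\Sigma_S$ with $\mathrm{dist}(y,\Sigma_S) \lesssim \e^{1/(d+1)}\ell(R)$ for $y \in F \cap \tfrac{M}{4}B_R$, $R \in S$, and $\tfrac{\omega_d}{2}r_B^d \leq \mathscr{H}^d(\Sigma_S \cap B) \lesssim r_B^d$ for balls $B$ centred on $\Sigma_S$ inside $MB_{Q(S)}$.

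The idea for each $Q \in S$ is to pass from $E$ to $F$ to $\Sigma_S$ and back. First, since $Q \in S$ implies (by condition (2) of the stopping time and Remark~\ref{r:int}) that $\mathrm{dist}(x,E) \leq \e\ell(R') \leq \e\ell(Q)$ for all $x$ in $Q$ or its siblings, $E$ is a good approximation of $F$ inside a fixed dilate of $B_Q$; combined with $E \subseteq F$ and the lower regularity of $F$, Lemma~\ref{betaest} (applied with $B^1 = C_0B_Q$ centred on $E$ — using Remark~\ref{r:int} — and $B^2$ the concentric ball centred on $F$) gives
\[
\check\beta_E^{d,p}(C_0B_Q)^2 \lesssim \check\beta_F^{d,p}(2C_0B_Q)^2 + \frac{1}{r_{B_Q}^d}\int_{E \cap 2C_0B_Q}\Bigl(\frac{\mathrm{dist}(y,F)}{\ell(Q)}\Bigr)^p d\mathscr{H}_\infty^d(y),
\]
but the last integral vanishes since $E \subseteq F$. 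This already reduces matters to $\check\beta_E^{d,p}$, and the issue is only that we want $\beta_E^{d,p}$ (the restricted-content version). For that we need a lower-regularity property for $E$ itself inside these balls; but this does \emph{not} hold for $E$ in general. Instead, the plan is to run Lemma~\ref{SubsetError} with the roles reversed: we have $E \subseteq \Sigma_S$ (approximately — more precisely $E$ is $O(\e^{1/(d+1)}\ell(Q))$-close to $\Sigma_S$ near $Q$), and $\Sigma_S$ \emph{is} lower content regular by \eqref{ee:lowerreg}. Applying Lemma~\ref{SubsetError} with $F$ there replaced by $\Sigma_S$,
\[
\beta_E^{d,p}(C_0B_Q)^2 \lesssim \check\beta_{\Sigma_S}^{d,p}(2C_0B_Q)^2 + \frac{1}{r_{B_Q}^d}\int_{\Sigma_S \cap 2C_0B_Q}\Bigl(\frac{\mathrm{dist}(y,E)}{\ell(Q)}\Bigr)^p d\mathscr{H}_\infty^d(y).
\]
Then $\check\beta_{\Sigma_S}^{d,p}(2C_0B_Q) \lesssim \check\beta_F^{d,p}(MB_Q) + (\text{a distance-from-}\Sigma_S\text{-to-}F\text{ error})$ by Lemma~\ref{betaest} again (or directly, since $\Sigma_S$ and $F$ are mutually close at scale $\ell(Q)$ by \eqref{Closeness} and the fact that $F$ is well approximated by the planes $L_Q$ in $S$), and here $M \geq 2C_0$ ensures $2C_0B_Q \subseteq MB_Q$. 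So after summing we are left to control the two Jones-type error sums:
\[
\sum_{Q \in S}\ell(Q)^d\Bigl(\frac{1}{r_{B_Q}^d}\int_{\Sigma_S \cap 2C_0B_Q}\Bigl(\frac{\mathrm{dist}(y,E)}{\ell(Q)}\Bigr)^p d\mathscr{H}_\infty^d(y)\Bigr)^{2/p}
\quad\text{and}\quad
\sum_{Q \in S}\ell(Q)^d\Bigl(\cdots\frac{\mathrm{dist}(y,F)}{\ell(Q)}\cdots\Bigr)^{2/p},
\]
where the second involves $\mathrm{dist}(y,F)$ for $y \in \Sigma_S$.

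The main obstacle — and the bulk of the work — is bounding these two error sums by $\ell(Q(S))^d$ (plus possibly $\sum_{Q\in S}\check\beta_F^{d,1}(MB_Q)^2\ell(Q)^d$). The standard strategy, as in \cite{azzam2018analyst}, is: for the $\mathrm{dist}(y,E)$ sum, a point $y \in \Sigma_S \cap 2C_0B_Q$ with $Q \in S$ is close to $F$ by \eqref{Closeness}, and $F$ is close to $E$ at the scale of $Q$ because $Q$ lies in the stopping region; the only places where $\mathrm{dist}(y,E)$ can be a definite fraction of $\ell(Q)$ are near cubes $R$ that have been stopped, i.e. $R \in \min(S)$, or near the boundary of $MB_{Q(S)}$. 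One decomposes $\Sigma_S \cap 2C_0B_Q$ according to which minimal cube's dilate it falls in, uses $\mathrm{dist}(y,E) \lesssim \ell(R)$ there (with $\ell(R) \lesssim \ell(Q)$), applies Jensen (Lemma~\ref{JensenPhi}, moving $p$ down to $1$, harmless since we only need $p \geq 1$ and the larger side is what we bound) to linearise the $(\cdot)^{2/p}$, and then resums: a geometric-series argument over scales plus the bounded-overlap bound $\#\{Q \in \mathscr{D}_k \cap S : 2C_0B_Q \cap R \neq \emptyset\} \lesssim 1$ collapses the double sum to $\sum_{R \in \min(S)}\ell(R)^d \leq \ell(Q(S))^d$ (the last inequality because the minimal cubes of a stopping time region are pairwise disjoint and contained in $Q(S)$). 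The $\mathrm{dist}(y,F)$ sum for $y \in \Sigma_S$ is handled symmetrically using that $\Sigma_S$ was built from the planes $L_R$ which track $F$ with error $\check\beta_F^{d,1}(MB_R)$, so that sum is absorbed into $\sum_{Q\in S}\check\beta_F^{d,1}(MB_Q)^2\ell(Q)^d \lesssim \e^2 \ell(Q(S))^d$ by the stopping condition (3) — in fact this reproduces the $\check\beta_F^{d,p}$ term on the right-hand side via Lemma~\ref{l:p}. Summing the per-region estimates over all $S \in \mathscr{S}$ yields the claimed inequality.
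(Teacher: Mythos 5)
Your detour through $\Sigma_S$ is both unnecessary and, as written, not rigorous; the paper goes through $F$ directly. The key tool is Lemma~\ref{SubsetError}, whose hypotheses are exactly the standing hypotheses of Theorem~\ref{Thm1}: $E \subseteq F$ and $F$ is $(c,d)$-lower content regular with $c \geq c_1$. Applied with $B = C_0B_Q$, it immediately gives
\[
\beta_E^{d,p}(C_0B_Q,L)
\;\lesssim\;
\check\beta_F^{d,p}(2C_0B_Q,L)
\;+\;
\Bigl(\tfrac{1}{\ell(Q)^d}\int_{F\cap 2C_0B_Q}\bigl(\tfrac{\dist(x,E)}{\ell(Q)}\bigr)^p\,d\mathscr{H}^d_\infty(x)\Bigr)^{1/p},
\]
with the \emph{restricted-content} $\beta_E^{d,p}$ already on the left. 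You appear to have conflated the roles of Lemma~\ref{betaest} and Lemma~\ref{SubsetError}: Lemma~\ref{betaest} requires lower regularity of $E$ (which is unavailable) and only produces $\check\beta_E^{d,p}$, whereas Lemma~\ref{SubsetError} requires lower regularity of the \emph{ambient} set and the inclusion $E \subseteq F$, and yields $\beta_E^{d,p}$. Your fix --- running Lemma~\ref{SubsetError} with $\Sigma_S$ in place of $F$ --- fails at exactly the hypothesis you flag as ``approximate'': Lemma~\ref{SubsetError} needs the literal inclusion $E \subseteq \Sigma_S$ in order for a good cover of $\Sigma_S \cap B$ to be a cover of $E \cap B$ (and hence to verify \eqref{LR}); mere $O(\e^{1/(d+1)}\ell(Q))$-closeness does not supply that, and you give no substitute argument. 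In the paper there is no such step, precisely because the genuine inclusion $E \subseteq F$ is already available.

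Once the first step is corrected, the remainder of your outline is essentially the paper's: bound $\check\beta_F^{d,p}(2C_0B_Q) \lesssim \check\beta_F^{d,p}(MB_Q)$ by Lemma~\ref{l:cont} using $M\geq 2C_0$, and control the Jones-type error sum
$I_S^p = \sum_{Q\in S}\bigl(\ell(Q)^{-d}\int_{F\cap 2C_0B_Q}(\dist(x,E)/\ell(Q))^p\,d\mathscr{H}^d_\infty\bigr)^{2/p}\ell(Q)^d$
by smoothing the stopping region (the cubes $\mathrm{Stop}(S)$, not $\min(S)$), using Lemma~\ref{l:subsum} to pull the sum over stopped cubes inside, $\dist(x,E)\lesssim \ell(R)$ on stopped cubes, a geometric sum in scales together with the bounded overlap $\#\{Q\in\mathscr{D}_k\cap S : R\cap 2C_0B_Q\neq\emptyset\}\lesssim 1$ (which in the paper is itself proved via $\Sigma_S$ and Lemma~\ref{ENV}), and finally packing the disjoint balls $c_0B_R$ on the bi-Lipschitz surface $\Sigma_S$ to bound the result by $\mathscr{H}^d(B_{Q(S)}\cap\Sigma_S) \lesssim \ell(Q(S))^d$. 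Note the paper uses $\mathrm{Stop}(S)$ and Lemma~\ref{l:Stop} rather than $\min(S)$, and reduces to $p\geq 2$ via Jensen so that $2/p\leq 1$; your sketch moves in the opposite direction (``moving $p$ down to $1$''), which does not linearise $(\cdot)^{2/p}$ the way you want. Also, the extra $\dist(y,F)$ error term for $y\in\Sigma_S$ that you introduce would not appear at all if the first step is done with $F$ directly.
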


To prove Lemma \ref{Similar} we will need apply the smoothing procedure of David and Semmes (see for example \cite[Chapter 8]{david1991singular}). Let us introduce these smoothed cubes and prove a general fact about them. Let 
\begin{align}\label{e:tau}
0 < \tau < \tau_0 = \tfrac{1}{2(1+\rho)}
\end{align}
and, for each $S \in \mathscr{S}$, let Stop($S$) be the collection of maximal cubes in $\mathscr{D}$ such that $\ell(Q) < \tau d_S(Q),$ that is,
\begin{align*}
\text{Stop}(S) = \{Q \in \mathscr{D} : Q \ \text{is maximal so that} \ \ell(Q) < \tau d_S(Q) \}.
\end{align*}
\begin{lem}\label{l:Stop}
Let $S \in \mathscr{S}$ and suppose $R \in \emph{Stop}(S)$ is such that $R \cap 2C_0B_{Q(S)} \not=\emptyset.$ Then there exists $Q = Q(R) \in S$ such that
\begin{align}\label{distsimR}
\tau\dist(R,Q) \leq \frac{4}{\rho}\ell(R)
\end{align}
and
\begin{align}\label{QsimR}
\frac{\tau\rho}{4}\ell(Q) \leq \ell(R)\leq 3C_0\tau\ell(Q).
\end{align}
\end{lem}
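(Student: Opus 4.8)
The plan is to exploit the two properties defining a cube $R \in \text{Stop}(S)$: on one hand $\ell(R) < \tau d_S(R)$, and on the other hand its parent $R^{(1)}$ (which exists in $\mathscr{D}$, since $R$ is much smaller than $Q(S)$, and which cannot itself be a stopping cube by the maximality of $R$) satisfies $\ell(R^{(1)}) \ge \tau d_S(R^{(1)})$, hence $d_S(R^{(1)}) \le \tau^{-1}\ell(R^{(1)}) = \tfrac{1}{\rho\tau}\ell(R)$. From the definition of $d_S$, I can therefore pick a point $x \in R^{(1)}$ and a cube $Q' \in S$ with $\ell(Q') + \dist(x,Q') \le \Delta := \tfrac{2}{\rho\tau}\ell(R)$, the factor $2$ absorbing a possibly unattained infimum. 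This gives a cube of $S$ near $R$, but $Q'$ carries no lower bound on its size, so the key step is instead to take $Q = Q(R)$ to be the \emph{smallest ancestor of $Q'$ (in $\mathscr{D}$) with $\ell(Q) \ge \tfrac{\ell(R)}{3C_0\tau}$}.

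To see that this $Q$ still lies in $S$, I use the hypothesis $R \cap 2C_0B_{Q(S)} \neq \emptyset$: choosing $z$ in this intersection, $d_S(R) \le \ell(Q(S)) + \dist(z,Q(S)) \le \ell(Q(S)) + 2C_0\ell(Q(S)) \le 3C_0\ell(Q(S))$ (using $C_0 > 1$), so from $\ell(R) < \tau d_S(R)$ we get $\ell(Q(S)) > \tfrac{1}{3C_0\tau}\ell(R)$. Thus $Q(S)$ already meets the size threshold, so the smallest ancestor of $Q'$ reaching that threshold sits at or below $Q(S)$ in the tree, i.e. $Q \subseteq Q(S)$, whence $Q \in S$ by property (2) of a stopping time region (Definition \ref{d:ST}). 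The size estimate \eqref{QsimR} is then immediate: $\ell(R) \le 3C_0\tau\ell(Q)$ is the defining threshold, and for the other direction either $Q = Q'$ (so $\ell(Q) \le \Delta$) or the child of $Q$ pointing toward $Q'$ has side length $< \tfrac{\ell(R)}{3C_0\tau}$, forcing $\ell(Q) = \rho^{-1}\ell(Q^-) < \tfrac{1}{3C_0\rho\tau}\ell(R) \le \Delta$; in either case $\ell(Q) \le \tfrac{2}{\rho\tau}\ell(R)$, which rearranges to $\tfrac{\rho\tau}{4}\ell(Q) \le \ell(R)$. For \eqref{distsimR}, since $Q \supseteq Q'$ we have $\dist(x,Q) \le \dist(x,Q') \le \Delta$, and for any $y \in R \subseteq R^{(1)}$, $\dist(y,Q) \le |y-x| + \dist(x,Q) \le \diam(R^{(1)}) + \Delta \le 2\rho^{-1}\ell(R) + \tfrac{2}{\rho\tau}\ell(R)$; multiplying by $\tau$ and using $\tau < 1$ gives $\tau\dist(R,Q) \le \tfrac{2(\tau+1)}{\rho}\ell(R) \le \tfrac{4}{\rho}\ell(R)$.

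The only genuine obstacle here is conceptual rather than computational: one must not take the nearest cube of $S$ to $R$, but rather the correctly scaled ancestor of it, and then check that this ancestor has not "escaped" above $Q(S)$ — and it is precisely the $2C_0B_{Q(S)}$ hypothesis that makes $Q(S)$ large enough (via the bound $d_S(R) \le 3C_0\ell(Q(S))$) to guarantee this. Everything else is routine bookkeeping with the triangle inequality \eqref{Tr} for $d_S$ and the basic geometry of Christ-David cubes from Lemma \ref{cubes}; the constants $\tfrac{\tau\rho}{4}$, $3C_0\tau$ and $\tfrac{4}{\rho}$ have enough slack that the crude choice of $\Delta$ suffices provided $\tau < \tau_0$.
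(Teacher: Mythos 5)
Your proof is correct and follows essentially the same strategy as the paper: use the fact that the parent $R^{(1)}$ fails the stopping condition to bound $d_S$, pick a near-optimal $Q' \in S$, pass to the right-sized ancestor of $Q'$, and invoke the hypothesis $R \cap 2C_0B_{Q(S)} \neq \emptyset$ to show $\ell(Q(S))$ exceeds the size threshold so that the ancestor stays inside $S$. The only cosmetic difference is that you work directly with a point of $R^{(1)}$ and $d_S(R^{(1)})$, whereas the paper transfers from $d_S(R^{(1)})$ to $d_S(R)$ via the triangle inequality \eqref{Tr}; the resulting constants have ample slack either way.
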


\begin{proof}
Let $Q^\prime \in S$ be a cube such that 
\begin{align}\label{e:new}
2d_S(R) \geq  \ell(Q^\prime) + \text{dist}(Q^\prime,R).
\end{align}
By maximality,
\begin{align}
\frac{1}{\rho} \ell(R) &= \ell(R^{(1)}) > \tau d_S(R^{(1)}) \overset{\eqref{Tr}}{\geq} \tau  \left(d_S(R) - 2\ell(R) - 2\ell(R^{(1)}) \right) \\
&= \tau d_S(R) - 2(1+\rho^{-1})\tau \ell(R).
\end{align}
By our choice of $\tau$ (see \eqref{e:tau}), this gives $\tau d_S(R) \leq \tfrac{2}{\rho}\ell(R).$ Then,
\[ \tau \left(\ell(Q^\prime) + \text{dist}(Q^\prime,R) \right) \overset{\eqref{e:new}}{\leq} 2\tau d_S(R) \leq \frac{4}{\rho} \ell(R) . \]
From here, we see that \eqref{distsimR} and the left hand inequality in \eqref{QsimR} are true. If $\ell(R) \leq 3C_0\tau\ell(Q^\prime),$ we can set $Q=Q^\prime$ and the lemma follows.  Otherwise, since $R \cap 2C_0Q(S) \not=\emptyset,$ we have
\[\ell(R) < \tau (\ell(Q(S)) + \dist(R,Q(S))) \leq 3C_0\tau\ell(Q(S)).\]
We can then choose $Q$ to be the smallest ancestor of $Q'$ contained in $Q(S)$ such that \eqref{QsimR} holds. The existence of such a $Q$ is guaranteed by the above inequality. 
\end{proof}

\begin{proof}[Proof of Lemma \ref{Similar}]
First, by Lemma \ref{SubsetError}, we have
\begin{align*}
\sum_{S \in \mathscr{S}}\sum_{Q \in S} \beta_E^{d,p}(C_0B_Q)^2\ell(Q)^d &\lesssim  \sum_{S \in \mathscr{S}}\sum_{Q \in S} \check\beta_F^{d,p}(2C_0B_Q)^2\ell(Q)^d \\
&\hspace{-4em}+ \sum_{S \in \mathscr{S}}\sum_{Q \in S} \left( \frac{1}{\ell(Q)^d} \int_{F \cap 2C_0B_Q} \left( \frac{\text{dist}(x,E)}{\ell(Q)} \right)^p \, d\mathscr{H}_\infty^d(x) \right)^\frac{2}{p}\ell(Q)^d.
\end{align*}
Since $M \geq 2C_0,$ we have the desired bound on the first term by Lemma \ref{l:cont}. To deal with the second term, let
\begin{align*}
I_S^p \coloneqq \sum_{Q \in S} \left( \frac{1}{\ell(Q)^d} \int_{F \cap 2C_0B_Q} \left( \frac{\text{dist}(x,E)}{\ell(Q)} \right)^p \, d\mathscr{H}_\infty^d(x) \right)^\frac{2}{p}\ell(Q)^d.
\end{align*}
If we can show that 
\begin{align}\label{I_S^p}
I_S^p \lesssim \ell(Q(S))^d
\end{align}
for each $S \in \mathscr{S}$ then the lemma follows. The rest of the proof is devoted to proving \eqref{I_S^p}. Note, we may assume that $p \geq 2,$ since for $p < 2,$ we have $I_S^p \lesssim I_S^2$ by Jensen's inequality (Lemma \ref{JensenPhi}).

Let $S \in \mathscr{S}.$ 
In the case that $S = \{Q\}$ is a singleton, since $C_0B_Q \cap E \not=\emptyset,$ we have $\text{dist}(x,E) \lesssim \ell(Q)$ for each $x \in F \cap 2C_0B_Q$. Then, $I^p_S$ reduces to $\ell(Q)$ and \eqref{I_S^p} follows. 

Assume then that $S$ is not a singleton. First, by Lemma \ref{l:subsum}, we can write
\begin{align}
I_S^p \lesssim \sum_{Q \in S} \left( \frac{1}{\ell(Q)^d} \sum_{\substack{R \in \text{Stop}(S) \\ R \cap 2C_0B_Q \not=\emptyset}} \int_R \left( \frac{\dist(x,E)}{\ell(Q)} \right)^p \, d\mathscr{H}^d_\infty(x)\right)^\frac{2}{p} \ell(Q)^d.
\end{align}
Let $R \in \text{Stop}(S)$ be such that $R \cap 2C_0B_{Q} \not=\emptyset$ for some $Q \in S$. By Lemma \ref{l:Stop}, we can find a cube $Q' \in S$ such that 
\begin{align}\label{e:Q'}
\dist(Q',R) \lesssim \ell(R) \quad \text{and}  \quad \ell(R) \sim \ell(Q').
\end{align}
By our stopping time condition (2), since $Q' \in S,$ we have 
\begin{align}\label{e:dist e}
\text{dist}(y^\prime,E) \leq \e \ell(Q')
\end{align}
for all $y' \in Q'.$ Let $y^{Q'} \in Q'$ and $y^R \in R$ be points such that 
\begin{align}\label{e: dist e1}
\dist(R,Q') = |y^{Q'} - y^R|.
\end{align}
Then, for any $y \in R$, we have
\begin{align}\label{e:distE}
\text{dist}(y,E) &\leq |y-y^{Q'}| + \text{dist}(y^{Q'}, E) \stackrel{\eqref{e:dist e}}{\leq} |y-y^{R}| + |y^{R}- y^{Q'}| + \e \ell(Q') \\
&\stackrel{\eqref{e: dist e1}}{\lesssim} \ell(R) + \dist(R,Q') + \e \ell(Q') \stackrel{\eqref{e:Q'}}{\lesssim} \ell(R).
\end{align}
Using this, along with the that fact that $\frac{2}{p} \leq 1$, we get
\begin{align*}
I_S^p \lesssim  \sum_{Q \in S} \left( \sum_{\substack{R \in \text{Stop}(S) \\ R \cap 2C_0B_Q \not= \emptyset}} \frac{\ell(R)^{d+p}}{\ell(Q)^{d+p}}\right)^\frac{2}{p} \ell(Q)^d  \lesssim \sum_{Q \in S} \sum_{\substack{R \in \text{Stop}(S) \\ R \cap 2C_0B_Q \not= \emptyset}}\frac{\ell(R)^{d\frac{2}{p} + 2 }}{\ell(Q)^{d(\frac{2}{p} - 1) + 2}}.
\end{align*}
\textbf{Claim:} For each $k \in \N,$ we have
\begin{align}\label{e:3.5Overlap}
\#\{Q \in \mathscr{D}_k \cap S : R \cap 2C_0B_Q \not= \emptyset\} \lesssim 1.
\end{align}
\begin{proof}[Proof of Claim]
As before, let $Q'$ be the cube from Lemma \ref{l:Stop} for $R$. By \eqref{e:Q'}, we can choose $M$ large enough so that $R \subseteq \tfrac{M}{4}B_{Q'}$ (taking $M \geq 100C_0/\rho$ is sufficient). In particular, $x_R \in \tfrac{M}{4}B_{Q'}$. Then, by Lemma \ref{Sigma}, 
\begin{align}\label{StopSigma1}
\text{dist}(x_R,\Sigma_S) \stackrel{\eqref{Closeness}}{\lesssim} \e^\frac{1}{d+1}\ell(Q') \stackrel{\eqref{QsimR}}{\lesssim} \frac{\e^\frac{1}{d+1}}{\tau}\ell(R)
\end{align}
with $\tau$ as in \eqref{e:tau}. For any $Q \in \mathscr{D}_k \cap S$ such that $R \cap 2C_0B_Q \not= \emptyset,$ we have 
\begin{align}\label{e:R<Q}
\ell(R) < \tau d_S(R) \leq \tau(\ell(Q) + \text{dist}(Q,R)) \lesssim \tau\ell(Q).
\end{align}
Let $x_{R}'$ be the point in $\Sigma_S$ closest to $x_R.$ By \eqref{StopSigma1} and \eqref{e:R<Q} there exists $A>0$ so that if $Q \in \mathscr{D}_k \cap S$ is such that $R \cap 2C_0B_Q \not= \emptyset$ then 
\[Q \subseteq B(x_R',A\ell(Q)) \coloneqq B. \]
Since $\Sigma_S$ is $(C\e,d)$-Reifenberg flat, we can find a plane $P$ through $x_{R}'$ such that 
\begin{align}\label{e:flat}
d_B(P,\Sigma_S) \lesssim \e.
\end{align} Let $x_Q^\prime$ be the point in $\Sigma_S$ which is closest to $x_Q$, then
\begin{align*}
\text{dist}(x_Q , P) \leq |x_Q - x_Q^\prime| + \text{dist}(x_Q^\prime,P) \stackrel{\substack{\eqref{Closeness} \\ \eqref{e:flat}}}{\lesssim} (\e^\frac{1}{d+1} + \e) \ell(Q).
\end{align*}
So, for $\e$ small enough, we have $\text{dist}(x_Q,P) \leq c_0\ell(Q)/2.$ Then \eqref{e:3.5Overlap} follows from Lemma \ref{ENV}.
\end{proof}

Returning to $I_S^p,$ since by assumption, $p < 2d/(d-2),$ or equivalently, $d(2/p -1) +2 >0,$ we can swap the order of integration, apply \eqref{e:3.5Overlap}, and sum over a geometric series and obtain
\begin{align*}
I_S^p \lesssim \sum_{\substack{R \in \text{Stop}(S)  \\ R \cap 2C_0B_{Q(S)} \not=\emptyset}} \sum_{\substack{Q \in S \\ R \cap 2C_0B_Q \not= \emptyset}}\frac{\ell(R)^{d\frac{2}{p} + 2 }}{\ell(Q)^{d(\frac{2}{p} - 1) + 2}} \lesssim  \sum_{\substack{R \in \text{Stop}(S) \\ R \cap 2C_0B_{Q(S)} \not=\emptyset}} \ell(R)^d. 
\end{align*}
By \eqref{StopSigma1}, for $\e \ll \tau^{d+1}$, each $c_0B_R$ carves out a large proportion of $\Sigma_S$ i.e. $\mathscr{H}^d(c_0B_R \cap \Sigma_S) \gtrsim \ell(R)^d.$ Since the $c_0B_R$ are disjoint and $\Sigma_S$ is bi-Lipschitz, we have
\begin{align*}
I_S^p \lesssim \sum_{\substack{R \in \text{Stop} \\ R \cap 2C_0B_{Q(S)} \not=\emptyset}} \mathscr{H}^d(c_0B_R \cap \Sigma_S) \leq \mathscr{H}^d(B_{Q(S)} \cap \Sigma_S) \lesssim \ell(Q(S))^d,
\end{align*}
completing the proof of \eqref{I_S^p} and hence the proof of the lemma. 
\end{proof}

\begin{lem}\label{TopControl}
\begin{align}
\sum_{S \in \mathscr{S}} \ell(Q(S))^d \lesssim \mathscr{H}^d(Q_0) + \sum_{Q \in \mathscr{D}(Q_0)} \check\beta_F^{d,1}(MB_Q)^2\ell(Q)^d.
\end{align}
\end{lem}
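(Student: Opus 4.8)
The plan is to bound the number of stopping time regions $S \in \mathscr{S}$ with a given top cube size by a Carleson-type sum, using the three reasons a stopping time can terminate.

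First I would set up the counting. Each $S \in \mathscr{S}$ other than $S_{Q_0}$ was started at a cube $Q = Q(S)$ which is a child of some $R \in \min(S')$ for a previously constructed $S'$, with $E \cap C_0B_Q \neq \emptyset$. So $\sum_{S \in \mathscr{S}} \ell(Q(S))^d = \ell(Q_0)^d + \sum_{S' \in \mathscr{S}} \sum_{R \in \min(S')} \sum_{\substack{Q \in \mathrm{Child}(R) \\ E \cap C_0 B_Q \neq \emptyset}} \ell(Q)^d$. Since each $R$ has at most $K = K(M,d)$ children (our choice of $\e$ guarantees this), $\sum_{Q \in \mathrm{Child}(R)} \ell(Q)^d \lesssim_{M,d} \ell(R)^d$, so it suffices to show $\sum_{S \in \mathscr{S}} \sum_{R \in \min(S)} \ell(R)^d \lesssim \mathscr{H}^d(Q_0) + \sum_{Q \in \mathscr{D}(Q_0)} \check\beta_F^{d,1}(MB_Q)^2 \ell(Q)^d$.

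Next, for a fixed $S$ and $R \in \min(S)$, some child $R'$ of $R$ fails one of the three defining conditions (it cannot fail (1) since $R \in S$). Split $\min(S) = \mathscr{F}_1(S) \cup \mathscr{F}_2(S) \cup \mathscr{F}_3(S)$ accordingly (a cube failing more than one we assign arbitrarily). If $R \in \mathscr{F}_2(S)$, there is a sibling $R'$ of a child and a point $x \in R'$ with $\mathrm{dist}(x,E) > \e\ell(R')$; this forces $\check\beta_F^{d,1}(C'B_R) \gtrsim_\e 1$ for an appropriate dilate (since $F$ is lower content regular, a point of $F$ at definite distance from $E$ — combined with the fact that $E \cap C_0 B_{Q(S)} \neq \emptyset$ and $R$ lies inside — forces $F$ to be non-flat at scale $\ell(R)$, or more directly one uses that such $R$ can be charged against a BWGL-type or $\check\beta$-type sum; alternatively charge it against $\sum_Q \check\beta_F^{d,1}(MB_Q)^2\ell(Q)^d$ after noting $\check\beta_F^{d,1}(MB_R) \gtrsim \e$). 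If $R \in \mathscr{F}_3(S)$, then some sibling $R'$ of a child of $R$ has $\sum_{R' \subseteq T \subseteq Q(S)} \check\beta_F^{d,1}(MB_T)^2 \geq \e^2$, so $\ell(R)^d \lesssim_\e \sum_{R \subseteq T \subseteq Q(S)} \check\beta_F^{d,1}(MB_T)^2 \ell(R)^d$ and, summing over $R \in \mathscr{F}_3(S)$ and all $S$ (using that distinct $S$ have essentially disjoint cube ranges and a standard rearrangement of the double sum over $R \subseteq T$), these contribute $\lesssim_\e \sum_{Q \in \mathscr{D}(Q_0)} \check\beta_F^{d,1}(MB_Q)^2 \ell(Q)^d$. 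The cubes in $\mathscr{F}_2(S)$ are handled similarly by charging each against a $\check\beta_F^{d,1}$ term that is $\gtrsim \e$, giving another contribution bounded by the $\check\beta$-sum.

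Finally, the cubes $R \in \mathscr{F}_1(S)$ — where the stopping happens "for free" because a child genuinely leaves, i.e. we restart a new stopping region $S_Q$ at a child $Q$ — are exactly the cubes generating new top cubes, and these must be charged against $\mathscr{H}^d(Q_0)$. The mechanism: for $S$ not a singleton, the surface $\Sigma_S$ from Lemma \ref{Sigma} satisfies the lower regularity \eqref{ee:lowerreg} and $F$ is close to $\Sigma_S$ on $\tfrac{M}{4}B_R$ for $R \in S$; since $F$ is lower content regular and sits close to $\Sigma_S$, each minimal cube $R$ carves out a definite piece $\mathscr{H}^d(c_0 B_R \cap \Sigma_S) \gtrsim \ell(R)^d$ of $\Sigma_S$, and these pieces have bounded overlap over $R \in \min(S)$, so $\sum_{R \in \min(S)} \ell(R)^d \lesssim \mathscr{H}^d(\Sigma_S \cap MB_{Q(S)}) \lesssim \ell(Q(S))^d$. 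To turn this into a global bound one needs that the top cubes $Q(S)$ themselves satisfy a Carleson packing condition, which is the standard ``stopping-time induction'': one shows $\sum_{S} \ell(Q(S))^d \lesssim \mathscr{H}^d(Q_0) + (\text{the } \mathscr{F}_2, \mathscr{F}_3 \text{ contributions})$ by an induction on generations of stopping regions, where the base surfaces $\Sigma_S$ are glued using that $\mathscr{H}^d$ of $F$ dominates $\mathscr{H}^d$ of the $\Sigma_S$ where they agree. Concretely I would invoke the corresponding argument in \cite{azzam2018analyst} (this lemma is the exact analogue of the top-cube packing estimate there), adapted by replacing their ``$F = E$'' setting with our $E \subseteq F$ setting and using \eqref{Closeness}, \eqref{ee:lowerreg} in place of their flatness/regularity inputs.

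The main obstacle I expect is the last step: showing the top cubes $\{Q(S)\}_{S \in \mathscr{S}}$ pack, i.e. actually performing the stopping-time induction that converts the ``one surface per region'' estimate into a global bound by $\mathscr{H}^d(Q_0)$. This requires care that the surfaces $\Sigma_S$ associated to nested stopping regions do not double-count mass — one must use that on the overlap region $F$ is simultaneously close to both surfaces and that $\mathscr{H}^d|_F$ (which may be infinite globally, but here we only compare with $\mathscr{H}^d(Q_0)$ which appears on the right) controls the relevant pieces — together with the bounded-overlap/Reifenberg-flatness machinery already set up in Lemma \ref{Sigma}. The $\mathscr{F}_2$ and $\mathscr{F}_3$ pieces are comparatively routine Carleson sum manipulations.
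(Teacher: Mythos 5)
Your reduction to counting the cubes in $\min\mathscr{S}$ is exactly how the paper proceeds, and your outline for the $\check\beta$-stopping cubes (your $\mathscr{F}_3$) is essentially correct: the Carleson rearrangement works once one has the per-region packing estimate $\sum_{R \in \min(S),\, R \subseteq T}\ell(R)^d \lesssim \ell(T)^d$, which comes from the surfaces $\Sigma_S$ of Lemma \ref{Sigma} (each minimal cube carves out $\gtrsim\ell(R)^d$ of $\Sigma_S$-mass inside $B_T$, and these pieces are disjoint).

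There is, however, a genuine gap in your handling of the distance-stopping cubes. A point $x\in R'\subseteq F$ at distance $>\e\ell(R')$ from $E$ does \emph{not} force $\check\beta_F^{d,1}(MB_R)\gtrsim\e$, nor any BWGL-type non-flatness of $F$: those quantities measure how well $F$ is approximated by a $d$-plane and are completely blind to how densely the subset $E$ sits inside $F$. If $F$ is an exact $d$-plane and $E$ is a single point on it, then $\check\beta_F\equiv 0$ at every scale while points of $F$ are arbitrarily far from $E$. So the distance stoppings cannot be charged to the $\check\beta$-sum. This is precisely what the $\mathscr{H}^d(Q_0)$ term on the right is for: the paper assigns to each such $R$ a cube $C_R$ with $\ell(C_R)\sim_{\e,C_0}\ell(R)$ and $2C_0B_{C_R}\cap E=\emptyset$, shows the $C_R$ have bounded overlap, and then uses the lower content regularity of $F$ to conclude $\sum_{R\in\mathscr{F}_2}\ell(R)^d\lesssim\sum\mathscr{H}^d(C_R)\lesssim\mathscr{H}^d(Q_0)$. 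Without this step the lemma is simply false, since the right-hand side could vanish while the left does not.

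Finally, your category $\mathscr{F}_1(S)$ is empty: a cube lies in $\min(S)$ precisely because one of its children fails condition (2) or (3), so the stopping reasons are exhausted by your $\mathscr{F}_2$ and $\mathscr{F}_3$. Consequently, the closing discussion of a ``stopping-time induction'' to avoid double-counting mass among nested surfaces addresses a phantom difficulty. The inequality $\sum_S\ell(Q(S))^d\lesssim\ell(Q_0)^d+\sum_{R\in\min\mathscr{S}}\ell(R)^d$ is immediate from the bounded number of children per cube, and the surfaces $\Sigma_S$ are only ever used \emph{within} a single stopping region to get the packing estimate, never glued across regions.
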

Lemma \ref{TopControl} along with Lemma \ref{Similar} finishes the proof of \eqref{Suffices1} (and hence finishes the proof of Theorem \ref{Thm1}). Let $\min\mathscr{S}$ be the collection of minimal cubes from $\mathscr{S},$ i.e. 
\[\min\mathscr{S} = \bigcup_{S \in \mathscr{S}} \{Q \in \min(S) \}. \] 
Each $Q(S)$, with the exception of $Q_0$, is the child of some cube in $\min \mathscr{S}.$ Since any cube in $\mathscr{D}^F$ has at most $C(n)$ children, and $F$ is lower regular, we have 
\begin{align}
\sum_{S \in \mathscr{S}} \ell(Q(S))^d &\leq \ell(Q_0)^d + \sum_{Q \in \min \mathscr{S}} \sum_{R \in \text{Child}(Q)} \ell(R)^d \\
&\lesssim_n \ell(Q_0)^d + \sum_{Q \in \min \mathscr{S}} \ell(Q)^d \\
&\lesssim \mathscr{H}^d(Q_0) + \sum_{Q \in \min \mathscr{S}} \ell(Q)^d. 
\end{align}
So, to prove Lemma \ref{TopControl}, it suffices to show the following. 

\begin{lem}\label{MinControl}
\begin{align}
\sum_{Q \in \min\mathscr{S}} \ell(Q)^d \lesssim \mathscr{H}^d(Q_0) + \sum_{Q \in \mathscr{D}(Q_0)} \check\beta_F^{d,1}(MB_Q)^2\ell(Q)^d. 
\end{align}
\end{lem}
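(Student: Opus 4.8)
The plan is to classify the cubes $Q \in \min\mathscr{S}$ according to \emph{which} of the three stopping conditions failed at the child level, and bound each class by a Carleson-type sum. Recall that $Q \in \min(S)$ for some $S \in \mathscr{S}$ means $Q$ has a child $Q'$ (or a sibling of such a child) that was \emph{not} added to $S$, which can happen for one of three reasons: (a) failure of the distance condition, i.e.\ there is $x \in Q'$ with $\dist(x,E) > \e\ell(Q')$; (b) failure of the $\beta$-sum condition, i.e.\ $\sum_{Q' \subseteq T \subseteq Q(S)} \check\beta_F^{d,1}(MB_T)^2 \geq \e^2$; or else (c) the cube $Q$ itself persists but a new stopping region is restarted below it because $E \cap C_0 B_{Q''} = \emptyset$ for the relevant child $Q''$ — but this last case actually does not produce a minimal cube in the sense that generates a new $Q(S)$ via our partition rule, so only (a) and (b) are genuinely relevant. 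Write $\min\mathscr{S} = \mathscr{A} \cup \mathscr{B}$ accordingly (putting a cube in $\mathscr{A}$ if (a) holds, otherwise in $\mathscr{B}$), so it suffices to bound $\sum_{Q \in \mathscr{A}} \ell(Q)^d$ and $\sum_{Q \in \mathscr{B}} \ell(Q)^d$ separately.

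For the $\mathscr{B}$-cubes (failure of the $\beta$-sum), the strategy is the standard Carleson-sum-interchange argument. For each $Q \in \mathscr{B}$ there is a child $Q'$ with $\sum_{Q' \subseteq T \subseteq Q(S(Q))} \check\beta_F^{d,1}(MB_T)^2 \geq \e^2$, so
\begin{align*}
\sum_{Q \in \mathscr{B}} \ell(Q)^d \lesssim \frac{1}{\e^2} \sum_{Q \in \mathscr{B}} \sum_{Q' \subseteq T \subseteq Q(S(Q))} \check\beta_F^{d,1}(MB_T)^2 \ell(Q)^d.
\end{align*}
Since the stopping regions $S \in \mathscr{S}$ have pairwise disjoint interiors in the tree (each cube lies in a unique $S$), and within a fixed $S$ the minimal cubes $Q \in \min(S)$ are pairwise incomparable, for each $T$ the cubes $Q \in \mathscr{B}$ whose summand involves $T$ are comparable to $T$ with $Q' \subseteq T \subseteq Q(S)$ and $Q \subseteq Q(S)$; one checks that $\#\{Q \in \mathscr{B} : T \text{ appears in its inner sum}\} \lesssim 1$ because such $Q$ all contain $Q' \subseteq T$ and are minimal in the \emph{same} $S$ as $T$, hence there is at most one. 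Interchanging the order of summation then gives $\sum_{Q \in \mathscr{B}} \ell(Q)^d \lesssim \e^{-2} \sum_{T \in \mathscr{D}(Q_0)} \check\beta_F^{d,1}(MB_T)^2 \ell(T)^d$, which is the second term on the right-hand side (the implied constant depends on $\e$, which is fixed).

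For the $\mathscr{A}$-cubes (failure of the distance condition), the point is that at such $Q$ the set $F$ has a point $x \in Q'$ with $\dist(x,E) > \e \ell(Q') \gtrsim \e \ell(Q)$, i.e.\ $F$ genuinely separates from $E$ near $Q$. Since $F$ is $(c,d)$-lower content regular, $B(x, \e\ell(Q)/2) \cap E = \emptyset$ while $\mathscr{H}^d_\infty(F \cap B(x,\e\ell(Q)/2)) \gtrsim (\e\ell(Q))^d$; so each $\mathscr{A}$-cube carves out a definite chunk of $F \setminus (\text{neighbourhood of }E)$ — but to get a clean bound by $\mathscr{H}^d(Q_0)$ one instead compares with $\mathscr{H}^d$ on $F$ directly: the balls $B(x_Q, c\e\ell(Q))$ associated to distinct minimal cubes in a fixed $S$ are boundedly overlapping (they lie in disjoint sub-trees below the incomparable minimal cubes), and $\mathscr{H}^d(F \cap B(x_Q,c\e\ell(Q))) \gtrsim_\e \ell(Q)^d$ by lower content regularity together with $\mathscr{H}^d_\infty \le \mathscr{H}^d$. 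Summing over all $S$ and using that the $Q(S)$ for distinct $S \in \mathscr{S}$ have boundedly overlapping dilates contained in a bounded multiple of $Q_0$ (a standard fact about the Christ--David tree, since each $S$ starts at a descendant of a minimal cube of a previous region), yields $\sum_{Q \in \mathscr{A}} \ell(Q)^d \lesssim_\e \mathscr{H}^d(C Q_0) \lesssim \mathscr{H}^d(Q_0)$ after absorbing the dilation via lower/upper regularity of $F$.

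The main obstacle is the bookkeeping in the $\mathscr{A}$-estimate: one must verify that the family $\{B(x_Q, c\e\ell(Q)) : Q \in \mathscr{A}\}$, ranging over all minimal cubes of all stopping regions simultaneously, really does have bounded overlap — this is not immediate because cubes from \emph{different} stopping regions can be nested, and one needs to use that a new region $S_Q$ is only restarted on a \emph{child} of a minimal cube, so the "gap" between $Q(S)$ and the minimal cubes above it is exactly one generation, keeping the total overlap dimension-independent and controlled. Once that combinatorial claim is in place, the rest is a routine application of lower content regularity of $F$ and the comparison $\mathscr{H}^d_\infty \leq \mathscr{H}^d$.
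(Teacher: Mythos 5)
Your two-case split of $\min\mathscr{S}$ (distance failure vs.\ $\beta$-sum failure) matches the paper's decomposition into $\mathscr{F}_1$ and $\mathscr{F}_2$, and the intended tools (Carleson interchange for $\mathscr{B}$; lower content regularity of $F$ together with some bounded-overlap claim for $\mathscr{A}$) are also the right ones in spirit. However, both halves contain genuine gaps.

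The $\mathscr{B}$-estimate has a step that is simply false. You claim that for a fixed $T$, at most one $Q \in \mathscr{B}$ has $T$ appearing in its inner sum, reasoning that all such $Q$ contain $Q' \subseteq T$ and are minimal in the same $S$. But $Q \supseteq Q'$ and $T \supseteq Q'$ only force $Q$ and $T$ to be nested, not comparable in a fixed direction: unless $T = Q'$, one always has $Q \subseteq T$, and a single stopping region $S$ can have arbitrarily many pairwise-incomparable minimal cubes $Q \subsetneq T$, each with its own child $Q' \subseteq Q \subseteq T$ and hence each contributing $T$ to its inner sum (take $T = Q(S)$ for the extreme case). So the interchange does not terminate in a bounded count. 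What the paper actually uses at exactly this point is a weighted packing estimate, not a counting one: it invokes the bi-Lipschitz surface $\Sigma_S$ constructed in Lemma \ref{Sigma} to show that the balls $c_0 B_R$, $R \in \min(S)$, are disjoint and each carves out $\gtrsim \ell(R)^d$ of $\Sigma_S$, giving $\sum_{R \in \min(S),\, R \subseteq T} \ell(R)^d \lesssim \mathscr{H}^d(B_T \cap \Sigma_S) \lesssim \ell(T)^d$. Without this surface input, the interchange does not close, so this is a missing idea rather than a cosmetic omission.

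The $\mathscr{A}$-estimate correctly identifies the obstacle (bounded overlap of the balls attached to minimal cubes across \emph{different} stopping regions), but the proposed resolution — that the gap between $Q(S)$ and the minimal cubes of its parent region is one generation — does not control it: after that one-generation gap a new region $S'$ begins whose minimal cubes live at arbitrarily deep scales, so nested $\mathscr{A}$-cubes from different regions can have wildly disparate side lengths and your balls can pile up. The paper instead exploits the crucial fact that it can find, near the offending point $x'$ with $\dist(x',E) > \e\ell(R')$, a cube $C_R$ of size $\sim \e\ell(R)$ with $2C_0 B_{C_R} \cap E = \emptyset$; then if $C_{R_i}$ and $C_{R_j}$ overlap with $\ell(R_j)$ too small compared to $\ell(R_i)$, one would have $R_j$ inside a region where $2C_0 B_{R_j} \cap E = \emptyset$, contradicting Remark \ref{r:int} (every cube in a stopping region meets $E$ in $2C_0 B$). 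This forces overlapping $C_R$'s to have comparable scales and yields the bounded overlap, after which your intended sum over $\mathscr{H}^d(C_R)$ does control $\sum_{\mathscr{A}} \ell(R)^d$ by lower content regularity of $F$.
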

\begin{proof}
We split $\min\mathscr{S}$ into two sub families, $\mathscr{F}_1$ and $\mathscr{F}_2$ where $\mathscr{F}_1$ is the collection of cubes $R$ in $\min\mathscr{S}$ such that $R$ has a child $R^\prime$ with
\begin{align}\label{Stop}
\sum_{\substack{Q \in S(R) \\ Q \supseteq R^\prime}} \check\beta_F^{d,1}(MB_Q)^2 \geq \e^2
\end{align}
and $\mathscr{F}_2 = \min\mathscr{S} \setminus \mathscr{F}_1$ (recall the definition of $S(R)$ from Definition \ref{d:ST}). We deal with $\mathscr{F}_1$ first. Let $R \in \mathscr{F}_1$ and let $R^\prime$ be its child satisfying \eqref{Stop}. Note that if $\check\beta_F^{d,1}(MB_{R^\prime}) \leq \e^2/2,$ then
\begin{align}
\sum_{\substack{Q \in S(R) \\ Q \supseteq R}} \check\beta_F^{d,1}(MB_Q)^2 \geq \e^2/2.
\end{align}
If $\check\beta_F^{d,1}(MB_{R^\prime}) > \e^2/2,$ by Lemma \ref{l:cont} we have 
\begin{align}
\sum_{\substack{Q \in S(R) \\ Q \supseteq R}} \check\beta_F^{d,1}(MB_Q)^2 \geq \check\beta_F^{d,1}(MB_R)^2 \gtrsim \check\beta_F^{d,1}(MB_{R^\prime})^2 \geq \e^2/2.
\end{align}
In either case it follows that
\begin{align}
\sum_{\substack{Q \in S(R) \\ Q \supseteq R}} \check\beta_F^{d,1}(MB_Q)^2 \gtrsim \e^2.
\end{align}
Thus, 
\begin{align}\label{MinCont1}
\sum_{R \in \mathscr{F}_1} \ell(R)^d &\lesssim \frac{1}{\e^2} \sum_{R \in \mathscr{F}_1} \ell(R)^d \sum_{\substack{Q \in S(R) \\ Q \supseteq R}} \check\beta_{F}^{d,1}(MB_Q)^2  \nonumber \\
&\lesssim \sum_{S \in \mathscr{S}} \sum_{Q \in S} \check\beta_F^{d,1}(MB_Q)^2 \sum_{\substack{R \in S \cap \mathscr{F}_1 \\ R \subseteq Q}} \ell(R)^d. 
\end{align}
In the case that $S$ is not a singleton, by Lemma \ref{Sigma}, $\text{dist}(x_R, \Sigma_{S(R)}) \lesssim \e^\frac{1}{d+1}\ell(R).$ Thus, for $\e > 0$, small enough $c_0B_R$ carves out a large proportion of $\Sigma_S$, hence $\mathscr{H}^d( c_0B_R \cap \Sigma_{S(R)}) \gtrsim \ell(R)^d.$ The balls $c_0B_R$ are disjoint and contained in $B_Q$, so
\begin{align}\label{MinCont2}
 \sum_{\substack{R \in S \cap \mathscr{F}_1 \\ R \subseteq Q}} \ell(R)^d \lesssim \sum_{\substack{R \in S \cap \mathscr{F}_1 \\ R \subseteq Q}} \mathscr{H}^d(c_0B_R \cap \Sigma_{S(R)}) \leq \mathscr{H}^d( B_Q \cap \Sigma_{S(R)}) \lesssim \ell(Q)^d.
\end{align}
If $S$ is a singleton, $\Sigma_S$ was not defined but we have $S = \{Q\} = \{R\},$ so the above estimate holds trivially. In either case, combining \eqref{MinCont1} and \eqref{MinCont2} gives
\begin{align}\label{F_1}
\sum_{R \in \mathscr{F}_1} \ell(R)^d \lesssim \sum_{S \in \mathscr{S}} \sum_{Q \in \mathscr{S}} \check\beta_F^{d,1}(MB_Q)^2\ell(Q)^d. 
\end{align}
Let us consider $\mathscr{F}_2.$ For $R \in \mathscr{F}_2,$ we know there exists a child $R^\prime$ of $R$ and a point $x^\prime \in R^\prime$ such that $\text{dist}(x^\prime,E) \geq \e\ell(R^\prime).$ We let $C_R$ be the maximal cube in $\mathscr{D}$ containing $x^\prime$ such that $\ell(C_R) \leq \tfrac{\e\rho}{C_0}\ell(R).$ This implies that
\begin{align}\label{CompC_R}
\frac{\e\rho^2}{C_0}\ell(R) \leq \ell(C_R) \leq \frac{\e\rho}{C_0}\ell(R).
\end{align}
In this way, we have $\dist(y,E) > 0$ for all $y \in 2C_0B_{C_R}$ since
\begin{align}
\e \ell(R) &\leq \dist(x',E) \leq |x' - y| + \dist(y,E) \leq 2C_0\ell(C_R) + \dist(y,E) \\
&\leq 2\e \rho \ell(R) + \dist(y,E).
\end{align}
In particular, 
\begin{align}\label{e:empty}
2C_0B_{C_R} \cap E =\emptyset.
\end{align}
Let us show that the $C_R$ have bounded overlap. Let $x \in F, N \in \N$ and suppose $\{R_i\}_{i=1}^N$ is a finite collection of distinct cubes in $\mathscr{F}_2$ such that 
\[ x \in \bigcap_{i=1}^N C_{R_i}. \]
We begin by showing the that each $\ell(R_i)$ is comparable. Assume without loss of generality that $\ell(R_i) > \ell(R_j)$ for all $i < j.$ In particular, we have 
\[ \ell(R_1) > \ell(R_j) \]
for all $j = 2,\dots,N.$ We also see that 
\begin{align}\label{e:gtr}
\ell(R_j) \geq \frac{\e \rho^2}{C_0} \ell(R_1)
\end{align}
for all $j = 2,\dots,N,$ otherwise, by \eqref{CompC_R} we have $R_N \subseteq C_{R_1}$ which by \eqref{e:empty} gives that $2C_0B_{R_N} \cap E =\emptyset.$ Thus, we arrive at a contradiction since $E \cap 2C_0B_Q \not=\emptyset$ for any $Q \in \mathscr{F}_2$ by our stopping time condition (2) (see Remark \ref{r:int}). We conclude then
\begin{align}\label{ee:sim}
\ell(R_i) \sim \ell(R_j) \quad \text{for all} \ i\not=j. 
\end{align}
Since $c_0B_{R_i} \cap c_0B_{R_j} = \emptyset$ for $i \not= j$ such that $\ell(R_i) = \ell(R_j)$, a standard volume argument coupled with \eqref{ee:sim} gives
\begin{align}\label{N < 1}
N \lesssim_{\e,C_0,n} 1.
\end{align}
Now, since $F$ is lower content regular, we get
\begin{align}\label{F_2}
\sum_{R \in \mathscr{F}_2} \ell(R)^d \stackrel{\eqref{CompC_R}}{\lesssim} \sum_{R \in \mathscr{F}_2} \mathscr{H}^d(C_R) \stackrel{\eqref{N < 1}}{\lesssim} \mathscr{H}^d(Q_0).
\end{align}
Combining \eqref{F_1} and \eqref{F_2} completes the proof of the lemma.
\end{proof}

\section{Proof of Theorem \ref{Thm4}}\label{s:Thm4}

\subsection{Constructing \texorpdfstring{$F$}{F} and its properties}

Let $X_k^E$ be a sequence of maximal $\rho^k$-separated nets in $E$ and let $\mathscr{D}^E$ be the cubes from Theorem \ref{cubes} with respect to these maximal nets. By scaling and translation, we can assume there is $Q_0 \in \mathscr{D}^E_0$ which contains $0.$ Let $M \geq 1$ and $\e,\alpha >0$.
\begin{rem}
The constants $M$ and $\e$ may be different from the previous section. We shall choose $M$ sufficiently large, $\e$ and $\alpha$ shall be chosen sufficiently small.
\end{rem}
We wish to construct $F$ using the Reifenberg parametrisation theorem of David and Toro. Since $E$ is not lower regular, we need the condition that we have $(d+1,\alpha)$-separated points to apply the construction. This will be added to our stopping time conditions. Note, in contrast to the previous section, we shall run our stopping time algorithm over cubes in $E$. 

For $Q \in \mathscr{D}^E(Q_0)$, we define a stopping time region $S_Q$ as follows. First, add $Q$ to $S_Q.$ Then, inductively on scales, we add a cube $R$ to $S_Q$ if $R^{(1)} \in S_Q$ and each sibling $R^\prime$ of $R$ satisfies:
\begin{enumerate}
\item 
\begin{align}
\sum_{Q \supseteq T \supseteq R^\prime} \beta_E^{d,1}(MB_Q)^2 \leq \e^2. 
\end{align}
\item $MB_{R^\prime}$ has $(d+1,\alpha)$ separated points (in the sense of Definition \ref{Sep}). 
\end{enumerate}
We partition $\mathscr{D}^E(Q_0)$ into a collection of stopping time regions $\mathscr{S}:$ Begin by adding $S_{Q_0}$ to $\mathscr{S}.$ Then, if $S$ has been added to $\mathscr{S}$, add $S_Q$ to $\mathscr{S}$ if $Q \in \text{Child}(R)$ for some $R \in \min(S).$ We continue in this way to generate $\mathscr{S}.$ Let
$$\min \mathscr{S} = \bigcup_{S \in \mathscr{S}} \{Q \in \min(S)\}$$
denote the collection of all minimal cubes in $\mathscr{S}.$

\begin{rem}The following is essentially Lemma \ref{Sigma}. The proof follows the same if we take $\epsilon \leq \alpha^{4d+4}.$
\end{rem}

\begin{lem}\label{Sigma1}
There exists $\e >0$ small enough so that for each $S \in \mathscr{S}$, which is not a singleton, there is a surface $\Sigma_{S}$ which satisfies the following. Firstly, if $\Sigma_S'$ denotes the bi-Lipschitz surface from Theorem \ref{LipDT}, then 
\begin{align}\label{e:sigmaM}
\Sigma_S = \Sigma_S' \cap MB_{Q(S)}.
\end{align}
Second, for each $R \in S$ and  $y \in F \cap \tfrac{M}{4}B_R$,
\begin{align}\label{Closeness1}
\emph{dist}(y,\Sigma_{S}) \lesssim \e ^\frac{1}{d+1}\ell(R).
\end{align}
Finally, for each ball $B$ centred on $\Sigma_S$ and contained in $MB_{Q(S)}$, 
\begin{align}\label{ee:lowerreg1}
\frac{\omega_d}{2}r_B^d \leq \mathscr{H}^d( \Sigma_{S} \cap B) \lesssim r_B^d.
\end{align}
\end{lem}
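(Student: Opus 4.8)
The plan is to run the proof of Lemma \ref{Sigma} essentially unchanged, keeping track of the single place where the failure of lower content regularity for $E$ must be compensated. Fix a non-singleton $S \in \mathscr{S}$. For $k \geq 0$ let $s(k)$ be the integer with $5\rho^{s(k)} \leq r_k \leq 5\rho^{s(k)-1}$, where $r_k = 10^{-k}$ is the David--Toro scale, and for each $Q \in S$ choose a $d$-plane $L_Q$ through $x_Q$ with $\beta_E^{d,1}(MB_Q,L_Q) \leq 2\beta_E^{d,1}(MB_Q)$. For each $k$ let $\mathscr{C}_k'$ be a maximal $r_k$-separated subset of $\mathscr{C}_k = \{x_Q : Q \in \mathscr{D}^E_{s(k)} \cap S\}$, equipped with the planes $\{L_Q\}_{x_Q \in \mathscr{C}_k'}$. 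These balls and planes are the input to Theorem \ref{DT}.

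The one substantive change is the verification that the David--Toro flatness numbers $\varepsilon_k$ are below the threshold $\e_0$. Here we use that, by the two stopping-time conditions defining $S$, every $T \in S$ satisfies both $\beta_E^{d,1}(MB_T) < \e$ and the $(d+1,\alpha)$-separation of $MB_T$. Consequently Lemma \ref{AngControl} applies to any pair $Q,R \in S$ whose centres lie in a common ball $100B_{j,k}$ (so their side lengths and separation are comparable up to an $M$-dependent constant), and gives $\angle(P_Q,P_R) \lesssim_M \e/\alpha^{2d+2}$. Imposing
\[
\e \leq \alpha^{4d+4}
\]
turns this into $\angle(P_Q,P_R) \lesssim_M \alpha^{2d+2}$, which is $< \e_0$ once $\alpha$ (hence $\e$) is small; thus $\varepsilon_k(x) \lesssim \beta_E^{d,1}(MB_Q) < \e$ for $x \in Q$ with $x_Q \in \mathscr{C}_k$, and Theorem \ref{DT} produces a surface $\Sigma_S'$. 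Shrinking $\e$ once more, \eqref{e:lowerreg} gives $\mathscr{H}_\infty^d(\Sigma_S' \cap B) \geq \tfrac{\omega_d}{2}r_B^d$ for all balls $B$ centred on $\Sigma_S'$. The extra hypothesis \eqref{e:LipDT} of Theorem \ref{LipDT} is checked exactly as in Lemma \ref{Sigma}: the estimate $|f(z)-f_k(z)| \leq \sum_{j \geq k}|\sigma_j(x_j)-x_j| \lesssim \sum_{j\geq k}\varepsilon_j r_j \lesssim \e r_k$ follows from \eqref{Sigma_ksigma_k}, placing $x_k \coloneqq f_k(z)$ in $B(x,2r_k)$; choosing $Q \in \mathscr{D}^E_{s(k)}\cap S$ with $x = f(z) \in Q$ and a net point $x_{Q'} \in \mathscr{C}_\ell'$, $\ell \in \{k-1,k\}$, with $x_k \in B(x_{Q'},r_\ell)$, one gets $100B(x_{Q'},r_\ell) \subseteq MB_Q$ for $M$ large, and Lemma \ref{AngPre} (again using the $(d+1,\alpha)$-separation, with $\e \leq \alpha^{4d+4}$ absorbing the $\alpha^{-(2d+2)}$ loss) yields $\varepsilon_k'(x_k) \lesssim \beta_E^{d,1}(MB_Q)$; summing and invoking stopping-time condition (1) gives $\sum_{k}\varepsilon_k'(x_k)^2 \lesssim \sum_{Q \subseteq T \subseteq Q(S)}\beta_E^{d,1}(MB_T)^2 \leq \e^2 < \infty$, so $\Sigma_S'$ is bi-Lipschitz.

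It remains to set $\Sigma_S = \Sigma_S' \cap MB_{Q(S)}$, which is \eqref{e:sigmaM}, and to read off the remaining conclusions. The left inequality in \eqref{ee:lowerreg1} is the content bound above, and the right inequality holds because $\Sigma_S'$ is a bi-Lipschitz image of $\R^d$. For \eqref{Closeness1}, given $R \in \mathscr{D}^E_{s(k)}\cap S$ and $y \in \tfrac{M}{4}B_R$, pick $R' \in \mathscr{D}^E_{s(k)}\cap S$ with $x_{R'} \in \mathscr{C}_k'$ and $|x_R - x_{R'}| \leq r_k$, so that $\tfrac{M}{4}B_R \subseteq \tfrac{M}{2}B_{R'}$ for $M \geq 4\rho^{-1}$; since $\beta_E^{d,1}(MB_{R'}) < \e$, the comparison \eqref{CompL_infty} gives $\beta_{E,\infty}^d(\tfrac{M}{2}B_{R'}) \lesssim \e^{1/(d+1)}$, hence $\dist(y,L_{R'}) \lesssim \e^{1/(d+1)}\ell(R)$, and then \eqref{Sigma_kPlane} and \eqref{Sigma_kSigma} push $y$ onto $\Sigma_S$ with the same error, as in Lemma \ref{Sigma}. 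The main obstacle is the flatness budget: the angle estimates of Lemmas \ref{AngControl} and \ref{AngPre} now cost a factor $\alpha^{-(2d+2)}$ because $E$ is only known to be well spread out through the imposed $(d+1,\alpha)$-separation rather than by lower regularity, and the quantitative content of the lemma is precisely that this loss is harmless provided $\e$ is chosen below $\alpha^{4d+4}$.
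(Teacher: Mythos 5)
Your proposal reproduces, with the details filled in, exactly the argument the paper intends: the paper's own ``proof'' is the one-line remark that the argument of Lemma \ref{Sigma} carries over once $\e \leq \alpha^{4d+4}$, and you correctly locate the only change --- stopping condition (2)'s $(d+1,\alpha)$-separation substitutes for lower content regularity of $F$ in Lemmas \ref{AngControl} and \ref{AngPre}, costing a factor $\alpha^{-(2d+2)}$ that the choice $\e \leq \alpha^{4d+4}$ absorbs. One small subtlety worth making explicit: the appeal to \eqref{CompL_infty} in the verification of \eqref{Closeness1} is not literally licensed, since that lemma's hypothesis is local lower content regularity of $E$, which is not assumed in this section; the inequality $\beta_{E,\infty}^d(\tfrac{1}{2}B) \lesssim \beta_E^{d,1}(B)^{1/(d+1)}$ nevertheless holds for arbitrary $E$, because the proof of \cite[Lemma 2.12]{azzam2018analyst} runs unchanged after replacing $\mathscr{H}^d_\infty$ with $\mathscr{H}^{d,E}_{B,\infty}$ and invoking the automatic lower bound $\mathscr{H}^{d,E}_{B,\infty}(E\cap B') \geq c_1 r_{B'}^d$ from Lemma \ref{HausEasy}(1), so the implicit constant depends only on the fixed $c_1$.
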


\begin{defn}
If $S = \{Q\}$ is a singleton, we define $\Sigma_S = P_Q \cap MB_Q,$ where $P_Q$ is some $d$-plane through $x_Q$ such that $\beta^{d,p}_E(MB_Q,P_Q) \leq 2\beta_E^{d,p}(MB_Q).$   
\end{defn}

Let $\mathscr{B}_\alpha \subseteq \mathscr{D}^E(Q_0)$ be the set of cubes $Q$ which have a sibling $Q'$ which fails the stopping time condition (2) i.e. $MB_{Q'}$ does not have $(d+1,\alpha)$-separated points. Consider those points in $Q_0$ for which we stopped a finite number of times or we never stopped. That is, we consider 
\[G \coloneqq \left\{ x \in Q_0 : \sum \beta_E^{d,1}(MB_Q)^2\chi_Q(x) < \infty \ \text{and} \ \sum_{\substack{Q \\ Q \in \mathscr{B}_\alpha}} \chi_Q(x) < \infty \right\}. \]
Observe that $G \subseteq \bigcup_{S \in \mathscr{S}} \Sigma_{S}.$ We define $E^\prime = Q_0 \setminus G$ and set 
\[F \coloneqq E^\prime \cup \bigcup_{S \in \mathscr{S}} \Sigma_{S}. \] 

\begin{lem}
$F$ is $(c_1,d)$-lower content regular, with $c_1$ as in Remark \ref{r:consts}
\end{lem}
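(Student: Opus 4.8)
The plan is to fix $x \in F$ and $r \in (0,\operatorname{diam} F)$ and produce a lower bound $\mathscr{H}^d_\infty(F \cap B(x,r)) \geq c_1 r^d$ with $c_1 = \omega_d\rho^d/(2^{d+1}3^d)$ as in Remark \ref{r:consts}. I would split into two cases according to whether $B(x,r)$ "sees" a cube from $\mathscr{D}^E$ that lies inside some stopping time region $S$, or whether it essentially only meets $E'$ and the boundary pieces. The key geometric input is Lemma \ref{Sigma1}: for any $S \in \mathscr{S}$ that is not a singleton, every ball $B$ centred on $\Sigma_S$ and contained in $MB_{Q(S)}$ satisfies $\mathscr{H}^d(\Sigma_S \cap B) \geq \tfrac{\omega_d}{2} r_B^d$ (and for singletons $\Sigma_S$ is a $d$-plane piece, which is trivially lower regular). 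Since $\mathscr{H}^d_\infty \leq \mathscr{H}^d$, this gives lower content regularity of each individual surface $\Sigma_S$; the work is to transfer this to $F$ at points and scales where the nearest surface has "wrong size" relative to $r$.

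First I would handle the main case: suppose there is a cube $Q \in \mathscr{D}^E$ with $x_Q$ close to $x$ and $\ell(Q) \sim r$ — more precisely, choose $Q \ni$ (a nearby point of $E$) with $\rho r \leq \ell(Q) \leq r$; such a $Q$ exists by the properties of Christ--David cubes in Lemma \ref{cubes} provided $B(x,r)$ meets $Q_0$ at a comparable scale. Let $S = S(Q)$ be its stopping time region. If $S$ is a singleton, $\Sigma_S$ is a piece of a $d$-plane of radius $M\ell(Q)$ through $x_Q$, and $B(x_Q, c_0\ell(Q)) \cap \Sigma_S$ is a full $d$-disc, giving $\mathscr{H}^d_\infty(F \cap B(x,r)) \geq \mathscr{H}^d_\infty(\Sigma_S \cap B(x_Q,c_0\ell(Q))) \geq \omega_d (c_0\ell(Q))^d \geq \omega_d (c_0\rho r)^d \geq c_1 r^d$ after checking the constant (here one uses $c_0 = 1/500$, $\rho = 1/1000$ — actually one must verify $\omega_d c_0^d\rho^d \geq c_1$; if not, shrink to the correct comparison, the point is $\ell(Q)\sim r$). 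If $S$ is not a singleton, $x_Q \in \Sigma_S$ (since $Q \cap E \neq \emptyset$ and $\operatorname{dist}(x_Q,\Sigma_S)$ is tiny by \eqref{Closeness1}, and then replace $x_Q$ by the nearby point $x_Q' \in \Sigma_S$), the ball $B(x_Q', \tfrac12 c_0\ell(Q))$ is centred on $\Sigma_S$ and contained in $MB_{Q(S)}$ (as $Q \subseteq Q(S)$ and $M$ is large), so \eqref{ee:lowerreg1} yields $\mathscr{H}^d_\infty(F\cap B(x,r)) \geq \mathscr{H}^d(\Sigma_S \cap B(x_Q', \tfrac12 c_0\ell(Q))) \geq \tfrac{\omega_d}{2}(\tfrac12 c_0\ell(Q))^d$, again $\gtrsim r^d$; one then checks the explicit constant matches $c_1$, using that $\ell(Q)\geq\rho r$.

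The remaining case is $x \in E' = Q_0 \setminus G$ at a scale $r$ where no single cube/region of comparable size is available — i.e. $x$ is a point where infinitely many stopping times occur. Here I would argue that $E' \subseteq \overline{\bigcup_S \Sigma_S}$ and moreover that through $x$ at scale $r$ there is \emph{some} stopping time region $S$ with $Q(S)$ of size $\geq$ (something like) $\rho r$ whose surface passes within distance $\e^{1/(d+1)} r \ll r$ of $x$: indeed, pick the cube $Q \ni x$ of generation with $\rho r \le \ell(Q)\le r$ and let $S = S(Q)$ be the stopping region containing $Q$ (every cube of $\mathscr{D}^E(Q_0)$ lies in exactly one region of $\mathscr{S}$); then $Q \subseteq Q(S)$, $B(x_Q, \tfrac12 c_0 \ell(Q))$ is centred on (a point of) $\Sigma_S$ up to error $\e^{1/(d+1)}\ell(Q)$ and is contained in $MB_{Q(S)}$, so the same application of \eqref{ee:lowerreg1} (or the singleton case) goes through — so in fact this case is subsumed by the first. \textbf{The main obstacle} is bookkeeping the constants and the "contained in $MB_{Q(S)}$" requirement uniformly: one must verify that whenever $Q \in \mathscr{D}^E(Q_0)$ and $S = S(Q)$, the small ball $B(x_Q', \tfrac{c_0}{2}\ell(Q)) \subseteq MB_{Q(S)}$, which holds since $x_Q \in Q \subseteq Q(S) \subseteq B(x_{Q(S)}, \ell(Q(S)))$ and $\ell(Q) \le \ell(Q(S))$ and $M \ge 4$, so the ball has radius $\le \tfrac{c_0}{2}\ell(Q(S)) \le \ell(Q(S))$ and is within $2\ell(Q(S))$ of the centre — fine for $M\ge 4$. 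Once the inclusion is secured, \eqref{ee:lowerreg1} plus $\mathscr{H}^d_\infty \le \mathscr{H}^d$ and the explicit value of $c_1$ finish the proof; I would close by remarking that the choice $c_1 = \omega_d\rho^d/(2^{d+1}3^d)$ was made precisely so that $\tfrac{\omega_d}{2}(\tfrac{c_0}{2}\rho r)^d \ge c_1 r^d$ holds after accounting for the worst-case ratio $\ell(Q)/r \ge \rho$ (adjusting the internal radius $c_0/2$ to $\rho/(3)$ if the bookkeeping demands it).
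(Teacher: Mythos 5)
Your overall strategy — reduce lower regularity of $F$ to the lower regularity estimate \eqref{ee:lowerreg1} for the individual surfaces $\Sigma_S$, and handle $E'$ by approximation — is the right idea and is what the paper does. However, your case split is not exhaustive, and this is the central gap. Your ``main case'' requires finding a cube $Q \in \mathscr{D}^E$ of scale $\rho r \leq \ell(Q) \leq r$ with $x_Q$ within distance $\lesssim r$ of $x$; this forces $\dist(x,E) \lesssim r$. But $F$ contains points $x \in \Sigma_S$ with $\dist(x,E)$ as large as $M\ell(Q(S))$: by \eqref{e:10} the surface $\Sigma_S' $ agrees with the affine plane $P_{Q(S)}$ outside $10B_{Q(S)}$, and $\Sigma_S = \Sigma_S' \cap MB_{Q(S)}$ keeps a large collar of this planar piece that is far from $E$. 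For such an $x$ and a small $r < \dist(x,E)$, the ball $B(x,r)$ misses $E$ entirely, so no cube of scale $\sim r$ lies near $x$ and your argument does not start. Your second case ($x \in E'$) does not rescue this, since $E' \subseteq Q_0 \subseteq E$ and so those $x$ always have $\dist(x,E)=0$. The paper handles the missing situation directly: when $x \in \Sigma_S$ and $r < 3M\ell(Q(S))$, if $B(x,r) \subseteq MB_{Q(S)}$ it applies \eqref{ee:lowerreg1} immediately (no cube needed); if $B(x,r)\not\subseteq MB_{Q(S)}$ it distinguishes $B(x,r) \cap 10B_{Q(S)} = \emptyset$ (where $\Sigma_S \cap B(x,r)$ is a piece of a $d$-plane and one finds $y$ with $B(y,r/4) \subseteq MB_{Q(S)} \cap B(x,r)$) from $B(x,r) \cap 10B_{Q(S)} \neq \emptyset$ (which forces $r \sim M\ell(Q(S))$ and one finds $B(y,r/10) \subseteq MB_{Q(S)} \cap B(x,r)$). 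When $r \geq 3M\ell(Q(S))$, the paper passes to an ancestor $Q^{(k)}$ of $Q(S)$ with $3M\ell(Q^{(k)}) \leq r$ and uses the whole surface $\Sigma_{S(Q^{(k)})}$; note this ancestor lives in a \emph{different} stopping time region, which your formulation with ``$S = S(Q)$'' does account for but without addressing the $\dist(x,E)$ obstruction.

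A secondary problem is the explicit constant. The ball $B(x_Q',\tfrac{c_0}{2}\ell(Q))$ gives $\mathscr{H}^d_\infty(F \cap B(x,r)) \geq \tfrac{\omega_d}{2}(\tfrac{c_0\rho}{2})^d r^d$, and matching this against $c_1 r^d = \tfrac{\omega_d\rho^d}{2^{d+1}3^d} r^d$ requires $c_0 \geq 1/3$, which fails badly since $c_0 = 1/500$. Your suggested fix of shrinking $c_0/2$ to $\rho/3$ makes the bound \emph{worse}, not better. The resolution is to use a ball whose radius is a definite fraction of $r$ (as the paper does: $r/4$, $r/10$, or $M\ell(Q^{(k)}) \geq \rho r/3$), rather than a ball of radius $\sim c_0\rho r$; such a ball still sits inside $MB_{Q(S)}$ because $\ell(Q) \leq \ell(Q(S))$ and $M$ is large.
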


\begin{proof}
Fix $x \in F$ and $r>0.$ Assume first of all that $x \in \Sigma_{S}$ for some $S \in \mathscr{S}.$ Let $Q = Q(S).$ For any $k \geq 0,$ recalling that $Q^{(k)}$ is the $k^{th}$ generational ancestor of $Q$, we have
\begin{align}
|x - x_{Q^{(k)}}| \leq |x - x_Q| + |x_Q - x_{Q^{(k)}}| \leq M\ell(Q) + \ell(Q^{(k)}) \leq 2M\ell(Q^{(k)}).
\end{align}
Assume first that $r \geq 3M\ell(Q)$ and let $k = k(r) \geq 0$ be the integer such that
\[ 
3M\ell(Q^{(k)}) \leq r \leq 3M\ell(Q^{(k+1)}).
\]
Then,
\[ MB_{Q^{(k)}} \subseteq B(x, 2M\ell(Q^{(k)}) + M\ell(Q^{(k)})) \subseteq B(x,r). \]
The lower regularity follow since
\begin{align}
\mathscr{H}_\infty^d(F \cap B(x,r)) \geq \mathscr{H}_\infty^d(\Sigma_{S(Q^{(k)})}) \overset{\eqref{ee:lowerreg1}}{\geq} \frac{\omega_d}{2}(M\ell(Q^{(k)}))^d \geq \frac{\omega_d \rho^d}{2\cdot 3^{d}} r^d \geq c_1 (2r)^d.
\end{align}
Assume now that $r < 3M\ell(Q(S)).$ If $B(x,r) \subseteq MB_{Q(S)}$ we can trivially apply the lower regularity estimates for $\Sigma_S$, so it suffices to consider the case when $B(x,r) \not\subseteq MB_{Q(S)}.$ We split this into two further sub-cases: either $B(x,r) \cap 10B_{Q(S)}= \emptyset$ or $B(x,r) \cap 10B_{Q(S)}\not= \emptyset.$

In the first sub-case, since by \eqref{e:10}, we have $\Sigma_S \setminus 10B_{Q(S)} = P_{Q(S)} \setminus 10B_{Q(S)},$ the portion of $\Sigma_S$ contained in $B(x,r)$ is just a $d$-plane through $x.$ Since $x$ is contained in $MB_{Q(S)}$, we can find $y \in B(x,r) \cap \Sigma_S$ such that $B(y,r/4) \subseteq MB_{Q(S)} \cap B(x,r)$ and apply the lower regularity estimates for $\Sigma_S$ inside $B(y,r/4)$ to obtain 
\[ \mathscr{H}^d_\infty( F \cap B(x,r)) \geq \mathscr{H}^d_\infty( \Sigma_S \cap B(y,r/4) ) \overset{\eqref{ee:lowerreg1}}{\geq} \frac{\omega_d}{2\cdot 4^{d}} r^d \geq c_1 (2r)^d. \]
See for example the left image in Figure \ref{fig:lowreg}. 

In the second sub-case, since $B(x,r) \not\subseteq MB_{Q(S)}$ but $B(x,r) \cap 10B_{Q(S)}\not= \emptyset,$ it must be that $r$ is comparable with $M\ell(Q).$ For $M$ sufficiently large ($M \geq 100$ is sufficient), we can certainly find $y \in B(x,r) \cap \Sigma_S$ such that $B(y,r/10) \subseteq MB_{Q(S)} \cap B(x,r)$ and apply the lower regularity estimates for $\Sigma_S$ inside $B(y,r/10)$ to get
\[ \mathscr{H}^d_\infty( F \cap B(x,r)) \geq \mathscr{H}^d_\infty( \Sigma_S \cap B(y,r/10) ) \overset{\eqref{ee:lowerreg1}}{\geq} \frac{\omega_d}{2\cdot 10^{d}} r^d \geq c_1 (2r)^d. \]
See for example the right image in Figure \ref{fig:lowreg}.

\begin{figure}[htbp]
  \centering
  \includegraphics[scale = 0.58]{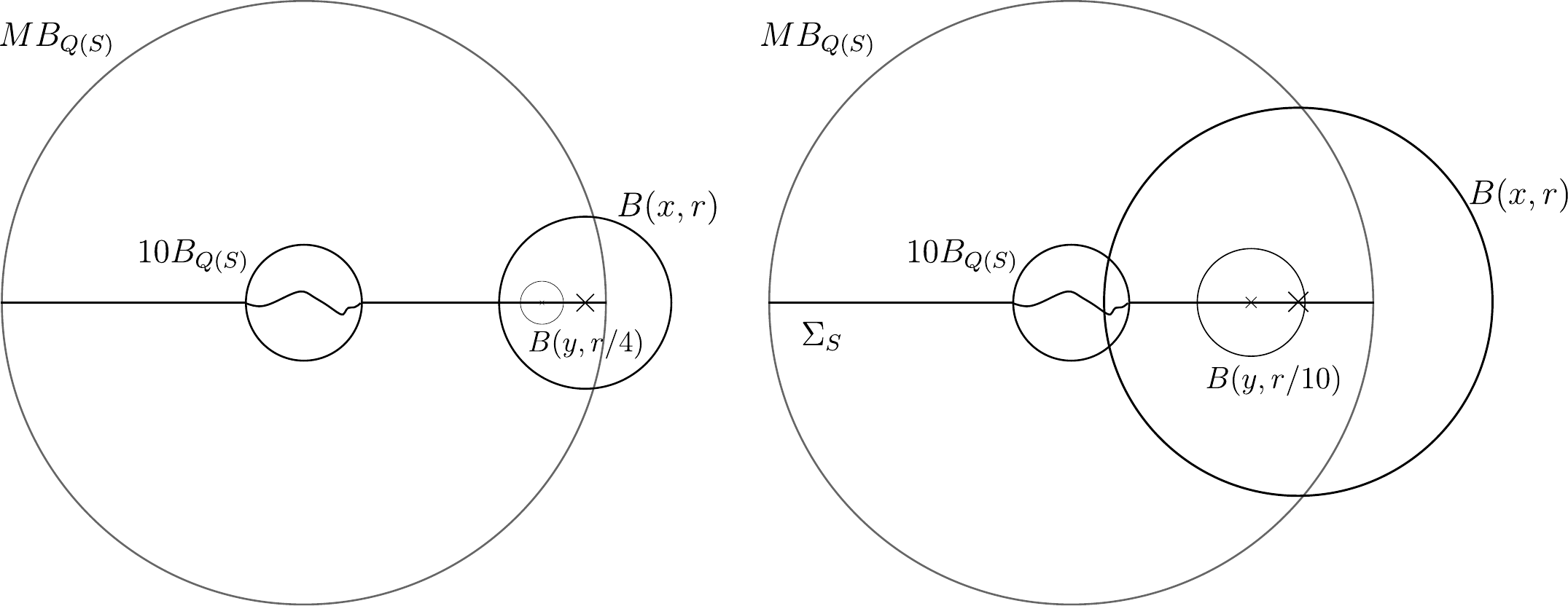}
  \caption{Sub-cases 1 and 2}\label{fig:lowreg}
\end{figure}

Suppose now that $x \in E^\prime.$ Since $E^\prime$ is the collection of points where we stopped an infinite number of times, we may find a sequence of stopping time regions $S_i$ such that $x \in Q(S_{i})$ and $\ell(Q(S_{i})) \downarrow 0.$ We denote by $S$ the stopping time region for which $x \in S$ and $\ell(Q(S)) \leq r/10.$ Then, by \eqref{Closeness1},
\[ \text{dist}(x ,\Sigma_{S}) \lesssim \e^\frac{1}{d+1}\ell(Q(S)) \leq \e^\frac{1}{d+1} \frac{r}{10}.\]
Let $x^\prime$ be the point in $\Sigma_{S}$ closest to $x.$ For $\e$ small enough, $B(x^\prime, r/2) \subseteq B(x,r)$, which by our above considerations gives
\[\mathscr{H}^d(F \cap B(x,r)) \geq \mathscr{H}^d(F \cap B(x^\prime, r/2)) \geq c_1 r^d, \]
 as required.
\end{proof}

We plan to prove \eqref{e:Thm4} with $F$ as above. When proving \eqref{e:Thm4}, it will be useful for us to have a bound for $\mathscr{H}^d(F)$ in terms of $\beta$-numbers.

\begin{lem}\label{TildeF}
Assume that 
\begin{align}\label{finite}
\sum_{Q \in \mathscr{D}^E(Q_0)} \beta^{d,1}_E(MB_Q)^2\ell(Q)^d < \infty.
\end{align}
Then $\mathscr{H}^d(E^\prime) =0$, from which it follows that $F$ is rectifiable and 
\begin{align}\label{FMeasure}
\mathscr{H}^d(F) \lesssim \ell(Q_0)^d +  \sum_{Q \in \mathscr{D}^E(Q_0)} \beta^{d,1}_E(MB_Q)^2\ell(Q)^d.
\end{align}
\end{lem}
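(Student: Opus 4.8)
The plan is to reduce everything to the Carleson-type estimate
\[
\sum_{S \in \mathscr{S}} \ell(Q(S))^d \;\lesssim\; \ell(Q_0)^d + \sum_{Q \in \mathscr{D}^E(Q_0)} \beta_E^{d,1}(MB_Q)^2\,\ell(Q)^d ,
\]
which, once established, gives all three assertions at once. Under the standing hypothesis \eqref{finite} its right-hand side is finite, so the series $\sum_{S}\ell(Q(S))^d$ converges; then $\mathscr{H}^d(E')=0$ follows by covering $E'$ efficiently by the top cubes $Q(S)$, and \eqref{FMeasure} together with rectifiability follows because $F$ is the union of $E'$ (a null set) and countably many rectifiable surfaces $\Sigma_S$, each satisfying $\mathscr{H}^d(\Sigma_S)\lesssim_M\ell(Q(S))^d$ by the upper bound in \eqref{ee:lowerreg1} (together with the bi-Lipschitz, resp.\ affine, structure of $\Sigma_S$ from Lemma \ref{Sigma1}).

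First I would prove the displayed estimate. Since every cube contained in a stopping time region $S$ satisfies $\beta_E^{d,1}(MB_T)\le\e$, hence has at most $K=K(M,d)$ children by Remark \ref{r:children}, and since every $Q(S)\neq Q_0$ is a child of some cube in $\min\mathscr{S}$, it suffices to bound $\sum_{R\in\min\mathscr{S}}\ell(R)^d$. I would split $\min\mathscr{S}=\mathscr{F}_1\cup\mathscr{F}_2$ according to which stopping condition was violated. For $R\in\mathscr{F}_1$ (a sibling of a child of $R$ violated condition (1)) the argument mirrors the treatment of $\mathscr{F}_1$ in Lemma \ref{MinControl}: comparing $\beta_E^{d,1}(MB_R)$ with the $\beta$-number of the offending child via Lemma \ref{l:cont} gives $\sum_{R\subseteq T\subseteq Q(S(R))}\beta_E^{d,1}(MB_T)^2\gtrsim\e^2$, and then the surface $\Sigma_{S(R)}$, whose lower content regularity forces $\mathscr{H}^d(c_0B_R\cap\Sigma_{S(R)})\gtrsim\ell(R)^d$ with the $c_0B_R$ pairwise disjoint and contained in $B_{Q(S(R))}$, yields $\sum_{R\in\mathscr{F}_1}\ell(R)^d\lesssim\e^{-2}\sum_{S}\sum_{Q\in S}\beta_E^{d,1}(MB_Q)^2\ell(Q)^d$.

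The family $\mathscr{F}_2$ — cubes stopped because of a failure of the $(d+1,\alpha)$-separation condition, i.e.\ cubes having a child in $\mathscr{B}_\alpha$ — is the genuinely new case and the main obstacle. The key point is that if $Q'\in\mathscr{B}_\alpha$ then $E\cap MB_{Q'}$ lies in the $\alpha M\ell(Q')$-neighbourhood of an affine subspace $V$ of dimension at most $d-1$, so the Christ–David cubes of $E$ sitting high up in such a region are forced to pack like a $(d-1)$-dimensional net. Applying the Euclidean packing estimate Lemma \ref{ENV} in $V$, together with the lower content regularity of $F$ (just proved) and of the approximating surfaces $\Sigma_S$, should give a Carleson packing $\sum_{Q\in\mathscr{B}_\alpha,\,Q\subseteq R}\ell(Q)^d\lesssim_\alpha\ell(R)^d$, and through it $\sum_{R\in\mathscr{F}_2}\ell(R)^d\lesssim\ell(Q_0)^d+\sum_Q\beta_E^{d,1}(MB_Q)^2\ell(Q)^d$; the delicate part is handling those scales inside a $\mathscr{B}_\alpha$-region at which the $(d-1)$-plane approximation is no longer thin, which forces an induction over such regions. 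Combining the two families then gives the displayed estimate.

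Finally, assuming the displayed estimate and \eqref{finite}, I would show $\mathscr{H}^d(E')=0$. If $x\in E'=Q_0\setminus G$ then $x$ cannot remain inside a single stopping region from some scale on: if it did, stopping conditions (1) and (2) applied along the chain of cubes containing $x$ would force both $\sum_Q\beta_E^{d,1}(MB_Q)^2\chi_Q(x)<\infty$ and $\#\{Q\ni x:Q\in\mathscr{B}_\alpha\}<\infty$, i.e.\ $x\in G$. Hence $x$ lies in $Q(S)$ for infinitely many $S\in\mathscr{S}$, and the side lengths of these cubes shrink by at least the factor $\rho$ at each restart, so they tend to $0$. Therefore, for every $\delta>0$ the collection $\{Q(S):\ell(Q(S))<\delta\}$ covers $E'$ by sets of diameter $<2\delta$, whence $\mathscr{H}^d_{2\delta}(E')\le 2^d\sum_{S:\,\ell(Q(S))<\delta}\ell(Q(S))^d$, and this tends to $0$ as $\delta\to0$ as a tail of a convergent series; thus $\mathscr{H}^d(E')=0$. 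Rectifiability of $F$ and the bound \eqref{FMeasure} then follow as indicated in the first paragraph.
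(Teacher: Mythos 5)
Your overall strategy matches the paper's: prove the Carleson estimate $\sum_{S\in\mathscr{S}}\ell(Q(S))^d\lesssim\ell(Q_0)^d+\sum_Q\beta_E^{d,1}(MB_Q)^2\ell(Q)^d$, reduce it via the bounded-children count to a bound on $\sum_{R\in\min\mathscr{S}}\ell(R)^d$, then cover $E'$ by small top cubes $Q(S)$ to obtain $\mathscr{H}^d(E')=0$ as the tail of a convergent series, and finally bound $\mathscr{H}^d(F)\le\sum_S\mathscr{H}^d(\Sigma_S)$. Those last two steps are correct and essentially the paper's argument (the paper covers by $\min\mathscr{S}$-cubes instead of top cubes, a cosmetic difference); and your $\mathscr{F}_1$ estimate mirrors the paper's treatment of cubes stopped by the $\beta$-condition. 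But be aware that the Carleson estimate is not something you get to quote: in the paper it is a separate result, Lemma \ref{MinCubes}, proved immediately before the statement you are asked to prove, and the paper's proof of Lemma \ref{TildeF} simply invokes it; you have to supply a proof, and your sketch of it has a gap.

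The gap is in $\mathscr{F}_2$, the $\min\mathscr{S}$-cubes stopped because some child's $M$-ball fails $(d+1,\alpha)$-separation. You assert a Carleson packing $\sum_{Q\in\mathscr{B}_\alpha,\,Q\subseteq R}\ell(Q)^d\lesssim_\alpha\ell(R)^d$ and say this "should give" the bound via an induction you don't set up. But the separation failure at $Q'$ only constrains $E\cap MB_{Q'}$ near a $(d-1)$-plane \emph{at that one scale}; a few generations down, $E$ can spread out to full $d$-dimensionality again, and a chain of unrelated local $(d-1)$-plane approximations does not by itself give a Carleson packing. What a single application of Lemma \ref{ENV} on a $(d-1)$-plane actually yields (after multiplying the $(d-1)$-dimensional packing by $\ell(R)$) is a \emph{decay}: for $Q\in\text{Type}_{\text{II}}$ and $k^*\approx\log\alpha/\log\rho$, one gets $\sum_{R\in\text{Child}_{k^*}(Q)}\ell(R)^d\le\delta\ell(Q)^d$ with $\delta\sim\alpha$, and $\alpha$ must be taken small enough that $\delta<1/2$. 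The paper then iterates this decay across alternating $\text{Type}_{\text{II}}$ restarts and $\min(S)$-levels via the auxiliary families $\mathscr{T}_k,\mathscr{M}_k$ (controlling the intermediate generations with Lemma \ref{l:des} and the $\min(S)$-cubes with Lemma \ref{l:minQ}), producing a convergent geometric series. Without the quantitative factor $\delta<1/2$ and that bookkeeping your outline does not close.
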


Before proving Lemma \ref{TildeF}, we shall need two preliminary results. 

\begin{lem}\label{l:minQ}
Let $S \in \mathscr{S}$ and $Q \in S.$ For $\e>0$ small enough, we have 
\begin{align}\label{e:minQ}
\sum_{\substack{R \in \min(S) \\ R \subseteq Q}} \ell(R)^d \lesssim \ell(Q)^d.
\end{align}

\begin{proof}
Let $R \in \min(S)$ satisfy $R \subseteq Q.$ By \eqref{Closeness1}, the ball $c_0B_R$ carves out a large proportion of $\Sigma_S,$ in particular, $\mathscr{H}^d( \Sigma_S \cap c_0B_R) \gtrsim \ell(R)^d.$ Furthermore, the balls $\{c_0B_R\}$ are disjoint and contained in $B_Q.$ Using this, we have
\begin{align}\label{e1:minS}
\sum_{\substack{R \in \min(S) \\ R \subseteq Q}} \ell(R)^d \lesssim  \sum_{\substack{R \in \min(S) \\ R \subseteq Q}} \mathscr{H}^d( \Sigma_S \cap c_0B_R)  \leq \mathscr{H}^d(\Sigma_S \cap B_Q) \lesssim \ell(Q)^d. 
\end{align}
\end{proof}
\end{lem}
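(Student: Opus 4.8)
The plan is to read \eqref{e:minQ} off the surface $\Sigma_S$ supplied by Lemma~\ref{Sigma1}: every cube $R \in \min(S)$ lies in $S$, so by \eqref{Closeness1} the ball $c_0 B_R$ captures a definite amount of $\mathscr{H}^d|_{\Sigma_S}$; these pieces are pairwise disjoint and sit inside $B_Q$, while $\Sigma_S$ carries only $\lesssim \ell(Q)^d$ mass in $B_Q$, and the estimate follows.

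First I would dispose of the degenerate case: if $S=\{Q\}$ then $\min(S)=\{Q\}$ and the left side of \eqref{e:minQ} is exactly $\ell(Q)^d$, so assume $S$ is not a singleton and let $\Sigma_S$ be as in Lemma~\ref{Sigma1}. Fix $R \in \min(S)$ with $R \subseteq Q$. Since $x_R \in R \subseteq Q_0 \subseteq F$ and $x_R \in \tfrac{M}{4}B_R$, estimate \eqref{Closeness1} (applied to $R \in S$) produces $z \in \Sigma_S$ with $|z - x_R| \lesssim \e^{1/(d+1)}\ell(R)$. For $\e$ small enough (depending only on $c_0$ and $d$) we then have $B(z,\tfrac{c_0}{2}\ell(R)) \subseteq B(x_R,c_0\ell(R)) = c_0 B_R \subseteq R \subseteq B_Q$, and, because $R \subseteq Q(S)$ and $M \geq 3$, this ball also lies in $MB_{Q(S)}$; it is centred on $\Sigma_S$, so \eqref{ee:lowerreg1} gives
\[
\mathscr{H}^d(\Sigma_S \cap c_0 B_R) \;\geq\; \mathscr{H}^d\!\big(\Sigma_S \cap B(z,\tfrac{c_0}{2}\ell(R))\big) \;\geq\; \tfrac{\omega_d}{2}\big(\tfrac{c_0}{2}\big)^d \ell(R)^d \;\gtrsim\; \ell(R)^d.
\]

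Next I would note that the cubes of $\min(S)$ are pairwise disjoint: in the construction of $S_Q$ a whole sibling family is added or left out together, so a cube of $\min(S)$ has no child in $S$, and if two cubes of $\min(S)$ were nested the smaller one would force a child of the larger into $S$ by condition~(2) of Definition~\ref{d:ST}, a contradiction. Hence the balls $\{c_0 B_R : R \in \min(S),\, R \subseteq Q\}$ are pairwise disjoint and contained in $B_Q$, and summing the display above gives
\[
\sum_{\substack{R \in \min(S)\\ R \subseteq Q}} \ell(R)^d \;\lesssim\; \sum_{\substack{R \in \min(S)\\ R \subseteq Q}} \mathscr{H}^d(\Sigma_S \cap c_0 B_R) \;\leq\; \mathscr{H}^d(\Sigma_S \cap B_Q) \;\lesssim\; \ell(Q)^d,
\]
where the last step uses that $\Sigma_S$ is a bi-Lipschitz image of $\R^d$ (equivalently: $x_Q$ is $\lesssim \e^{1/(d+1)}\ell(Q)$-close to some $w \in \Sigma_S$ by \eqref{Closeness1}, so $B_Q \cap \Sigma_S \subseteq \Sigma_S \cap B(w,2\ell(Q))$ and one applies the upper bound in \eqref{ee:lowerreg1}). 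The only point that needs genuine care is the first display — that each $c_0 B_R$ really carves out a fixed fraction of $\Sigma_S$ — which is precisely where $\e$ must be chosen small (relative to $c_0$ and $d$) and where both the closeness estimate \eqref{Closeness1} and the lower regularity \eqref{ee:lowerreg1} of $\Sigma_S$ are invoked; everything else is bookkeeping.
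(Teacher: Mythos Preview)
Your proof is correct and follows essentially the same approach as the paper's: both use \eqref{Closeness1} to show each $c_0B_R$ carves out $\gtrsim \ell(R)^d$ of $\Sigma_S$, then exploit disjointness of the $c_0B_R$ inside $B_Q$ together with the upper regularity of $\Sigma_S$ from \eqref{ee:lowerreg1}. You have simply been more explicit than the paper about the singleton case, the location of the point $z\in\Sigma_S$, and why the cubes of $\min(S)$ are pairwise disjoint.
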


\begin{lem}\label{MinCubes}
\begin{align}
\sum_{R \in \min\mathscr{S}}\ell(R)^d \lesssim \ell(Q_0)^d + \sum_{S \in \mathscr{S}}\sum_{Q\in S} \beta_E^{d,1}(MB_Q)^2\ell(Q)^d.
\end{align}
\end{lem}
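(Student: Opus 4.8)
The plan is to split $\min\mathscr{S}$ according to which of the two stopping conditions triggered the stop, exactly mirroring the scheme used in the proof of Lemma \ref{MinControl}. Let $\mathscr{F}_1$ be the collection of $R \in \min\mathscr{S}$ which have a child $R'$ failing condition (1), i.e.
\[
\sum_{Q \supseteq T \supseteq R'} \beta_E^{d,1}(MB_T)^2 > \e^2,
\]
where the sum is over $T \in S(R)$, and let $\mathscr{F}_2 = \min\mathscr{S} \setminus \mathscr{F}_1$ be those cubes which stopped purely because some sibling of a child failed the $(d+1,\alpha)$-separation condition (2). It suffices to bound $\sum_{R \in \mathscr{F}_1}\ell(R)^d$ and $\sum_{R \in \mathscr{F}_2}\ell(R)^d$ separately.

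For $\mathscr{F}_1$, I would argue as in Lemma \ref{MinControl}: using Lemma \ref{l:cont} to pass from $\check\beta$ (there) to $\beta$ (here) — or rather just repeating the same two-case split on whether $\beta_E^{d,1}(MB_{R'})$ is small or large — one gets
\[
\sum_{\substack{Q \in S(R) \\ Q \supseteq R}} \beta_E^{d,1}(MB_Q)^2 \gtrsim \e^2
\]
for each $R \in \mathscr{F}_1$. Then
\[
\sum_{R \in \mathscr{F}_1}\ell(R)^d \lesssim \frac{1}{\e^2}\sum_{S \in \mathscr{S}}\sum_{Q \in S}\beta_E^{d,1}(MB_Q)^2 \sum_{\substack{R \in \min(S) \cap \mathscr{F}_1 \\ R \subseteq Q}}\ell(R)^d,
\]
and the inner sum is $\lesssim \ell(Q)^d$ by Lemma \ref{l:minQ} (valid since, if $S$ is not a singleton, $c_0 B_R$ carves out $\gtrsim \ell(R)^d$ of the bi-Lipschitz surface $\Sigma_S$; if $S$ is a singleton the estimate is trivial). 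This yields $\sum_{R \in \mathscr{F}_1}\ell(R)^d \lesssim \sum_{S}\sum_{Q \in S}\beta_E^{d,1}(MB_Q)^2\ell(Q)^d$.

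For $\mathscr{F}_2$, the cubes stopped because of a failure of the geometric separation condition rather than because of a $\beta$-sum, so we must absorb them into $\ell(Q_0)^d$; here is where I expect the only real subtlety. The key point is a \emph{dichotomy at the parent scale}: if a sibling $R'$ of a child of $R$ has $MB_{R'}$ not containing $(d+1,\alpha)$-separated points, then $E \cap MB_{R'}$ lies within distance $\lesssim \alpha\ell(R')$ of some $(d-1)$-plane, and in particular $\beta_{E,\infty}^d(MB_{R'})$ is small. Combined with Lemma \ref{l:children}/Remark \ref{r:children} this bounds the number of children of such cubes by a constant depending only on $M$ and $d$, so it is enough to bound $\sum \ell(Q)^d$ over the \emph{maximal} cubes at which separation first fails. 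These maximal cubes are pairwise disjoint subsets of $Q_0$, hence
\[
\sum \ell(Q)^d \lesssim \sum \mathscr{H}^d_\infty(Q) \le \text{(overlap constant)} \cdot \mathscr{H}^d_\infty(Q_0) \lesssim \ell(Q_0)^d,
\]
once one checks the cubes have bounded overlap — which follows because distinct maximal failure-cubes of comparable size have disjoint core balls $c_0 B_Q$, while cubes of very different sizes cannot both be maximal-with-failure and nested (a sub-cube of a failure-cube of much larger size would itself have $MB$ with no separated points, contradicting maximality of the larger one after adjusting constants). Assembling the $\mathscr{F}_1$ and $\mathscr{F}_2$ bounds gives the claim. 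The main obstacle is getting the $\mathscr{F}_2$ bounded-overlap / maximality argument clean, since the separation condition is not monotone under $MB_Q \subseteq MB_{Q'}$ in an entirely obvious way and one has to feed in the comparability $\ell(R) \sim \ell(R^{(1)})$ carefully.
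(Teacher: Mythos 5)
Your split of $\min\mathscr{S}$ into cubes stopping for $\beta$-sum reasons ($\mathscr{F}_1$) versus separation reasons ($\mathscr{F}_2$) matches the paper's $\text{Type}_\text{I}$/$\text{Type}_\text{II}$ decomposition (up to a harmless reshuffling of cubes that fail both conditions), and your $\mathscr{F}_1$ argument correctly reproduces the analogue of Lemma \ref{MinControl} together with Lemma \ref{l:minQ}. The $\mathscr{F}_2$ part, however, has serious gaps.

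First, the chain $\sum \ell(Q)^d \lesssim \sum \mathscr{H}^d_\infty(Q) \le C\,\mathscr{H}^d_\infty(Q_0) \lesssim \ell(Q_0)^d$ is wrong at both steps. The first inequality requires $E$ to be $d$-lower content regular — precisely what we do \emph{not} have in Theorem \ref{Thm4} (this is what distinguishes Lemma \ref{MinCubes} from Lemma \ref{MinControl}, where the cubes lived in the lower-regular $F$). The second inequality is also false: $\mathscr{H}^d_\infty$ is a content, not a measure, and $\sum_i \mathscr{H}^d_\infty(Q_i)$ for a disjoint family $Q_i \subseteq Q_0$ is not bounded by a multiple of $\mathscr{H}^d_\infty(Q_0)$ — this subadditivity-only behavior of content is essentially why the paper's content $\mathscr{H}^{d,E}_{B,\infty}$ was introduced in the first place.

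Second, and more structurally, the reduction to ``maximal cubes at which separation first fails'' does not survive the recursive nature of $\min\mathscr{S}$. After stopping at a cube $R$ for separation failure, a fresh stopping region $S_Q$ is started at every child $Q$, which may itself instantly fail separation; iterating, one can have arbitrarily deep chains of nested singleton stopping regions (e.g.\ if $E$ is contained in a $(d-1)$-plane, then \emph{every} cube of $\mathscr{D}^E(Q_0)$ is in $\min\mathscr{S}$). Bounding the children of the maximal layer does nothing for the sub-sub-levels. The ingredient you've noticed — that separation failure means $E\cap MB_{R'}$ is near a $(d-1)$-plane — is exactly right, but you use it only to bound children counts, when the paper uses it (Claim 1) to get a quantitative $d$-dimensional \emph{volume decay}: $\sum_{R\in\mathrm{Child}_{k^*}(Q)}\ell(R)^d \le \delta\ell(Q)^d$. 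That decay is then iterated through the layered families $\mathscr{T}_k,\mathscr{M}_k$ (Claims 2 and 3) to sum a geometric series. Your argument has no summable decay, so it cannot handle the unbounded recursion depth. Note also the correct statement one should be aiming for is $\sum_{\text{Type}_\text{II}}\ell(Q)^d \lesssim \ell(Q_0)^d + \sum_{\text{Type}_\text{I}}\ell(Q)^d$, not $\lesssim \ell(Q_0)^d$ alone: the iteration keeps passing through $\text{Type}_\text{I}$ cubes, and their contribution must appear on the right.
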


\begin{proof}
We split $\min\mathscr{S}$ into two sub families, $\text{Type}_\text{\romup{1}}$ and $\text{Type}_\text{\romup{2}}$, where $\text{Type}_\text{\romup{1}}$ is the collection of cubes $R  \in \min\mathscr{S}$ such that each child $R'$ of $R$ has $(d+1,\alpha)$-separated points, but there is a child $R''$ such that
\begin{align}\label{Stop1}
\sum_{\substack{Q \in S(R) \\ Q \supseteq R''}} \beta_E^{d,1}(MB_Q)^2 \geq \e^2
\end{align}
and $\text{Type}_\text{\romup{2}} = \min\mathscr{S} \setminus \text{Type}_\text{\romup{1}}.$ Controlling cubes in $\text{Type}_\text{\romup{1}}$ is done in a similar way to how we controlled the cubes in $\mathscr{F}_1$ in the proof of Lemma \ref{MinControl}, the only difference being that we construct the surfaces $\Sigma_S$ by Lemma \ref{Sigma1} instead of Lemma \ref{Sigma}. This is because $E$ is not necessary lower regular but we know each cube in a stopping time region (which is not a singleton) has $(d+1,\alpha)$-separated points. This gives

\begin{align}
\sum_{Q \in \text{Type}_\text{\romup{1}}} \ell(Q)^d \lesssim \sum_{Q \in \mathscr{D}^E(Q_0)} \beta_E^{d,1}(MB_Q)^2\ell(Q)^d. 
\end{align}
We now consider cubes in $\text{Type}_\text{\romup{2}}$. We will in fact show that 
\begin{align}\label{e:www}
\sum_{Q \in \text{Type}_\text{\romup{2}}} \ell(Q)^d \lesssim \ell(Q_0)^d +  \sum_{Q \in \text{Type}_\text{\romup{1}}} \ell(Q)^d 
\end{align}
from which the lemma follows immediately. We state and prove three preliminary claims before we proceed with the proof of \eqref{e:www}. 
\bigbreak
\noindent
\textbf{Claim 1:} For all $\delta > 0$ there exists $\alpha >0$ such that if $Q \in \text{Type}_\text{\romup{2}}$ and
\begin{align}
k^* = \floor*{\frac{\log (2 \alpha M\rho/c_0)}{\log \rho}},
\end{align}
then
\begin{align}\label{lowdim1}
\sum_{R  \in \text{Child}_{k^*}(Q)} \ell(R)^d  \leq \delta \ell(Q)^d.
\end{align}
Let $\delta >0$ and assume that $Q \in \text{Type}_\text{\romup{2}}$. By definition, there exists $Q^\prime \in \text{Child}(Q)$ which does \textit{not} have $(d+1,\alpha)$-separated points. This is equivalent to saying $\beta_{E,\infty}^{d-1}(MB_{Q^\prime}) \leq \alpha.$ If $L$ is the $(d-1)$-plane such that 
\[\beta_{E,\infty}^{d-1}(MB_{Q^\prime}) = \beta_{E,\infty}^{d-1}(MB_{Q^\prime},L),\]
then each $R \subseteq Q$ satisfies 
$$\text{dist}(x_R,L) \leq \alpha M\ell(Q^\prime) = \alpha M\rho\ell(Q).$$ 
By our choice of $k^*$, if $R \subseteq Q$ is such that $\ell(R) = \rho^{k^*}\ell(Q)$ then $\text{dist}(x_{R},L) \leq \tfrac{c_0}{2}\ell(R).$ Thus, by Lemma \ref{ENV},
\begin{align}
\sum_{R  \in \text{Child}_{k^*}(Q)} \ell(R)^{d-1} \leq C\ell(Q)^{d-1}.
\end{align}
Multiplying both sides by $\ell(R),$ we obtain
\begin{align}
\sum_{R  \in \text{Child}_{k^*}(Q)} \ell(R)^d  \leq C\ell(Q)^{d-1}\ell(R) \leq C \rho^{k^*} \ell(Q)^{d}.
\end{align}
By taking $\alpha >0$ small enough, we can ensure that $C\rho^{k^*} < \delta.$ This proves the claim. 

\bigbreak

Let $Q \in \text{Type \romup{2}}$. Define $\text{Type}_\text{\romup{1}}(Q)$ to be the maximal collection of cubes in $\text{Type}_\text{\romup{1}}$ contained in $Q$. Let $\text{Tree}(Q)$ be the collection of cubes $R \in \mathscr{D}^E$ such that $R \subseteq Q$ and $R$ is not properly contained in any cube from $\text{Type}_\text{\romup{1}}(Q)$ and let
\[\text{Type}_\text{\romup{2}}(Q) = \text{Tree}(Q) \cap \text{Type}_\text{\romup{2}}.\] 
Recall the definition of $S(Q)$ from Definition \ref{d:ST}. We define sequences of subsets of $\text{Tree}(Q)$, $\mathscr{T}_k$ and $\mathscr{M}_k$ as follows. Let
\[ \mathscr{T}_0 = \{Q\} \quad \text{and} \ \mathscr{M}_0 = \{ R \in \min (S(Q)) : R \subseteq Q\}.  \] 
Then, supposing that $\mathscr{T}_k$ and $\mathscr{M}_k$ have been defined for some integer $k \geq 0,$ we let
\[ \mathscr{T}_{k+1} = \{T \in \text{Tree}(Q) : T \in \text{Child}_{k^*}(R) \ \text{for some} \ R \in \mathscr{M}_k \} \]
and
\[ \mathscr{M}_{k+1} = \{T \in \text{Type}_\text{\romup{2}}(Q): T \ \text{is max so that} \ T \subseteq R \ \text{for some} \ R \in \mathscr{T}_{k+1} \} \]
Recalling the definition of $\text{Des}_{k}(R)$ from \eqref{d:Des}, we have the following:
\bigbreak
\noindent
\textbf{Claim 2:} 
\begin{align}\label{e:TypeII}
\text{Type}_\text{II}(Q) \subseteq \{Q\} \cup \bigcup_{k=0}^\infty \bigcup_{T \in \mathscr{M}_k} \text{Des}_{k^*-1}(T). 
\end{align}
\bigbreak
Let $R \in \text{Type}_{\text{II}}(Q).$ The claim is clearly true for $R =Q$ so let us assume $R \not=Q.$ Let $k_R $ be the largest integer $k \geq 0$ such that there is a cube $T \in \mathscr{M}_k$ with $R \subseteq T.$ The existence of such a $k$ is guaranteed since there exists $\tilde{T} \in \mathscr{M}_0$ such that $R \subseteq \tilde{T}$ and each cube $T \in \mathscr{M}_k$ satisfies $\ell(T) \leq \rho^{kk^*}\ell(Q).$ Let $T_R  \in \mathscr{M}_{k_R}$ be such that $R \subseteq T_R.$ Assume $R \in \text{Child}_k(T_R)$ for some $k \geq k^*.$ If this is the case then we can find a cube $R' \in \text{Child}_{k^*}(T_R)$ such that $R \subseteq R'$, recall by definition that $R' \in \mathscr{T}_{k_R+1}.$ By maximality this implies that there exists some $R'' \in \mathscr{M}_{k_R+1}$ such that $R \subseteq R''$, which contradicts the definition of $k_R.$ It follows that $R \in \text{Des}_{k^*-1}(T_R)$ which completes the proof of the claim. 
\bigbreak
\noindent
\textbf{Claim 3:} There exists $\alpha >0$ small enough so that for each $k \geq 0,$ 
\begin{align}\label{e:geosum}
\sum_{R \in \mathscr{M}_k} \ell(R)^d \leq c_3 \left( \frac{1}{2} \right)^k \ell(Q)^d,
\end{align} 
where $c_3$ is the implicit constant from Lemma \ref{l:minQ}.
\bigbreak
The results holds for $k=0$ by Lemma \ref{l:minQ}. Assume $k \geq 1$ and assume the result holds for $k -1 \geq 0$. We will show that it holds for $k$. Let $R \in \mathscr{M}_k$. By definition, then there exists a unique $T \in \mathscr{T}_k$ such that $R \subseteq T.$ By maximality it must be that $T \in \mathscr{T}_k \cap S(R).$ Then, by Lemma \ref{l:minQ}, Claim 1 and the definitions of $\mathscr{T}_k$ and $\mathscr{M}_k$, we have 
\begin{align}
\sum_{R \in \mathscr{M}_k} \ell(R)^d &\leq \sum_{T \in \mathscr{T}_k} \sum_{\substack{R \in \mathscr{M}_k \\ R \subseteq T}} \ell(R)^d \stackrel{\eqref{e1:minS}}{\leq} C \sum_{T  \in \mathscr{T}_k} \ell(T)^d \stackrel{\eqref{lowdim1}}{\leq} C \delta \sum_{T \in \mathscr{M}_{k-1}} \ell(T)^d \\
&\leq c_3 C\delta \left(\frac{1}{2}\right)^{k-1} \ell(Q)^d. 
\end{align}
Choosing $\delta$ (and hence $\alpha$) small enough so that $C\delta < 1/2$ we get the required result.

\bigbreak

We return to the proof of \eqref{e:www}. Our first goal is to find a bound on the sum over cubes in $\text{Type}_{\text{II}}(Q).$ Suppose $R,R' \in \text{Tree}(Q)$ are such that $R' \subset R.$ Any cube $T \in \mathscr{D}$ such that $R' \subset T \subseteq R$ is either the child of a $\text{Type}_{\text{II}}$ cube or is contained in some stopping that is not a singleton. In either case
\[\beta_{E,\infty}^{d-1}(MB_T) \leq \alpha \quad \text{or} \quad \beta_E^{d,1}(MB_T) \leq \e. \]
So, by Lemma \ref{l:des} (for $\alpha$ and $\e$ small enough), 
\begin{align}
\sum_{T \in \text{Des}_{k^*-1}(R)} \ell(T)^d \lesssim_{\alpha,d,M} \ell(R)^d.
\end{align}
Combing all the above, we get
\begin{align}
\sum_{R \in \text{Type}_\text{\romup{2}}(Q)} \ell(R)^d &\stackrel{\eqref{e:TypeII}}{\leq} \ell(Q)^d + \sum_{k=0}^\infty \sum_{R \in \mathscr{M}_k} \sum_{T \in \text{Des}_{k^*-1}(R)} \ell(T)^d \\
&\lesssim \ell(Q)^d + \sum_{k=0}^\infty \sum_{R \in \mathscr{M}_k} \ell(R)^d \\
&\stackrel{\eqref{e:geosum}}{\lesssim}  \ell(Q)^d + \sum_{k=0}^\infty \left( \frac{1}{2} \right)^k\ell(Q)^d \lesssim \ell(Q)^d. 
\end{align}
Now, for each $Q \in \text{Type}_\text{\romup{2}},$ let $R(Q)$ be the smallest cube in $\text{Type}_\text{\romup{1}}$ which contains $Q$. By construction, if $T \in \text{Tree}(Q)$ for some $Q \in \text{Type}_\text{\romup{2}}$ then $R(T) = R(Q).$ Given this, if $R \in \text{Type}_\text{\romup{1}},$
\begin{align}
\sum_{\substack{Q \in \text{Type}_\text{\romup{2}} \\ R(Q) = R}} \ell(Q)^d &= \sum_{Q \in \text{Child}(R) \cap \text{Type}_\text{II}} \sum_{T \in \text{Type}_\text{II}(Q)}\ell(T)^d \\
&\lesssim \sum_{Q \in \text{Child}(R) \cap \text{Type}_\text{II}}\ell(Q)^d \lesssim \ell(R)^d. 
\end{align}
It may be that there is a cube $Q$ in $\text{Type}_\text{\romup{2}}$ which is not contained in any cube from $\text{Type}_\text{\romup{1}}$. If this is the case, then $Q_0 \in \text{Type}_\text{\romup{2}}$ and $Q \in \text{Type}_\text{II}(Q_0).$ In any case, we get
\begin{align}
\sum_{Q \in \text{Type}_\text{II}}\ell(Q)^d &\leq \sum_{Q \in \text{Type}_\text{II}(Q_0)} \ell(Q)^d +  \sum_{R \in \text{Type}_\text{I}} \sum_{\substack{Q \in \text{Type}_\text{II} \\ R(Q) = R }} \ell(Q)^d \\
&\lesssim  \ell(Q_0)^d + \sum_{Q \in\text{Type}_\text{I}}\ell(Q)^d. 
\end{align}
\end{proof}

\begin{proof}[Proof of Lemma \ref{TildeF}]
Let $x \in E^\prime.$ By definition, for each $r >0$ there exists $Q \in \min\mathscr{S}$ such that $\ell(Q) < r$ and $x \in Q.$ By induction, we can construct a sequence of distinct covers $\mathscr{C}_k \subseteq \min\mathscr{S}$ of $E^\prime$ such that $\ell(Q) < \tfrac{1}{k}$ for each $Q \in \mathscr{C}_k$. With the finite assumption \eqref{finite}, it must be that 
\begin{align}\label{e:lim}
\lim_{k\rightarrow\infty} \sum_{Q \in \mathscr{C}_k}\ell(Q)^d = 0.
\end{align}
Assume towards a contradiction that there exists $\delta >0$ and $K \in \N$ such that $\sum_{Q \in \mathscr{C}_k} \ell(Q)^d > \delta$ for all $k \geq K.$ By this and Lemma \ref{MinCubes} it follows that
\[ \ell(Q_0)^d + \sum_{S \in \mathscr{S}} \sum_{Q \in S} \beta_E^{d,1}(MB_Q)^2\ell(Q)^d \gtrsim \sum_{Q \in \min \mathscr{S}} \ell(Q)^d \geq \sum_{k=1}^\infty \sum_{Q \in \mathscr{C}_k} \ell(Q)^d = \infty \]
which contradicts \eqref{finite} and proves \eqref{e:lim}. The fact that $\mathscr{H}^d(E') = 0$ follows from \eqref{e:lim} since
\[ \mathscr{H}^d(E') = \lim_{k \rightarrow \infty} \mathscr{H}^d_{\frac{1}{k}}(E') \leq \lim_{k\rightarrow \infty} \sum_{Q \in \mathscr{C}_k} \ell(Q)^d = 0. \]
Furthermore
\begin{align}\label{blah}
\begin{split}
\mathscr{H}^d(F) &\leq \mathscr{H}^d\left( \bigcup_{S \in \mathscr{S}} \Sigma_S \right) \leq \sum_{S \in \mathscr{S}} \ell(Q(S))^d \\
&\lesssim \ell(Q_0)^d +  \sum_{Q \in \min\mathscr{S}}\sum_{R \in \text{Child}(Q)}\ell(R)^d \\
&\lesssim \ell(Q_0)^d + \sum_{Q \in \min\mathscr{S}}\ell(Q)^d \\
&\lesssim \ell(Q_0)^d + \sum_{Q \in \mathscr{D}^E(Q_0)} \beta^{d,1}_E(MB_Q)^2\ell(Q)^d,
\end{split}
\end{align}
which proves \eqref{FMeasure}. The inequality from the second to the third lines follows because any $Q \in \min\mathscr{S}$ has at most $K = K(M,d)$ children by Lemma \ref{l:children}.  
\end{proof}

\subsection{Proof of (\ref{e:Thm4})}

Recall the definition of $Q_0^F$ from the statement of Theorem \ref{Thm4}. Just like at the beginning of the proof of Theorem \ref{Thm1}, we can reduce proving \eqref{e:Thm4} to proving 

\begin{align}\label{e:var}
&\text{diam}(Q_0^F)^d + \sum_{Q \in \mathscr{D}^F(Q_0^F)}\check\beta_F^{d,p}(C_0B_Q)^2\ell(Q)^d \\
&\quad   \lesssim \text{diam}(Q_0)^d + \sum_{Q \in \mathscr{D}^E(Q_0)}\beta_E^{d,1}(AB_Q)^2\ell(Q)^d,
\end{align}
that is, we can replace the constant $C_0$ by some larger constant $A$ and set $p=1$ on the right-hand side. Let us breifly outline the strategy for proving \eqref{e:var}. In the statement of Theorem \ref{Thm4} we assume $\diam(Q_0) \geq \lambda \ell(Q_0),$ so we have the following bound on the first term: 
\begin{align}
\diam(Q_0^F) \leq \ell(Q_0^F) = \ell(Q_0) \lesssim_\lambda \diam(Q_0).
\end{align}
Thus, in order to prove \eqref{e:var}, it suffices to bound the second term. To do so, we begin by showing 
\begin{align}\label{e:var1}
\sum_{Q \in \mathscr{D}^E(Q_0)}\check\beta_F^{d,p}(C_0B_Q)^2\ell(Q)^d \lesssim \text{diam}(Q_0)^d + \sum_{Q \in \mathscr{D}^E(Q_0)}\beta_E^{d,1}(AB_Q)^2\ell(Q)^d.
\end{align}
Notice that that the sum on the left hand side is taken over $\mathscr{D}^E$, not $\mathscr{D}^F,$ as is required for \eqref{e:var}. From here, we partition the cubes in $F$ into those which lie close to $E$ and those which do not. We control the sum over $F$-cubes which lie close to $E$ by the corresponding sum over $E$-cubes (which we bound using \eqref{e:var1}) and control the sum over the $F$-cubes far away from $E$ by Theorem \ref{AV}.

Let us begin. As promised, we start with \eqref{e:var1}. From here on, we shall assume 
\[  \sum_{Q \in \mathscr{D}^E(Q_0)}\beta_E^{d,1}(AB_Q)^2\ell(Q)^d < \infty \]
since otherwise \eqref{e:var1} is trivial. 

Let $\{S_i\}$ be an enumeration of the stopping time regions in $\mathscr{S}$ which are not singletons. First, we observe that 

\begin{align}
\begin{split}
\sum_{Q \in \mathscr{D}^E(Q_0)} \check\beta_F^{d,p}(C_0B_Q)^2\ell(Q)^d &= \sum_i \sum_{Q \in S_i}  \check\beta_F^{d,p}(C_0B_Q)^2\ell(Q)^d \\
&\hspace{4em}+ \sum_{\substack{S \in \mathscr{S} \\ S = \{Q\}}} \check\beta_F^{d,p}(C_0B_Q)^2\ell(Q)^d.
\end{split}
\end{align}
If $S \in \mathscr{S}$ is such that $S = \{Q\}$ then $Q \in \min \mathscr{S}.$ Then, since $\check\beta_F^{d,p}(\cdot) \lesssim 1,$ the second term on the right hand side of the above equation is at most some constant multiple of
\begin{align}
\sum_{Q \in \min\mathscr{S}} \ell(Q)^d,
\end{align}
which we bound by Lemma \ref{MinCubes}. Thus, to prove \eqref{e:var1} it suffices to bound the first term. Using the $\beta$-error estimate (Lemma \ref{betaest}), we obtain 
\begin{align}
\begin{split}
 \sum_i \sum_{Q \in S_i}  \check\beta_F^{d,p}(C_0B_Q)^2\ell(Q)^d &\overset{\eqref{betaeq}}{\lesssim} \sum_i \sum_{Q \in S_i}  \beta_E^{d,p}(2C_0B_Q)^2\ell(Q)^d \\
&\hspace{-4em} + \sum_i\sum_{Q \in S_i} \left( \frac{1}{\ell(Q)^d} \int_{F \cap 2C_0B_Q} \left( \frac{\text{dist}(x,E)}{\ell(Q)} \right)^p \, d\mathscr{H}_\infty^d(x) \right)^\frac{2}{p}\ell(Q)^d.
\end{split}
\end{align}
We have a trivial bound for the first term by Lemma \ref{l:cont}. Let us focus on bounding the second term, the proof of which is very similar to the proof of Lemma \ref{Similar}. First, let 
\[ \mathscr{D}' = \bigcup_i S_i. \]
We split $\mathscr{D}'$ into two families. Let $\delta >0$ be small and define
\begin{align*}
\mathscr{G} &= \{Q \in \mathscr{D}^\prime : \text{dist}(x,E) \leq \delta \ell(Q) \ \text{for all} \ x \in F \cap \tfrac{M}{4}B_Q \}, \\
\mathscr{B} &= \mathscr{D}^\prime\setminus \mathscr{G}.
\end{align*} 
To each cubes $Q \in \mathscr{B}$ we shall assign a patch $C_Q$ of $F$. By definition, if $Q \in \mathscr{B}$ then there exists a point $y_Q \in F \cap \tfrac{M}{4}B_Q$ such that $\dist(y_Q,E) > \delta \ell(Q).$ We define 
\[C_Q = F \cap B(y_Q,\delta \ell(Q)/2). \] 
We claim that the balls $\{C_Q\}_{Q \in \mathscr{B}}$ have bounded overlap in $F$. This is the content of the following lemma. 
\begin{lem}\label{l:patches}
The collection of balls $\{C_Q\}_{Q \in \mathscr{B}}$ have bounded overlap in $F$.
\end{lem}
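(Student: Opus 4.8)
The plan is to show that for each $x \in F$, only a bounded number of balls $C_Q$ (with $Q \in \mathscr{B}$) contain $x$. Fix such an $x$ and suppose $x \in C_{Q_i}$ for distinct cubes $Q_1, \dots, Q_N \in \mathscr{B}$. The first step is to argue that all the $\ell(Q_i)$ are comparable, exactly as in the argument controlling the overlap of the $C_R$ in the proof of Lemma \ref{MinControl} (see the passage around \eqref{ee:sim}). The key point is this: by definition of $\mathscr{B}$, each $Q_i$ comes with a point $y_{Q_i} \in F \cap \tfrac{M}{4}B_{Q_i}$ with $\dist(y_{Q_i}, E) > \delta \ell(Q_i)$, and $C_{Q_i} = F \cap B(y_{Q_i}, \delta\ell(Q_i)/2)$. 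If $\ell(Q_j)$ were much smaller than $\ell(Q_i)$, say $\ell(Q_j) < \tfrac{\delta \rho}{4M}\ell(Q_i)$ or so, then $B(y_{Q_j}, \delta \ell(Q_j)/2)$ together with the fact that $x$ lies in both balls would force $y_{Q_j}$ (or all of $\tfrac{M}{4}B_{Q_j}$, hence $x_{Q_j}$) to lie inside, say, $2C_0 B$ of a small cube near $y_{Q_i}$ that is entirely inside the region where $\dist(\cdot,E) > 0$; but $x_{Q_j} \in E$ (by Remark \ref{r:int}, since $Q_j$ lies in a stopping time region so $Q_j \cap E \neq \emptyset$ and in fact $x_{Q_j}$ is within $\e\ell(Q_j)$ of $E$), giving a contradiction. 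So one gets $\ell(Q_i) \sim_{\delta, M, \rho} \ell(Q_j)$ for all $i, j$.

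Once comparability of side lengths is in hand, a standard volume (packing) argument finishes the job. All the balls $\tfrac{M}{4}B_{Q_i}$ meet a fixed bounded ball around $x$ (of radius comparable to $\ell(Q_1)$), and among cubes of a fixed generation the balls $\{c_0 B_{Q_i}\}$ are pairwise disjoint by Lemma \ref{cubes}; since the generations involved span only a bounded range (by the comparability just established), the total count $N$ is bounded by a constant depending only on $\delta$, $M$, $\rho$, $c_0$, and the ambient dimension $n$. This is the same mechanism as \eqref{N < 1} in the proof of Lemma \ref{MinControl}.

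The main obstacle I anticipate is making the contradiction in the comparability step fully rigorous: one must carefully track that if two of the $C_{Q_i}$ overlap with wildly different scales, the smaller cube $Q_j$ is forced into a neighbourhood of $F$ on which $\dist(\cdot, E)$ is bounded below by a definite multiple of $\ell(Q_j)$, contradicting that $x_{Q_j} \in E$ (equivalently $\dist(x_{Q_j}, E) \le \e \ell(Q_j)$ from the stopping time condition). This requires choosing the relevant threshold on the ratio $\ell(Q_j)/\ell(Q_i)$ correctly in terms of $\delta$, $M$, $C_0$ and $\rho$, but it is entirely analogous to the computation already carried out for $\mathscr{C}_R$ in Lemma \ref{MinControl}, so no genuinely new idea is needed — only bookkeeping.
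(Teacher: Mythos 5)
Your proof is essentially correct and follows the same two-step structure as the paper's: first show that if two balls $C_{Q_i}$ and $C_{Q_j}$ overlap then $\ell(Q_i) \sim_{M,\delta} \ell(Q_j)$, then finish with a packing argument among cubes of a fixed generation. The comparability step you sketch is sound, though it can be tightened. Note that in Section \ref{s:Thm4} the cubes in $\mathscr{B} \subseteq \mathscr{D}'$ belong to $\mathscr{D}^E$, so $x_{Q_j}$ lies in $E$ outright; your appeal to Remark \ref{r:int} (which concerns the Section \ref{s:Thm1} stopping times over $\mathscr{D}^F$) is unnecessary and slightly misplaced, but the conclusion you draw from it ($x_{Q_j} \in E$, or at least $\dist(x_{Q_j},E) \le \e \ell(Q_j)$) is what is needed. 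Also, you do not really need the detour through a ``small cube near $y_{Q_i}$'': since $x \in C_{Q_i}$ forces $\dist(x,E) > \delta\ell(Q_i)/2$, and $x \in C_{Q_j}$ with $y_{Q_j}\in \tfrac{M}{4}B_{Q_j}$ forces $|x - x_{Q_j}| \lesssim M\ell(Q_j)$, one immediately gets $\dist(x_{Q_j},E) > \delta\ell(Q_i)/2 - CM\ell(Q_j) > 0$ once $\ell(Q_j) \ll_\delta \ell(Q_i)$ — contradiction. The paper runs the contrapositive: it shows directly that $\ell(Q')\le \tfrac{\delta}{2M}\ell(Q)$ forces $C_Q \cap \tfrac{M}{2}B_{Q'}=\emptyset$, using $|y_Q - x_{Q'}| \ge \dist(y_Q,E) > \delta\ell(Q)$, but this is the same geometry.

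The one genuine difference is your final packing step. You invoke a standard volume argument (disjointness of $c_0B_{Q_i}$ within a ball of comparable radius), giving a constant depending on the ambient dimension $n$. The paper instead uses the fact that $Q \in \mathscr{B} \subseteq \bigcup_i S_i$ implies $\beta_E^{d,1}(MB_Q) \le \e$, so neighbouring centres $x_{Q'}$ lie within $\lesssim \e^{1/(d+1)}\ell(Q)$ of a fixed $d$-plane $P_Q$, and then applies Lemma \ref{ENV} to get a bound depending only on $d$. The paper is deliberately keeping its estimates dimension-free where it can (see the remark following Theorem \ref{DT} concerning a conjectured Hilbert-space version), whereas your version sacrifices that. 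This is a harmless trade-off here, since this lemma's output only feeds into $\sum_{Q\in\mathscr{B}}\mathscr{H}^d(C_Q)\lesssim\mathscr{H}^d(F)$ and nearby lemmas (e.g.\ Lemma \ref{l:topoverlap}) already carry $n$-dependence — but it is worth knowing the $\beta$-control gives the cleaner, dimension-independent route.
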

\begin{proof}
Let $x \in F$. We will show
\begin{align}\label{e:patches}
\mathscr{B}_x = \# \{ Q \in \mathscr{B} : x \in C_Q \} \lesssim 1.
\end{align}
First, note that
\[
C_Q \subseteq \tfrac{M}{2}B_Q
\] 
for all $Q \in \mathscr{B}.$ Let $Q,Q^\prime \in \mathscr{B}.$ If $\ell(Q^\prime)  \leq \tfrac{\delta}{2M}\ell(Q)$ and $y \in C_Q$ we have
\begin{align}
|y-x_{Q^\prime}| \geq |y_Q-x_{Q^\prime}| - |y - y_Q| \geq \delta \ell(Q) - \frac{\delta \ell(Q)}{2} \geq M\ell(Q^\prime),
\end{align}
in particular, $C_Q \cap \tfrac{M}{2}B_{Q^\prime} = \emptyset.$ Since $C_{Q^\prime} \subseteq \tfrac{M}{2}B_{Q^\prime},$ we must have that $C_Q \cap C_{Q^\prime} = \emptyset.$ Reversing the role of $Q$ and $Q^\prime$ above, we conclude that if $C_Q \cap C_{Q^\prime} \not= \emptyset$ then $\ell(Q) \sim_{M,\delta} \ell(Q^\prime).$ In particular, \eqref{e:patches} follows if we can show that for each $k$, 
\begin{align}\label{e:4:overlap}
\{Q \in \mathscr{B} \cap \mathscr{D}_k : x \in C_Q \} \lesssim 1,
\end{align}
with constant independent of $k$. Fix $k$ and let $Q,Q^\prime \in \mathscr{B} \cap \mathscr{D}_k$ such that $C_Q \cap C_{Q^\prime} \not=\emptyset$. Then $\tfrac{M}{2}B_Q \cap \tfrac{M}{2}B_{Q^\prime} \not=\emptyset$ which implies $|x_Q - x_{Q^\prime}| \leq M\ell(Q).$ Since, $\beta_E^{d,p}(MB_Q) \leq \e,$ we have $\text{dist}(x_{Q^\prime},P_Q) \lesssim \e^\frac{1}{d+1} \ell(Q) = \e^\frac{1}{d+1} \ell(Q^\prime).$ For $\e$ small enough, \eqref{e:4:overlap} follows from the usual argument via Lemma \ref{ENV}.
\end{proof}
\begin{lem}
We have
\begin{align}
\begin{split}
&\sum_i\sum_{Q \in S_i} \left( \frac{1}{\ell(Q)^d} \int_{F \cap 2C_0B_Q} \left( \frac{\emph{dist}(x,E)}{\ell(Q)} \right)^p \, d\mathscr{H}_\infty^d(x) \right)^\frac{2}{p}\ell(Q)^d \\
&\hspace{4em} \lesssim \ell(Q_0)^d +\sum_{Q \in \mathscr{D}^E(Q_0)} \beta^{d,1}_E(MB_Q)^2\ell(Q)^d. 
\end{split}
\end{align}

\end{lem}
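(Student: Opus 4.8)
The argument follows the scheme of the proof of Lemma~\ref{Similar}, split according to the partition $\mathscr{D}'=\bigcup_i S_i=\mathscr{G}\sqcup\mathscr{B}$. Two preliminary observations will be used repeatedly. First, every $Q\in\mathscr{D}^E$ has $x_Q\in E$, so $\dist(x,E)\le 2C_0\ell(Q)$ for all $x\in 2C_0B_Q$, while for $Q\in\mathscr{G}$ the defining property improves this to $\dist(x,E)\le\delta\ell(Q)$ on $F\cap\tfrac M4 B_Q\supseteq F\cap 2C_0B_Q$ (we take $M\ge 8C_0$). Second, $F$ is upper content regular at scales up to $\ell(Q_0)$, i.e.\ $\mathscr{H}^d_\infty(F\cap B(x,r))\lesssim r^d$; this follows from Lemma~\ref{Sigma1} and the tree structure of $\mathscr{S}$, since in any such ball the relevant portion of $F$ is covered, up to an $\mathscr{H}^d$-null set, by a bounded number of bi-Lipschitz images of $\R^d$ of comparable size. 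We also record that $\mathscr{H}^d(F)\lesssim\ell(Q_0)^d+\sum_{Q\in\mathscr{D}^E(Q_0)}\beta_E^{d,1}(MB_Q)^2\ell(Q)^d$ by Lemma~\ref{TildeF}, and that $\sum_{S\in\mathscr{S}}\ell(Q(S))^d\lesssim\ell(Q_0)^d+\sum_{Q\in\mathscr{D}^E(Q_0)}\beta_E^{d,1}(MB_Q)^2\ell(Q)^d$ by Lemma~\ref{MinCubes}, using that each $Q(S)$ with $S\ne S_{Q_0}$ is a child of a cube of $\min\mathscr{S}$, which has at most $K$ children by Lemma~\ref{l:children}. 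As in the proof of Lemma~\ref{Similar} we may assume $p\ge2$.

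\emph{The sum over $\mathscr{B}$.} For $Q\in\mathscr{B}$ recall the patch $C_Q=F\cap B(y_Q,\delta\ell(Q)/2)$. By $(c_1,d)$-lower content regularity of $F$ we have $\mathscr{H}^d_\infty(C_Q)\gtrsim_\delta\ell(Q)^d$, while by Lemma~\ref{l:patches} the $C_Q$ have bounded overlap in $F$; since $\mathscr{H}^d$ is a measure,
\begin{align*}
\sum_{Q\in\mathscr{B}}\ell(Q)^d\lesssim_\delta\sum_{Q\in\mathscr{B}}\mathscr{H}^d(C_Q)=\int_F\sum_{Q\in\mathscr{B}}\mathds{1}_{C_Q}\,d\mathscr{H}^d\lesssim\mathscr{H}^d(F).
\end{align*}
Combining this with the crude bound $\dist(x,E)\le2C_0\ell(Q)$, local upper regularity, and $\tfrac2p\le1$, we obtain
\begin{align*}
\sum_{Q\in\mathscr{B}}\left(\frac{1}{\ell(Q)^d}\int_{F\cap2C_0B_Q}\left(\frac{\dist(x,E)}{\ell(Q)}\right)^{p}\,d\mathscr{H}^d_\infty\right)^{2/p}\ell(Q)^d\lesssim\sum_{Q\in\mathscr{B}}\ell(Q)^d\lesssim\mathscr{H}^d(F),
\end{align*}
which is of the required form.

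\emph{The sum over $\mathscr{G}$.} This is treated one stopping region at a time, mimicking the Whitney-smoothing computation of Lemma~\ref{Similar}. Fix a non-singleton $S=S_i$ and, with $\tau$ as in~\eqref{e:tau}, let $\text{Stop}(S)$ be the maximal cubes $R\in\mathscr{D}^E$ with $\ell(R)<\tau d_S(R)$; for those meeting $2C_0B_{Q(S)}$, Lemma~\ref{l:Stop} supplies a cube $Q(R)\in S$ with $\ell(R)\sim_\tau\ell(Q(R))$ and $\dist(R,Q(R))\lesssim_\tau\ell(R)$. Given $Q\in S\cap\mathscr{G}$ and $x\in F\cap2C_0B_Q$, let $e_x\in E$ realise $\dist(x,E)$ and let $T_x$ be the smallest cube in $\mathscr{D}^E$ containing $e_x$ with $\ell(T_x)\ge\dist(x,E)$, so $\ell(T_x)\sim\dist(x,E)$ and $x\in C'B_{T_x}$ for a fixed $C'=C'(\rho)$. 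Since $x\notin\tfrac M4 B_{T'}$ whenever $T'\in\mathscr{G}$ has $\ell(T')\sim\dist(x,E)>0$, the cube $T_x$ cannot belong to $S\cap\mathscr{G}$: it is either a cube of $\mathscr{B}$, or it lies weakly inside some $R\in\text{Stop}(S)$, or it lies in a stopping region strictly below $\min(S)$. Splitting $F\cap2C_0B_Q$ into the three corresponding pieces, the piece charged to $\text{Stop}(S)$ is estimated just as in Lemma~\ref{Similar}: on $F\cap C'B_R$ one has $\dist(x,E)\lesssim_\tau\ell(R)$ because $x_{Q(R)}\in E$ and $|x-x_{Q(R)}|\lesssim_\tau\ell(R)$, so by local upper regularity $\int_{F\cap C'B_R}(\dist(x,E)/\ell(Q))^{p}\,d\mathscr{H}^d_\infty\lesssim(\ell(R)/\ell(Q))^{p}\ell(R)^d$; summing, raising to the power $2/p$, exchanging the order of summation, using the overlap estimate $\#\{Q\in\mathscr{D}_k\cap S:R\cap2C_0B_Q\ne\emptyset\}\lesssim1$ (proved as in~\eqref{e:3.5Overlap} from the Reifenberg flatness of $\Sigma_S$ in Lemma~\ref{Sigma1}), and summing a geometric series, this contributes $\lesssim\sum_{R\in\text{Stop}(S),\,R\cap2C_0B_{Q(S)}\ne\emptyset}\ell(R)^d\lesssim\mathscr{H}^d(\Sigma_S)\lesssim\ell(Q(S))^d$, the middle inequality holding because the disjoint balls $c_0B_R$ each carve out a definite portion of the bi-Lipschitz surface $\Sigma_S$ by~\eqref{Closeness1} (taking $\e\ll\tau^{d+1}$). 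The piece charged to $\mathscr{B}$ is bounded, exactly as above, by $\sum_{R\in\mathscr{B}}\mathscr{H}^d_\infty(F\cap C'B_R)\lesssim\sum_{R\in\mathscr{B}}\ell(R)^d\lesssim\mathscr{H}^d(F)$, each $\mathscr{B}$-cube being charged by only boundedly many pairs $(Q,S)$. The piece charged below $\min(S)$ is absorbed into $\sum_{S'\in\mathscr{S}}\ell(Q(S'))^d$. Summing over all non-singleton $S_i$ and invoking the bounds for $\mathscr{H}^d(F)$ and $\sum_{S\in\mathscr{S}}\ell(Q(S))^d$ recorded above completes the proof.

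\emph{The main obstacle} is the bookkeeping in the $\mathscr{G}$-part: one must check that the three-way charging of $F\cap2C_0B_Q$ does not over-count (no piece of $F$ of positive content is charged infinitely often), that the pieces of $F$ at an intermediate distance from $E$ inside a $\mathscr{G}$-cube — which the $\mathscr{G}$-condition only bounds by $\delta\ell(Q)$, not by the Whitney scale — are correctly charged to $\mathscr{B}$-cubes or to finer stopping regions, and that these two families are genuinely summable; this is precisely where Lemma~\ref{l:patches}, the finiteness $\mathscr{H}^d(F)<\infty$ of Lemma~\ref{TildeF}, and the packing estimate for $\min\mathscr{S}$ of Lemma~\ref{MinCubes} enter. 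The remaining computations merely repeat the Whitney-cube estimate from the proof of Lemma~\ref{Similar}.
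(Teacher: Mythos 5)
Your overall scheme (assume $p\ge 2$, split $\mathscr{D}'=\mathscr{G}\sqcup\mathscr{B}$, handle $\mathscr{B}$ via the bounded-overlap patches $C_Q$ and $\mathscr{H}^d(F)\lesssim\ell(Q_0)^d+\sum\beta_E^{d,1}(MB_Q)^2\ell(Q)^d$) matches the paper's, and the $\mathscr{B}$-part is essentially the paper's argument. The gap is in the $\mathscr{G}$-part, and it is not merely a bookkeeping detail.

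The paper does something you have missed: for each non-singleton $S$ it takes a Whitney decomposition $S_\mathscr{G}^F$ \emph{in the cube grid of $F$}, relative to $S_\mathscr{G}=S\cap\mathscr{G}$, i.e.\ the maximal $R\in\mathscr{D}^F$ with $\ell(R)<\tau d_{S_\mathscr{G}}(R)$. These cubes form a genuine partition of the relevant portion of $F$, satisfy $\dist(y,E)\lesssim\ell(R)$ for $y\in R$ (Claim~1) and $\dist(x_R,\Sigma_S)\lesssim\e^{1/(d+1)}\ell(R)/\tau$ (Claim~2), and the $\mathscr{G}$-cubes near them have bounded overlap (Claim~3, proved from the Reifenberg flatness of $\Sigma_S$). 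With these three facts the computation from Lemma~\ref{Similar} goes through verbatim, ending in $I\lesssim\sum_{R\in S_\mathscr{G}^F,\,R\cap 2C_0B_{Q(S)}\ne\emptyset}\ell(R)^d\lesssim\mathscr{H}^d(B_{Q(S)}\cap\Sigma_S)\lesssim\ell(Q(S))^d$ for each $S$, and then one sums over $S$ using \eqref{blah}.

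Your substitute — the cubes $T_x\in\mathscr{D}^E$ with $\ell(T_x)\sim\dist(x,E)$, together with $\text{Stop}(S)\subseteq\mathscr{D}^E$ — does not give a clean partition of $F$. The cubes $T_x$ and the balls $C'B_R$ with $R\in\text{Stop}(S)$ live on the $E$-side, so they do not tile $F\cap 2C_0B_Q$; you must instead cover by ambient balls and invoke $\mathscr{H}^d_\infty$-subadditivity with a bounded-overlap hypothesis you never verify. More seriously, your three-way classification of $T_x$ (in $\mathscr{B}$, weakly inside some $R\in\text{Stop}(S)$, or in a stopping region strictly below $\min(S)$) is neither obviously exhaustive nor disjoint — a cube below $\min(S)$ can itself be a $\mathscr{B}$-cube, and $\text{Stop}(S)$ is defined with respect to $S$ rather than $S_\mathscr{G}$, so it is not aligned with the case split — and the resulting charging scheme is not reduced to a single geometric sum. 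You acknowledge this yourself (``the main obstacle is the bookkeeping... one must check... that these two families are genuinely summable'') and only point to Lemma~\ref{l:patches}, Lemma~\ref{TildeF} and Lemma~\ref{MinCubes} as ingredients; that is an outline of what would be needed, not a proof. In particular, the assertion that the piece ``charged below $\min(S)$ is absorbed into $\sum_{S'}\ell(Q(S'))^d$'' requires a packing bound on how many pairs $(Q,S)$ can charge a given lower stopping region — the exact issue that Claim~3 in the paper handles for the $S^F_\mathscr{G}$-decomposition, and which you do not prove for your family. To repair the proof, replace the $T_x$/$\text{Stop}(S)$ machinery with the Whitney cubes $S^F_\mathscr{G}\subseteq\mathscr{D}^F$ relative to $S_\mathscr{G}$ and reproduce the paper's three claims for them.
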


\begin{proof}
By Jensen's inequality (Lemma \ref{JensenPhi}), we may assume $p \geq 2$. Let $\mathscr{D}', \mathscr{G}$ and $\mathscr{B}$ be as above Lemma \ref{l:patches}. First, since $F$ is lower regular and the $C_Q$ have bounded overlap, we get 
\begin{equation}
\begin{aligned}
&\sum_{Q \in \mathscr{B}}  \left( \frac{1}{\ell(Q)^d} \int_{F \cap 2C_0B_Q} \left( \frac{\text{dist}(x,E)}{\ell(Q)} \right)^p \, d\mathscr{H}_\infty^d(x) \right)^\frac{2}{p}\ell(Q)^d  \\
&\hspace{4em} \leq \sum_{Q \in \mathscr{B}} \ell(Q)^d \lesssim \sum_{Q \in \mathscr{B}} \mathscr{H}^d(C_Q) \stackrel{\eqref{e:patches}}{\lesssim} \mathscr{H}^d(F) \\
&\hspace{4em} \overset{\eqref{FMeasure}}{\lesssim} \ell(Q_0)^d+ \sum_{Q \in \mathscr{D}^E(Q_0)} \beta_E^{d,1}(MB_Q)^2\ell(Q)^d. 
\end{aligned}
\end{equation}
Consider a single $S = S_i$. Let $S_\mathscr{G}$ denote $S \cap \mathscr{G}.$ Let $\mathscr{D}^F$ denote the Christ-David cubes for $F$ and let $S^{F}_\mathscr{G}$ be the smoothed out cubes in $F$ with respect to $S_\mathscr{G}.$ That is, we let 
\[0 < \tau < \tau_0= \frac{1}{2(1+\rho)}\] 
and define 
\begin{align*}
S^{F}_\mathscr{G} = \{Q \in \mathscr{D}^F : Q  \ \text{is maximal with}\ \ell(Q) < \tau d_{S_\mathscr{G}}(Q) \}.
\end{align*}
Let $R \in S_\mathscr{G}^F.$ 
\bigbreak
\noindent\textbf{Claim 1.} For each $y \in R,  \ \dist(y,E) \lesssim \ell(R).$ 
\bigbreak
By a direct analogue of Lemma \ref{l:Stop} (whose proof is exactly the same), there exists $Q \in  S_\mathscr{G}$ such that $\tau \text{dist}(Q,R) \lesssim \ell(R)$ and $\tau \ell(Q) \sim \ell(R).$ Let $y^Q$ be the point in $Q$ closest to $y.$ Then, 
\begin{align}
\text{dist}(y,E) \leq \dist(y,y^Q) + \dist(y^Q,E) \lesssim \tfrac{\ell(R)}{\tau} + \delta \ell(Q) \lesssim \ell(R).
\end{align}

\bigbreak
\noindent \textbf{Claim 2.} We have $\text{dist}(x_R,\Sigma_{S}) \lesssim \frac{\e^\frac{1}{d+1}}{\tau} \ell(R).$ 
\bigbreak
Let $Q \in S_\mathscr{G}$ be as in the proof of Claim 1. We can chose $M$ large enough (depending on $\tau$) so that $R \subseteq \tfrac{M}{4}B_Q.$ Then, by Lemma \ref{Sigma1} and the fact that $\ell(R) \sim \tau \ell(Q),$
\begin{align}\label{StopSigma}
\text{dist}(x_R,\Sigma_S) \overset{\eqref{Closeness1}}{\lesssim} \e^\frac{1}{d+1}\ell(Q) \lesssim \frac{\e^\frac{1}{d+1}}{\tau} \ell(R).
\end{align}

\noindent \textbf{Claim 3.} For each $k \in \N$, 
\[\#\{Q \in \mathscr{D}^E_k \cap S_\mathscr{G} : R \cap 2C_0B_Q \not=\emptyset\} \lesssim 1.\]
\bigbreak
If $Q \in \mathscr{D}^E_k \cap S_\mathscr{G}$ is such that $R \cap 2C_0B_Q\not=\emptyset,$ then 
\begin{align}
\ell(R) \leq \tau ( \ell(Q) + \text{dist}(R,Q)) \lesssim \ell(Q). 
\end{align}
Let $x_R'$ be the point in $\Sigma_S$ closest to $x_R.$ By the above and \eqref{StopSigma} there exists a constant $A >0$ so that if $Q \in \mathscr{D}_k \cap S$ is such that $R \cap 2C_0B_Q \not=\emptyset$ then 
\[Q \subseteq B(x_R',A\ell(Q)) \coloneqq B.\]
Since $\text{dist}(x_Q,\Sigma_{S}) \lesssim \e ^\frac{1}{d+1}\ell(Q),$ the balls $c_0B_Q$ carve out a large proportion of $\Sigma_S.$ Then, by \eqref{ee:lowerreg1},
\begin{align}
\sum_{\substack{Q \in S_\mathscr{G} \\ R \cap 2C_0B_Q \not=\emptyset}} \ell(Q)^d \lesssim \sum_{\substack{Q \in S_\mathscr{G} \\ R \cap 2C_0B_Q \not=\emptyset}} \mathscr{H}^d(\Sigma_S \cap c_0 B_Q) \leq \mathscr{H}^d(\Sigma_S \cap B) \lesssim \ell(Q)^d,
\end{align} 
which proves the claim. 
\bigbreak
Since we have assumed $p>2$, we can apply Lemma \ref{l:subsum} and Claim 1 to get
\begin{align}
&I \coloneqq \sum_{Q \in S_\mathscr{G}} \left( \frac{1}{\ell(Q)^d} \int_{F \cap 2C_0B_Q} \left( \frac{\text{dist}(x,E)}{\ell(Q)} \right)^p \, d\mathscr{H}_\infty^d(x) \right)^\frac{2}{p}\ell(Q)^d \\
&\hspace{4em} \lesssim \sum_{Q \in S_\mathscr{G}} \left(\sum_{\substack{R \in S^{F}_\mathscr{G} \\ R \cap 2C_0B_Q \not= \emptyset}}\frac{\ell(R)^{d+p}}{\ell(Q)^{d+p}} \right)^\frac{2}{p} \ell(Q)^d \lesssim \sum_{Q \in S_\mathscr{G}}\sum_{\substack{R \in S^{F}_\mathscr{G} \\ R \cap 2C_0B_Q \not= \emptyset}} \frac{\ell(R)^{d\frac{2}{p}+2}}{\ell(Q)^{d(\frac{2}{p}-1) +2}}
\end{align}
By Claim 3, we swap the order of integration and sum over a geometric series to get,
\begin{align*}
I \lesssim \sum_{\substack{R \in S^{F}_\mathscr{G}  \\ R \cap 2C_0B_{Q(S)} \not=\emptyset}} \sum_{\substack{Q \in S_\mathscr{G} \\ R \cap 2C_0B_Q \not= \emptyset}}\frac{\ell(R)^{d\frac{2}{p} + 2 }}{\ell(Q)^{d(\frac{2}{p} - 1) + 2}} \lesssim  \sum_{\substack{R \in S^{F}_\mathscr{G} \\ R \cap 2C_0B_{Q(S)} \not=\emptyset}} \ell(R)^d. 
\end{align*}
By \eqref{StopSigma}, for $\e$ small enough, the ball $c_0B_R$ carves out a large proportion of $\Sigma_{S}$ for each $R \in S^{F}_\mathscr{G}$ , i.e. $\mathscr{H}^d(c_0B_R \cap \Sigma_{S}) \gtrsim \ell(R)^d.$ By \eqref{ee:lowerreg1}, using the fact the $c_0B_R$ are disjoint, we have
\begin{align*}
I \lesssim \sum_{\substack{R \in S^{F}_\mathscr{G} \\ R \cap 2C_0B_{Q(S)} \not=\emptyset}} \mathscr{H}^d(c_0B_R \cap \Sigma_{S}) \leq \mathscr{H}^d(B_{Q(S)} \cap \Sigma_{S}) \lesssim \ell(Q(S))^d.
\end{align*}
Hence,
\begin{align}
&\sum_i\sum_{Q \in S_i} \left( \frac{1}{\ell(Q)^d} \int_{F \cap 2C_0B_Q} \left( \frac{\text{dist}(x,E)}{\ell(Q)} \right)^p \, d\mathscr{H}_\infty^d(x) \right)^\frac{2}{p}\ell(Q)^d \\
&\hspace{4em} \lesssim \sum_{S \in \mathscr{S}} \ell(Q(S))^d \overset{\eqref{blah}}{\lesssim} \ell(Q_0)^d +\sum_{Q \in \mathscr{D}^E(Q_0)} \beta^{d,1}_E(MB_Q)^2\ell(Q)^d
\end{align}

\end{proof}

This takes care of \eqref{e:var1}. Now, we will now control the sum over cubes in $\mathscr{D}^F.$ For $k \geq 0,$ let 
$$\mathscr{B}_k^E = \bigcup_{Q \in \mathscr{D}_k^E(Q_0)}MB_Q.$$

\begin{lem}
Define
\begin{align}
\emph{Top}_F = \{Q \in \mathscr{D}^F(Q_0^F) : Q \ \emph{is max such that} \ (C_0+M)\ell(Q) < \emph{dist}(x_Q,E)\}.
\end{align}
Let $k \in \N$ and $Q \in \emph{Top}_F \cap \mathscr{D}_k^F.$ Then 
\begin{align}\label{e:top1}
C_0B_Q \cap \mathscr{B}_l^E = \emptyset \quad \text{for all} \ l \geq k
\end{align}
and
\begin{align}\label{e:top2}
C_0B_Q \subseteq 3\mathscr{B}^E_l \quad \text{for all} \ 0 \leq l \leq k-1
\end{align}
\end{lem}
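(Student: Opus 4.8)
The plan is to unwind the definitions carefully. Write $Q \in \text{Top}_F \cap \mathscr{D}_k^F$, so $k(Q) = k$ and $(C_0+M)\ell(Q) < \dist(x_Q,E)$; recall $\ell(Q) = 5\rho^k$ (Lemma \ref{cubes}) and that $\mathscr{D}^E$ and $\mathscr{D}^F$ are built from the same separated nets, so for each $l$ and each $P \in \mathscr{D}^E_l(Q_0)$ we have $x_P \in X_l^E$ and $\ell(P) = 5\rho^l$. The ball $MB_P$ has radius $M\ell(P) = 5M\rho^l$.

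First I would prove \eqref{e:top1}. Fix $l \geq k$ and suppose for contradiction that $C_0B_Q \cap MB_P \neq \emptyset$ for some $P \in \mathscr{D}_l^E(Q_0)$. Since $x_P \in E$, this gives a point $y \in E$ with
\[
\dist(x_Q, E) \leq |x_Q - x_P| \leq r_{C_0B_Q} + r_{MB_P} = C_0\ell(Q) + M\ell(P).
\]
Because $l \geq k$ we have $\ell(P) = 5\rho^l \leq 5\rho^k = \ell(Q)$, so the right-hand side is at most $(C_0 + M)\ell(Q)$, contradicting the defining inequality for $Q \in \text{Top}_F$. Hence $C_0B_Q \cap MB_P = \emptyset$ for every $P \in \mathscr{D}_l^E(Q_0)$, i.e. $C_0B_Q \cap \mathscr{B}_l^E = \emptyset$.

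Next I would prove \eqref{e:top2}. Fix $0 \leq l \leq k-1$. The key point is that since $Q$ is \emph{maximal} in $\text{Top}_F$, its parent $Q^{(1)} \in \mathscr{D}^F_{k-1}(Q_0^F)$ satisfies $(C_0+M)\ell(Q^{(1)}) \geq \dist(x_{Q^{(1)}}, E)$ — unless $Q = Q_0^F$ itself, a case I will treat at the end. Iterating, the ancestor $Q^{(k-l)} \in \mathscr{D}^F_l$ with $\ell(Q^{(k-l)}) = 5\rho^l$ has $\dist(x_{Q^{(k-l)}},E) \leq (C_0+M)\ell(Q^{(k-l)})$; more simply, I will just use that the ancestor $R := Q^{(1)}$ at level $k-1$ has $\dist(x_R,E) \leq (C_0+M)\ell(R)$ and pass down, but cleanest is to pick a nearest point of $E$ directly. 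Choose $z \in E$ with $\dist(x_{Q^{(k-l)}},E) = |x_{Q^{(k-l)}} - z|$; since $\{X_l^E\}$ is a maximal $\rho^l$-separated net in $E$, there is $x_P \in X_l^E$ with $|z - x_P| \leq \rho^l$, and $P$ can be taken in $\mathscr{D}_l^E(Q_0)$ provided $z \in Q_0$ (which holds as $|z - x_{Q^{(k-l)}}|$ is small and $Q^{(k-l)} \subseteq Q_0^F$; I will verify $z$ stays inside $Q_0$ using the inclusions $Q^{(k-l)} \subseteq B(x_{Q^{(k-l)}},\ell(Q^{(k-l)})) \subseteq Q_0 \subseteq B(x_{Q_0},\ell(Q_0))$ together with the bound $|x_{Q^{(k-l)}} - z| \leq (C_0+M)\ell(Q^{(k-l)})$ — for this to force $z \in Q_0$ one needs a slightly more careful argument, possibly replacing $Q_0$ by a fixed dilate, which is the one delicate spot). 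Then for any $w \in C_0B_Q$,
\[
|w - x_P| \leq |w - x_Q| + |x_Q - x_{Q^{(k-l)}}| + |x_{Q^{(k-l)}} - z| + |z - x_P| \leq C_0\ell(Q) + \ell(Q^{(k-l)}) + (C_0+M)\ell(Q^{(k-l)}) + \rho^l.
\]
Since $\ell(Q) \leq \rho\,\ell(Q^{(k-l)}) \leq \ell(Q^{(k-l)})$ and $\rho^l = \tfrac{1}{5}\ell(Q^{(k-l)})$, the right-hand side is bounded by a fixed multiple of $\ell(Q^{(k-l)}) = M\ell(P) \cdot \tfrac{1}{M}$; choosing $M$ large enough (as the hypotheses of Theorem \ref{Thm4} permit) this multiple is at most $3M\ell(P) = r_{3MB_P}$, giving $w \in 3MB_P \subseteq 3\mathscr{B}_l^E$. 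Finally, if $Q = Q_0^F$ then $k = 0$ and there is no $l$ with $0 \leq l \leq k-1$, so \eqref{e:top2} is vacuous.

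The main obstacle I anticipate is the bookkeeping in \eqref{e:top2}: ensuring that the point $x_P \in X_l^E$ produced from a near-point of $E$ actually lies in $\mathscr{D}_l^E(Q_0)$ rather than in a sibling subtree, i.e. controlling that the relevant piece of $E$ stays within $Q_0$ (or within a controlled dilate of it). This is where the precise constants $C_0$, $M$, $\rho$ and the maximality of $Q$ in $\text{Top}_F$ must be combined, and it is likely the reason the factor $3$ (rather than $1$) appears on the right of \eqref{e:top2}. Everything else is a direct triangle-inequality computation using $\ell(P) = 5\rho^{k(P)}$ and $r_{MB_P} = M\ell(P)$.
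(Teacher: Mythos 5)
Your argument is correct and follows essentially the same route as the paper's: \eqref{e:top1} is precisely the triangle-inequality observation the paper labels ``immediate'', and for \eqref{e:top2} both proofs use maximality in $\text{Top}_F$ to bound the distance from an ancestor's center to $E$, locate a nearby $E$-cube at the desired level, and absorb the triangle-inequality constants into the factor $3M$ for $M$ large. The only cosmetic difference is that the paper works solely at level $k-1$ (bounding $\dist(x_Q,E)$ via the parent $Q^{(1)}$, producing $Q' \in \mathscr{D}^E_{k-1}$ with $C_0B_Q \subseteq 3MB_{Q'}$, and then passing to coarser levels via $3MB_{Q'} \subseteq 3MB_{(Q')^{(j)}}$), whereas you go directly to the ancestor $Q^{(k-l)}$ at level $l$, using that \emph{no} strict ancestor of $Q$ satisfies the defining inequality of $\text{Top}_F$. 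The one ``delicate spot'' you flag, namely that the cube of $\mathscr{D}^E_l$ obtained from a near-point of $E$ must actually belong to $\mathscr{D}^E_l(Q_0)$, is glossed over in exactly the same way in the paper's own proof, which simply writes ``let $Q'$ be the cube $\dots$ such that $z_Q \in Q'$'' without checking $z_Q \in Q_0$; you are being more careful than the source here, not missing a step it supplies.
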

\begin{proof}
Let $Q \in \text{Top}_F \cap \mathscr{D}_k^F$. By definition, \eqref{e:top1} is immediate so let us prove \eqref{e:top2}.  By maximality, we have
\begin{align}\label{e:above}
\text{dist}(x_Q,E) \leq |x_Q - x_{Q^{(1)}}| + \text{dist}(x_{Q^{(1)}},E) \leq (1 + C_0+M)\rho^{-1}\ell(Q).
\end{align}
Let $z_Q$ be the point in $E$ closest to $x_Q$ and let $Q'$ be the cube in $\mathscr{B}^E_{k-1}$ such that $z_Q \in Q'.$ Then, for $y \in C_0B_Q,$ we have 
\begin{align}
|y-x_{Q'}| &\leq |y - x_Q| +  |x_Q - z_Q| + |z_Q - x_{Q'}| \\
&\stackrel{\eqref{e:above}}{\leq}  C_0\ell(Q) + (1+C_0 +M)\rho^{-1}\ell(Q) + \ell(Q') \\
&\leq 3M\rho^{-1}\ell(Q) = 3M\ell(Q')
\end{align}
for $M$ large enough. This implies $y \in 3MB_{Q'}.$ Since $y$ is arbitrary point in $C_0B_Q,$ we have
\[C_0B_Q \subseteq 3MB_{Q'} \subseteq 3\mathscr{B}^E_{k-1}.\]
Clearly, this also implies that $C_0B_Q \subseteq 3\mathscr{B}^E_{l}$ for all $0 \leq l \leq k-1.$
\end{proof}

\begin{lem}\label{l:topoverlap}
The collection of balls $\{B_Q\}_{Q \in \emph{Top}_F}$ have bounded overlap with constant dependent on $n$. 
\end{lem}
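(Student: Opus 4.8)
The plan is the usual two-step argument: first show that any two cubes $Q,Q' \in \text{Top}_F$ with $B_Q \cap B_{Q'} \neq \emptyset$ have comparable side lengths, and then run a volume-packing estimate scale by scale.

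\emph{Step 1 (comparability of scales).} The key is a two-sided bound on $\dist(x_Q,E)$ for $Q \in \text{Top}_F$. If $Q_0^F$ itself satisfies $(C_0+M)\ell(Q_0^F) < \dist(x_{Q_0^F},E)$, then by maximality $\text{Top}_F = \{Q_0^F\}$ and the lemma is trivial, so I may assume every $Q \in \text{Top}_F$ is a proper descendant of $Q_0^F$; its parent $Q^{(1)} \in \mathscr{D}^F(Q_0^F)$ then fails the defining inequality by maximality, and hence
\begin{align}
(C_0+M)\ell(Q) < \dist(x_Q,E) \leq |x_Q - x_{Q^{(1)}}| + \dist(x_{Q^{(1)}},E) \leq (1+C_0+M)\rho^{-1}\ell(Q).
\end{align}
If now $Q,Q' \in \text{Top}_F$ with $B_Q \cap B_{Q'} \neq \emptyset$ and $\ell(Q') \leq \ell(Q)$, then $|x_Q - x_{Q'}| < \ell(Q) + \ell(Q') \leq 2\ell(Q)$, so combining the triangle inequality with the two displayed bounds gives $(C_0+M)\ell(Q) < 2\ell(Q) + (1+C_0+M)\rho^{-1}\ell(Q')$. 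Since $M$ is fixed and large (in particular $M > 2$), this forces $\ell(Q) \sim \ell(Q')$ with a comparability constant depending only on $M$, $C_0$, and $\rho$.

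\emph{Step 2 (packing).} Fix $x \in \R^n$ and put $\mathcal{Q}_x = \{Q \in \text{Top}_F : x \in B_Q\}$. Any two cubes in $\mathcal{Q}_x$ contain $x$ in their balls, hence have comparable side lengths by Step 1, so there are at most $N_0 = N_0(M,C_0,\rho)$ distinct values of $\ell(Q)$ among $Q \in \mathcal{Q}_x$. For each fixed generation $k$ the cubes of $\mathscr{D}^F_k$ are pairwise disjoint, so $\{c_0B_Q : Q \in \mathcal{Q}_x,\ k(Q) = k\}$ is a disjoint family of balls of radius $c_0\ell(Q)$; moreover $x \in B_Q$ gives $c_0B_Q \subseteq B(x,2\ell(Q))$. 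A standard volume argument in $\R^n$ bounds the cardinality of this family by $(2/c_0)^n$, and summing over the at most $N_0$ relevant scales yields $\#\mathcal{Q}_x \lesssim_n 1$, as required.

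The argument is entirely routine and I do not anticipate a genuine obstacle; the step requiring the most care is the scale-comparability in Step~1, together with the degenerate case $Q = Q_0^F$, and one should also check that the $M$-, $C_0$-, and $\rho$-dependent constants are harmless — which they are, since all three parameters are fixed before this lemma is invoked.
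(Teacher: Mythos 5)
Your proof is correct and follows essentially the same route as the paper's: establish a two-sided bound on $\dist(x_Q,E)$ for $Q\in\text{Top}_F$ (lower bound from the defining inequality, upper bound from maximality via the parent), use it to show scale-comparability of any two overlapping $B_Q,B_{Q'}$, and then finish with a per-generation volume-packing estimate. Your explicit handling of the degenerate case $\text{Top}_F=\{Q_0^F\}$ is a small but sensible clean-up that the paper leaves implicit.
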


\begin{proof}
Let $x \in \R^n$ and let
\[ \mathscr{Q}_x = \{Q \in \text{Top}_F : x \in B_Q \}. \] 
We first show that each cube in $\mathscr{Q}_x$ has comparable size. Let $Q,Q' \in \mathscr{Q}_x$ and assume without loss of generality that $\ell(Q) \leq \ell(Q').$ Since $Q' \in \text{Top}_F$ and $B_Q \cap B_{Q'} \not=\emptyset$ we have
\begin{align}
\ell(Q') &< (C_0 + M)^{-1} \dist(x_{Q'},E) \leq   (C_0 + M)^{-1} (|x_Q - x_{Q'}| + \dist(x_Q,E)) \\
&\leq (C_0 + M)^{-1}( 2\ell(Q') + \dist(x_Q,E) ). 
\end{align}
Taking $M$ large enough so that $2(C_0 +M)^{-1} \leq \tfrac{1}{2},$ we can rearrange the above equation to give
\begin{align}
\ell(Q') \leq \frac{2}{C_0 +M} \dist(x_Q,E) \stackrel{\eqref{e:above}}{\lesssim} \ell(Q)
\end{align}
which proves that
\begin{align}\label{e:samesize}
\ell(Q) \sim \ell(Q').
\end{align} 
By a standard volume argument, for each $k \in \N$ we have 
\begin{align}
\# ( \mathscr{Q}_x \cap \mathscr{D}_k ) \lesssim_{n} 1,
\end{align}
which when combined with \eqref{e:samesize} finishes the proof of the lemma. 
\end{proof}

\begin{lem}\label{Top}
\begin{align}
\sum_{Q \in \emph{Top}_F} \sum_{R \subseteq Q}\check\beta_F^{d,p}(C_0B_R)^2\ell(R)^d \lesssim \ell(Q_0^E)^d+ \sum_{Q \in \mathscr{D}^E(Q_0)} \beta_E^{d,p}(MB_Q)^2\ell(Q)^d. 
\end{align}
\end{lem}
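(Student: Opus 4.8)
The plan is to reduce, via the BAUP form of the Azzam--Villa theorem (Theorem \ref{AV}), to showing that far from $E$ the set $F$ is uniformly well approximated by unions of $d$-planes, so that the BAUP count of $F$ below any cube of $\text{Top}_F$ vanishes; everything else is then absorbed into $\mathscr H^d(F)$, which \eqref{FMeasure} already bounds by the $\beta_E$-sum. To begin, we may assume the right hand side is finite, so that \eqref{finite} holds, Lemma \ref{TildeF} applies, and in particular $F$ is rectifiable with $\mathscr H^d(F)<\infty$. By the usual finite generation manoeuvre (as in Lemma \ref{l:Suffices1}), together with the monotonicity and comparability estimates for $\check\beta_F^{d,p}$ on the lower content regular set $F$ (Lemma \ref{l:cont}, Corollary \ref{CheckComp}), it is enough to bound $\sum_{Q\in\text{Top}_F}\sum_{R\subseteq Q}\check\beta_F^{d,p}(3B_R)^2\ell(R)^d$, the inner sum over $R\in\mathscr D^F$. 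I would also record the elementary fact that for $Q\in\text{Top}_F$ and $R\in\mathscr D^F$ with $R\subseteq Q$ one has $\dist(x_R,E)\ge (C_0+M-1)\ell(R)$, so that a fixed dilate of $C_0B_R$ is disjoint from $E$.

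Applying Theorem \ref{AV} to $F$ with its cubes $\mathscr D^F$, for $\e'$ small enough (depending on $C_0$ and $c_1$) and each $Q\in\text{Top}_F$ we get
\[
\sum_{\substack{R\in\mathscr D^F\\ R\subseteq Q}}\check\beta_F^{d,p}(3B_R)^2\ell(R)^d\;\lesssim\;\mathscr H^d(Q)+\text{BAUP}(Q,C_0,\e').
\]
Since the cubes of $\text{Top}_F$ are pairwise disjoint and contained in $Q_0^F\subseteq F$, summing over $\text{Top}_F$ and using $\mathscr H^d(F)<\infty$ yields $\sum_{Q\in\text{Top}_F}\mathscr H^d(Q)\le\mathscr H^d(F)$, which by \eqref{FMeasure} and Lemma \ref{l:p} (as $\beta_E^{d,1}\lesssim\beta_E^{d,p}$) is $\lesssim \ell(Q_0^E)^d+\sum_{Q\in\mathscr D^E(Q_0)}\beta_E^{d,p}(MB_Q)^2\ell(Q)^d$. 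Hence the lemma reduces to proving $\text{BAUP}(Q,C_0,\e')=0$ for every $Q\in\text{Top}_F$, after which one restores $C_0$ in place of $3$ at the cost of a dimensional constant, exactly as in Lemma \ref{l:Suffices1}.

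For the remaining point I would prove the structural claim: there is $\e_1$ with $\e_1\to 0$ as the construction parameter $\e\to 0$ (in fact $\e_1\lesssim\e^{1/(d+1)}$) such that whenever $R\in\mathscr D^F$ and a suitable dilate of $C_0B_R$ misses $E$, there is a union $U$ of $d$-planes with $d_{C_0B_R}(F,U)\le\e_1$; choosing $\e$ with $\e_1<\e'$ then forces every such $R$ out of $\text{BAUP}(C_0,\e')$, and since every $R\subseteq Q$ with $Q\in\text{Top}_F$ has a dilate of $C_0B_R$ missing $E$, we obtain $\text{BAUP}(Q,C_0,\e')=0$. To prove the claim, recall $F=E'\cup\bigcup_{S\in\mathscr S}\Sigma_S$ with $E'\subseteq E$, so away from $E$ the set $F$ is the union of the pieces $\Sigma_S=\Sigma_S'\cap MB_{Q(S)}$ meeting $C_0B_R$; each such $S$ has $\ell(Q(S))\gtrsim\ell(R)$ (since $\Sigma_S\subseteq MB_{Q(S)}$, $x_{Q(S)}\in E$, and $C_0B_R$ is far from $E$), so $C_0B_R$ lies at a scale inside the $C\e$-Reifenberg flat range of the bi-Lipschitz surface $\Sigma_S'$ (Theorem \ref{DT}, Theorem \ref{LipDT}). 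Thus each $\Sigma_S'$ is $C\e$-close, in both directions of Hausdorff distance, to a $d$-plane $P_S$ throughout $C_0B_R$; taking $U=\bigcup_SP_S$ settles the inclusion ``$F\cap C_0B_R$ is close to $U$'' at once. The one delicate point is the reverse inclusion near the ``seams'' $\partial(MB_{Q(S)})$: here one uses that, for $M$ large, all the nets building $\Sigma_S'$ lie in a fixed dilate of $B_{Q(S)}$, so $\Sigma_S'$ equals $P_{Q(S)}$ outside $\tfrac M2 B_{Q(S)}$, while the portion of $P_{Q(S)}$ lying in $C_0B_R$ but outside $MB_{Q(S)}$ is shadowed by the parent surface $\Sigma_{\hat S}$ (for $Q(S)\in\text{Child}(R_P)$, $R_P\in\min(\hat S)$), which by the gluing estimate \eqref{Closeness1} and the angle control of Lemma \ref{AngControl} stays $O(\e^{1/(d+1)}\ell(R))$-close to $P_{Q(S)}$ in $C_0B_R$. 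The step requiring the most care is exactly this seam analysis, i.e.\ the reverse Hausdorff bound $\sup_{w\in U\cap C_0B_R}\dist(w,F)\lesssim\e_1\ell(R)$ near the boundaries of the truncation balls $MB_{Q(S)}$, especially in the borderline case where $C_0B_R$ is far smaller than $\ell(Q(S))$ yet straddles such a boundary.
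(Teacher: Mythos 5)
Your overall architecture agrees with the paper's: use Theorem \ref{AV} with the BAUP count to bound the inner sum by $\mathscr{H}^d(\cdot)$, use disjointness of $\text{Top}_F$ to sum into $\mathscr{H}^d(F)$, and close with \eqref{FMeasure} and Lemma \ref{l:p}. The gap is in the step you flag yourself — showing that every sub-cube $R$ of a $\text{Top}_F$-cube is out of $\text{BAUP}(C_0,\e')$ \emph{for $F$ itself}. The reverse Hausdorff inclusion near $\partial(MB_{Q(S)})$ does not hold at scale $\ell(R)$: the closeness of $\Sigma_{\hat S}$ to $\Sigma_S$ (or to $P_{Q(S)}$) near that boundary is controlled by the gluing/angle estimates only at scale $\ell(Q(S))$ (equivalently $\ell(R_P)$), giving an offset of size $O(\e^{1/(d+1)}\ell(Q(S)))$ between the plane for the truncated $\Sigma_S$ and the local plane for $\Sigma_{\hat S}$. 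When $\ell(R)\ll\ell(Q(S))$ — a case you cannot exclude, since $Q(S)\in\mathscr D^E_l$ with $l\le k-1$ while $R\in\mathscr D^F_m$ with $m\ge k$ can be arbitrarily deep — this offset dominates $\e'\ell(R)$ and the point of $P_{Q(S)}\cap C_0B_R$ just outside $MB_{Q(S)}$ is genuinely far from $F$, so the BAUP condition fails for $F$. Your proposed bound ``$\Sigma_{\hat S}$ stays $O(\e^{1/(d+1)}\ell(R))$-close to $P_{Q(S)}$'' is therefore not justified and in general false.

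The paper sidesteps the seam entirely rather than analysing it. It \emph{extends} the truncated surfaces, setting $\tilde\Sigma_S=\Sigma_S'\cap 6MB_{Q(S)}$ and $F_Q=E'\cup\bigcup_{S\in\mathscr S_Q}\tilde\Sigma_S\cup\bigcup_{S\notin\mathscr S_Q}\Sigma_S$. Because $C_0B_Q\subseteq 3MB_{Q(S)}$ by \eqref{e:top2}, every $C_0B_{\tilde R}$ sits well inside $6MB_{Q(S)}$, so there are no seams in the balls one cares about, and $d_{C_0B_{\tilde R}}(F_Q,U_{\tilde R})\lesssim\e$ with $U_{\tilde R}$ a union of $d$-planes; hence $\text{BAUP}(\tilde Q,C_0,C\e)=0$ for $F_Q$. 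Then Theorem \ref{AV} is applied to $F_Q$, and the monotonicity $\check\beta_F^{d,p}(B)\le\check\beta_{F_Q}^{d,p}(B)$ (coming from $F\subseteq F_Q$ and the construction of $\mathscr D^{F_Q}$ by completing the nets of $\mathscr D^F$) transfers the estimate back to $F$. That substitution of $F$ by the enlarged $F_Q$ is precisely the missing idea: one never needs, and indeed should not attempt, to show BAUP is small for $F$ itself.
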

\begin{proof}

Let $Q \in \text{Top}_F \cap \mathscr{D}^F_k$ and let $\mathscr{S}_Q \subseteq \mathscr{S}$ be the collection of stopping time regions such that $C_0B_Q \cap \Sigma_{Q(S)} \not= \emptyset.$ Since $C_0B_Q \cap \mathscr{B}^E_l = \emptyset$ for all $l \geq k,$ by \eqref{e:top1}, it must be that $Q(S) \in \mathscr{D}^E_l$ for some $0 \leq l \leq k-1.$ By \eqref{e:top2} it follows that 
\[C_0B_Q \subseteq 3MB_{Q(S)}\]
for each $S \in \mathscr{S}_Q.$ We wish to use that fact that in each of these balls, $F$ is well approximated by a union of planes. At the minute this is not quite the case. It could be that $C_0B_Q$ intersects $MB_{Q(S)}$ at the boundary for some $S \in \mathscr{S}_Q$ (see for example Figure \ref{BAUP1}). As such we must extend each of the surfaces $\Sigma_S$.  

\begin{figure}[h]
  \centering
  \includegraphics[scale = 0.42]{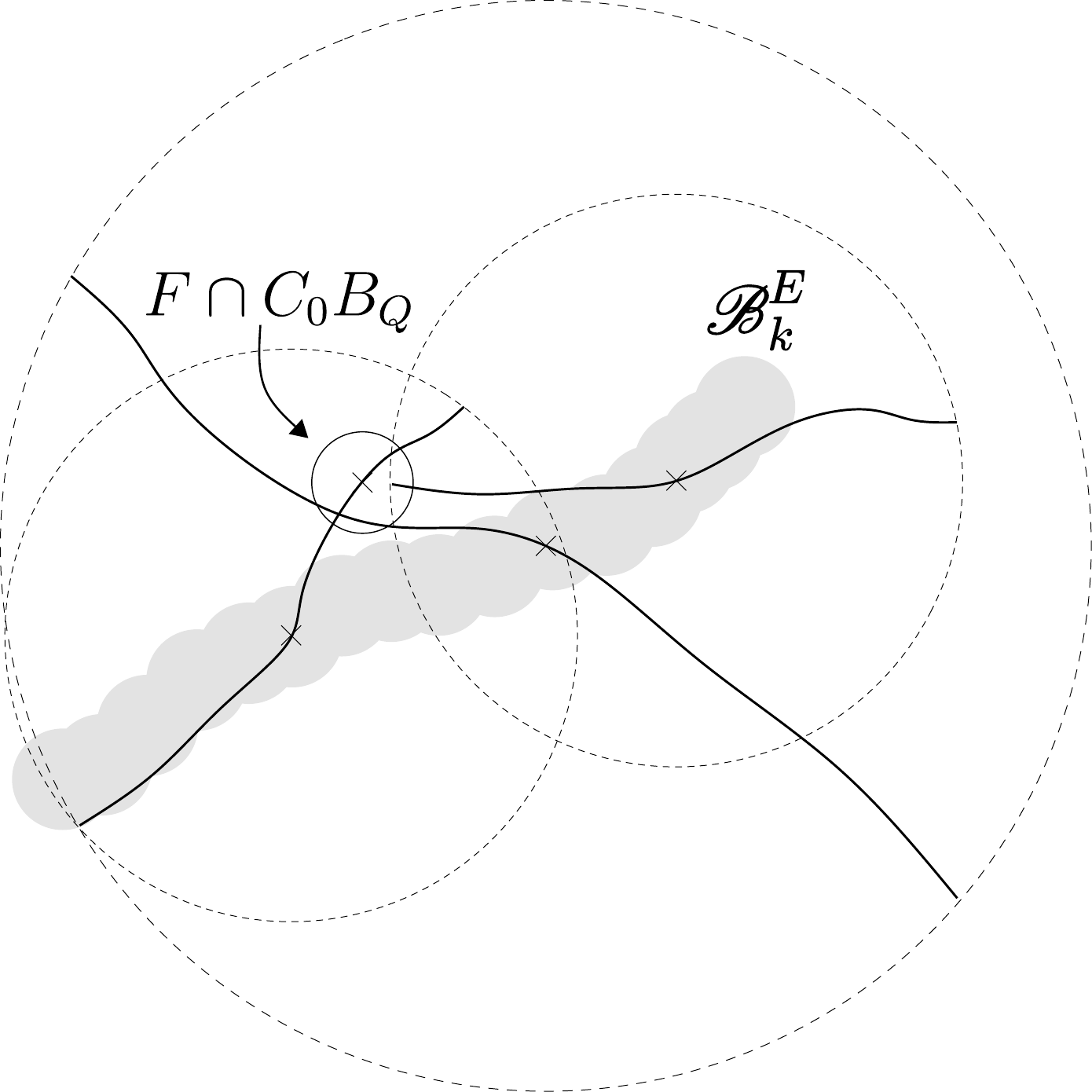}
\caption{$F$ is not necessarily well approximated by a union of planes.}\label{BAUP1}
\end{figure}

Recall from Lemma \ref{Sigma1} that $\Sigma_S'$ is the unbounded bi-Lipschitz surface from Theorem \ref{LipDT}. For each $S \in \mathscr{S},$ let $\tilde{\Sigma}_S$ be the surface obtained by restricting $\Sigma_S'$ to $6MB_{Q(S)},$ i.e. 
\begin{align}
\tilde{\Sigma}_S = \Sigma_S^\prime \cap 6MB_{Q(S)}.
\end{align}
Compare this to how we define $\Sigma_S$ in \eqref{e:sigmaM}. This ensures that each $\tilde{\Sigma}_S$ is $(C\e,d)$-Reifenberg flat in $3MB_{Q(S)}.$ Clearly
\begin{align}
F \cap C_0B_Q \subseteq E' \cup \left( \bigcup_{S \in \mathscr{S}_Q} \tilde{\Sigma}_{S} \right) \cap C_0B_Q.
\end{align}
Define 
\[ F_Q = E' \cup \left( \bigcup_{S \in \mathscr{S}_Q} \tilde{\Sigma}_S \right) \cup \left( \bigcup_{S \in \mathscr{S}\setminus\mathscr{S}_Q} \Sigma_S \right). \] 
We can show that $F_Q$ is lower regular in exactly the same way as we did for $F$ (we shall omit the details). 

It is easy to see that $F \subseteq F_Q.$ For each $k \geq 0$ let $X_k^{F_Q}$ be the completion of $X_k^F$ to a maximal $\rho^k$-separated net for $F_Q.$ Let $\mathscr{D}^{F_Q}$ denote the cubes for $F_Q$ from Theorem \ref{cubes} with respect to the sequence $X_k^{F_Q}.$ In this way, for each $R \in \mathscr{D}^F$ there exists a corresponding cube $\tilde{R} \in \mathscr{D}^{F_Q}$ such that $x_R = x_{\tilde{R}}$ and $\ell(R) = \ell(\tilde{R}).$ It is clear then, that 
\[C_0B_R = C_0B_{\tilde{R}}. \] 
Now, let $\tilde{Q}$ be the cube described above for $Q$ and let $\tilde{R} \in \mathscr{D}^{F_Q}$ be such that $\tilde{R} \subseteq \tilde{Q}$. We aim to show that $\tilde{R}$ is well approximated by a union of planes. If $S \in \mathscr{S}_Q$ and $\tilde{\Sigma}_{S} \cap C_0B_{\tilde{R}} \not= \emptyset,$ let $x_S \in \tilde{\Sigma}_S \cap C_0B_{\tilde{R}}$ be a point of intersection. Since 
\[C_0B_{\tilde{R}} \subseteq B(x_S,3C_0\ell(\tilde{R})) \subseteq 3MB_{Q(S)}\]
and $\tilde{\Sigma}_S$ is $(C\e,d)$-Reifenberg flat in $3MB_{Q(S)}$, we can find a plane $L_S$ through $x_S$ such that 
\begin{align}
d_{C_0B_{\tilde{R}}}(\tilde{\Sigma}_S,L_S) \leq C\e. 
\end{align}
Let 
\begin{align}
 U_{\tilde{R}}  \coloneqq \bigcup_{\substack{S \in \mathscr{S}_Q \\ C_0B_{\tilde{R}} \cap \tilde{\Sigma}_{S} \not= \emptyset}} L_S.
\end{align} 
By construction we have
\begin{align}
F_Q \cap C_0B_{\tilde{R}}= \left( E' \cap C_0B_{\tilde{R}}  \right) \cup \bigcup_{\substack{S \in \mathscr{S}_Q \\ C_0B_{\tilde{R}} \cap \tilde{\Sigma}_{S} \not= \emptyset}} \tilde{\Sigma}_S.
\end{align}
Since each point in $E' \cap C_0B_{\tilde{R}}$ is arbitrarily close to $\tilde{\Sigma}_S$ for some $S \in \mathscr{S}_Q$, it follows that 
\begin{align}
d_{C_0B_{\tilde{R}}}(F_Q,U_{\tilde{R}}) \leq C\e,
\end{align}
i.e. $\tilde{R} \not\in \text{BAUP}(C_0,C\e)$.
See the below Figure \ref{BAUP2}. 
\begin{figure}[h]
  \centering
  \includegraphics[scale = 0.66]{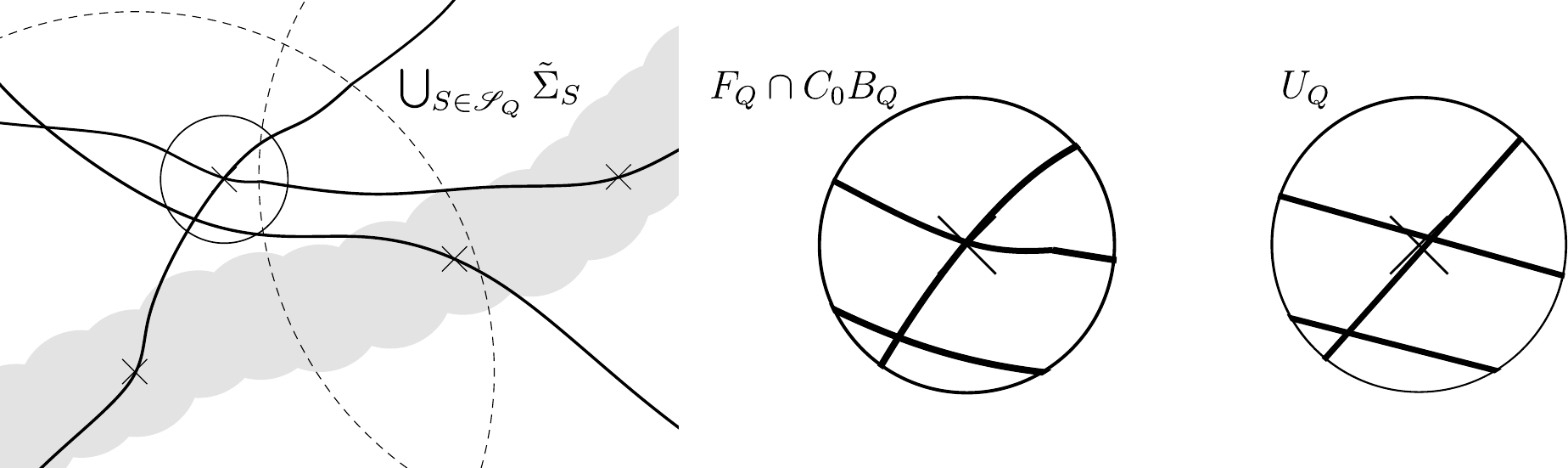}
\caption{An illustration of the above argument for $R =Q$. Compare the extended surface shown above to the original surface shown in Figure \ref{BAUP1}.}\label{BAUP2}
\end{figure}
Since $\tilde{R}$ and $Q \in \text{Top}_F$ we arbitrary, we have $\text{BAUP}(\tilde{Q},C_0,C\e) = 0$ for all $Q \in \text{Top}_F.$ Using that fact that $F \subseteq F_Q$, the correspondence between cubes in $\mathscr{D}^F$ and $\mathscr{D}^{F_Q}$ and Theorem \ref{AV}, we get
\begin{align}
\sum_{R \subseteq Q}\check\beta_F^{d,p}(C_0B_R)^2\ell(R)^d \leq \sum_{\tilde{R} \subseteq \tilde{Q}}\check\beta_{F_Q}^{d,p}(C_0B_{\tilde{R}})^2\ell(\tilde{R})^d \stackrel{\eqref{e:AV}}{\lesssim} \mathscr{H}^d(\tilde{Q}).
\end{align}
Since by Lemma \ref{l:topoverlap} the collection of balls $\{B_Q\}_{Q \in \text{Top}_F}$ have bounded overlap, the same is true for the cubes $\{\tilde{Q}\}_{Q \in \text{Top}_F}.$ If we define
\begin{align*}
\tilde{F} = E^\prime \cup \bigcup_{S \in \mathscr{S}} \tilde\Sigma_S,
\end{align*}
then $\tilde{Q} \subseteq \tilde{F}$ for all $Q \in \text{Top}_F$ which gives
\begin{align*}
\sum_{Q \in \text{Top}_F}\sum_{R \subseteq Q}\check\beta_F^{d,p}(C_0B_Q)^2\ell(Q)^d &\lesssim \sum_{Q \in \text{Top}_F} \mathscr{H}^d(\tilde{Q}) \lesssim \mathscr{H}^d(\tilde{F})  \\
&\lesssim \sum_{S \in \mathscr{S}} \ell(Q(S))^d \\
&\lesssim \ell(Q_0)^d + \sum_{Q \in \mathscr{D}^E(Q_0)} \beta^{d,1}_E(MB_Q)^2\ell(Q)^d,
\end{align*}
where the last inequality follows from \eqref{blah}.

\end{proof}

\begin{lem}\label{l:Up} Let $\emph{Up}_F$ be the collection of cubes in $\mathscr{D}^F$ which are not properly contained in any cube from $\emph{Top}_F$. Then 
\[ \sum_{Q \in \emph{Up}_F} \check\beta_F^{d,p}(C_0B_Q)^2\ell(Q)^d \lesssim \ell(Q_0)^d + \sum_{Q \in \mathscr{D}^E(Q_0)} \beta^{d,1}_E(2MB_Q)^2\ell(Q)^d.\]
\end{lem}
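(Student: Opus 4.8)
The plan is to exploit the $\beta$-error estimate of Lemma \ref{betaest}, but now with the roles of $E$ and $F$ interchanged: here it is $F$, not $E$, that is $(c_1,d)$-lower content regular, so Lemma \ref{betaest} lets us control $\check\beta_F^{d,p}$ on balls centred on $F$ by the $\beta$-number of the \emph{nearby} set $E$ plus an error term measuring $\dist(F,E)$. First I would observe that every cube of $\text{Up}_F$ sits close to $E$. If $Q\in\text{Up}_F\setminus\text{Top}_F$, then $Q$ cannot satisfy the inequality $(C_0+M)\ell(Q)<\dist(x_Q,E)$ defining $\text{Top}_F$, since otherwise the maximal cube through $Q$ with this property would lie in $\text{Top}_F$ and contain $Q$, contradicting $Q\in\text{Up}_F$; hence $\dist(x_Q,E)\le(C_0+M)\ell(Q)$. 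If $Q\in\text{Top}_F$, applying \eqref{e:above} to $Q^{(1)}$ (which is not in $\text{Top}_F$) gives $\dist(x_Q,E)\le(1+C_0+M)\rho^{-1}\ell(Q)$. Either way $\dist(x_Q,E)\le C_M\ell(Q)$ for a constant $C_M=C_M(C_0,\rho)\sim M$. I would then pick $z_Q\in E$ nearest to $x_Q$ and let $R_Q\in\mathscr{D}^E$ be the cube with $k(R_Q)=k(Q)$ containing $z_Q$, so $\ell(R_Q)=\ell(Q)$ and $x_Q\in B(x_{R_Q},(C_M+1)\ell(Q))$.

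Next I would apply Lemma \ref{betaest} with the lower content regular set taken to be $F$ and the nearby set taken to be $E$ (legitimate since $F$ is $(c_1,d)$-lower content regular and $c_1\ge c_1$), to the balls $B^1=B(x_Q,2C_M\ell(Q))$ (centred on $F$, containing $C_0B_Q$) and $B^2=B(z_Q,2C_M\ell(Q))$ (centred on $E$, with $B^1\subseteq 2B^2$); combined with Corollary \ref{CheckComp} and Lemma \ref{l:cont} (to pass from $C_0B_Q$ to $B^1$, and to replace the ball $2B^2$ centred at $z_Q$ by a concentric-with-$R_Q$ ball of comparable radius $\Lambda M\ell(Q)$) this would give, for a constant $\Lambda=\Lambda(C_0,\rho)$,
\begin{align*}
\check\beta_F^{d,p}(C_0B_Q)^2\lesssim_{C_0,M,n,p}\beta_E^{d,p}(\Lambda MB_{R_Q})^2+\mathcal{E}_Q^2,\qquad \mathcal{E}_Q:=\left(\frac{1}{\ell(Q)^d}\int_{F\cap\Lambda MB_Q}\left(\frac{\dist(y,E)}{\ell(Q)}\right)^p\,d\mathscr{H}_\infty^d(y)\right)^{1/p}.
\end{align*}
Summing the first term over $\text{Up}_F$: since $\ell(R_Q)=\ell(Q)$ and, for a fixed $R$, the centres $x_Q$ with $R_Q=R$ lie in $B(x_R,(C_M+1)\ell(R))$ and are $\tfrac15\ell(R)$-separated, the map $Q\mapsto R_Q$ is at most $C(M,n)$-to-one; after discarding the $\lesssim_{M,n}1$ per scale cubes of $\text{Up}_F$ near the largest scales or near $\partial Q_0$ (their $\ell(\cdot)^d$-sum being $\lesssim_{M,n}\ell(Q_0)^d$, and handling $z_Q\notin Q_0$ by replacing $z_Q$ with a nearby point of $E\cap Q_0$) one may take $R_Q\in\mathscr{D}^E(Q_0)$, so that
\begin{align*}
\sum_{Q\in\text{Up}_F}\beta_E^{d,p}(\Lambda MB_{R_Q})^2\ell(Q)^d\lesssim_{M,n}\sum_{R\in\mathscr{D}^E(Q_0)}\beta_E^{d,p}(\Lambda MB_R)^2\ell(R)^d,
\end{align*}
and this last sum is $\lesssim\ell(Q_0)^d+\sum_{Q\in\mathscr{D}^E(Q_0)}\beta_E^{d,1}(2MB_Q)^2\ell(Q)^d$ by the same ball-enlarging and exponent comparison (Lemma \ref{l:cont} along bounded chains of ancestors, Lemma \ref{l:p}) used in reducing \eqref{e:Thm4} to \eqref{e:var} and in proving \eqref{e:var1}.

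The remaining term $\sum_{Q\in\text{Up}_F}\mathcal{E}_Q^2\ell(Q)^d$ I would bound exactly as the analogous quantity is bounded in the proof of \eqref{e:var1}. Assuming $p\ge2$ (Lemma \ref{JensenPhi}), split $\text{Up}_F=\mathscr{G}\cup\mathscr{B}$ according to whether $\dist(y,E)\le\delta\ell(Q)$ for all $y\in F\cap\Lambda MB_Q$ or not. To each $Q\in\mathscr{B}$ attach the patch $C_Q=F\cap B(y_Q,\delta\ell(Q)/2)$ at a witness $y_Q$; as in Lemma \ref{l:patches} these have bounded overlap in $F$, so $\sum_{\mathscr{B}}\mathcal{E}_Q^2\ell(Q)^d\le\sum_{\mathscr{B}}\ell(Q)^d\lesssim\sum_{\mathscr{B}}\mathscr{H}^d(C_Q)\lesssim\mathscr{H}^d(F)$, which is controlled by \eqref{FMeasure}. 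For $Q\in\mathscr{G}$, since $x_Q$ lies close to $E$ it lies close to some $\Sigma_S$, and after smoothing out the $\mathscr{D}^F$-cubes relative to $\mathscr{G}\cap S$ (parameter $\tau<\tau_0$) one argues, as in Claims 1--3 of that proof, that $\dist(\cdot,E)\lesssim\ell(R)$ on each smoothed cube $R$, that $\dist(x_R,\Sigma_S)\lesssim\e^{1/(d+1)}\ell(R)/\tau$ by \eqref{Closeness1}, and that each smoothed cube meets $\lesssim1$ cubes of $\mathscr{G}$ per generation by \eqref{ee:lowerreg1} and Lemma \ref{ENV}; Fubini and a geometric series then bound the $\mathscr{G}$-part by $\sum_S\sum_R\ell(R)^d$, which, since the disjoint balls $c_0B_R$ each carve out $\gtrsim\ell(R)^d$ of $\Sigma_S$ (again \eqref{ee:lowerreg1}), is $\lesssim\sum_S\mathscr{H}^d(\Sigma_S)\lesssim\sum_S\ell(Q(S))^d$, hence $\lesssim\ell(Q_0)^d+\sum_{Q\in\mathscr{D}^E(Q_0)}\beta_E^{d,1}(MB_Q)^2\ell(Q)^d$ by \eqref{blah}. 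Combining these estimates proves the lemma.

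The hard part is the last step, namely re-running the smoothing/stopping-cube machinery of the previous lemma in the present situation, where the cubes summed over are the $F$-cubes of $\text{Up}_F$ rather than $E$-cubes of a stopping region; the rest (Steps 1--2) is a short geometric observation together with routine reconciliation of ball radii and $L^p$--$L^1$ exponents to match the form $\beta_E^{d,1}(2MB_Q)$ demanded in the statement.
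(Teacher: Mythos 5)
Your high-level outline — reduce the sum over $\text{Up}_F$ to an $E$-indexed quantity, then close with the ingredients used for \eqref{e:var1} — is sound, but the route you take is substantially more laboured than the paper's, and one part of it contains a gap. The paper's proof of Lemma~\ref{l:Up} never touches Lemma~\ref{betaest} again: given $Q\in\text{Up}_F\cap\mathscr{D}^F_k$, one finds an $E$-cube $Q'\in\mathscr{D}^E_k$ of the \emph{same scale} with $C_0B_Q\subseteq 2MB_{Q'}$, so that $\check\beta_F^{d,p}(C_0B_Q)\lesssim\check\beta_F^{d,p}(2MB_{Q'})$ directly by Lemma~\ref{l:cont}. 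The map $Q\mapsto Q'$ is boundedly many-to-one (a volume argument), so the sum over $\text{Up}_F$ of $\check\beta_F^{d,p}(C_0B_Q)^2\ell(Q)^d$ is absorbed into $\sum_{Q'\in\mathscr{D}^E(Q_0)}\check\beta_F^{d,p}(2MB_{Q'})^2\ell(Q')^d$. After reducing the radius $2M$ back to $C_0$ along bounded chains of ancestors, \emph{this is exactly the quantity bounded in \eqref{e:var1}}, which was proved earlier precisely because it packages the hard error-term estimate once and for all. One line finishes: cite \eqref{e:var1}.

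Your approach re-invokes Lemma~\ref{betaest} (with the roles of $E$ and $F$ swapped), which re-introduces the distance-error term $\mathcal{E}_Q$, and you then propose to control $\sum_{Q\in\text{Up}_F}\mathcal{E}_Q^2\ell(Q)^d$ by re-running the $\mathscr{G}/\mathscr{B}$ split and the smoothed-cube argument. That is where the gap is. The original error-term argument is anchored to a single stopping region $S$ of $E$-cubes with its dedicated surface $\Sigma_S$: Claims~1--3 require a fixed $S$, the smoothed $F$-cubes $S^F_\mathscr{G}$ are defined relative to $S_\mathscr{G}\subseteq S$, \eqref{Closeness1} is applied to $\Sigma_S$, and the final geometric-series packing is against $\mathscr{H}^d(B_{Q(S)}\cap\Sigma_S)$. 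The cubes of $\text{Up}_F$ are $F$-cubes that do not sit in a single stopping region, and your notation ``$\mathscr{G}\cap S$'' (an intersection of a set of $F$-cubes with a set of $E$-cubes) is empty as written and reveals that the assignment of an $F$-cube in $\mathscr{G}$ to a stopping region and a surface $\Sigma_S$ has not actually been specified — nor is it clear that such an assignment is stable across scales, which the argument needs. So as sketched the $\mathscr{G}$-part of your error estimate does not go through. Beyond that, your "exponent comparison" step in Step~2 cites Lemma~\ref{l:p}, which gives $\beta_E^{d,1}\lesssim\beta_E^{d,p}$, i.e.\ the wrong direction for converting $\beta_E^{d,p}$ into $\beta_E^{d,1}$; in the paper this is never an issue because the conversion happens once inside \eqref{e:var1}, not in the proof of Lemma~\ref{l:Up}. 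The moral is that the intermediate estimate you should be aiming for is the one already established: once you have landed on $\sum_{Q'\in\mathscr{D}^E(Q_0)}\check\beta_F^{d,p}(2MB_{Q'})^2\ell(Q')^d$, stop and quote \eqref{e:var1} rather than re-deriving it in a setting it was not built for.
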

\begin{proof} 
Let $Q \in \text{Up}_F\cap \mathscr{D}^F_k.$ By construction, we have $\text{dist}(x_Q,E) < (C_0 +M)\ell(Q)$ so there exists a cube $Q^\prime \in \mathscr{D}^E_{k}$ such that $C_0B_Q \cap MB_{Q^\prime} \not=\emptyset.$ In particular $C_0B_Q \subseteq 2MB_{Q'}$, which by Lemma \ref{l:cont} implies 
\begin{align}\label{e:end1}
\check\beta^{d,p}_F(C_0B_Q) \lesssim \check\beta^{d,p}_F(2MB_{Q'}).
\end{align}
For $Q \in \mathscr{D}^E,$ a standard volume argument gives
\begin{align}\label{e:end2}
\{R \in \text{Up}_F : R^\prime = Q \} \lesssim_n 1.
\end{align}
Then,
\begin{align}\label{Up}
\begin{split}
\sum_{Q \in \text{Up}_F} \check\beta_F^{d,p}(C_0B_Q)^2\ell(Q)^d &\leq \sum_{Q \in \mathscr{D}^E(Q_0)} \sum_{\substack{R \in \text{Up}_F \\ R^\prime =Q}} \check\beta_F^{d,p}(C_0B_R)^2\ell(R)^d \\
&\stackrel{\substack{\eqref{e:end1} \\ \eqref{e:end2}}}{\lesssim_n} \sum_{Q \in \mathscr{D}^E(Q_0)}  \check\beta_F^{d,p}(2MB_Q)^2\ell(Q)^d \\
&\lesssim \ell(Q_0)^d + \sum_{Q \in \mathscr{D}^E(Q_0)}  \check\beta_F^{d,p}(C_0B_Q)^2\ell(Q)^d \\
&\stackrel{\eqref{e:var1}}{\lesssim} \ell(Q_0)^d + \sum_{Q \in \mathscr{D}^E(Q_0)} \beta^{d,1}_E(2MB_Q)^2\ell(Q)^d. 
\end{split}
\end{align}
The third inequality follows for the following reason: for $Q$ small enough (depending on $C_0$ and $M$) we can find a larger cube $Q'$ such that $MB_Q \subseteq C_0B_{Q'}$. We use this along with the fact that any cube has a bounded number of descendants up to the $K^{th}$ generation, say, with constant dependent on $n$ and $K$. The sum of the larger cubes is absorbed into the first term since again we can control the number of these cubes. This is what we did in the proof of Lemma \ref{l:Suffices1}, see there for more details.
\end{proof}
The proof of Theorem \ref{Thm4} is finished by noting that 
\begin{align}
 \sum_{Q \in \mathscr{D}^F(Q_0^F)}\check\beta_F^{d,p}(C_0B_Q)^2\ell(Q)^d &= \sum_{Q \in \text{Up}_F} \check\beta_F^{d,p}(C_0B_Q)^2\ell(Q)^d \\
&\hspace{4em}+ \sum_{Q \in \text{Top}_F}\sum_{R \subseteq Q}  \check\beta_F^{d,p}(C_0B_R)^2\ell(R)^d
\end{align}
and applying Lemma \ref{Top} and Lemma \ref{Up}. 

\bibliography{Ref}
\bibliographystyle{alpha}

\end{document}